\theoremstyle{plain}
\newtheorem*{theorem*}{Theorem}
\newtheorem{theorem}[subsection]{Theorem}
\newtheorem{lemma}[subsection]{Lemma}
\newtheorem{corollary}[subsection]{Corollary}
\newtheorem{definition}[subsection]{Definition}
\newtheorem{remark}[subsection]{Remark}
\newtheorem{conjecture}[subsection]{Conjecture}
\DeclareMathOperator{\degree}{deg}
\DeclareMathOperator{\modulus}{mod}
\DeclareMathOperator{\residue}{Res}
\DeclareMathOperator{\rad}{rad}
\renewcommand{\Re}{\operatorname{Re}}
\renewcommand{\Im}{\operatorname{Im}}
\newcommand{\Ci}{\operatorname{Ci}}
\newcommand*\conj[1]{\overline{#1}}
\newcommand{\sumstar}{\sideset{}{^*} \sum}
\begin{document}

\allowdisplaybreaks

\title{The Hybrid Euler-Hadamard Product Formula for Dirichlet $L$-functions in $\mathbb{F}_q [T]$}
\author{M. Yiasemides}
\date{\today}
\address{Department of Mathematics, University of Exeter, Exeter, EX4 4QF, UK}
\email{my298@exeter.ac.uk}

\subjclass[2020]{Primary 11M06; Secondary 11M26, 11M50, 11R59}
\keywords{hybrid Euler-Hadamard product, moments, Dirichlet $L$-functions, function fields, random matrix theory}

\thanks{\textbf{Acknowledgments:} The author is grateful for an Engineering and Physical Sciences Research Council (UK) DTP Standard Research Studentship (grant number EP/M506527/1). The author would also like to thank Hung Bui and Nigel Byott for various comments and corrections, and Julio Andrade for suggesting this problem and for his comments.}

\maketitle

\begin{abstract}
For Dirichlet $L$-functions in $\mathbb{F}_q [T]$ we obtain a hybrid Euler-Hadamard product formula. We make a splitting conjecture, namely that the $2k$-th moment of the Dirichlet $L$-functions at $\frac{1}{2}$, averaged over primitive characters of modulus $R$, is asymptotic to (as $\degree R \longrightarrow \infty$) the $2k$-th moment of the Euler product multiplied by the $2k$-th moment of the Hadamard product. We explicitly obtain the main term of the $2k$-th moment of the Euler product, and we conjecture via random matrix theory the main term of the $2k$-th moment of the Hadamard product. With the splitting conjecture, this directly leads to a conjecture for the $2k$-th moment of Dirichlet $L$-functions. Finally, we lend support for the splitting conjecture by proving the cases $k=1,2$. This work is the function field analogue of the work of Bui and Keating. A notable difference in the function field setting is that the Euler-Hadamard product formula is exact, in that there is no error term.
\end{abstract}


\tableofcontents


\section{Introduction and Statement of Results} \label{section, introduction}

Mean values, or moments, of $L$-functions have many powerful applications in number theory, from the non-vanishing of $L$-functions at certain points, to zero-density estimates, positive lower bounds for the number of zeros of $\zeta (s)$ that lie on the critical line, and the proportion of simple zeros on the critical line (see \cite{Gonek2005_AppMVTTheoRZF} for a summary). They also have intrinsic interest because results on moments higher than the fourth have not been obtained, and instead we rely on conjectures. \\

Consider the Riemann zeta-function. For $\Re (s) > 1$,
\begin{align*}
\zeta (s) := \sum_{n=1}^{\infty} \frac{1}{n^{s}} ,
\end{align*}
which can be extended meromorphically to $\mathbb{C}$ with a simple pole at $s=1$. It was shown by Hardy and Littlewood \cite{HardyLittlewood1918_ContrTheoRZFTheoDistrPrime} that
\begin{align*}
\frac{1}{T} \int_{t=0}^{T} \Big\lvert \zeta \Big( \frac{1}{2} + it \Big) \Big\rvert^{2} \mathrm{d} t
\sim \log T ,
\end{align*}
as $\degree T \longrightarrow \infty$, and it was shown by Ingham \cite{MeanValueTheoRMZ_Ingham1926} that
\begin{align*}
\frac{1}{T} \int_{t=0}^{T} \Big\lvert \zeta \Big( \frac{1}{2} + it \Big) \Big\rvert^{4} \mathrm{d} t
\sim \frac{1}{12} \frac{6}{\pi^2} (\log T)^4 
\end{align*}
as $\degree T \longrightarrow \infty$. For higher moments it has been conjectured (see \cite[equation (4)]{RMTZeta_KeatingSnaith2000}) that, for integers $k \geq 0$,
\begin{align} \label{conjecture of 2k-th moment of RZF on crit line, statement}
\lim_{T \longrightarrow \infty} \frac{1}{(\log T)^{k^2}} \frac{1}{T} \int_{t=0}^{T} \Big\lvert \zeta \Big( \frac{1}{2} + it \Big) \Big\rvert^{2k} \mathrm{d} t 
= f(k) a(k) ,
\end{align}
where $f(k)$ is a real-valued function and
\begin{align*}
a(k)
:= \prod_{p} \Bigg( \bigg( 1 - \frac{1}{p} \bigg) ^{k^2} \sum_{m=0}^{\infty} \frac{d_k (p^m)^2}{p^m} \Bigg) .
\end{align*}
We have $a(0) = 1 , a(1) = 1 , a(2) = \frac{1}{\zeta(2)} = \frac{6}{\pi^2}$, and we have an understanding of $a(k)$ for higher values of $k$. The factor $f(k)$ is more elusive. Clearly, from the results described above, we have $f(0) = 1$, $f(1) = 1$, $f(2)= \frac{1}{12}$. It has been conjectured via number-theoretic means that $f(3) = \frac{42}{9!}$ \cite{Conj6thMomRZF_ConreyGhos1998} and $f(4) = \frac{24024}{16!}$ \cite{HighMomRZF_ConreyGonek1999}. For higher powers one must look at random matrix theory for conjectures. \\

We point out that one can also obtain conjectures for higher powers by using the recipe developed by Conrey, Farmer, Keating, Rubinstein, and Snaith \cite{IntegralMomLFunc_CFKRS}. Again these conjectures are in agreement with those obtained via random matrix theory. However, in this paper we focus on the latter. \\

It has been known for some time that there is a relationship between the Riemann zeta-function and eigenvalues of random unitary matrices. In 1972 it was observed by Montgomery and Dyson that the pair correlations of the non-trivial zeros of the Riemann zeta-function appear to behave similarly to the pair correlations of eigenvalues of a random Hermitian matrix \cite{Montgomery1973_PairCorrZeroZetaFunc}. Later, Odlyzko produced numerical evidence in support of this \cite{Odlyzko1987_DistrSpaceZeroZetaFunc}. \\

Given that the eigenvalues of a matrix are the zeros of its characteristic polynomial, it is reasonable to expect a relationship between $\zeta (s)$ on the critical line and the characteristic polynomials of unitary matrices. Keating and Snaith \cite{RMTZeta_KeatingSnaith2000} modeled $\zeta (s)$ at around height $T$ on the critical line by the characteristic polynomial of a random $N \times N$ unitary matrix. (Here, $N$ is chosen such that the mean spacing between the eigenphases of an $N \times N$ unitary matrix is the same as the mean spacing of the zeros of the Riemann zeta-function at around height $T$ on the critical line). They obtained the following result for integers $k \geq 0$:
\begin{align} \label{2k-th moment of char poly of unitary matrices, statement}
\int_{U \in U(N)} \lvert Z (U, \theta) \rvert^{2k} \mathrm{d} U
\sim f_{CUE} (k) N^{k^2}
\end{align}
as $N \longrightarrow \infty$. Here $U(N)$ is the set of all unitary $N \times N$ matrices; for all $U \in U(N)$, we take $Z(U,\theta) := \det \big( I_N - U e^{-i \theta} \big)$ to be the characteristic polynomial of $U$; the integral is with respect to the Haar measure on $U(N)$; and $f_{CUE} (k) := \prod_{j=0}^{k-1} \frac{j!}{(j+k)!}$. (The fact that (\ref{2k-th moment of char poly of unitary matrices, statement}) is independent of $\theta$ is not immediately obvious, and so we remark that this lack of dependency is not an error). Now, we note that
\begin{align*}
f_{CUE} (k) 
= \begin{cases}
1 &\text{ if $k=1$} \\
\frac{1}{12} &\text{ if $k=2$} \\
\frac{42}{9!} &\text{ if $k=3$} \\
\frac{24024}{16!} &\text{ if $k=4$} .
\end{cases}
\end{align*}
That is, $f_{CUE} (k) $ agrees with the established values of $f(k)$, as well as the values that have been conjectured by alternative means. This provides support for the conjecture that $f(k) = f_{CUE} (k)$ for all integers $k \geq 0$. We remark that the results of Keating and Snaith apply to values of $k$ that are not necessarily integer-valued; however, in this paper we are concerned with the integer case. \\

Note that this conjecture, using random matrix theory, does not introduce the factor $a(k)$ in (\ref{conjecture of 2k-th moment of RZF on crit line, statement}) in any natural way. This was addressed by Gonek, Hughes, and Keating \cite{HybridEulerHadProdRZF_GonekHughesKeating} who expressed $\zeta (s)$ as a hybrid Euler-Hadamard product: $\zeta (s) \approx P_X (s) Z_X (s)$, where $P_X (s)$ is a roughly a partial Euler product and $Z_X (s)$ is roughly a partial Hadamard product (a product over the zeros of $\zeta (s)$). The variable $X$ determines the contribution of each factor. They conjectured that, asymptotically, the $2k$-th moment of $\zeta (s)$ on the critical line can be factored into the $2k$-th moment of $P_X (s)$ multiplied by the $2k$-th moment of $Z_X (s)$ (known as the splitting conjecture); and they showed that the former contributes the factor $a(k)$ in (\ref{conjecture of 2k-th moment of RZF on crit line, statement}) and conjectured via random matrix theory that the latter contributes the fator $f(k)$. That is, they obtained a conjecture for the $2k$-th moment of $\zeta (s)$ in a way that the factor $a(k)$ appears naturally. They also lent support for the splitting conjecture by demonstrating that it holds for the cases $k=1,2$. \\

This approach, using an Euler-Hadamard hybrid formula, has been applied to discrete moments of the derivative of the Riemann zeta-function by Bui, Gonek, and Milinovich \cite{BuiGonekMilinovich2015_HybrEulerHadaProdMomDeivZetaRho}. \\

The relationship between random matrix theory and the Riemann zeta-function extends to other $L$-functions, particularly certain families of $L$-functions \cite{ZeroesZetaFuncSym_KatzSarnak}. For example, one aspect of the relationship is that the proportion of $L$-functions of a certain family with conductor $q$ that have $j$-th zero in some interval $[a,b]$ appears to be the same as the proportion of matrices of a certain matrix ensemble (the precise ensemble is dependent on the family) of size $N \times N$ ($N = N(q)$ is chosen so that the mean spacing of the eigenvalues is the same as the mean spacing of the zeros of the $L$-functions of conductor $q$) that have $j$-th eigenvalue in $[a,b]$. At least, this appears to be the case as $q \longrightarrow \infty$. \\

Let us consider the family of Dirichlet $L$-functions. The associated ensemble of matrices is the unitary matrices \cite[page 887]{ConreyFarmer2000_MeanValLFuncSymm}. By making use of this relationship, Bui and Keating \cite{MeanValDLF_BuiKeating} obtained an analogue of \cite{HybridEulerHadProdRZF_GonekHughesKeating} where they considered the $2k$-th moment of Dirichlet $L$-functions at $s=\frac{1}{2}$, averaged over all primitive Dirichlet $L$-functions of modulus $q$, instead of the Riemann zeta-function averaged over the critical line. That is, using a hybrid Euler-Hadamard product for the Dirichlet $L$-functions, they conjectured (among other results) that
\begin{align} \label{statement, NF DLF moments RMT conjecture}
\frac{1}{\phi^* (q)} \sumstar_{\chi \modulus q} \Big\lvert L \Big( \frac{1}{2} , \chi \Big) \Big\rvert^{2k}
\sim a(k) \frac{ G^2 (k+1)}{G(2k+1)} \prod_{p \mid q} \bigg( \sum_{m=0}^{\infty} \frac{d_k (p^m)^2 }{p^m} \bigg)^{-1} (\log q)^{k^2} 
\end{align}
as $\degree q \longrightarrow \infty$, where $\phi^* (q)$ is the number of primitive Dirichlet characters of modulus $q$, the star in the sum indicates the sum is over primitive characters only, and $G(z)$ is the Barnes $G$-function. This had been conjectured previously (see \cite{RMTLFuncSHalf_KeatingSnaith2000}), but this approach allows for all the factors to appear naturally. \\

One can consider the above problems in the function field setting. In fact, it is the function field analogues that give some insight into the relationship between random matrix theory and $L$-functions (be they in number fields or function fields). See \cite[Section 3]{ZeroesZetaFuncSym_KatzSarnak} for details. In function fields, Bui and Florea \cite{BuiFlorea2018_HybridEulerHadaProdQuadrDLFFuncFields} developed the hybrid Euler-Hadamard product model for the family of quadratic Dirichlet $L$-functions. In this paper we do the same for Dirichlet $L$-functions of any primitive character, which is the function field analogue of the work of Bui and Keating described above. The aim is to provide support for the following conjecture (see \cite[page 887]{ConreyFarmer2000_MeanValLFuncSymm}), which is the analogue of (\ref{statement, NF DLF moments RMT conjecture}), in such a way that all factors appear naturally: 

\begin{conjecture} \label{conjecture, FF DLF moments RMT conjecture}
For all non-negative integers $k$, it is conjectured that
\begin{align*} 
\frac{1}{\phi^* (R)}  \sumstar_{\chi \modulus R} \Big\lvert L \Big( \frac{1}{2} , \chi \Big) \Big\rvert^{2k}
\sim f(k) a (k) \prod_{P \mid R} \bigg( \sum_{m=0}^{\infty} \frac{ d_k (P^m )^2}{\lvert P \rvert^m } \bigg)^{-1} (\degree R )^{k^2} ,
\end{align*}
as $\degree R\longrightarrow \infty$, where
\begin{align*}
a (k)
:= \prod_{ P \in \mathcal{P} } \Bigg( \bigg( 1 - \frac{1}{\lvert P \rvert} \bigg)^{k^2} \sum_{m=0}^{\infty} \frac{ d_k \big( P^m \big)^2 }{\lvert P \rvert^m } \Bigg) .
\end{align*}
and
\begin{align*}
f(k)
:= \frac{G^2 (k+1)}{G(2k+1)}
= \prod_{i=0}^{k-1} \frac{i!}{(i+k)!} ,
\end{align*}
where $G$ is the Barnes $G$-function. 
\end{conjecture}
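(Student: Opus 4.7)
The plan is to support Conjecture \ref{conjecture, FF DLF moments RMT conjecture} by adapting to the function field setting the hybrid Euler--Hadamard strategy of Gonek, Hughes, and Keating and its Dirichlet $L$-function refinement due to Bui and Keating, in parallel to the quadratic case handled by Bui and Florea. The goal is to produce: (i) an exact factorization $L(1/2,\chi) = P_X(\chi) Z_X(\chi)$, where $P_X$ is roughly a partial Euler product over primes $P$ of small degree (controlled by a cutoff parameter $X$) and $Z_X$ is a Hadamard-type product over the zeros of $L(s,\chi)$ weighted by a smooth cutoff depending on $X$; (ii) a \emph{splitting conjecture} asserting that, as $\degree R \longrightarrow \infty$ with $X$ tending to infinity suitably slowly, the $2k$-th moment of $L(1/2,\chi)$ averaged over primitive $\chi$ modulo $R$ factors asymptotically as the product of the $2k$-th moments of $P_X$ and $Z_X$; and (iii) separate evaluations of the two moment factors whose product reproduces the right-hand side of the conjecture.

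The first step is to derive the hybrid formula itself. I would apply a truncated explicit formula to $\log L(s,\chi)$, using a smooth test function $u(\cdot/X)$ supported on prime powers, and split the result into a prime-power sum (giving $\log P_X$) and a zero sum (giving $\log Z_X$). Because in the function field setting $L(s,\chi)$ is a polynomial in $q^{-s}$ whose zeros obey the Riemann hypothesis, I expect this factorization to be \emph{exact} rather than approximate, as noted already in the abstract. The second step is the direct evaluation of the $2k$-th moment of $P_X(\chi)$ at $s=1/2$ averaged over primitive $\chi$ modulo $R$: expanding the Euler product as a Dirichlet series and invoking orthogonality isolates the local factors at primes dividing $R$ (producing the $\prod_{P \mid R}$ term in the conjecture), while the product over primes coprime to $R$ converges to $a(k)$ in the limit; the $X$-dependence packages as a power of $X$ that will ultimately pair with the Hadamard side to yield $(\degree R)^{k^2}$.

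The third step models $Z_X$ by random matrix theory. Since the family of primitive Dirichlet $L$-functions modulo $R$ is unitary, I model $Z_X(1/2,\chi)$ by the characteristic polynomial of a random matrix in $U(N)$ with $N = \degree R - 1$, and apply the Keating--Snaith formula (\ref{2k-th moment of char poly of unitary matrices, statement}) to extract the factor $f(k) = G^2(k+1)/G(2k+1)$ together with the complementary power of $X$. Combining with step two, and multiplying via the splitting conjecture, recovers the predicted asymptotic. Finally, to provide genuine unconditional evidence I would prove the splitting conjecture for $k=1,2$ by independently computing the second and fourth moments of $L(1/2,\chi)$ averaged over primitive $\chi$ modulo $R$ and matching them with the corresponding products of Euler and Hadamard moments. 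The main obstacle will be the $k=2$ splitting: controlling the off-diagonal contributions in the fourth moment, evaluating the resulting sums over zeros uniformly in $X$ via the explicit formula, and choosing the cutoff parameters so that all error terms remain negligible relative to the main terms on both sides simultaneously, is the technically delicate crux of the argument.
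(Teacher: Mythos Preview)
Your proposal is correct and follows essentially the same strategy as the paper: an exact hybrid Euler--Hadamard factorization, a splitting conjecture, a rigorous evaluation of the partial Euler product moments yielding $a(k)$ and the local factors at primes dividing $R$, a random matrix theory model for the Hadamard moments yielding $f(k)$, and unconditional verification of the splitting for $k=1,2$. The only minor discrepancy is your choice $N=\degree R-1$ for the random matrix size, whereas the paper takes $N(R)=\lfloor(\log q)\degree R\rfloor$ to match the mean zero spacing; this does not affect the leading-order outcome.
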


This conjecture has been verified for the cases $k=1,2$ by Andrade and Yiasemides \cite{AndradeYiasemides2021_4thPowMeanDLFuncField_Final}:
\begin{align} \label{statement, 2nd moment DLF in FF}
\frac{1}{\phi^* (R)} \sumstar_{\chi \modulus R} \Big\lvert L \Big( \frac{1}{2}, \chi \Big) \Big\rvert^2 
\sim \frac{\phi (R)}{\lvert R \rvert} \degree R
\end{align}
as $\degree R \longrightarrow \infty$, and
\begin{align} \label{statement, 4th moment DLF in FF}
\frac{1}{\phi^* (R)} \sumstar_{\chi \modulus R} \Big\lvert L \Big( \frac{1}{2} , \chi \Big) \Big\lvert^4 
\sim \frac{1-q^{-1}}{12} \prod_{P \mid R} \bigg( \frac{ \big( 1 - \lvert P \rvert^{-1} \big)^3}{1 + \lvert P \rvert^{-1}} \bigg) (\degree R)^4
\end{align}
as $\degree R \longrightarrow \infty$. \\

We now state our results, but we refer the unfamiliar to reader to the beginning of Section \ref{section, notation and background} for definitions and notational remarks relating to $L$-functions in function fields. In particular,  henceforth, the letter $q$ is reserved for the order of the finite field $\mathbb{F}_q$. We begin with the Euler-Hadamard hybrid formula, which we prove in Section \ref{section, hybrid E-H formula}.

\begin{theorem} \label{theorem, L(s, chi) as hybrid E-H product, Intro version}
Let $X \geq 1$ be an integer and let $u(x)$ be a positive $C^{\infty}$-function with support in $[e , e^{1 + q^{-X} }]$. Let
\begin{align*}
v(x)
= \int_{t=x}^{\infty} u(t) \mathrm{d} t
\end{align*}
and take $u$ to be normalised so that $v(0) = 1$. Furthermore, for $y \in \mathbb{C} \backslash \{ 0 \}$ with $\arg (y) \neq \pi$, we define $E_1 (y) := \int_{w=y}^{y+ \infty} \frac{e^{-w}}{w} \mathrm{d} w$; and for $z \in \mathbb{C} \backslash \{ 0 \}$ with $\arg (z) \neq \pi$, we define
\begin{align*}
U (z) := \int_{x=0}^{\infty} u(x) E_1 (z \log x ) \mathrm{d} x .
\end{align*}
Let $\chi$ be a primitive Dirichlet character of modulus $R \in \mathcal{M} \backslash \{ 1 \}$, and let $\rho_n = \frac{1}{2} + i \gamma_n$ be the $n$-th zero of $L (s , \chi )$. Then, for all $s \in \mathbb{C}$ we have
\begin{align} \label{statement, dis of res, EH hybrid formula, L=PZ}
L (s , \chi )
= P_{X} (s , \chi) Z_{X} (s , \chi) ,
\end{align}
where
\begin{align*}
P_{X} (s , \chi)
= \exp \bigg( \sum_{\substack{A \in \mathcal{M} \\ \degree A \leq X }} \frac{ \chi (A) \Lambda (A)}{ \lvert A \rvert^{s} \log \lvert A \rvert} \bigg) 
\end{align*}
and
\begin{align*}
Z_{X} (s , \chi)
= \exp \bigg( - \sum_{\rho_n} U \Big( (s - \rho_n ) (\log q) X \Big) \bigg) .
\end{align*}
Strictly speaking, if $s = \rho$ or $\arg (s - \rho ) = \pi$ for some zero $\rho$ of $L (s , \chi)$, then $Z_{X} (s , \chi)$ is not well defined. In this case, we take
\begin{align*}
Z_{X} (s , \chi)
= \lim_{s_0 \longrightarrow s} Z_{X} (s_0 , \chi)
\end{align*}
and we show that this is well defined.
\end{theorem}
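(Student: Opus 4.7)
The plan is to take logarithms of \eqref{statement, dis of res, EH hybrid formula, L=PZ} and establish the equivalent additive identity
\[
\log L(s,\chi) - \log P_X(s,\chi) = -\sum_\rho U\bigl((s-\rho)(\log q)X\bigr),
\]
where the sum on the right is interpreted as running over all zeros of $L(s,\chi)$ together with their periodic translates $\rho_n + 2\pi i k/\log q$ ($k \in \mathbb{Z}$). For the left-hand side, in the region $\Re s > 1$ one has the Dirichlet series $\log L(s,\chi) = \sum_{A\in\mathcal{M}}\chi(A)\Lambda(A)/(|A|^s\log|A|)$ (obtained by integrating the logarithmic derivative), so $\log L - \log P_X$ collapses to the tail $\sum_{\deg A > X}\chi(A)\Lambda(A)/(|A|^s\log|A|)$. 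Applying the function field explicit formula $\sum_{A\in\mathcal{M}_m}\chi(A)\Lambda(A) = -(\log q)\sum_n q^{m\rho_n}$, with $n$ running over zeros in one fundamental period, turns the left-hand side into $-\sum_n\sum_{m > X}q^{-m(s-\rho_n)}/m$.

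For the right-hand side I would group by fundamental domain and write
\[
\sum_\rho U\bigl((s-\rho)(\log q)X\bigr) = \sum_n\sum_{k\in\mathbb{Z}}U(z_n - 2\pi i k X), \qquad z_n := (s-\rho_n)(\log q)X,
\]
and evaluate the inner sum via Poisson summation on $\mathbb{Z}$. Setting $f(t) := U(z_n - 2\pi i t X)$, one computes the Fourier coefficient $\hat f(l) = \int_{\mathbb{R}}f(t)e^{-2\pi i l t}\,dt$ by substituting $E_1(wy) = \int_1^\infty e^{-wys}/s\,ds$ inside $U$ and performing the resulting $t$-integral as $\int e^{2\pi i A t}\,dt = \delta(A)$; this delta pins $s = l/(X\log x)$ in the $s$-integral, and integrating over $x$ against $u$ yields
\[
\hat f(l) = \frac{e^{-l z_n/X}}{l}\bigl(1 - v(e^{l/X})\bigr) \quad (l > 0), \qquad \hat f(l) = 0 \quad (l \leq 0).
\]
The key feature of the test function $u$ is that for positive integer $l$ and integer $X \geq 1$ one has $1 - v(e^{l/X}) = \mathbf{1}_{l > X}$: the condition $q^X > X$ (which holds for all $q \geq 2$, $X \geq 1$) guarantees $l/X \geq 1 + 1/X > 1 + q^{-X}$ whenever $l \geq X+1$, placing $e^{l/X}$ outside the support of $u$. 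Thus Poisson summation gives $\sum_k U(z_n - 2\pi i k X) = \sum_{l > X}q^{-l(s-\rho_n)}/l$, and summing over $n$ matches the left-hand-side computation exactly; exponentiating recovers $L = P_X Z_X$.

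For the boundary case where $s$ is a zero $\rho$ of $L$ or $\arg(s-\rho) = \pi$ for some zero: $U(z)$ has a logarithmic singularity at $z = 0$ coming from $E_1(z) \sim -\gamma - \log z$, so $U((s-\rho)(\log q)X) \sim -\log(s-\rho) + O(1)$ as $s \to \rho$; the corresponding logarithmic divergence of $\log L(s,\chi) \sim \log(s-\rho)$ cancels it in $\log L + \sum_\rho U$, so the limiting value of $Z_X(s,\chi)$ is well-defined, both sides of $L = P_X Z_X$ vanish at $s = \rho$, and the identity extends by continuity. The main obstacle I expect is the rigorous justification of the Poisson summation step and the distributional $t$-integral producing $\hat f(l)$: one must verify absolute convergence of $\sum_k f(k)$ and $\sum_l \hat f(l)$. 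The former follows from the rapid decay $U(w) = O_N(|\Im w|^{-N})$ for every $N$, obtained by repeated integration by parts on $U(w) = \int u(e^y) e^y E_1(wy)\,dy$ using the smoothness and compact support of $u$; the latter is immediate from the explicit formula for $\hat f(l)$ and the decay of $q^{-l(s-\rho_n)}/l$ as $l \to \infty$ when $\Re s > 1/2$, with the general case following by analytic continuation.
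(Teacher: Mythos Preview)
Your approach is correct and genuinely different from the paper's. The paper follows the Gonek--Hughes--Keating template: it applies Mellin inversion to the smoothed sum $\sum_A \chi(A)\Lambda(A)\lvert A\rvert^{-s_0}\,v(e^{\deg A/X})$, shifts the contour in the Mellin variable to the left to pick up residues at the zeros of $L$, thereby obtaining an exact identity expressing $-L'/L(s_0,\chi)$ as the truncated prime-power sum plus $\sum_\rho \tilde u\bigl(1+(\rho-s_0)(\log q)X\bigr)/(\rho-s_0)$; it then integrates this in $s_0$ from $s$ to $s+\infty$ and recognises the integrated zero term as $-U\bigl((s-\rho)(\log q)X\bigr)$ via a direct change of variables. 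Your route instead feeds in the function-field explicit formula $\sum_{\deg A=m}\chi(A)\Lambda(A)=-(\log q)\sum_n q^{m\rho_n}$ (an exact consequence of $L$ being a polynomial in $q^{-s}$) and then applies Poisson summation over the $2\pi i/\log q$-periodic translates of each zero to collapse $\sum_k U(z_n-2\pi ikX)$. The paper's argument is the portable one (it is what works over $\mathbb Q$, with an error term), while yours makes more transparent \emph{why} the identity is exact here: the support condition on $u$ together with the integrality of $X$ and $q^X>X$ force $1-v(e^{l/X})=\mathbf 1_{l>X}$ on integers $l$, so nothing survives as an error. To make the distributional step rigorous you can bypass the delta entirely: write $U(w)=\int_1^\infty \widehat G(ws)\,s^{-1}\,ds$ with $\widehat G(\xi)=\int u(e^y)e^{y}e^{-\xi y}\,dy$, so that the smoothness and compact support of $u$ give $\widehat G(\xi)\ll_N \lvert\xi\rvert^{-N}e^{-\Re\xi}$; this yields $U(w)\ll_N\lvert w\rvert^{-N}$ for $\Re w>0$ and justifies both sides of Poisson summation for $\Re s>1/2$, after which analytic continuation and the limit argument you describe handle the remaining $s$.
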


\begin{remark}
We note that our hybrid Euler-Hadamard product formula, (\ref{statement, dis of res, EH hybrid formula, L=PZ}), does not involve an error term, unlike the analogous Theorem 1 in \cite{HybridEulerHadProdRZF_GonekHughesKeating} and Theorem 1 in \cite{MeanValDLF_BuiKeating}. This is due to the fact that we are working in the function field setting. \\

We also note that $Z_{X} (s , \chi)$ is expressed in terms of $u(x)$. Whereas, $P_{X} (s , \chi)$ and $L (s , \chi)$ are independent of $u(x)$. Thus, given the equality (\ref{statement, dis of res, EH hybrid formula, L=PZ}), we can see that, as long as $u(x)$ satisfies the conditions in the theorem, the value of $Z_{X} (s , \chi)$ is independent of any further restrictions made on $u(x)$. Ultimately, this is due to the fact that we are working in the function field setting and due to our choice of support for $u(x)$. Indeed, this is why our support for $u(x)$ is not quite the exact analogy to the support of $u(x)$ in Theorem 1 of \cite{MeanValDLF_BuiKeating}. We note that in Theorem 1 in \cite{MeanValDLF_BuiKeating}, $P_{X} (s , \chi)$ and $L (s , \chi)$ also do not depend on $u(x)$, but this is because the dependency exists in the error term.
\end{remark}

We conjecture that the $2k$-th moment of the $L$-functions can be split into the $2k$-th moment of their partial Euler products multiplied by $2k$-th moment of their partial Hadamard products:

\begin{conjecture}[Splitting Conjecture] \label{splitting conjecture for DLF in FF, Intro version}
For integers $k \geq 0$, we have
\begin{align*}
&\frac{1}{\phi^* (R)} \sumstar_{\chi \modulus R} \Big\lvert L \Big( \frac{1}{2} , \chi \Big) \Big\rvert^{2k }\\
\sim &\bigg( \frac{1}{\phi^* (R)} \sumstar_{\chi \modulus R} \Big\rvert P_X \Big( \frac{1}{2} , \chi \Big) \Big\lvert ^{2k} \bigg)
	\cdot \bigg( \frac{1}{\phi^* (R)} \sumstar_{\chi \modulus R} \Big\lvert Z_{X} \Big( \frac{1}{2} , \chi \Big) \Big\rvert^{2k} \bigg)
\end{align*}
as $X , \degree R \longrightarrow \infty$ with $X \leq \log_q \degree R$.
\end{conjecture}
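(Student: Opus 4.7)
The plan is to start from the pointwise factorisation $\lvert L(\tfrac{1}{2},\chi)\rvert^{2k} = \lvert P_X(\tfrac{1}{2},\chi)\rvert^{2k}\,\lvert Z_X(\tfrac{1}{2},\chi)\rvert^{2k}$ supplied by Theorem \ref{theorem, L(s, chi) as hybrid E-H product, Intro version} (note that in the function field setting there is no error term to worry about). The splitting conjecture is thus equivalent to asserting that, averaged over primitive characters $\chi$ mod $R$, the Euler factor $\lvert P_X\rvert^{2k}$ and the Hadamard factor $\lvert Z_X\rvert^{2k}$ are asymptotically uncorrelated in the range $X\leq \log_q \degree R$.

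First I would evaluate the two individual averages separately. For the Euler side, I would expand $P_X(\tfrac{1}{2},\chi)^k \overline{P_X(\tfrac{1}{2},\chi)}^k$ as a Dirichlet polynomial in $\chi$ supported on polynomials of effective degree at most roughly $kX$ (taking the exponential series and using $\Lambda$-to-$d_k$ identities), then invoke primitive-character orthogonality after Möbius inversion on the conductor to reduce the mean to a sum of the form $\sum_{A\bar B \equiv 1 \ (R)} d_k(A)d_k(B)/(\lvert A\rvert\lvert B\rvert)^{1/2}$ restricted to small degrees. A local Euler product factorisation, combined with the main diagonal $A=B$, should produce a main term of shape $a(k) \prod_{P\mid R}\big(\sum_m d_k(P^m)^2 /\lvert P\rvert^m\big)^{-1} X^{k^2}$. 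For the Hadamard side, I would invoke the random matrix theory model: replace the average over primitive $\chi$ mod $R$ by Haar integration over $U(\degree R - 1)$, re-express the sum over zeros as a sum over eigenphases, and use the integral representation of $U(z)$ together with the Keating--Snaith moment formula (\ref{2k-th moment of char poly of unitary matrices, statement}) to extract a contribution of size $f(k)(\degree R / X)^{k^2}$. The product of these two evaluations then reproduces Conjecture \ref{conjecture, FF DLF moments RMT conjecture}, as desired.

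The main obstacle, and indeed the entire content of the splitting conjecture, is the decorrelation itself:
\begin{align*}
\frac{1}{\phi^*(R)}\sumstar_{\chi \modulus R} \lvert P_X(\tfrac{1}{2},\chi)\rvert^{2k} \lvert Z_X(\tfrac{1}{2},\chi)\rvert^{2k}
\sim \bigg(\frac{1}{\phi^*(R)}\sumstar_{\chi \modulus R}\lvert P_X\rvert^{2k}\bigg)\bigg(\frac{1}{\phi^*(R)}\sumstar_{\chi \modulus R}\lvert Z_X\rvert^{2k}\bigg).
\end{align*}
The heuristic is that $P_X$ encodes information about primes of degree $\leq X$ whereas $Z_X$, via the explicit formula relating zeros to prime sums, encodes information about primes of degree $>X$, and these two regimes should be statistically independent across the family. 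Making this precise for general $k$ would require control of arbitrary mixed moments of a short Dirichlet polynomial against a product of characteristic-polynomial type factors, which is well beyond current technology and is precisely why a full proof is not attempted.

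A realistic path toward evidence for the conjecture, and what I would actually carry out, is therefore to handle the cases $k=1,2$: in those cases the right-hand side can be computed explicitly from the Euler-side evaluation described above together with the conjectural (and, for $k=1,2$, provable) random matrix theory evaluation of $\langle \lvert Z_X\rvert^{2k}\rangle$, and the product can be matched against the known asymptotics (\ref{statement, 2nd moment DLF in FF}) and (\ref{statement, 4th moment DLF in FF}). For $k\geq 3$ the splitting must remain conjectural, consistent with the state of the art.
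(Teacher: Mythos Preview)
The statement is a conjecture, and the paper does not prove it for general $k$; it only verifies the cases $k=1,2$. You correctly recognise this and correctly identify that the obstruction is establishing asymptotic decorrelation of $\lvert P_X\rvert^{2k}$ and $\lvert Z_X\rvert^{2k}$ across the family.

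However, your sketch for the cases $k=1,2$ has a genuine gap. You propose to compute the Hadamard average $\frac{1}{\phi^*(R)}\sumstar_{\chi}\lvert Z_X(\tfrac12,\chi)\rvert^{2k}$ via ``the conjectural (and, for $k=1,2$, provable) random matrix theory evaluation''. But the random matrix heuristic is never made rigorous in this paper, not even for $k=1,2$: the replacement of zeros by eigenphases in (\ref{statement, Hada prod 2k-th mom conj via GHK, replacing zeros with e vals}) is itself conjectural. What the paper actually does is compute the Hadamard moments \emph{directly} as twisted $L$-function moments,
\[
\frac{1}{\phi^*(R)}\sumstar_{\chi}\Big\lvert Z_X\Big(\tfrac12,\chi\Big)\Big\rvert^{2k}
= \frac{1}{\phi^*(R)}\sumstar_{\chi}\Big\lvert L\Big(\tfrac12,\chi\Big)\Big\rvert^{2k}\Big\lvert P_X\Big(\tfrac12,\chi\Big)\Big\rvert^{-2k},
\]
by expanding $P_X^{-k}$ (or $P_X^{-2}$) as an explicit Dirichlet series in $X$-smooth polynomials and then evaluating the resulting twisted second and fourth moments of $L(\tfrac12,\chi)$; see Theorems \ref{theorem, second twisted moment, Intro version} and \ref{Theorem, fourth moment of Hadamard Product, Intro version} and the substantial machinery of Sections \ref{section, second twisted moment}--\ref{section, fourth twisted moment}. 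This is where the real work lies, and your proposal does not indicate any mechanism for it. Once those twisted moments are in hand, one checks that their product with the rigorously computed Euler moments (Theorem \ref{theorem, Euler product 2k-th moment, Intro version}) reproduces the known $L$-function moments (\ref{statement, 2nd moment DLF in FF}) and (\ref{statement, 4th moment DLF in FF}), thereby verifying the splitting for $k=1,2$.
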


We then obtain the $2k$-th moment of the partial Euler products in Section \ref{section, Euler product moments}, and we use a random matrix theory model to conjecture the $2k$-th moment of the Hadamard products in Section \ref{section, Hadamard moments conjecture}:

\begin{theorem}\label{theorem, Euler product 2k-th moment, Intro version}
For positive integers $k$, we have
\begin{align*}
\frac{1}{\phi^* (R)} \sumstar_{\chi \modulus R} \Big\rvert P_X \Big( \frac{1}{2} , \chi \Big) \Big\lvert ^{2k} 
\sim a(k) \Bigg[ \prod_{\substack{ \degree P \leq X \\ P \mid R }} \Bigg( \sum_{m=0}^{\infty} \frac{ d_k \big( P^m \big)^2 }{\lvert P \rvert^m }  \Bigg)^{-1} \Bigg] \Big( e^{\gamma} X \Big)^{k^2}
\end{align*}
as $X , \degree R \longrightarrow \infty$ with $X \leq \log_q \degree R$. Here, $\gamma$ is the Euler-Mascheroni constant, and 
\begin{align*}
a (k)
= \prod_{ P \in \mathcal{P} } \Bigg( \bigg( 1 - \frac{1}{\lvert P \rvert} \bigg)^{k^2} \sum_{m=0}^{\infty} \frac{ d_k \big( P^m \big)^2 }{\lvert P \rvert^m } \Bigg) .
\end{align*}
\end{theorem}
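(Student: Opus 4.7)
The plan is to expand $|P_X(\tfrac{1}{2}, \chi)|^{2k}$ as a Dirichlet series in $\chi$ and $\bar{\chi}$, average via orthogonality of primitive characters, identify the main term as a diagonal Euler product, and simplify using a function-field Mertens estimate. First, I would rewrite
\begin{align*}
P_X(s, \chi)^k = \prod_P \exp\Bigg( k \sum_{1 \leq m \leq X/\degree P} \frac{\chi(P)^m}{m |P|^{ms}} \Bigg)
\end{align*}
and expand each local factor as a power series in $\chi(P)/|P|^s$, producing a Dirichlet series $\sum_A \chi(A) \alpha_X(A; k) |A|^{-s}$, where $\alpha_X(\cdot;k)$ is multiplicative, supported on moduli whose prime factors have degree at most $X$, and satisfies $\alpha_X(P^n;k) = d_k(P^n)$ whenever $n \leq \lfloor X/\degree P \rfloor$, since truncating the inner exponential series at $m = M_P := \lfloor X/\degree P \rfloor$ does not affect its Taylor coefficients up to order $M_P$. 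Consequently,
\begin{align*}
\Bigl|P_X\Bigl(\tfrac{1}{2},\chi\Bigr)\Bigr|^{2k} = \sum_{A,B} \frac{\chi(A)\bar{\chi}(B)\, \alpha_X(A;k)\overline{\alpha_X(B;k)}}{|A|^{1/2}|B|^{1/2}}.
\end{align*}

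Next, I would apply orthogonality of primitive characters modulo $R$, separating the resulting sum into the ``true diagonal'' $A = B$ with $(A,R)=1$, which gives
\begin{align*}
\sum_{(A,R)=1} \frac{|\alpha_X(A;k)|^2}{|A|} = \prod_{\substack{\degree P \leq X \\ P \nmid R}} \sum_{n \geq 0} \frac{|\alpha_X(P^n;k)|^2}{|P|^n},
\end{align*}
and off-diagonal contributions where $A \neq B$ satisfy $A \equiv B \pmod D$ for some $D \mid R$. The off-diagonal must be shown negligible: the constraint $X \leq \log_q \degree R$ will severely restrict the sizes of $|A|$ and $|B|$ arising with non-negligible coefficients, making nontrivial congruences modulo divisors of $R$ very rare.

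For the main term, using $\alpha_X(P^n;k) = d_k(P^n)$ for $n \leq M_P$ and polynomial growth for larger $n$,
\begin{align*}
\sum_{n \geq 0} \frac{|\alpha_X(P^n;k)|^2}{|P|^n} = \sum_{n \geq 0} \frac{d_k(P^n)^2}{|P|^n} + O\bigl(|P|^{-M_P - 1}\bigr).
\end{align*}
Taking the product over all $P \nmid R$ of degree at most $X$, inserting the convergence factors $(1 - |P|^{-1})^{\pm k^2}$, and invoking the function-field Mertens-type estimate
\begin{align*}
\prod_{\degree P \leq X} (1 - |P|^{-1})^{-k^2} \sim (e^{\gamma} X)^{k^2}
\end{align*}
together with the absolute convergence of the Euler product defining $a(k)$, yields the target asymptotic $a(k)\,(e^\gamma X)^{k^2}$ multiplied by the inverse local factors at primes $P \mid R$ with $\degree P \leq X$.

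The main obstacle is twofold. First, controlling the truncation error for primes whose degree approaches $X$ (where $M_P$ is small and $\alpha_X(P^n;k)$ can differ appreciably from $d_k(P^n)$), and verifying that the accumulated error remains of strictly lower order than $(e^\gamma X)^{k^2}$. Second, the off-diagonal bound, which must exploit $X \leq \log_q \degree R$ sharply, since the coefficient sum $\sum_A |\alpha_X(A;k)|^2/|A|$ is itself of size $(e^\gamma X)^{k^2}$ and cannot be discarded cheaply; one expects to need fairly delicate estimates on $\alpha_X(A;k)$ in short degree ranges, in the spirit of the analogous number-field arguments of Bui and Keating.
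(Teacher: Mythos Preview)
Your plan is correct and follows essentially the same route as the paper: expand $P_X^k$ as a Dirichlet series, apply primitive-character orthogonality, evaluate the diagonal via Mertens, and bound the off-diagonal. The paper streamlines both of your stated obstacles by first replacing $P_X$ with a modified product
\[
P_X^{*}(s,\chi)=\prod_{\degree P\le X}\Bigl(1-\tfrac{\chi(P)}{|P|^s}\Bigr)^{-1}\prod_{X/2<\degree P\le X}\Bigl(1+\tfrac{\chi(P)^2}{2|P|^{2s}}\Bigr)^{-1},
\]
which differs from $P_X$ by a factor $1+O_k(X^{-1})$ and whose Dirichlet coefficients $\alpha_k(A)$ satisfy the clean global bound $0\le \alpha_k(A)\le d_k(A)$; with this bound in hand, the paper truncates at $\degree A\le \tfrac14\degree R$ and the off-diagonal and tail estimates become routine. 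Your direct expansion also enjoys $0\le \alpha_X(A;k)\le d_k(A)$ (since adding further nonnegative terms $k z^m/m$ to the exponent only increases Taylor coefficients), so the $P_X^*$ device is a convenience rather than a necessity, and once you insert that bound your two ``obstacles'' dissolve exactly as in the paper.
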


\begin{conjecture}\label{Hadamard moment conjecture, Intro version}
For integers $k \geq 0$, we have
\begin{align*}
\frac{1}{\phi^* (R)} \sumstar_{\chi \modulus R} \Big\lvert Z_{X} \Big( \frac{1}{2} , \chi \Big) \Big\rvert^{2k}
\sim \frac{G^2 (k+1)}{G (2k+1)} \bigg( \frac{\degree R}{e^{\gamma} X} \bigg)^{k^2} ,
\end{align*}
as $\degree R \longrightarrow \infty$, where $\gamma$ is the Euler-Mascheroni constant and $G$ is the Barnes $G$-function. For our purposes, it suffices to note that
\begin{align*}
\frac{G^2 (k+1)}{G (2k+1)}
= \prod_{i=0}^{k-1} \frac{i !}{(i+k)!} .
\end{align*}
\end{conjecture}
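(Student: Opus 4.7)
The plan is to support this conjecture by modelling the family of Dirichlet $L$-functions by characteristic polynomials of random unitary matrices, following the Katz--Sarnak philosophy. For a primitive character $\chi$ of modulus $R$, $L(s,\chi)$ is a polynomial in $q^{-s}$ of degree $N \approx \degree R$, and the Riemann hypothesis for function fields lets us write $L(s,\chi) = \det(I - q^{1/2-s}\Theta_\chi)$ for a unitary $N \times N$ matrix $\Theta_\chi$. As $\degree R \longrightarrow \infty$, these unitarised Frobenii are expected to equidistribute in $U(N)$ with respect to the Haar measure, so that natural averages over primitive $\chi$ should match the corresponding Haar averages of the same functional applied to the characteristic polynomial $Z_U(s) := \det(I - q^{1/2-s}U)$ with $U \in U(N)$.

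To transfer the decomposition $L = P_X \cdot Z_X$ to the matrix side, one writes
\begin{align*}
\log Z_U(s) = -\sum_{m \geq 1} \frac{\operatorname{tr}(U^m)}{m}\, q^{m(1/2 - s)}
\end{align*}
and mimics the Mellin-type construction from Theorem \ref{theorem, L(s, chi) as hybrid E-H product, Intro version}, using the same weights $u$ and $v$: truncating the $m$-sum at the scale $X$ yields an RMT analogue $P_X^U(s)$, and $Z_X^U(s) := Z_U(s)/P_X^U(s)$ is the matrix-model Hadamard factor. The conjecture then reduces to
\begin{align*}
\int_{U \in U(N)} \big\lvert Z_X^U(1/2) \big\rvert^{2k} \, dU \sim \frac{G^2(k+1)}{G(2k+1)} \bigg( \frac{N}{e^{\gamma} X} \bigg)^{k^2} ,
\end{align*}
which one derives from three ingredients. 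First, the Keating--Snaith formula $\int_{U(N)} \lvert Z_U(1/2) \rvert^{2k} \, dU \sim \tfrac{G^2(k+1)}{G(2k+1)} N^{k^2}$. Second, an RMT counterpart of Theorem \ref{theorem, Euler product 2k-th moment, Intro version} giving $\int_{U(N)} \lvert P_X^U(1/2) \rvert^{2k} \, dU \sim (e^{\gamma} X)^{k^2}$; this follows from the Diaconis--Shahshahani theorem, which says that the low-degree traces $\operatorname{tr}(U^m)$ for $m \leq X \ll N$ behave asymptotically like independent complex Gaussians of variance $m$, so that the moment reduces to an explicit Gaussian integral whose exponent simplifies via a Mertens-type identity $\sum_{m \leq X} k^2/m \sim k^2(\log X + \gamma)$. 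Third, a splitting argument factoring the moment of $Z_X^U = Z_U/P_X^U$ as the ratio of the two preceding moments.

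The main obstacle is this last step, which is itself an RMT analogue of the splitting conjecture. It is justified heuristically by a separation of scales in the matrix model: the low-frequency traces controlling $P_X^U$ encode global spectral information, whereas $Z_X^U(1/2)$ is governed by the fine-scale repulsion of the eigenphases near the critical line, and the two should be asymptotically independent under the scaling $X \leq \log_q \degree R$. An alternative direct route is to expand the exponential defining $P_X^U$, apply the Diaconis--Shahshahani moment formulae term by term, and pair the surviving contributions against a Selberg-type integral for the full characteristic polynomial; this mirrors the treatments of Gonek, Hughes and Keating for $\zeta(s)$ and of Bui and Keating for number-field Dirichlet $L$-functions, and would produce the Barnes $G$-function factor together with the correct power of $\degree R/(e^{\gamma}X)$ explicitly.
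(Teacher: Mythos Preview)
Your approach differs from the paper's in an important structural way. The paper does not model $L(s,\chi)$ as a full characteristic polynomial and then construct RMT analogues $P_X^U$ and $Z_X^U$; instead it works directly with the explicit expression for $Z_X(\tfrac12,\chi)$ in terms of the zeros, as furnished by Theorem~\ref{theorem, L(s, chi) as hybrid E-H product, Intro version}. Concretely, one has
\begin{align*}
\Big\lvert Z_{X} \Big( \frac{1}{2} , \chi \Big) \Big\rvert^{2k}
= \exp \Bigg( 2k \sum_{\gamma_n (\chi )} \int_{x=0}^{\infty} u(x) \Ci \big( \lvert \gamma_n (\chi ) \rvert (\log q) X \log x \big) \mathrm{d} x \Bigg) ,
\end{align*}
and the paper then replaces the ordinates $\gamma_n(\chi)$ by the (periodicised) eigenphases $\theta_n(A)$ of a Haar-random $A \in U(N(R))$, with $N(R) = \lfloor (\log q)\degree R \rfloor$. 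The resulting matrix integral is evaluated directly by citing Section~4 of Gonek--Hughes--Keating, with $\log X$ there replaced by $(\log q)X$ and $\log T$ by $(\log q)\degree R$.

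The advantage of the paper's route is that it never needs the RMT-side splitting you identify as your main obstacle. Your argument requires factoring $\int_{U(N)} |Z_X^U|^{2k} \sim \int |Z_U|^{2k} \big/ \int |P_X^U|^{2k}$, which is itself a random-matrix analogue of Conjecture~\ref{splitting conjecture for DLF in FF, Intro version} and is at least as delicate as the statement you are trying to support. Because the hybrid formula already expresses $Z_X$ purely as a function of the zeros, the paper's substitution of eigenphases for zeros yields a single eigenvalue functional whose Haar average is already computed in the literature --- no decomposition or recombination is needed. Your Diaconis--Shahshahani computation of $\int |P_X^U|^{2k} \sim (e^\gamma X)^{k^2}$ is correct and your overall picture is not wrong as a heuristic, but it introduces an extra layer of conjecture that the paper's more direct route avoids.
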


Note that Conjecture \ref{splitting conjecture for DLF in FF, Intro version}, Theorem \ref{theorem, Euler product 2k-th moment, Intro version}, and Conjecture \ref{Hadamard moment conjecture, Intro version} together reproduce Conjecture \ref{conjecture, FF DLF moments RMT conjecture} as desired, but only for certain cases, such as when the largest prime divisor of $R$ has degree less than $X$, or when $P$ is prime. \\

In Section \ref{section, second twisted moment} we rigorously obtain the second moment of the Hadamard product:

\begin{theorem}\label{theorem, second twisted moment, Intro version}
We have that
\begin{align*}
\frac{1}{\phi^* (R)} \sumstar_{\chi \modulus R} \Big\lvert Z_{X} \Big( \frac{1}{2} , \chi \Big) \Big\rvert^{2}
= &\frac{1}{\phi^* (R)} \sumstar_{\chi \modulus R} \Big\lvert L \Big( \frac{1}{2} , \chi \Big) P_X \Big( \frac{1}{2} , \chi \Big)^{-1} \Big\rvert^2 \\
\sim &\frac{\degree R}{e^{\gamma} X} \prod_{\substack{\degree P > X \\ P \mid R}} \bigg( 1 - \frac{1}{\lvert P \rvert} \bigg) 
\end{align*}
as $X , \degree R \longrightarrow \infty$ with $X \leq \log_q \degree R$.
\end{theorem}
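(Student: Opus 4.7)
The first identity is immediate from Theorem \ref{theorem, L(s, chi) as hybrid E-H product, Intro version}: it gives $Z_X = L \cdot P_X^{-1}$, so $|Z_X|^2 = |L P_X^{-1}|^2$. For the asymptotic, the plan is to reduce $\mathcal{M}_2 := \phi^*(R)^{-1} \sumstar_\chi |L P_X^{-1}|^2$ to a weighted twisted second moment of $L(\tfrac{1}{2}, \chi)$. Expanding the exponential in the definition of $P_X^{-1}$ and collecting terms by powers of $\chi(P)$ yields the Dirichlet series
\[
P_X(s, \chi)^{-1} = \sum_{A \in \mathcal{M}} \frac{f_X(A) \chi(A)}{|A|^s},
\]
where $f_X$ is a $\chi$- and $s$-independent multiplicative arithmetic function supported on $A$ whose prime divisors all have degree $\leq X$; concretely, $f_X(P^j)$ equals the $z^j$-coefficient of $\exp\bigl(-\sum_{1 \leq m \leq X/\deg P} z^m/m\bigr)$. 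Squaring and interchanging summations gives
\[
\mathcal{M}_2 = \sum_{\substack{a, b \in \mathcal{M} \\ (ab, R) = 1}} \frac{f_X(a) f_X(b)}{\sqrt{|ab|}} T(a, b), \qquad T(a, b) := \frac{1}{\phi^*(R)} \sumstar_\chi \Big|L\bigl(\tfrac{1}{2}, \chi\bigr)\Big|^2 \chi(a) \bar\chi(b).
\]

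To evaluate $T(a, b)$, I would expand $|L|^2$ as a double sum over monic polynomials of degree $< \deg R$ (exploiting that $L(s, \chi)$ is a polynomial in $q^{-s}$ of degree $\deg R - 1$) and apply the primitive-character orthogonality
\[
\frac{1}{\phi^*(R)} \sumstar_{\chi \bmod R} \chi(aA) \bar\chi(bB) = \frac{1}{\phi^*(R)} \sum_{d \mid R} \mu(R/d) \phi(d) \mathbf{1}_{aA \equiv bB \pmod d,\ (aAbB, R) = 1}.
\]
The main contribution comes from the diagonal $aA = bB$, where the inner average simplifies to $\mathbf{1}_{(aAbB, R) = 1}$; writing $g = (a, b)$, $a = g a'$, $b = g b'$ with $(a', b') = 1$, this forces $A = b' k$, $B = a' k$ with $(k, R) = 1$ and $\deg k < N := \deg R - \max(\deg a, \deg b) + \deg g$. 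The exact evaluation
\[
\sum_{\substack{(k, R) = 1 \\ \deg k < N}} \frac{1}{|k|} = \frac{\phi(R)}{|R|} N + O(1)
\]
(obtained from $\sum_{\deg m = j} |m|^{-1} = 1$ and Möbius inversion) yields
\[
T(a, b) = \frac{\phi(R)}{|R|} \cdot \frac{|(a, b)|}{\sqrt{|ab|}} \cdot \deg R + (\text{smaller order}),
\]
uniformly for $\deg a, \deg b$ suitably small. The off-diagonal pairs ($aA \equiv bB \pmod R$ with $aA \neq bB$, which force $\deg A \geq \deg R - \deg a$) and the $d < R$ contributions are absorbed into the error.

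Substituting the main term back, the resulting sum over $(a, b)$ factors as an Euler product:
\[
\sum_{(ab, R) = 1} \frac{f_X(a) f_X(b) |(a, b)|}{|ab|} = \prod_{P \nmid R} \biggl(\bigl(1 - \tfrac{1}{|P|}\bigr) \sum_{n \geq 0} \frac{c_X(P^n)^2}{|P|^n}\biggr),
\]
where $c_X(P^n) := \sum_{j = 0}^n f_X(P^j)$ and the factor $(1 - |P|^{-1})$ arises from the telescoping identity $\sum_{\max(j, k) = n} f_X(P^j) f_X(P^k) = c_X(P^n)^2 - c_X(P^{n-1})^2$. The generating identity $\sum_n c_X(P^n) z^n = \exp\bigl(\sum_{m > X/\deg P} z^m/m\bigr)/(1 - z)$ then shows that $c_X(P^n) = 1$ for all $n$ when $\deg P > X$ (so the local factor is $1$), while for $\deg P \leq X$ one has $\sum_n c_X(P^n)^2/|P|^n = 1 + O(|P|^{-\lfloor X/\deg P \rfloor - 1})$; the product of these corrections over $\deg P \leq X$ is $1 + o(1)$. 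Hence the whole product is asymptotic to $\prod_{P \nmid R,\ \deg P \leq X}(1 - |P|^{-1})$. Finally, applying the function field Mertens theorem $\prod_{\deg P \leq X}(1 - |P|^{-1}) \sim e^{-\gamma}/X$ together with $\phi(R)/|R| = \prod_{P \mid R}(1 - |P|^{-1})$, the factors $\prod_{P \mid R,\ \deg P \leq X}(1 - |P|^{-1})$ cancel and we obtain the claimed $\frac{\deg R}{e^\gamma X} \prod_{P \mid R,\ \deg P > X}(1 - |P|^{-1})$.

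The principal obstacle is the uniform control of the error terms---the off-diagonal $aA \neq bB$ pairs and the non-principal $d < R$ contributions to $T(a, b)$, together with the tail of the $(a, b)$ sum over smooth polynomials of potentially unbounded degree. This is handled by exploiting the decay of $f_X$ on large-degree arguments (stemming from the near-cancellation $\exp(-T_P) \approx 1 - z$ when $X/\deg P$ is large) together with the hypothesis $X \leq \log_q \deg R$, which keeps $|a|, |b|$ well within the range where the twisted second moment formula for $L$ remains valid.
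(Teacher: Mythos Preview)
Your proposal follows the same overall architecture as the paper --- express $P_X^{-1}$ as a Dirichlet series over $X$-smooth polynomials, open $|L|^2$, apply primitive-character orthogonality, split into diagonal and off-diagonal, and evaluate the diagonal as an Euler product collapsed by Mertens' theorem --- but the technical packaging differs in two places worth noting.

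First, the paper does not work with your $f_X$ directly. It replaces $P_X^{-1}$ by the simpler $(P_X^*)^{-1}$ of Lemma~\ref{lemma, P_X (1/2 , chi) in terms of P_X* (1/2 , chi)} at cost $1+O(X^{-1})$, whose coefficients $\alpha_{-1}$ are supported on fourth-power-free $X$-smooth monics with $|\alpha_{-1}|\le 1$. This truncation of the local series to low prime powers is exactly the ``near-cancellation $\exp(-T_P)\approx 1-z$'' you allude to, but made explicit; it is what lets the paper truncate the $(C,D)$-sum at $\tfrac{1}{10}\deg R$ cheaply and then carry out the off-diagonal bound (Lemma~\ref{Sum of deg AB = Z_1, AC equiv BD mod E of AB^(1/2)}) and the diagonal estimates (Lemmas~\ref{Sum of alpha_(-1) HS and HT /HST absolute}--\ref{Sum of alpha_(-1) HS and HT /HST exact}) with no growth issues. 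Your route via the unsimplified $f_X$ is in principle fine --- your observation $0\le c_X(P^n)\le 1$ is enough to make the local sums converge --- but the error control you flag as ``the principal obstacle'' is precisely where the paper invests its effort, and the $\alpha_{-1}$ device is what makes that investment short.

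Second, for the diagonal the paper does not isolate a closed-form $T(a,b)$ and then sum; instead it substitutes $A=GS$, $B=GT$, $C=HT$, $D=HS$ with $(S,T)=1$ directly into the full four-fold sum and evaluates the resulting $H,S,T$-sum as an Euler product (Lemma~\ref{Sum of alpha_(-1) HS and HT /HST exact}) after peeling off the $G$-sum via Corollary~\ref{Sum over (A,R)=1, deg A <= a deg R of 1/A}. Your telescoping identity $\sum_{\max(j,k)=n}f_X(P^j)f_X(P^k)=c_X(P^n)^2-c_X(P^{n-1})^2$ together with $\sum_n c_X(P^n)z^n=\exp\bigl(\sum_{m>\lfloor X/\deg P\rfloor}z^m/m\bigr)$ is a cleaner way to reach the same local factors, and it makes the vanishing of the contribution from primes of degree $>X$ transparent. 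The two computations arrive at the same product $\prod_{P\nmid R,\ \deg P\le X}(1-|P|^{-1})(1+o(1))$, and both finish with Lemma~\ref{lemma, Mertens 3rd Theorem in FF}.

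One minor point: the paper uses the approximate functional equation form of $|L(\tfrac12,\chi)|^2$ from Lemma~\ref{Lemma, short sum for squared L-function} (the $a(\chi)+c(\chi)$ decomposition) rather than the raw square over $\deg A,\deg B<\deg R$; this halves the effective length and separates out the even/odd boundary terms $c(\chi)$, which are then shown to be lower order. Your direct expansion works too but makes the off-diagonal range slightly longer.
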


In Section \ref{section, fourth twisted moment} we rigorously obtain the fourth moment of the Hadamard product:

\begin{theorem} \label{Theorem, fourth moment of Hadamard Product, Intro version}
We have
\begin{align*}
\frac{1}{\phi^* (R)} \sumstar_{\chi \modulus R} \Big\lvert Z_X \Big( \frac{1}{2} , \chi \Big) \Big\rvert^4 
= &\frac{1}{\phi^* (R)} \sumstar_{\chi \modulus R} \Big\lvert L \Big( \frac{1}{2} , \chi \Big) P_X \Big( \frac{1}{2} , \chi \Big)^{-1} \Big\rvert^4 \\
\sim &\frac{1}{12} \Big( \frac{\degree R }{ e^{\gamma} X } \Big)^4
	 \prod_{\substack{\degree P > X \\ P \mid R}} \frac{\big(1 - \lvert P \rvert^{-1} \big)^3}{1 + \lvert P \rvert^{-1}} 
\end{align*}
as $X , \degree R \longrightarrow \infty$ with $X \leq \log_q \log \degree R$.
\end{theorem}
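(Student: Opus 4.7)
The first equality is immediate from the hybrid factorisation $L = P_X Z_X$ of Theorem \ref{theorem, L(s, chi) as hybrid E-H product, Intro version}, so the task reduces to the asymptotic. The plan is to expand
\begin{align*}
P_X(s,\chi)^{-2} = \sum_{A \in \mathcal{M}} \frac{c_X(A) \chi(A)}{|A|^s},
\end{align*}
where $c_X$ is a multiplicative function supported on polynomials whose prime factors all have degree at most $X$, arising from the Taylor expansion of $\exp\bigl(-2\sum_{\degree A \leq X}\chi(A)\Lambda(A)/(|A|^s \log|A|)\bigr)$. Writing $|P_X(\tfrac12,\chi)|^{-4} = P_X^{-2}\,\overline{P_X^{-2}}$ as a double sum indexed by $X$-smooth pairs $(A,B)$ reduces the fourth moment of $|L P_X^{-1}|$ to a twisted fourth moment of $L$.

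The main technical input is an asymptotic for
\begin{align*}
\mathcal{T}(A,B;R) := \frac{1}{\phi^*(R)}\sumstar_{\chi \modulus R} \Big|L\Big(\tfrac12,\chi\Big)\Big|^4 \chi(A)\overline{\chi(B)}.
\end{align*}
I would derive this by following the argument used in \cite{AndradeYiasemides2021_4thPowMeanDLFuncField_Final} to prove (\ref{statement, 4th moment DLF in FF}), but retaining the character twist: exploiting that $L(s,\chi)$ is a polynomial of degree $\degree R - 1$ in $q^{-s}$, one expands $|L(\tfrac12,\chi)|^4$ as a finite sum of terms $d_2(M)d_2(N)\chi(MA)\overline{\chi(NB)}/|MN|^{1/2}$, applies orthogonality of primitive characters modulo $R$, and evaluates the resulting arithmetic sum. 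The output should be a polynomial of degree $4$ in $\degree R$, multiplicative in $(A,B)$, reducing to the right-hand side of (\ref{statement, 4th moment DLF in FF}) when $A = B = 1$.

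With $\mathcal{T}(A,B;R)$ in hand, substituting into the double sum and exchanging orders produces an Euler product over primes. At primes $P \nmid R$ of degree $\leq X$, the local factors combine the Dirichlet coefficients of $|P_X|^{-4}$ with the local arithmetic factor from $\mathcal{T}$ to yield, via the function field Mertens product $\prod_{\degree P \leq X}(1-|P|^{-1}) \sim (e^\gamma X)^{-1}$, the factor $(e^\gamma X)^{-4}$. At primes $P \mid R$ with $\degree P \leq X$, the local contributions from $|P_X|^{-4}$ cancel their counterparts in the untwisted fourth moment, so the product over $P \mid R$ collapses to the desired $\prod_{\degree P > X,\, P \mid R}(1-|P|^{-1})^3/(1+|P|^{-1})$. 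The overall constant is pinned down by the identity $a(2) = \prod_P(1-|P|^{-2}) = 1-q^{-1}$, which follows from $\sum_{m \geq 0}(m+1)^2|P|^{-m} = (1+|P|^{-1})/(1-|P|^{-1})^3$, giving $(1-q^{-1})/(12\,a(2)) = 1/12$.

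The principal obstacle is proving the twisted fourth moment asymptotic uniformly in $A$ and $B$. The smooth polynomials appearing in the series for $P_X^{-2}$ can have degrees of order $q^X$, and the stringent condition $X \leq \log_q \log \degree R$ in the statement (compared to the much more generous $X \leq \log_q \degree R$ allowed in Theorem \ref{theorem, second twisted moment, Intro version}) reflects precisely how delicate the control of the off-diagonal and arithmetic error terms must be. Once the twisted fourth moment is in sufficient range, the remaining Euler-product bookkeeping is routine.
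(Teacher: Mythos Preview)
Your overall strategy---expand $P_X^{-2}$ as a Dirichlet series over $X$-smooth polynomials and reduce to a twisted fourth moment of $L$---is the same as the paper's. But your proposal stops exactly where the real work begins: you correctly flag the uniform twisted fourth moment as ``the principal obstacle'' and then offer no mechanism for it. That is the gap. The paper does \emph{not} first establish a standalone asymptotic for $\mathcal{T}(A,B;R)$ and then sum; it expands everything together and attacks the resulting six-variable sum directly.

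Concretely, the paper writes $|L(\tfrac12,\chi)|^2$ via Lemma~\ref{Lemma, short sum for squared L-function} and squares, obtaining $|L|^4 = (2a(\chi)+2b(\chi)+c(\chi))^2$ with $a(\chi)$ the sum truncated at $\degree AB \le z_R := \degree R - \log_q 2^{\omega(R)}$. This truncation is not cosmetic: it is what makes the off-diagonal contribution tractable via Lemma~\ref{Lemma, off diagonal z_1 z_2 sum, EH version}, which in turn rests on the function-field Brun--Titchmarsh theorem (Theorem~\ref{Brun-Titschmarsh theorem, divisor function case in function field, Intro version}) and the divisor bounds of Lemmas~\ref{Double divisor sum} and~\ref{Double divisor sum, number 2}; the $b(\chi)$ and $c(\chi)$ pieces are bounded separately. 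For the diagonal $A_1A_2A_3 = B_1B_2B_3$, the paper introduces a bijective parametrisation (Lemma~\ref{A_1 A_2 A_3 = B_1 B_2 B_3 rewrite}) by variables $G_i, V_{i,j}$, followed by the counting Lemma~\ref{V = V_(1,2) V_(2,1)}, which decouples the constraint into sums evaluable by Lemma~\ref{Proposition Sum of 2^(omega(V) - omega((V,W)) (z - deg V)^k / |V|} and Corollary~\ref{Corollary Sum of 2^(omega(V) - omega((V,W)) (deg V)^k / |V|}. The Euler-product bookkeeping you describe as ``routine'' is this several-page computation, and it is where the factor $(\degree R/e^\gamma X)^4/12$ and the product over $P\mid R$, $\degree P>X$ actually emerge. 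None of these ingredients appears in your sketch.
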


We can see that Theorems \ref{theorem, second twisted moment, Intro version} and \ref{theorem, Euler product 2k-th moment, Intro version}, and (\ref{statement, 2nd moment DLF in FF}) verify the Splitting Conjecture for the case $k=1$. This can be seen from the fact that $a(1) = 1$ and
\begin{align*}
\prod_{\substack{ \degree P \leq X \\ P \mid R }} \Bigg( \sum_{m=0}^{\infty} \frac{ d_1 \big( P^m \big)^2 }{\lvert P \rvert^m }  \Bigg)^{-1}
= \prod_{\substack{ \degree P \leq X \\ P \mid R }} \bigg( 1 - \frac{1}{\lvert P \rvert} \bigg) .
\end{align*}

We can also see that Theorems \ref{Theorem, fourth moment of Hadamard Product, Intro version} and \ref{theorem, Euler product 2k-th moment, Intro version}, and (\ref{statement, 4th moment DLF in FF}) verify the Splitting Conjecture for the case $k=2$. This can be seen from the fact that $a (2) = 1-q^{-1}$ and
\begin{align*}
\prod_{\substack{ \degree P \leq X \\ P \mid R }} \Bigg( \sum_{m=0}^{\infty} \frac{ d_2 \big( P^m \big)^2 }{\lvert P \rvert^m }  \Bigg)^{-1} 
= \prod_{\substack{ \degree P \leq X \\ P \mid R }} \frac{\big(1 - \lvert P \rvert^{-1} \big)^3}{1 + \lvert P \rvert^{-1}} .
\end{align*}
However, in Theorem \ref{Theorem, fourth moment of Hadamard Product, Intro version} we required the condition $X \leq \log_q \log \degree R$ which is more restrictive than the condition $X \leq \log_q \degree R$ in the Splitting Conjecture. However, given the results that have been establish in the area of twisted moments (see, for example, \cite{GaggeroJara1997_PhDThesis, HughesYoung2010_Twist4thMomRZF, BettinEtAl2020_QuadrDivProbMomRZF, Motohashi2009_RZFHeckeCongrSubgroupII} for $\zeta (s)$ and \cite{Hough2016_AngleLargeValLFunc, Zacharias2019_Moll4thMomDLF} for Dirichlet $L$-functions), we expect that one can improve upon this restriction for Theorem \ref{Theorem, fourth moment of Hadamard Product, Intro version}. \\


\section{Notation and Background} \label{section, notation and background}

Let $q$ be a prime power in $\mathbb{N}$ and define $\mathcal{A} := \mathbb{F}_q [T]$, the polynomial ring over the finite field of order $q$. We define $\mathcal{M} \subseteq \mathcal{A}$ to be the set of monic polynomials, and we define $\mathcal{P}$ to be the set of monic primes. Henceforth, ``prime" shall mean ``monic prime", and the upper-case letter $P$ is reserved for primes even when it is not explicitly stated. In the limits of summations and products, unless otherwise stated, the polynomials appearing therein should be taken to be monic. We define 
\begin{align*}
\mathcal{S} (X) := &\{ A \in \mathcal{A} : P \mid A \Rightarrow \degree P \leq X \} , \\
\mathcal{S}_{\mathcal{M}} (X) := &\{ A \in \mathcal{M} : P \mid A \Rightarrow \degree P \leq X \};
\end{align*}
and for $\mathcal{B} \subseteq \mathcal{A}$ and integers $n \geq 0$, we define 
\begin{align*}
\mathcal{B}_n := \{ B \in \mathcal{B} : \degree B = n \} .
\end{align*}
For $A \in \mathcal{A} \backslash \{ 0 \}$ we define $\lvert A \rvert := q^{\degree A}$, and for the zero polynomial we define $\lvert 0 \rvert := 0$. For $A,B \in \mathcal{A}$ we define $(A,B)$ to be the greatest common (monic) divisor of $A$ and $B$, and $[A,B]$ is the lowest common (monic) multiple. Suppose $A \in \mathcal{A}$ has prime factorisation $A = {P_1}^{e_1} \ldots {P_n}^{e_n}$, then we define the radical of $A$ by $\rad (A) := P_1 \ldots P_n$. \\

As usual, we write $f(x) \sim g(x)$ if $\frac{f (x)}{g (x)} \longrightarrow 1$ as $x \longrightarrow \infty$, and we write $f (x) = o \big( g (x) \big)$ if $\frac{f (x)}{g (x)} \longrightarrow 0$ as $x \longrightarrow \infty$. We write $f (x) \ll g (x)$ or $f (x) = O \big(g (x) \big)$ if there is some constant $c$ such that for all $x$ in the domain of $f$ we have $\lvert f (x) \rvert \leq c \lvert g (x) \rvert$. It may be the case that $f$, and perhaps $g$, are dependent on some parameter $k$. For example, we will often have functions where the order of our finite field, $q$, appears as a constant. If the implied constant above depends on the parameter $k$, then we write $f (x) \ll_k g (x)$ or $f (x) = O_k \big(g (x) \big)$. Otherwise, it is to be understood that the implied constant is independent of $k$. If $\lvert f (x) \rvert \leq c \lvert g (x) \rvert$ for all $x$ greater than some constant $d = d(k)$, then we write $f (x) \ll g (x)$ as $x \overset{k}{\longrightarrow} \infty$, or $f (x) = O \big(g (x) \big)$ as $x \overset{k}{\longrightarrow} \infty$. \\

Let $a \in \mathbb{C}$ and $b \in \mathbb{C} \backslash \{ 0 \}$, and let $f$ be an integrable complex function. The integral $\int_{t=a}^{a + b \infty} f(t) \mathrm{d} t$ is defined to be over the straight line starting at $a$ and in the direction of $b$. That is, $\int_{t=a}^{a + b \infty} f(t) \mathrm{d} t = \int_{s=0}^{\infty} f \big( a + \frac{b}{\lvert b \rvert} s \big) \mathrm{d} s$. If $a=0$ then we will simply write $\int_{t=0}^{b \infty} f(t) \mathrm{d} t$, and if $b= \pm 1$ then we will write $\int_{t=a}^{a \pm \infty} f(t) \mathrm{d} t$. \\

Now we state some standard definitions and results. The prime polynomial theorem tells us that
\begin{align} \label{prime poly theorem, precise, statement}
\lvert \mathcal{P}_n \rvert
= \frac{1}{n} \sum_{d \mid n} \mu (d) q^{\frac{n}{d}} ,
\end{align}
which implies
\begin{align*}
\lvert \mathcal{P}_n \rvert
= \frac{q^n}{n} + O \Big( \frac{q^{\frac{n}{2}}}{n} \Big) .
\end{align*}

The Riemann zeta-function on $\mathcal{A}$ is defined, for $\Re (s) > 1$, by
\begin{align*}
\zeta_{\mathcal{A}} (s) 
:= \sum_{A \in \mathcal{M}} \frac{1}{\lvert A \rvert^s}
= \frac{1}{1 - q^{1-s}}.
\end{align*}
We can see that the right side provides a meromorphic continuation for $\zeta_{\mathcal{A}} (s)$ to $\mathbb{C}$. Dirichlet characters are defined similarly as in the classical case:

\begin{definition}[Dirichlet Characters]
A Dirichlet character on $\mathcal{A}$ with modulus $R \in \mathcal{M}$ is a function $\chi : \mathcal{A} \longrightarrow \mathbb{C}^*$ satisfying the following properties. For all $A,B \in \mathcal{A}$:
\begin{enumerate}
\item $\chi (AB) = \chi (A) \chi (B)$;
\item \label{Dirichlet character definition, equal mod R point} If $A \equiv B (\modulus R)$, then $\chi (A) = \chi (B)$;
\item $\chi (A) = 0$ if and only if $(A,R) \neq 1$.
\end{enumerate}
\end{definition}

As usual, $\chi_0$ represents the trivial character: $\chi_0 (A) = 1$ if $(A,R)=1$ and is zero elsewhere. A character $\chi$ is even if $\chi (a) = 1$ for all $a \in {\mathbb{F}_q}^*$, and otherwise it is odd. Now, suppose $S \mid R$. We say that $S$ is an induced modulus of $\chi$ if there exists a character $\chi_1$ of modulus $S$ such that
\begin{align*}
\chi (A)
=\begin{cases}
\chi_1 (A) &\text{ if $(A,R)=1$} \\
0 &\text{ otherwise} .
\end{cases}
\end{align*}
$\chi$ is said to be primitive if there is no induced modulus of strictly smaller degree than $R$. Otherwise, $\chi$ is said to be non-primitive. $\phi^* (R)$ denotes the number of primitive characters of modulus $R$. We denote a sum over all characters $\chi$ of modulus $R$ by $\sum_{\chi \modulus R}$, and a sum over all primitive characters $\chi$ of modulus $R$ by $\sumstar_{\chi \modulus R}$. The following two results are standard, and proofs can be found in Section 3 of \cite{AndradeYiasemides2021_4thPowMeanDLFuncField_Final}.

\begin{lemma} \label{Primitive chracter sum, mobius inversion}
Let $R \in \mathcal{M}$ and let  $A,B \in \mathcal{A}$. Then,
\begin{align*}
\sumstar_{\chi \modulus R} \chi (A) \tilde{\chi} (B) =
\begin{cases}
\sum_{\substack{EF = R \\ F \mid (A-B)}} \mu (E) \phi (F) &\text{ if $(AB,R)=1$} , \\
0 &\text{ otherwise} ;
\end{cases} 
\end{align*}
and 
\begin{align*}
\sumstar_{\substack{\chi \modulus R \\ \chi \text{ even} }} \chi (A) \conj\chi (B) =
\begin{cases}
\frac{1}{q-1} \sum_{a \in {\mathbb{F}_q}^*} \sum_{\substack{EF = R \\ F \mid (A-aB) }} \mu (E) \phi (F) &\text{ if $(AB,R)=1$} , \\
0 &\text{ otherwise} .
\end{cases} 
\end{align*}
\end{lemma}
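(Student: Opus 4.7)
The plan is to derive both identities from the standard orthogonality relation for all Dirichlet characters modulo $R$, namely
\begin{align*}
\sum_{\chi \modulus R} \chi(A) \conj\chi(B)
= \begin{cases} \phi(R) & \text{if } (AB,R)=1 \text{ and } A \equiv B \pmod R, \\ 0 & \text{otherwise}, \end{cases}
\end{align*}
together with the fact that every Dirichlet character $\chi$ of modulus $R$ is induced by a unique primitive character of modulus $F$ for some $F \mid R$. This bijection between characters mod $R$ and pairs $(F, \chi^*)$ with $F \mid R$ and $\chi^*$ primitive modulo $F$ lets us decompose
\begin{align*}
\sum_{\chi \modulus R} \chi(A) \conj\chi(B) = \sum_{F \mid R} \sumstar_{\chi \modulus F} \chi(A) \conj\chi(B),
\end{align*}
valid whenever $(AB,R)=1$ (so that the induced characters agree with the primitives at $A$ and $B$). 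If $(AB,R) \neq 1$ then every $\chi$ modulo $R$ annihilates at least one of $A,B$, and the same holds for any induced character, giving the zero case directly.

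Next I would apply Möbius inversion over the divisor lattice of $R$. Writing $S_F := \sumstar_{\chi \modulus F} \chi(A)\conj\chi(B)$, the relation $\sum_{F \mid R} S_F = \phi(R) \cdot \mathbf{1}_{A \equiv B \pmod R}$ is only a special case; more generally, applying the orthogonality identity at each divisor $F$ and then inverting gives
\begin{align*}
\sumstar_{\chi \modulus R} \chi(A) \conj\chi(B) = \sum_{EF=R} \mu(E) \sum_{\chi \modulus F} \chi(A) \conj\chi(B) = \sum_{\substack{EF=R \\ F \mid (A-B)}} \mu(E) \phi(F),
\end{align*}
where the inner orthogonality is used at modulus $F$ (here we also use that $(AB,R)=1$ implies $(AB,F)=1$). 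This establishes the first identity.

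For the second (even characters) identity, the key observation is that for any $\chi$ modulo $R$,
\begin{align*}
\frac{1}{q-1}\sum_{a \in \mathbb{F}_q^*} \conj\chi(a)
= \begin{cases} 1 & \text{if } \chi \text{ is even}, \\ 0 & \text{if } \chi \text{ is odd}, \end{cases}
\end{align*}
since the restriction of $\chi$ to $\mathbb{F}_q^*$ is a character of $\mathbb{F}_q^*$, and the sum of a nontrivial character over a finite group is zero. Inserting this as an indicator and using $\conj\chi(aB) = \conj\chi(a)\conj\chi(B)$ gives
\begin{align*}
\sumstar_{\substack{\chi \modulus R \\ \chi \text{ even}}} \chi(A) \conj\chi(B) = \frac{1}{q-1}\sum_{a \in \mathbb{F}_q^*} \sumstar_{\chi \modulus R} \chi(A)\conj\chi(aB),
\end{align*}
and applying the first part of the lemma to each term produces the stated formula.

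There is no serious obstacle here; the only point requiring a little care is the treatment of the condition $(AB,R)=1$ when passing between characters of different moduli, since an induced character vanishes on arguments sharing a factor with $R$ even when the primitive character it comes from does not. Handling this carefully at the outset (splitting into the $(AB,R)=1$ and $(AB,R)\neq 1$ cases) and using the bijection between characters modulo $R$ and primitives of moduli dividing $R$ is the main technical piece of the argument.
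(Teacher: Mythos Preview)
Your proposal is correct and follows the standard approach: decompose the sum over all characters modulo $F$ into primitives of moduli dividing $F$, apply orthogonality at each divisor, and Möbius invert; then handle the even-character sum by inserting the averaging indicator $\frac{1}{q-1}\sum_{a\in\mathbb{F}_q^*}\conj\chi(a)$. The paper does not prove this lemma itself but refers to Section~3 of \cite{AndradeYiasemides2021_4thPowMeanDLFuncField_Final}, where the same standard argument is carried out.
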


\begin{corollary} \label{Number of primitive characters}
For all $R \in \mathcal{M}$ we have that
\begin{align*}
\phi^* (R) = \sum_{EF=R} \mu (E) \phi (F) .
\end{align*}
\end{corollary}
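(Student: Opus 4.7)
The plan is to derive the corollary as an immediate specialisation of Lemma~\ref{Primitive chracter sum, mobius inversion}. Set $A = B = 1$ in the first identity of the lemma. The hypothesis $(AB,R) = 1$ becomes $(1,R) = 1$, which holds for every $R \in \mathcal{M}$, so we are in the nontrivial case of the piecewise formula. On the right-hand side, the divisibility condition $F \mid (A-B)$ reduces to $F \mid 0$, which is automatic for every $F$. Thus the double sum collapses to $\sum_{EF=R} \mu(E) \phi(F)$.

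On the left-hand side, $\chi(1)\tilde{\chi}(1) = 1 \cdot 1 = 1$ for every primitive character $\chi$ mod $R$, so the sum equals the number of primitive characters of modulus $R$, which is $\phi^*(R)$ by definition. Equating the two sides yields the claimed identity.

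Since the proof is a one-line substitution, there is no real obstacle; the only thing worth noting is that one must verify the trivial checks (that $1$ is coprime to $R$, and that $F \mid 0$ holds for all $F$) so that the nontrivial branch of the lemma is the one that applies. An alternative self-contained route, should one wish to avoid invoking the lemma, would be to observe that every Dirichlet character mod $R$ is induced by a unique primitive character of modulus $F \mid R$, giving $\phi(R) = \sum_{F \mid R} \phi^*(F)$, and then applying Möbius inversion over $\mathcal{M}$; but the substitution into the lemma is by far the shortest route in the present context.
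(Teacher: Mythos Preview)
Your proof is correct and is precisely the intended derivation: the paper labels this result a corollary to Lemma~\ref{Primitive chracter sum, mobius inversion} and simply cites an external reference for the details, but the one-line substitution $A=B=1$ you give is exactly why it is a corollary rather than a standalone lemma.
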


\begin{definition}[Dirichlet $L$-function]
Let $\chi$ be a Dirichlet character. The associated L-function, $L(s,\chi )$, is defined for $\Re (s) > 1$ by
\begin{align*}
L(s,\chi )
:= \sum_{A \in \mathcal{M}} \frac{\chi (A)}{\lvert A \rvert^s} .
\end{align*}
\end{definition}

If $\chi_0$ is the trivial Dirichlet character of modulus $R$, then
\begin{align*}
L (s , \chi_0 )
= \sum_{\substack{A \in \mathcal{M} \\ (A,R) =1 }} \frac{1}{\lvert A \rvert^s}
= \prod_{\substack{P \in \mathcal{P} \\ P \nmid R }} \frac{1}{1 - \frac{1}{\lvert P \rvert^s} }
= \prod_{P \mid R} \bigg( 1 - \frac{1}{\lvert P \rvert^s} \bigg) \zeta_{\mathcal{A}} (s) 
= \prod_{P \mid R} \bigg( 1 - \frac{1}{\lvert P \rvert^s} \bigg) \frac{1}{1 - q^{1-s}}.
\end{align*}
We can see that the far right side provides a meromorphic continuation to $\mathbb{C}$ with simple poles at $1 + \frac{2m \pi i}{\log q}$ for $m \in \mathbb{Z}$. \\

If $\chi$ is a non-trivial character of modulus R, then we have some $B \in \mathcal{A}$ with $\degree B < \degree R$ and $(B,R)=1$ satisfying $\chi (B) \neq 1$. Thus, we have
\begin{align*}
\sum_{\substack{A \in \mathcal{A} \\ \degree A < \degree R }} \chi (A)
= \sum_{\substack{A \in \mathcal{A} \\ \degree A < \degree R }} \chi (AB)
= \chi (B) \sum_{\substack{A \in \mathcal{A} \\ \degree A < \degree R }} \chi (A) ,
\end{align*}
and so we must have
\begin{align*}
\sum_{\substack{A \in \mathcal{A} \\ \degree A < \degree R }} \chi (A)
= 0.
\end{align*}
This leads to
\begin{align*}
L (s , \chi )
= \sum_{A \in \mathcal{M}} \frac{\chi (A)}{\lvert A \rvert^s}
= \sum_{\substack{A \in \mathcal{M} \\ \degree A < \degree R }} \frac{\chi (A)}{\lvert A \rvert^s} .
\end{align*}
Thus, we have a finite polynomial in $q^{-s}$ which provides a holomorphic continuation to $\mathbb{C}$. The Riemann hypothesis for these $L$-functions has been proved in this setting, and so we have that all zeros lie on the critical line. Thus, we can order them and write the $n$-th zero as $\gamma_n = \frac{1}{2} + i \rho_n$ for some $\rho_n \in \mathbb{R}$. Clearly, they are vertically periodic with period $\frac{2 \pi}{\log q}$. \\

\begin{lemma} \label{Lemma, short sum for squared L-function}
Let $\chi$ a primitive character of modulus $R \neq 1$. Then, 
\begin{align*}
\Big\lvert L \Big( \frac{1}{2} , \chi \Big) \Big\rvert^2
= 2 \sum_{\substack{A,B \in \mathcal{M} \\ \degree AB < \degree R}} \frac{\chi (A) \conj\chi (B)}{\lvert AB \rvert^{\frac{1}{2}}}
+ c (\chi ) ,
\end{align*}
where, if $\chi$ is odd, we define
\begin{align*}
c (\chi )
:= - \sum_{\substack{A,B \in \mathcal{M} \\ \degree AB = \degree R -1}} \frac{\chi (A) \conj\chi (B)}{\lvert AB \rvert^{\frac{1}{2}}},
\end{align*}
and if $\chi$ is even we define 
\begin{align*}
c (\chi )
:= &-\frac{q}{\big( q^{\frac{1}{2}}-1 \big)^2} \sum_{\substack{A,B \in \mathcal{M} \\ \degree AB = \degree R -2}} \frac{\chi (A) \conj\chi (B)}{\lvert AB \rvert^{\frac{1}{2}}}
-\frac{2q^{\frac{1}{2}}}{ q^{\frac{1}{2}}-1} \sum_{\substack{A,B \in \mathcal{M} \\ \degree AB = \degree R -1}} \frac{\chi (A) \conj\chi (B)}{\lvert AB \rvert^{\frac{1}{2}}} \\
&+ \frac{1}{\big( q^{\frac{1}{2}}-1 \big)^2} \sum_{\substack{A,B \in \mathcal{M} \\ \degree AB = \degree R }} \frac{\chi (A) \conj\chi (B)}{\lvert AB \rvert^{\frac{1}{2}}} .
\end{align*}
\end{lemma}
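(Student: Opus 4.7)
The plan is to exploit that, for primitive $\chi$ of modulus $R$ with $d := \degree R$, $L(s, \chi)$ is a polynomial in $u := q^{-s}$ of degree $d - 1$. Writing $L(s, \chi) L(s, \conj\chi) = \sum_{n = 0}^{2(d-1)} S_n q^{-ns}$ with
\begin{align*}
S_n := \sum_{\substack{A, B \in \mathcal{M} \\ \degree AB = n,\, \degree A,\, \degree B < d}} \chi(A) \conj\chi (B) ,
\end{align*}
so that $\lvert L(\tfrac{1}{2}, \chi) \rvert^2 = \sum_{n=0}^{2(d-1)} q^{-n/2} S_n$, the strategy is to use the functional equation of $L(s, \chi)$ to fold this finite sum in half about its midpoint $n = d - 1$.

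For odd primitive $\chi$, the function field functional equation reads $\mathcal{L}(u, \chi) = \epsilon(\chi) (q^{1/2} u)^{d-1} \mathcal{L}(1/(qu), \conj\chi)$ with $\lvert \epsilon(\chi) \rvert = 1$, where $\mathcal{L}(u, \chi)$ denotes $L(s, \chi)$ under the substitution $u = q^{-s}$. Multiplying the analogous identities for $\chi$ and $\conj\chi$ yields the palindromic coefficient symmetry $S_{2(d-1) - n} = q^{d-1-n} S_n$ on the product polynomial $\mathcal{L}(u, \chi) \mathcal{L}(u, \conj\chi)$. Splitting the sum at $n = d-1$ and using this symmetry to replace $\sum_{n=d}^{2d-2} q^{-n/2} S_n$ by $\sum_{n=0}^{d-2} q^{-n/2} S_n$ leaves
\begin{align*}
\Big\lvert L\Big(\tfrac{1}{2}, \chi\Big) \Big\rvert^2 = 2 \sum_{n=0}^{d-1} q^{-n/2} S_n - q^{-(d-1)/2} S_{d-1} ,
\end{align*}
which matches the claim with $c(\chi) = - \sum_{\degree AB = d-1} \chi(A) \conj\chi (B) / \lvert AB \rvert^{1/2}$.

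For even primitive $\chi$, $L(s, \chi)$ carries a trivial zero at $u = 1$, so I would factor $\mathcal{L}(u, \chi) = (1 - u) \mathcal{L}^*(u, \chi)$ where $\mathcal{L}^*$ is a polynomial of degree $d - 2$ satisfying the analogous functional equation (with $d - 1$ replaced by $d - 2$). Running the odd-case fold for $\lvert L^*(\tfrac{1}{2}, \chi) \rvert^2$ yields $2 \sum_{n=0}^{d-2} q^{-n/2} S_n^* - q^{-(d-2)/2} S_{d-2}^*$, where $S_n^*$ denote the coefficients of $\mathcal{L}^*(u, \chi) \mathcal{L}^*(u, \conj\chi)$. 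Multiplying by $(1 - q^{-1/2})^2$ recovers $\lvert L(\tfrac{1}{2}, \chi) \rvert^2$, and the polynomial identity $\sum_n S_n u^n = (1 - u)^2 \sum_n S_n^* u^n$, equivalent to $S_n = S_n^* - 2 S_{n-1}^* + S_{n-2}^*$, allows conversion back to the $S_n$ at the cost of residual boundary terms at indices $n \in \{d-2, d-1, d\}$ involving $S_{d-2}^*, S_{d-1}^*, S_d^*$.

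The main obstacle will be the algebraic bookkeeping in the even case. After using the self-duality $S_n^* = q^{n - (d-2)} S_{2(d-2) - n}^*$ to collapse $S_{d-1}^*$ and $S_d^*$ down to $S_{d-3}^*$ and $S_{d-4}^*$, and then applying the inversion $S_n^* = \sum_{k=0}^n (n-k+1) S_k$ (coming from $(1-u)^{-2} = \sum_k (k+1) u^k$) to rewrite the remaining $S^*$-terms in terms of the $S_k$, the leftover corrections must be shown to collapse precisely into the three-term expression $-\frac{q}{(q^{1/2}-1)^2} T_{d-2} - \frac{2 q^{1/2}}{q^{1/2}-1} T_{d-1} + \frac{1}{(q^{1/2}-1)^2} T_d$, where $T_k := \sum_{\degree AB = k} \chi(A) \conj\chi(B)/\lvert AB \rvert^{1/2}$. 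The rational weights in $q^{1/2}$ arise from the identity $(1 - q^{-1/2})^{-2} = q/(q^{1/2}-1)^2$, and the verification is a finite, purely algebraic computation with no analytic subtleties.
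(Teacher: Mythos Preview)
The paper does not give its own proof of this lemma; it simply cites Lemmas 3.10 and 3.11 of \cite{AndradeYiasemides2021_4thPowMeanDLFuncField_Final}. Your approach via the functional equation is the standard one and is almost certainly what is carried out in that reference: for odd $\chi$ the palindromic symmetry $S_{2(d-1)-n} = q^{d-1-n} S_n$ folds the sum immediately, and for even $\chi$ factoring out the trivial zero at $u=1$ reduces to the same mechanism on the degree-$(d-2)$ polynomial $\mathcal{L}^*$, followed by the finite algebraic passage from $S_n^*$ back to $S_n$ that you describe. The plan is sound.

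One small point worth making explicit when you write this up: in the even case the target formula involves the \emph{unrestricted} sum $T_d = \sum_{\degree AB = d} \chi(A)\conj\chi(B)/\lvert AB\rvert^{1/2}$, whereas your $S_d$ (as a coefficient of $\mathcal{L}(u,\chi)\mathcal{L}(u,\conj\chi)$) carries the restriction $\degree A, \degree B < d$. These agree because $\sum_{A \in \mathcal{M},\, \degree A = d} \chi(A) = 0$ for nontrivial $\chi$ (the monic polynomials of degree $d$ form a complete residue system modulo $R$), so the boundary contributions with $\degree A = d$ or $\degree B = d$ vanish. This is routine, but deserves a sentence.
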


\begin{proof}
See Lemmas 3.10 and 3.11 in \cite{AndradeYiasemides2021_4thPowMeanDLFuncField_Final}.
\end{proof}

\begin{lemma} \label{Sum over (A,R)=1, deg A <= x of 1/A}
Let $R \in \mathcal{M}$ and let $x$ be a positive integer. Then,
\begin{align*}
\sum_{\substack{ A \in \mathcal{M} \\ \degree A \leq x \\ (A , R) = 1}} \frac{1}{\lvert A \rvert}
= \begin{cases}
\frac{\phi (R)}{\lvert R \rvert} x + O \Big( \frac{\phi (R)}{\lvert R \rvert} \log \omega (R) \Big)  &\text{ if $x \geq \degree R$} \\
\frac{\phi (R)}{\lvert R \rvert} x + O \Big( \frac{\phi (R)}{\lvert R \rvert} \log \omega (R) \Big) + O \Big( \frac{2^{\omega (R)} x}{q^x} \Big) &\text{ if $x < \degree R$}
\end{cases} .
\end{align*}
\end{lemma}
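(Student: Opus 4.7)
The approach is to apply M\"obius inversion to remove the coprimality condition, reducing the sum to a closed form, and then to control a tail error. Writing $\sum_{D\mid(A,R)}\mu(D)$ for the indicator of $(A,R)=1$, interchanging summation, and applying $\sum_{B\in\mathcal{M},\,\degree B\leq y} 1/\lvert B\rvert = y+1$ for integer $y\geq 0$, I obtain
\[
\sum_{\substack{A\in\mathcal{M} \\ \degree A\leq x \\ (A,R)=1}} \frac{1}{\lvert A\rvert} = \sum_{\substack{D\mid R \\ \degree D\leq x}} \frac{\mu(D)}{\lvert D\rvert}(x - \degree D + 1).
\]
I would then split this as $S_1 - S_2$, where $S_1$ is the sum over all $D\mid R$ with no degree restriction and $S_2$ is the sum over $D\mid R$ with $\degree D > x$.

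To evaluate $S_1$, split linearly: the first piece $(x+1)\sum_{D\mid R}\mu(D)/\lvert D\rvert$ collapses to $(x+1)\phi(R)/\lvert R\rvert$, while for $\sum_{D\mid R}\mu(D)\degree D/\lvert D\rvert$ I would use $\degree D = \sum_{P\mid D}\degree P$ (valid since only squarefree $D$ contribute), interchange summation, and peel off the factor $-1/\lvert P\rvert$ to obtain
\[
\sum_{D\mid R} \frac{\mu(D)\,\degree D}{\lvert D\rvert} = -\frac{\phi(R)}{\lvert R\rvert}\sum_{P\mid R}\frac{\degree P}{\lvert P\rvert-1}.
\]
Thus $S_1 = \frac{\phi(R)}{\lvert R\rvert}\,x + \frac{\phi(R)}{\lvert R\rvert}\bigl(1 + \sum_{P\mid R}\degree P/(\lvert P\rvert-1)\bigr)$.

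The main analytic step, and the one I expect to be the principal obstacle, is the bound $\sum_{P\mid R}\degree P/(\lvert P\rvert-1) = O(\log\omega(R))$. Because $\degree P/(\lvert P\rvert-1)$ is decreasing in $\degree P$, the sum (for $\omega(R)=k$ fixed) is maximised when $R$ is divisible by the $k$ primes of smallest degree. The prime polynomial theorem \eqref{prime poly theorem, precise, statement} gives $\sum_{d=1}^n\lvert\mathcal{P}_d\rvert \sim q^n/n$, so these $k$ primes fit into degrees at most $n^\ast = O(\log k)$; and a direct computation yields $\sum_{\degree P\leq n}\degree P/(\lvert P\rvert-1) = \sum_{d=1}^n d\lvert\mathcal{P}_d\rvert/(q^d-1) = n + O(1)$, from which the required bound follows.

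Finally, Case 1 ($x\geq\degree R$) has $S_2=0$ automatically, and the formula for $S_1$ yields the claim. In Case 2 ($x<\degree R$), I would bound $|S_2|$ by taking absolute values and parameterising $\degree D = x+j$ with $j\geq 1$:
\[
|S_2| \leq \sum_{j\geq 1} N_{x+j}\,\frac{j}{q^{x+j}} \leq \frac{1}{q^x}\Bigl(\max_{j\geq 1}\tfrac{j}{q^j}\Bigr)\sum_{j\geq 1} N_{x+j} \leq \frac{2^{\omega(R)}}{q^{x+1}},
\]
where $N_{x+j}$ is the number of squarefree divisors of $R$ of degree $x+j$, so that $\sum_j N_{x+j}\leq 2^{\omega(R)}$, together with $\max_{j\geq 1} j/q^j = 1/q$ for $q\geq 2$. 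Since $x\geq 1$, this is absorbed into the stated $O(2^{\omega(R)} x/q^x)$, completing the proof.
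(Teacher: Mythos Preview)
Your proof is correct. The paper itself does not give a proof but merely cites Lemma 4.12 of \cite{AndradeYiasemides2021_4thPowMeanDLFuncField_Final}, remarking that the argument is identical; your approach via M\"obius inversion, the exact evaluation of $\sum_{D\mid R}\mu(D)\degree D/\lvert D\rvert$, and the primorial-type maximisation to bound $\sum_{P\mid R}\degree P/(\lvert P\rvert-1)$ is the standard route and almost certainly what that reference does.
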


\begin{proof}
See Lemma 4.12 in \cite{AndradeYiasemides2021_4thPowMeanDLFuncField_Final}. This result is slightly stronger, but the proof is identical.
\end{proof}

\begin{corollary} \label{Sum over (A,R)=1, deg A <= a deg R of 1/A}
If $a > 0$ and $x = a \degree R$, then,
\begin{align*}
\sum_{\substack{ A \in \mathcal{M} \\ \degree A \leq x \\ (A , R) = 1}} \frac{1}{\lvert A \rvert}
= \frac{\phi (R)}{\lvert R \rvert} x + O_a \Big( \frac{ \phi (R)}{\lvert R \rvert} \log \omega (R) \Big) .
\end{align*}
If $b > 2$ and $x = \log_q b^{\omega (R)}$, then
\begin{align*}
\sum_{\substack{ A \in \mathcal{M} \\ \degree A \leq x \\ (A , R) = 1}} \frac{1}{\lvert A \rvert}
= \frac{\phi (R)}{\lvert R \rvert} x + O_b \Big( \frac{ \phi (R)}{\lvert R \rvert} \log \omega (R) \Big) .
\end{align*}
\end{corollary}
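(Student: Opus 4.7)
The plan is to deduce both statements as direct consequences of the preceding Lemma by substituting the specified value of $x$ and, when necessary, absorbing the extra error term $O(2^{\omega(R)} x / q^x)$ into the stated error.

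For the first statement with $x = a \deg R$, I would split on whether $a \geq 1$ or $a < 1$. If $a \geq 1$ then $x \geq \deg R$, and the first case of the Lemma yields the conclusion immediately. If $0 < a < 1$ then $x < \deg R$, so the Lemma's second case gives the desired main term plus the extra error $O(2^{\omega(R)} \cdot a \deg R / |R|^a)$, which must be absorbed into $O_a(\phi(R)/|R| \cdot \log \omega(R))$. To do this I would invoke the divisor bound $2^{\omega(R)} \leq d(R) \ll_\epsilon |R|^\epsilon$, valid for any $\epsilon > 0$: choosing $\epsilon = a/2$ shows the extra term is $\ll_a \deg R / |R|^{a/2}$, which decays polynomially in $|R|$. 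Combined with a Mertens-type lower bound for $\phi(R)/|R|$ (which decays no faster than a power of $1/\deg R$), this exponential-vs-polynomial comparison shows the extra term is subsumed in $O_a(\phi(R)/|R| \cdot \log \omega(R))$ for $\deg R$ sufficiently large, and the finitely many remaining cases are absorbed into the implicit $a$-dependent constant.

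For the second statement with $x = \log_q b^{\omega(R)} = \omega(R) \log_q b$ and $b > 2$: again if $x \geq \deg R$ the first case of the Lemma gives the conclusion directly. Otherwise the additional error becomes $O\!\left(\omega(R) \log_q b \cdot (2/b)^{\omega(R)}\right)$, and since $2/b < 1$ this quantity decays exponentially in $\omega(R)$, so it is trivially $O_b(\phi(R)/|R| \cdot \log \omega(R))$.

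The main obstacle is the absorption step in the sub-case $0 < a < 1$ of the first statement, where one must verify that the polynomial decay of the extra term (extracted from the divisor bound) outpaces the possibly slow decay of $\phi(R)/|R| \cdot \log \omega(R)$; the second statement is comparatively easy thanks to the explicit exponential decay supplied by the hypothesis $b > 2$.
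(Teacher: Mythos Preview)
Your overall strategy---deduce both parts from the preceding Lemma by controlling the extra term $2^{\omega(R)}x/q^x$---matches the paper, but the technical execution differs in each case.

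For the first part, you invoke the divisor bound $2^{\omega(R)}\ll_\epsilon |R|^\epsilon$ with $\epsilon=a/2$, then compare the resulting $\deg R/|R|^{a/2}$ against a Mertens lower bound for $\phi(R)/|R|$. The paper instead splits on whether $q>e^{4\log 2/a}$ or not: for large $q$ it uses the cruder bound $\omega(R)\le\deg R$ together with the hypothesis on $q$, while for small $q$ it uses $\omega(R)=O(\deg R/\log\deg R)$; in both subcases it arrives at $q^{-\frac{a}{4}\deg R}\ll_a \phi(R)/|R|$, and then handles the finitely many remaining pairs $(q,R)$ (finitely many because the bound on $q$ is now in force) by absorption. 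Your route via the divisor bound is slicker and avoids the case split, though you should be explicit that the divisor bound and the Mertens comparison are uniform in $q$, since the claimed error is $O_a$ rather than $O_{a,q}$.

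For the second part your argument has a small gap. You assert that $\omega(R)(\log_q b)(2/b)^{\omega(R)}$ is ``trivially'' $O_b(\tfrac{\phi(R)}{|R|}\log\omega(R))$ because it decays exponentially in $\omega(R)$. But $\phi(R)/|R|$ can itself be small (as small as roughly $1/\log_q\omega(R)$ in the primorial case), so the comparison is not automatic; you would need to invoke the same Mertens-type lower bound you used in the first part. The paper sidesteps this entirely: it first absorbs the polynomial factor into the base, writing the error as $\ll_b(\tfrac{4}{b+2})^{\omega(R)}=\prod_{P\mid R}\tfrac{4}{b+2}$, and then compares \emph{termwise} with $\prod_{P\mid R}(1-|P|^{-1})=\phi(R)/|R|$, using that $\tfrac{4}{b+2}<1$ and that only boundedly many (in terms of $b$) primes can have $1-|P|^{-1}<\tfrac{4}{b+2}$. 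This factor-by-factor comparison is both more direct and manifestly uniform in $q$.
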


\begin{proof}
First consider the case where $x = a \degree R$. If $q > e^{\frac{4 \log 2}{a}}$, then
\begin{align*}
\frac{2^{\omega (R)} x}{q^x}
\ll \frac{2^{\omega (R)} }{q^\frac{x}{2}}
\leq q^{\frac{\log 2}{\log q} \degree R - \frac{a}{2} \degree R}
< q^{- \frac{a}{4} \degree R}
\ll_a \frac{\phi (R)}{\lvert R \rvert} .
\end{align*}
If $q \leq e^{\frac{4 \log 2}{a}}$, then
\begin{align*}
\frac{2^{\omega (R)} x}{q^x}
\ll \frac{2^{\omega (R)} }{q^\frac{x}{2}}
= q^{O \Big( \frac{\degree R}{\log \degree R} \Big) - \frac{a}{2} \degree R}
\leq q^{- \frac{a}{4} \degree R}
\ll_a \frac{\phi (R)}{\lvert R \rvert} ,
\end{align*}
where the second relation holds for $\degree R > c_a$, where $c_a$ is some constant that is dependent on $a$, but independent of $q$. Finally, there are only a finite number of cases where $q \leq e^{\frac{4 \log 2}{a}}$ and $\degree R \leq c_a$, and so
\begin{align*}
\frac{2^{\omega (R)} x}{q^x}
\ll_a \frac{\phi (R)}{\lvert R \rvert}
\end{align*}
for these cases too. The proof follows from Lemma \ref{Sum over (A,R)=1, deg A <= x of 1/A}. \\

Now consider the case where $x = \log_q b^{\omega (R)}$. We have that
\begin{align*}
\frac{2^{\omega (R)} x}{q^x}
= &\frac{2^{\omega (R)} (\log_q b) \omega (R)}{b^{\omega (R)}}
\ll_b \frac{2^{\omega (R)}}{\Big( \frac{b+2}{2} \Big)^{\omega (R)}}
= \Big( \frac{4}{b+2} \Big)^{\omega (R)} \\
= &\prod_{P \mid R} \bigg( \frac{4}{b+2} \bigg)
\ll_b \prod_{P \mid R} \bigg( 1 - \frac{1}{\lvert P \rvert} \bigg)
\ll_b \frac{\phi (R)}{\lvert R \rvert} .
\end{align*}
Again, the proof follows from Lemma \ref{Sum over (A,R)=1, deg A <= x of 1/A}.
\end{proof}


\section{The Hybrid Euler-Hadamard Product Formula} \label{section, hybrid E-H formula}

Before proving Theorem \ref{theorem, L(s, chi) as hybrid E-H product, Intro version}, we prove several lemmas.

\begin{lemma}
For all Dirichlet characters $\chi$ and all $\Re (s) > 1$ we have
\begin{align*}
- \frac{L'}{L} (s, \chi )
= \sum_{A \in \mathcal{M}} \frac{ \chi (A) \Lambda (A)}{\lvert A \rvert^s} .
\end{align*}
\end{lemma}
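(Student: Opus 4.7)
The plan is to derive the identity from the Euler product for $L(s,\chi)$ by taking a logarithmic derivative, exactly as in the classical setting. This is a routine identity, so the argument is short, but care is needed to justify convergence and the interchange of sums.

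First I would establish (or invoke) the Euler product
\[
L(s,\chi) = \prod_{P \in \mathcal{P}} \bigg( 1 - \frac{\chi(P)}{\lvert P \rvert^s} \bigg)^{-1}, \qquad \Re(s) > 1,
\]
which follows from complete multiplicativity of $\chi$ and unique factorisation in $\mathcal{A}$, together with the absolute convergence of $\sum_A \chi(A) \lvert A \rvert^{-s}$ on $\Re(s) > 1$ (noting $|\chi(A)| \leq 1$ and that $\sum_A \lvert A \rvert^{-s}$ converges for $\Re(s) > 1$ since $|\mathcal{M}_n| = q^n$).

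Next, I would take the logarithm branch-by-branch and expand using the Taylor series $-\log(1-z) = \sum_{m \geq 1} z^m/m$, valid since $|\chi(P)/|P|^s| \leq |P|^{-\Re(s)} < 1$. This gives
\[
\log L(s, \chi) = \sum_{P \in \mathcal{P}} \sum_{m=1}^{\infty} \frac{\chi(P)^m}{m \lvert P \rvert^{ms}}.
\]
Absolute convergence of the double sum on $\Re(s) > 1$ justifies term-by-term differentiation with respect to $s$, yielding
\[
\frac{L'}{L}(s, \chi) = -\sum_{P \in \mathcal{P}} \sum_{m=1}^{\infty} \frac{\chi(P)^m \log \lvert P \rvert}{\lvert P \rvert^{ms}}.
\]

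Finally, I would reindex: since $\chi$ is completely multiplicative we have $\chi(P)^m = \chi(P^m)$, and by definition $\Lambda(P^m) = \log \lvert P \rvert$ while $\Lambda(A) = 0$ for $A$ not a prime power. Hence the right-hand side equals $-\sum_{A \in \mathcal{M}} \chi(A) \Lambda(A) \lvert A \rvert^{-s}$, which gives the claim after multiplying by $-1$. The only substantive step — and it is entirely standard — is verifying absolute convergence to legitimize the log-expansion, term-by-term differentiation, and reindexing; there is no real obstacle here.
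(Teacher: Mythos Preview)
Your proof is correct and follows essentially the same approach as the paper: take the Euler product for $L(s,\chi)$, compute the logarithmic derivative, and reindex via the von Mangoldt function. The paper's version is slightly terser (it differentiates the Euler factors directly rather than first expanding the logarithm as a Taylor series), but the substance is identical.
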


\begin{proof}
Taking the logarithmic derivative of 
\begin{align*}
L (s , \chi )
= \prod_{P \in \mathcal{P}} \bigg( 1 - \frac{\chi (P)}{\lvert P \rvert^{s}} \bigg)^{-1}
\end{align*}
gives
\begin{align*}
\frac{L'}{L} (s , \chi )
= - \sum_{P \in \mathcal{P}} \frac{\chi (P) \log \lvert P \rvert}{\lvert P \rvert^{s}} \bigg( 1 - \frac{\chi (P)}{\lvert P \rvert^{s}} \bigg)^{-1}
= - \sum_{A \in \mathcal{M}} \frac{ \chi (A) \Lambda (A)}{\lvert A \rvert^s} .
\end{align*}
\end{proof}

\begin{lemma} \label{boundedness of L'/L on left half plane}
Let $\chi$ be a non-trivial character. As $\Re (s) \longrightarrow \infty$,
\begin{align*}
\frac{L'}{L} (-s, \chi ) = O_{\chi} (1) .
\end{align*}
\end{lemma}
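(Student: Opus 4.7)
The plan is to exploit the fact that, for any non-trivial Dirichlet character $\chi$ of modulus $R$, the $L$-function $L(s,\chi)$ is a polynomial in $q^{-s}$ of degree at most $\degree R - 1$, as established in Section \ref{section, notation and background}. Writing this polynomial in factored form
\begin{align*}
L(s,\chi) = c_\chi \prod_{n=1}^{N(\chi)} \bigl( 1 - \alpha_n q^{-s} \bigr),
\end{align*}
where $N(\chi) \leq \degree R$ and the $\alpha_n$ are the inverse roots, reduces the lemma to a routine estimate. (By the Riemann hypothesis for these $L$-functions one even has $\lvert \alpha_n \rvert = q^{1/2}$, but for the present lemma any uniform-in-$n$ bound on $\lvert \alpha_n \rvert$ suffices.)

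Taking the logarithmic derivative gives
\begin{align*}
\frac{L'}{L}(s,\chi) = \log q \sum_{n=1}^{N(\chi)} \frac{\alpha_n q^{-s}}{1 - \alpha_n q^{-s}} ,
\end{align*}
and I would substitute $s \mapsto -s$ and rewrite each summand via the algebraic identity $\frac{\alpha q^s}{1 - \alpha q^s} = -1 + \frac{1}{1 - \alpha q^s}$, producing
\begin{align*}
\frac{L'}{L}(-s,\chi) = -N(\chi) \log q + \log q \sum_{n=1}^{N(\chi)} \frac{1}{1 - \alpha_n q^{s}} .
\end{align*}
For $\Re(s)$ sufficiently large, $\lvert 1 - \alpha_n q^s \rvert \geq q^{\Re(s) + 1/2} - 1$, so each remainder term decays like $q^{-\Re(s)}$, the residual sum tends to $0$, and $\frac{L'}{L}(-s,\chi) \to -N(\chi) \log q$, which is the claimed $O_\chi(1)$ bound.

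There is no genuine obstacle here; the entire argument is the observation that the polynomial nature of $L(s,\chi)$ in $q^{-s}$ trivialises the asymptotics at $\Re(s) \to -\infty$, in contrast to the number field setting where one would need the functional equation. The only care needed is to note that both $N(\chi)$ and the $\alpha_n$ depend on $\chi$, which is precisely why the implied constant must be allowed to depend on $\chi$.
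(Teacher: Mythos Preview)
Your proof is correct. Both your argument and the paper's exploit the same basic fact---that $L(s,\chi)$ is a polynomial in $q^{-s}$---but proceed differently from there. The paper writes $L(-s,\chi) = \sum_{n=0}^{N} L_n(\chi) q^{ns}$ and simply observes that the leading term dominates: $L(-s,\chi) \gg_\chi q^{N\Re(s)}$ and $L'(-s,\chi) \ll_\chi q^{N\Re(s)}$, so the ratio is $O_\chi(1)$. You instead factor the polynomial and compute the logarithmic derivative explicitly, which yields the sharper conclusion that $\frac{L'}{L}(-s,\chi) \to -N(\chi)\log q$. Your route gives more information at the cost of invoking the factorisation; the paper's route is marginally more direct. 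One minor caveat on your parenthetical: the statement $\lvert \alpha_n \rvert = q^{1/2}$ holds for primitive $\chi$, whereas the lemma concerns any non-trivial $\chi$ (for which some inverse roots may have $\lvert \alpha_n \rvert = 1$), but you already noted that any bound suffices, and since there are finitely many nonzero $\alpha_n$ for fixed $\chi$, the argument goes through unchanged.
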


\begin{proof}
As $\chi$ is non-trivial, there is some maximal integer $N \geq 0$ with $L_N ( \chi ) \neq 0$. Hence,
\begin{align*}
L (-s, \chi )
= \sum_{n=0}^{N} L_n (\chi ) q^{ns}
\gg_{\chi} q^{N \Re (s)} 
\end{align*}
and
\begin{align*}
L' (-s, \chi )
= - \log q \sum_{n=0}^{N} n L_n (\chi ) q^{ns}
\ll_{\chi} q^{N \Re (s)} .
\end{align*}
The proof follows.
\end{proof}

\begin{lemma} \label{lemma, tilde(u) (s) << 1/s bound}
Let $X$ be a positive integer, and let $u(x)$ be a positive $C^{\infty}$-function with support in $[e , e^{1+q^{-X}}]$. Let $\tilde{u} (s)$ be its Mellin transform. That is,
\begin{align*}
\tilde{u} (s)
= \int_{x=0}^{\infty} x^{s-1} u (x) \mathrm{d} x
\end{align*}
and
\begin{align*}
u (x)
= \frac{1}{2 \pi i} \int_{\Re (s) = c} x^{-s} \tilde{u} (s) \mathrm{d} s ,
\end{align*}
where $c$ can take any value in $\mathbb{R}$ (due to our restrictions on the support of $u$, we can see that $\tilde{u} (s)$ is well-defined for all $s \in \mathbb{C}$, and so, by the Mellin inversion theorem, $c$ can take any value in $\mathbb{R}$). Then,
\begin{align*}
\tilde{u} (s)
\ll \begin{cases}
\frac{1}{\lvert s \rvert + 1} \max_x \{ \lvert u' (x) \rvert \} e^{2 \Re (s) } &\text{ if $\Re (s) > 0$} \\
\frac{1}{\lvert s \rvert + 1} \max_x \{ \lvert u' (x) \rvert \} e^{\Re (s) } &\text{ if $\Re (s) \leq 0$} .
\end{cases}
\end{align*}
\end{lemma}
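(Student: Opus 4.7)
The strategy is the standard one: integration by parts yields the $1/|s|$ decay, and the very small support of $u$ (of length $\leq e q^{-X}(e^{q^{-X}})$, essentially $O(q^{-X})$) controls the $x^{\Re(s)}$ factor. The only subtlety is producing the ``$+1$" in the denominator $|s|+1$, which requires a separate direct estimate when $|s|$ is small.

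First I would integrate by parts in
\begin{align*}
\tilde{u}(s) = \int_{x=0}^{\infty} x^{s-1} u(x)\, \mathrm{d} x,
\end{align*}
writing $\mathrm{d}(x^s/s) = x^{s-1}\mathrm{d}x$. Since $u$ is $C^{\infty}$ with compact support contained in $[e,e^{1+q^{-X}}]$, the boundary terms vanish (for $s\ne 0$), giving
\begin{align*}
\tilde{u}(s) = -\frac{1}{s} \int_{x=e}^{e^{1+q^{-X}}} x^{s} u'(x)\, \mathrm{d} x.
\end{align*}
Pulling out $\max_x |u'(x)|$ and bounding $x^{\Re(s)}$ on the support, I would split into two cases. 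If $\Re(s) > 0$ then $x^{\Re(s)} \leq e^{(1+q^{-X})\Re(s)} \leq e^{2\Re(s)}$, and if $\Re(s) \leq 0$ then $x^{\Re(s)}\leq e^{\Re(s)}$ (maximum at the left endpoint). Combined with the fact that the support has length $e(e^{q^{-X}}-1) = O(1)$, this produces
\begin{align*}
\tilde{u}(s) \ll \frac{1}{|s|}\,\max_x |u'(x)|\,\begin{cases} e^{2\Re(s)} & \Re(s)>0,\\ e^{\Re(s)} & \Re(s)\leq 0,\end{cases}
\end{align*}
which is the desired bound for $|s|\geq 1$.

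For $|s| < 1$, integration by parts is useless because of the $1/s$ blow-up, so I would bound $\tilde{u}(s)$ directly from its definition. Since $u$ is supported in $[e,e^{1+q^{-X}}]$ and vanishes at $x=e$, I would write $u(x)=\int_{e}^{x} u'(t)\,\mathrm{d}t$ to get $|u(x)|\leq (x-e)\max |u'| \leq e q^{-X} e^{q^{-X}} \max|u'|$. Substituting into $\tilde{u}(s)=\int_{e}^{e^{1+q^{-X}}} x^{s-1} u(x)\mathrm{d}x$ and bounding $x^{\Re(s)-1}$ on the support exactly as above (noting $\Re(s)-1$ merely shifts the exponent but does not change which endpoint gives the maximum in the relevant regimes, and that $e^{\Re(s)-1}\leq e^{\Re(s)}$ and $e^{2\Re(s)-1}\leq e^{2\Re(s)}$) yields a bound of the same shape but with a bounded constant in place of $1/|s|$.

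Finally I would combine the two regimes: for $|s|\geq 1$ one has $1/|s| \ll 1/(|s|+1)$, and for $|s|<1$ the direct bound is $O(1) \ll 1/(|s|+1)$ as well, so both cases can be wrapped into the single statement with $1/(|s|+1)$. The main obstacle, if any, is only the bookkeeping of which endpoint maximises $x^{\Re(s)}$ in each regime; no nontrivial estimate is needed beyond the triangle inequality.
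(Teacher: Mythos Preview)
Your proposal is correct and follows essentially the same approach as the paper: integration by parts for $|s|\geq 1$, and a separate direct bound for $|s|<1$, then merging into $1/(|s|+1)$. The only difference is in the small-$|s|$ step: the paper stays with the integrated-by-parts expression and uses $\int u'=0$ to replace $x^{s}$ by $x^{s}-1$, then writes $(1-x^{s})/s=-\int_{1}^{x}y^{s-1}\,\mathrm{d}y$ to remove the $1/s$; you instead return to the original integral and bound $|u(x)|\le (x-e)\max|u'|$. Both devices are equally short and give the same $O(\max|u'|)$ bound, so there is no substantive difference.
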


\begin{proof}
We have, by integration by parts, that
\begin{align*}
\tilde{u} (s)
= \int_{x=e}^{e^{1+q^{-X}} } x^{s-1} u (x) \mathrm{d} x
= -\frac{1}{s} \int_{x=e}^{e^{1+q^{-X}} } x^{s} u' (x) \mathrm{d} x.
\end{align*}
If $\lvert s \rvert > 1$, then it is not difficult to deduce that the above is
\begin{align*}
\ll &\begin{cases}
\frac{1}{\lvert s \rvert + 1} \max_x \{ \lvert u' (x) \rvert \} e^{2 \Re (s) } &\text{ if $\Re (s) > 0$} \\
\frac{1}{\lvert s \rvert + 1} \max_x \{ \lvert u' (x) \rvert \} e^{\Re (s) } &\text{ if $\Re (s) \leq 0$} .
\end{cases}
\end{align*}
If $\lvert s \rvert \leq 1$, then, by using the fact that $\int_{x=e}^{e^{1+q^{-X}} } u' (x) \mathrm{d} x = 0$, we obtain
\begin{align*}
\tilde{u} (s)
= &\int_{x=e}^{e^{1+q^{-X}} } \frac{1 - x^{s}}{s} u' (x) \mathrm{d} x
= - \int_{x=e}^{e^{1+q^{-X}} } \bigg( \int_{y=1}^{x} y^{s-1} \mathrm{d} y \bigg) u' (x) \mathrm{d} x \\
\ll &\int_{x=e}^{e^{1+q^{-X}} } \lvert u' (x) \rvert \mathrm{d} x
\ll \max_x \{ \lvert u' (x) \rvert \} ,
\end{align*}
from which the result follows.
\end{proof}

\begin{lemma} \label{L'/L (s.chi) in terms of zeros sum and von mangoldt sum}
Let $X$ be a positive integer, and let $u(x)$ be a positive $C^{\infty}$-function with support in $[e , e^{1 + q^{-X} }]$, and let $\tilde{u} (s)$ be its Mellin transform. Let
\begin{align*}
v(x)
= \int_{t=x}^{\infty} u(t) \mathrm{d} t
\end{align*}
and take $u$ to be normalised so that $v(0) = 1$. Note that its Mellin transform is
\begin{align*}
\tilde{v} (s)
= \frac{ \tilde{u} (s+1)}{s} .
\end{align*}
Let $\chi$ be a primitive Dirichlet character of modulus $R \in \mathcal{M} \backslash \{ 1 \}$. Then, for $s \in \mathbb{C}$ not being a zero of $L (s, \chi)$, we have
\begin{align} \label{L'/L (s.chi) in terms of zeros sum and von mangoldt sum, actual equation}
- \frac{L'}{L} (s, \chi )
= \sum_{\substack{A \in \mathcal{M} \\ \degree A \leq X }} \frac{ \chi (A) \Lambda (A)}{ \lvert A \rvert^s}
	+ \sum_{\rho_n} \frac{\tilde{u} \big( 1 + (\rho_n -s) (\log q) X \big)}{\rho_n - s} ,
\end{align}
where $\rho_n = \frac{1}{2} + i \gamma_n$ is the $n$-th zero of $L (s , \chi )$. Note that, by Lemma \ref{lemma, tilde(u) (s) << 1/s bound}, we can see that the sum over the zeros is absolutely convergent.
\end{lemma}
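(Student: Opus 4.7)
The plan is to compute the contour integral
\begin{align*}
I \;=\; \frac{1}{2\pi i} \int_{(c)} -\frac{L'}{L}(s+w, \chi)\, \tilde{u}\bigl(1 + wX\log q\bigr) \,\frac{dw}{w}
\end{align*}
in two different ways, where initially $\Re(s) > 1$ and $c > 0$ is large enough that $\Re(s+w) > 1$, and then extend by analytic continuation. The integrand is chosen so that the pole at $w = 0$ (coming from $1/w$) yields $-L'/L(s,\chi)$ and each zero of $L(\cdot,\chi)$ produces a pole contributing exactly the $\tilde{u}\bigl(1+(\rho_n - s)X\log q\bigr)/(\rho_n - s)$ term of the claimed formula.

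First, I would expand $-L'/L$ as its Dirichlet series, exchange sum and integral (absolute convergence on $\Re(s+w)>1$ justifies this), and reduce $I$ to $\sum_A \chi(A)\Lambda(A)|A|^{-s} J_A$, where
\begin{align*}
J_A \;=\; \frac{1}{2\pi i} \int_{(c)} \tilde{u}(1+wX\log q)\,|A|^{-w}\,\frac{dw}{w} .
\end{align*}
Using $\tilde{v}(s) = \tilde{u}(s+1)/s$ together with Mellin inversion of $v$ under the substitution $s = wX\log q$, one identifies $J_A = v\bigl(e^{(\degree A)/X}\bigr)$. The choice of support of $u$ makes $v \equiv 1$ on $[0,e]$ and $v \equiv 0$ on $[e^{1+q^{-X}},\infty)$; since $\degree A$ is an integer and $Xq^{-X} < 1$ for every $X \geq 1$ and $q \geq 2$, the point $e^{(\degree A)/X}$ lands in one of these two flat regions for every $A$, giving $J_A = \mathbf{1}_{\degree A \leq X}$ and hence $I = \sum_{\degree A \leq X} \chi(A)\Lambda(A)/|A|^s$.

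Second, I would shift the contour to $\Re(w) = -c'$ for large $c' > 0$, collecting the residue at $w = 0$ (which equals $-L'/L(s,\chi)$, since $\tilde{u}(1) = v(0) = 1$) and the residues at each simple pole $w = \rho_n - s$ (which equal $-\tilde{u}\bigl(1+(\rho_n-s)X\log q\bigr)/(\rho_n - s)$). Equating the two evaluations of $I$ and rearranging then gives the desired identity.

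The main obstacle is to justify the contour shift, i.e.\ to show that the shifted line integral tends to $0$ as $c' \to \infty$ and that the horizontal caps vanish as $|\Im(w)| \to \infty$. This is where Lemmas \ref{boundedness of L'/L on left half plane} and \ref{lemma, tilde(u) (s) << 1/s bound} enter: on $\Re(w) = -c'$, the factor $L'/L(s+w,\chi)$ is $O_\chi(1)$, while $\tilde{u}(1+wX\log q) \ll e^{1+\Re(wX\log q)}/(|1+wX\log q|+1)$, producing an overall factor of $q^{-c'X}$ that dominates the $t$-integration and forces decay as $c' \to \infty$. The horizontal caps vanish by the same two bounds together with the $(2\pi/\log q)$-periodicity of $L'/L$ in the imaginary direction. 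Convergence of the resulting sum over zeros is likewise guaranteed by Lemma \ref{lemma, tilde(u) (s) << 1/s bound}, which renders each term $O(|\rho_n - s|^{-2})$. Finally, since both sides of the identity are meromorphic in $s$, the formula extends from $\Re(s) > 1$ to every $s$ that is not a zero of $L(s,\chi)$.
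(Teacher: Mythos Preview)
Your proposal is correct and follows essentially the same approach as the paper: Mellin inversion applied to the smoothed von Mangoldt sum, a contour shift picking up the pole at $w=0$ and the poles at the zeros, and the key observation that the support of $u$ forces the smoothed sum to truncate \emph{exactly} at $\degree A \leq X$ because no integer lies in $(X, X(1+q^{-X}))$. The only cosmetic differences are that the paper parametrises the integral by $s + w/((\log q)X)$ rather than $s+w$ (a change of variable), and the paper works with general $s$ from the outset by taking $c$ large enough, whereas you restrict to $\Re(s)>1$ and extend by analytic continuation afterwards; both are valid.
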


\begin{proof}
Let $c > \max \{ 0 , \big( 1- \Re(s) \big) (\log q) X \}$. By the Mellin inversion theorem, we have
\begin{align*}
\sum_{A \in \mathcal{M}} \frac{ \chi (A) \Lambda (A)}{ \lvert A \rvert^s} v \Big( e^{\frac{\degree A}{X}} \Big)
= &\frac{1}{2 \pi i} \sum_{A \in \mathcal{M}} \frac{ \chi (A) \Lambda (A)}{ \lvert A \rvert^s} \int_{\Re (w) = c} \frac{\tilde{u} (w+1)}{w} \lvert A \rvert^{- \frac{w}{(\log q) X}} \mathrm{d} w \\
= &\frac{1}{2 \pi i} \int_{\Re (w) = c} \frac{\tilde{u} (w+1)}{w} \sum_{A \in \mathcal{M}} \frac{ \chi (A) \Lambda (A)}{ \lvert A \rvert^{s+\frac{w}{(\log q) X}}} \mathrm{d} w \\
= &-\frac{1}{2 \pi i} \int_{\Re (w) = c} \frac{\tilde{u} (w+1)}{w} \frac{L'}{L} \Big( s + \frac{w}{(\log q) X}, \chi \Big) \mathrm{d} w .
\end{align*}
The interchange of integral and summation is justified by absolute convergence, which holds because $c > \big(1-\Re (s) \big) (\log q) X$ and by Lemma \ref{lemma, tilde(u) (s) << 1/s bound}. \\

We now shift the line of integration to $\Re (w) = - M$, for some \\
$M > \max \{ 0 , \Re (s) (\log q) X \}$, giving
\begin{align*}
\sum_{A \in \mathcal{M}} \frac{ \chi (A) \Lambda (A)}{ \lvert A \rvert^s} v \Big( e^{\frac{\degree A}{X}} \Big)
= &-\frac{L'}{L} ( s , \chi ) 
	- \sum_{\rho_n} \frac{\tilde{u} \big( 1 + (\rho_n -s) (\log q) X \big) }{\rho_n -s} \\
	&-\frac{1}{2 \pi i} \int_{\Re (w) = -M} \frac{\tilde{u} (w+1)}{w} \frac{L'}{L} \Big( s + \frac{w}{(\log q) X}, \chi \Big) \mathrm{d} w ,
\end{align*}
where the sum over the zeros counts multiplicities. This requires some justification. We make use of the contour that is the rectangle with vertices at 
\begin{align*}
c \pm i \Big( \big(d-\Im (s) \big) (\log q) X + 2 \pi n X \Big) , \\
-M \pm i \Big( \big(d-\Im (s) \big) (\log q) X + 2 \pi n X \Big) . 
\end{align*}
Here, $d > 0$ is such that $\frac{1}{2} + i d$ is not a pole of $ \frac{L'}{L} (s , \chi )$ (that is, not a zero of $L(s , \chi )$). It is clear that as $n \longrightarrow \infty$ we capture all the poles and the left edge tends to the integral over $\Re (w) = - M$. Due to the vertical periodicity of $\frac{L'}{L}$, and our choice of $d$, we can see that the top and bottom integrals are equal to $O_{c,M} (n^{-1})$, which vanishes as $n \longrightarrow \infty$. \\

By Lemmas \ref{boundedness of L'/L on left half plane} and \ref{lemma, tilde(u) (s) << 1/s bound}, if we let $M \longrightarrow \infty$ then we see that the integral over $\Re (w) = -M$ vanishes. \\

Finally, we note that
\begin{align*}
v \Big( e^{\frac{\degree A}{X}} \Big) 
= \begin{cases}
1 &\text{ if $\degree A \leq X$} \\
0 &\text{ if $\degree A \geq X (1+q^{-X})$} .
\end{cases}
\end{align*}
Also, since $X$ is a positive integer, there are no integers in the interval $\big( X , X (1+q^{-X}) \big) \allowbreak \subseteq {\big( X , X + \frac{1}{2} \big)}$, and so there are no $A \in \mathcal{A}$ that have degree in this interval. It follows that
\begin{align*}
\sum_{A \in \mathcal{M}} \frac{ \chi (A) \Lambda (A)}{ \lvert A \rvert^s} v \Big( e^{\frac{\log \lvert A \rvert}{(\log q) X}} \Big) 
= \sum_{\substack{A \in \mathcal{M} \\ \degree A \leq X }} \frac{ \chi (A) \Lambda (A)}{ \lvert A \rvert^s} .
\end{align*}
\end{proof}

\begin{lemma} \label{int of tilde(u) (1 + (gamma_n - s) log X)/(gamma_n - s) = - U ((s_0 - gamma_n) log X)}
Suppose $u(x)$ has support in $[e , e^{1 + q^{-X} }]$. For all $z \in \mathbb{C} \backslash \{ 0 \}$ with $\arg (z) \neq \pi$ we define
\begin{align*}
U (z) := \int_{x=0}^{\infty} u(x) E_1 (z \log x ) \mathrm{d} x .
\end{align*}
(Recall, for $y \in \mathbb{C} \backslash \{ 0 \}$ with $\arg (y) \neq \pi$, we define $E_1 (y) := \int_{w=y}^{y+ \infty} \frac{e^{-w}}{w} \mathrm{d} w$). Let $\chi$ be a primitive Dirichlet character of modulus $R \in \mathcal{M} \backslash \{ 1 \}$, and suppose $\rho$ is a zero of $L (s , \chi )$ and $s \in \mathbb{C} \backslash \{ \rho \}$ with $\arg (s -\rho ) \neq \pi$. Then,
\begin{align*}
\int_{s_0 =s}^{s + \infty} \frac{ \tilde{u} \big( 1 + (\rho - s_0 ) (\log q) X \big)}{\rho - s_0 } \mathrm{d} s_0
=  -U \Big( (s - \rho ) (\log q) X \Big) .
\end{align*}
\end{lemma}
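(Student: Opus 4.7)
The plan is to compute the integral directly by expanding $\tilde{u}$ via its Mellin definition and swapping the order of integration.

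First I would write
\begin{align*}
\tilde{u}\bigl(1 + (\rho - s_0)(\log q) X\bigr)
= \int_{x=0}^{\infty} x^{(\rho - s_0)(\log q) X} u(x)\, \mathrm{d} x ,
\end{align*}
using that $u$ is supported on $[e, e^{1+q^{-X}}]$ so this is a finite integral. Substituting into the left-hand side and swapping the order of integration (justified below) gives
\begin{align*}
\int_{s_0 = s}^{s+\infty} \frac{\tilde{u}\bigl(1+(\rho - s_0)(\log q) X\bigr)}{\rho - s_0} \mathrm{d} s_0
= \int_{x=0}^{\infty} u(x) \int_{s_0=s}^{s+\infty} \frac{ x^{(\rho - s_0)(\log q) X}}{\rho - s_0} \mathrm{d} s_0 \, \mathrm{d} x .
\end{align*}

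Next I would make the change of variables $w = (s_0 - \rho)(\log q)(\log x) X$ in the inner integral. Since $u$ is supported in $[e, e^{1+q^{-X}}]$ we have $\log x \geq 1$, in particular $\log x > 0$, so as $s_0$ traces the ray from $s$ in direction $+1$, $w$ traces the ray from $(s - \rho)(\log q)(\log x) X$ in the positive real direction. Under this change,
\begin{align*}
x^{(\rho - s_0)(\log q)X} = e^{-w}, \qquad \rho - s_0 = -\frac{w}{(\log q)(\log x) X}, \qquad \mathrm{d} s_0 = \frac{\mathrm{d} w}{(\log q)(\log x) X},
\end{align*}
so the inner integral becomes
\begin{align*}
-\int_{w = (s-\rho)(\log q)(\log x) X}^{(s-\rho)(\log q)(\log x) X + \infty} \frac{e^{-w}}{w} \mathrm{d} w
= - E_1\bigl((s - \rho)(\log q)(\log x) X\bigr).
\end{align*}
Setting $z = (s-\rho)(\log q) X$, since $\log x > 0$ we have $\arg(z \log x) = \arg(z) = \arg(s - \rho) \neq \pi$, so $E_1$ is defined. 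Substituting back and recognising the outer integral as $U(z)$ yields
\begin{align*}
\int_{s_0 = s}^{s+\infty} \frac{\tilde{u}\bigl(1+(\rho - s_0)(\log q) X\bigr)}{\rho - s_0} \mathrm{d} s_0
= -\int_{x=0}^{\infty} u(x) E_1(z \log x)\, \mathrm{d} x
= -U\bigl((s - \rho)(\log q) X\bigr),
\end{align*}
which is the claim.

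The main technical point — and the only real obstacle — is justifying the Fubini interchange. For this one notes that the double integrand is absolutely bounded by $|u(x)| \cdot x^{(\Re(\rho) - \Re(s_0))(\log q) X} / |\rho - s_0|$; since $x \in [e, e^{1+q^{-X}}]$ so $\log x \geq 1$, and since along the integration ray $\Re(s_0) - \Re(s)$ grows linearly, the factor $x^{-(\Re(s_0) - \Re(s))(\log q) X}$ decays exponentially uniformly in $x$, while $|\rho - s_0|^{-1}$ gives additional polynomial decay. Together with the compact support of $u$, this gives absolute integrability, justifying the swap.
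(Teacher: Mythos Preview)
Your proposal is correct and follows essentially the same approach as the paper: expand $\tilde{u}$ by its Mellin definition, interchange the order of integration, substitute $w = (s_0 - \rho)(\log q)(\log x) X$ to recognise the inner integral as $E_1$, and then identify the outer integral as $U$. Your justification of the Fubini step is in fact more detailed than the paper's, which simply asserts absolute convergence holds for $X > 1$.
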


\begin{proof}
We have
\begin{align*}
\int_{s_0 = s}^{s + \infty} \frac{ \tilde{u} \big( 1 + (\rho - s_0 ) (\log q) X \big)}{\rho - s_0 } \mathrm{d} s_0
= &\int_{s_0 = s}^{s + \infty } \frac{1}{\rho - s_0} \int_{x=0}^{\infty} x^{(\rho - s_0) (\log q) X} u(x) \mathrm{d} x \mathrm{d} s_0 \\
= &\int_{x=0}^{\infty} u(x) \int_{s_0 = s}^{s + \infty } \frac{e^{(\rho - s_0 ) (\log q) X \log x}}{\rho - s_0 }  \mathrm{d} s_0 \mathrm{d} x \\
= &-\int_{x=0}^{\infty} u(x) \int_{w= (s - \rho ) (\log q) X \log x}^{(s - \rho ) (\log q) X \log x + \infty} \frac{e^{-w}}{w}  \mathrm{d} w \mathrm{d} x \\
= &-\int_{x=0}^{\infty} u(x) E_1 \Big( (s - \rho ) (\log q) X \log x \Big) \mathrm{d} x \\
= & -U \Big( (s - \rho ) (\log q) X \Big).
\end{align*}
The interchange of integration is justified by absolute convergence, which holds for $X > 1$. 
\end{proof}

We can now proceed with the proof of Theorem \ref{theorem, L(s, chi) as hybrid E-H product, Intro version}.

\begin{proof}[Proof of Theorem \ref{theorem, L(s, chi) as hybrid E-H product, Intro version}]
Suppose $s \in \mathbb{C}$ is not a zero of $L(s,\chi )$ and $\arg (s -\rho ) \neq \pi$ for all zeros $\rho$ of $L (s , \chi)$. We recall that (\ref{L'/L (s.chi) in terms of zeros sum and von mangoldt sum, actual equation}) gives us
\begin{align*}
- \frac{L'}{L} (s_0 , \chi )
= \sum_{\substack{A \in \mathcal{M} \\ \degree A \leq X }} \frac{ \chi (A) \Lambda (A)}{ \lvert A \rvert^{s_0}}
	+ \sum_{\rho_n} \frac{\tilde{u} \big( 1 + (\rho_n -s_0 ) (\log q) X \big)}{\rho_n - s_0 } ,
\end{align*}
to which we apply the integral $\int_{s_0 = s}^{s + \infty} \mathrm{d} s_0$ to both sides to obtain
\begin{align} \label{L(s, chi) as hybrid E-H product, before exponentials}
\log L (s , \chi )
= \sum_{\substack{A \in \mathcal{M} \\ \degree A \leq X }} \frac{ \chi (A) \Lambda (A)}{ \lvert A \rvert^{s} \log \lvert A \rvert}
	- \sum_{\rho} U \Big( (s - \rho ) (\log q) X \Big) .
\end{align}
For the integral over the sum over zeros, we applied Lemma \ref{int of tilde(u) (1 + (gamma_n - s) log X)/(gamma_n - s) = - U ((s_0 - gamma_n) log X)}, after an interchange of summation and integration that is justified by Lemma \ref{lemma, tilde(u) (s) << 1/s bound}. We now take exponentials of both sides of (\ref{L(s, chi) as hybrid E-H product, before exponentials}) to obtain
\begin{align*}
L (s , \chi )
= &\exp \bigg( \sum_{\substack{A \in \mathcal{M} \\ \degree A \leq X }} \frac{ \chi (A) \Lambda (A)}{ \lvert A \rvert^{s} \log \lvert A \rvert} \bigg)
	\exp \bigg( - \sum_{\rho} U \Big( (s - \rho ) (\log q) X \Big) \bigg) \\
= &P_{X} (s , \chi) Z_{X} (s , \chi) .
\end{align*}

Now suppose we have $s \in \mathbb{C}$, not being a zero of $L(s,\chi )$, but with $\arg (s -\rho ) = \pi$ for some zero $\rho$ of $L (s , \chi )$. We can see that $\lim_{s_0 \longrightarrow s} L (s_0 , \chi ) = L (s , \chi )$ and $\lim_{s_0 \longrightarrow s} P_{X} (s_0 , \chi)  = P_{X} (s , \chi) \neq 0 $. The latter is non-zero as $P_{X} (s , \chi) $ is the exponential of a polynomial. From this, we can deduce that $\lim_{s_0 \longrightarrow s} Z_{X} (s_0 , \chi)  = L (s , \chi ) \big( P_{X} (s , \chi) \big)^{-1} \in \mathbb{C}$. Similarly, if $s$ is a zero of $L(s,\chi )$, then we can see that $\lim_{s_0 \longrightarrow s} Z_{X} (s_0 , \chi)  = L (s , \chi ) \big( P_{X} (s , \chi) \big)^{-1} = 0$. This completes the proof.
\end{proof}


\section{Moments of the Partial Euler Product} \label{section, Euler product moments}
We require the following two lemmas before proving Theorem \ref{theorem, Euler product 2k-th moment, Intro version}.

\begin{lemma} \label{lemma, P_X (1/2 , chi) in terms of P_X* (1/2 , chi)}
For all $\Re (s) > 0$ and primitive characters $\chi$ we define
\begin{align}\label{P* definition}
P_X^{*} (s, \chi )
:=\prod_{\degree P \leq X} \bigg( 1 - \frac{\chi (P)}{\lvert P \rvert^s } \bigg)^{-1}
	\prod_{\frac{X}{2} < \degree P \leq X} \bigg( 1 + \frac{\chi (P)^2}{2 \lvert P \rvert^{2s} } \bigg)^{-1} ,
\end{align}
and for positive integers $k$ and $A \in \mathcal{S}_{\mathcal{M}} (X)$ we define $\alpha_k (A)$ by 
\begin{align*}
P_X^{*} (s, \chi )^{k} 
= \sum_{A \in \mathcal{S}_{\mathcal{M}} (X) } \frac{ \alpha_k (A) \chi (A)}{\lvert A \rvert^{s} } .
\end{align*}
Then, for positive integers $k$, we have
\begin{align}
\begin{split} \label{P_X (1/2 , chi) in terms of P_X* (1/2 , chi) and sum over a_k (A), statement}
P_X \Big( \frac{1}{2} , \chi \Big)^k
= \Big( 1 + O_k \big( X^{-1} \big) \Big) P_X^{*} \Big( \frac{1}{2} , \chi \Big)^{k} 
= \Big( 1 + O_k \big( X^{-1} \big) \Big) \sum_{A \in \mathcal{S}_{\mathcal{M}} (X) } \frac{ \alpha_k (A) \chi (A)}{\lvert A \rvert^{\frac{1}{2}} } .
\end{split}
\end{align}
We also have that
\begin{align}
\begin{split} \label{P_X (1/2 , chi) in terms of P_X* (1/2 , chi), a_k (A) def}
\alpha_k (A)  = &d_k (A) \hspace{3em} \text{ if $A \in \mathcal{S}_{\mathcal{M}} \Big( \frac{X}{2} \Big)$ or $A$ is prime} \\
0 \leq \alpha_k (A)  \leq &d_k (A) \hspace{3em} \text{ if $A \not\in \mathcal{S}_{\mathcal{M}} \Big( \frac{X}{2} \Big)$ and $A$ is not prime} .
\end{split}
\end{align}
\end{lemma}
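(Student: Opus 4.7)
The plan is to handle the two claims separately. For the asymptotic $P_X(1/2,\chi)^k = (1+O_k(X^{-1}))P_X^*(1/2,\chi)^k$, I would compare logarithms. Using $\Lambda(P^m)=\log\lvert P\rvert$ gives
\[
\log P_X(s,\chi) = \sum_{\degree P \leq X}\sum_{\substack{m \geq 1 \\ m \degree P \leq X}} \frac{\chi(P)^m}{m\lvert P \rvert^{ms}},
\]
while expanding $\log P_X^*(s,\chi)$ via $-\log(1-y)=\sum_{m\geq 1}y^m/m$ and $\log(1+y)^{-1}=\sum_{m\geq 1}(-1)^m y^m/m$ and subtracting yields
\begin{align*}
\log P_X^*(s,\chi) - \log P_X(s,\chi)
&= \sum_{\degree P \leq X}\sum_{m \degree P > X}\frac{\chi(P)^m}{m\lvert P \rvert^{ms}} \\
&\quad + \sum_{X/2 < \degree P \leq X}\sum_{m\geq 1}\frac{(-1)^m}{m}\left(\frac{\chi(P)^2}{2\lvert P \rvert^{2s}}\right)^m.
\end{align*}
At $s=1/2$, the $m=2$ contribution to the first sum (which automatically restricts to $\degree P > X/2$) exactly cancels the $m=1$ contribution of the second. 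What remains is a tail of terms with $m\geq 3$ in the first and $m\geq 2$ in the second, both of which I would bound by $O(q^{-X/6})$ using the prime polynomial theorem (in the first, $m=3$ dominates; in the second, $\lvert P\rvert^{-2}$ is rapidly summable over $\degree P > X/2$). Exponentiating and raising to the $k$-th power then gives the first equality in (\ref{P_X (1/2 , chi) in terms of P_X* (1/2 , chi) and sum over a_k (A), statement}).

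For the Dirichlet series expansion, I would expand each Euler factor of $P_X^*(s,\chi)^k$ by the binomial series
\[
\bigl(1 - \chi(P)/\lvert P \rvert^s\bigr)^{-k} = \sum_{m\geq 0} d_k(P^m)\chi(P^m)/\lvert P \rvert^{ms}, \qquad \bigl(1 + \chi(P)^2/(2\lvert P \rvert^{2s})\bigr)^{-k} = \sum_{j\geq 0}(-1)^j d_k(P^j)\chi(P^{2j})/(2^j\lvert P \rvert^{2js}).
\]
Thus $\alpha_k$ is multiplicative, supported on $\mathcal{S}_{\mathcal{M}}(X)$, with $\alpha_k(P^n)=d_k(P^n)$ whenever $\degree P \leq X/2$, and
\[
\alpha_k(P^n) = \sum_{n_1+2j=n}d_k(P^{n_1})\,(-1)^j d_k(P^j)/2^j \qquad (X/2 < \degree P \leq X).
\]
Setting $n=1$ in the second case forces $j=0$, so $\alpha_k(P)=d_k(P)=k$ for every prime $P$ of degree $\leq X$; multiplicativity then gives (\ref{P_X (1/2 , chi) in terms of P_X* (1/2 , chi), a_k (A) def}) in the ``$A$ prime or $A\in\mathcal{S}_{\mathcal{M}}(X/2)$'' cases.

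The main obstacle is the remaining inequality $0 \leq \alpha_k(P^n) \leq d_k(P^n)$ for $X/2 < \degree P \leq X$ and $n \geq 2$; once established, multiplicativity of $\alpha_k$ and $d_k$ extends the bound to all composite $A$. My plan is to exploit the factorisation
\[
\sum_{n\geq 0}\alpha_k(P^n)z^n = (1-z)^{-k}(1+z^2/2)^{-k} = \bigl[(1-z)^{-1}(1+z^2/2)^{-1}\bigr]^k,
\]
which expresses $(\alpha_k(P^n))_{n\geq 0}$ as the $k$-fold Cauchy convolution of $(\alpha_1(P^n))_{n\geq 0}$. Since $d_k(P^n)$ is likewise the $k$-fold convolution of $d_1(P^n)=1$, and convolution is monotone on non-negative sequences, it suffices to prove $0\leq \alpha_1(P^n)\leq 1$ for all $n$. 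A partial-fraction computation
\[
(1-z)^{-1}(1+z^2/2)^{-1} = \frac{2/3}{1-z} + \frac{(1+z)/3}{1+z^2/2}
\]
yields $\alpha_1(P^n) = 2/3 + (-1)^{\lfloor n/2\rfloor}/(3\cdot 2^{\lfloor n/2\rfloor}) \in [1/2,1]$, closing the argument.
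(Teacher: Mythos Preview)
Your argument is correct and follows essentially the same route as the paper: compare $\log P_X$ and $\log P_X^*$, use the cancellation between the $m=2$ term of the first sum and the $m=1$ term of the second, and bound the remainder; then reduce the $\alpha_k$ inequality to the $k=1$ local computation $0\le \alpha_1(P^n)\le 1$. Two small remarks: your tail estimate $O(q^{-X/6})$ is in fact sharper than the paper's $O(X^{-1})$ (the paper uses the cruder bound $\lvert P\rvert^{-(N_P+1)/2}\le q^{-X/2}$), and you make explicit the $k$-fold convolution step that the paper leaves as ``the result follows''.
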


\begin{proof}
First we note that
\begin{align*}
P_X \Big( \frac{1}{2} , \chi \Big)
= \exp \bigg( \sum_{\substack{A \in \mathcal{M} \\ \degree A \leq X }} \frac{ \chi (A) \Lambda (A)}{ \lvert A \rvert^{\frac{1}{2}} \log \lvert A \rvert} \bigg)
= \exp \bigg( \sum_{\degree P \leq X} \sum_{j=1}^{N_P} \frac{\chi (P)^j}{ j \lvert P \rvert^{\frac{j}{2}} } \bigg) ,
\end{align*}
where 
\begin{align*}
N_P := \Big\lfloor \frac{X}{\degree P} \Big\rfloor.
\end{align*}
Also, by using the Taylor series for $\log$, we have
\begin{align*}
P_X^{*} \Big( \frac{1}{2} , \chi \Big)
= \exp \bigg( \sum_{\degree P \leq X} \sum_{j=1}^{\infty} \frac{\chi (P)^j}{ j \lvert P \rvert^{\frac{j}{2}} }
	+ \sum_{\frac{X}{2} < \degree P \leq X} \sum_{j=1}^{\infty} \frac{(-1)^j \chi (P)^{2j}}{ j 2^j \lvert P \rvert^{j} } \bigg) .
\end{align*}
Hence,
\begin{align*}
P_X \Big( \frac{1}{2} , \chi \Big)
	P_X^{*} \Big( \frac{1}{2} , \chi \Big)^{-1} 
= \exp \bigg( - \sum_{\degree P \leq X} \sum_{j=N_p + 1}^{\infty} \frac{\chi (P)^j}{ j \lvert P \rvert^{\frac{j}{2} }}
	- \sum_{\frac{X}{2} < \degree P \leq X} \sum_{j=1}^{\infty} \frac{(-1)^j \chi (P)^{2j}}{ j 2^j \lvert P \rvert^{j} } \bigg) .
\end{align*}
We now show that the terms inside the exponential are equal to $O \big( X^{-1} \big)$, from which we easily deduce
\begin{align*}
P_X \Big( \frac{1}{2} , \chi \Big)^k
= \Big( 1 + O_k \big( X^{-1} \big) \Big) P_X^{*} \Big( \frac{1}{2} , \chi \Big)^{k} .
\end{align*}
To this end, using the prime polynomial theorem for the last line below, we have
\begin{align}
\begin{split}\label{P^k P*^-k exponential terms bound}
& \sum_{\degree P \leq X} \sum_{j=N_P + 1}^{\infty} \frac{\chi (P)^j}{ j \lvert P \rvert^{\frac{j}{2} }}
	+  \sum_{\frac{X}{2} < \degree P \leq X} \sum_{j=1}^{\infty} \frac{(-1)^j \chi (P)^{2j}}{ j 2^j \lvert P \rvert^{j}} \\
= & \sum_{\degree P \leq \frac{X}{2}} \sum_{j=N_P + 1}^{\infty} \frac{\chi (P)^j}{ j \lvert P \rvert^{\frac{j}{2} }}
	+ \sum_{\frac{X}{2} < \degree P \leq X} \sum_{j=2}^{\infty} \frac{\chi (P)^j}{ j \lvert P \rvert^{\frac{j}{2} }}
	+ \sum_{\frac{X}{2} < \degree P \leq X} \sum_{j=1}^{\infty} \frac{(-1)^j \chi (P)^{2j}}{ j 2^j \lvert P \rvert^{j}} \\
= & \sum_{\degree P \leq \frac{X}{2}} \sum_{j=N_P + 1}^{\infty} \frac{\chi (P)^j}{ j \lvert P \rvert^{\frac{j}{2} }}
	+ \sum_{\frac{X}{2} < \degree P \leq X} \sum_{j=3}^{\infty} \frac{\chi (P)^j}{ j \lvert P \rvert^{\frac{j}{2} }}
	+ \sum_{\frac{X}{2} < \degree P \leq X} \sum_{j=2}^{\infty} \frac{(-1)^j \chi (P)^{2j}}{ j 2^j \lvert P \rvert^{j}}  \\
\ll &\sum_{\degree P \leq \frac{X}{2}} \lvert P \rvert^{-\frac{N_P +1}{2} }
	+    \sum_{\frac{X}{2} < \degree P \leq X} \lvert P \rvert^{-\frac{3}{2}} 
\ll q^{-\frac{X}{2}} \sum_{\degree P \leq \frac{X}{2}} 1
	+    \sum_{\frac{X}{2} < n \leq X} \frac{q^{-\frac{n}{2}} }{n}
\ll \frac{ 1 }{X} .
\end{split}
\end{align}

We now proceed to prove (\ref{P_X (1/2 , chi) in terms of P_X* (1/2 , chi), a_k (A) def}). The first case is clear, so assume that $A \not\in \mathcal{S}_{\mathcal{M}} \Big( \frac{X}{2} \Big)$ and $A$ is not prime. We note that
\begin{align*}
&\bigg( 1 - \frac{\chi (P)}{\lvert P \rvert^{\frac{1}{2}} } \bigg)^{-1} \bigg( 1 + \frac{\chi (P)^2}{2 \lvert P \rvert } \bigg)^{-1} \\
= &\bigg( 1 + \frac{\chi (P)}{\lvert P \rvert^{\frac{1}{2}} } + \frac{\chi (P)^2}{\vert P \rvert } + \ldots \bigg)
	\bigg( 1 - \frac{\chi (P)^2}{2 \lvert P \rvert } + \frac{\chi (P)^4}{2^2 \lvert P \rvert^2 } - \ldots \bigg) \\
= &\sum_{r=0}^{\infty} \Bigg( \sum_{\substack{r_1 , r_2 \geq 0 \\ r_1 + 2r_2 = r }} \bigg( - \frac{1}{2} \bigg)^{r_2} \Bigg) \frac{ \chi (P)^r}{\lvert P \rvert^{\frac{r}{2}}}
= \sum_{r=0}^{\infty}  \frac{2}{3} \bigg( 1 - \Big( - \frac{1}{2} \Big)^{\lfloor \frac{r}{2} \rfloor + 1} \bigg) \frac{ \chi (P)^r}{\lvert P \rvert^{\frac{r}{2}}} .
\end{align*}
Since
\begin{align*}
0 
\leq \frac{2}{3} \bigg( 1 - \Big( - \frac{1}{2} \Big)^{\lfloor \frac{r}{2} \rfloor + 1} \bigg)
 \leq 1
\end{align*}
for all $r \geq 0$, the result follows.
\end{proof}

\begin{lemma}[Mertens' Third Theorem in {$\mathbb{F}_q [T]$}] \label{lemma, Mertens 3rd Theorem in FF}
We have
\begin{align*}
\prod_{\degree P \leq n} \bigg( 1 - \frac{1}{\lvert P \rvert} \bigg)^{-1}
\sim e^{\gamma} n .
\end{align*}
\end{lemma}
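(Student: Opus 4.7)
The plan is to take logarithms and split the resulting double sum into the $j=1$ contribution (which behaves like the partial harmonic sum) and the $j \geq 2$ tail (which is a convergent absolute constant). The main subtlety is identifying the absolute constant so that the answer comes out to precisely $e^{\gamma}$, with no extra $q$-dependent factor.

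First I would write
$$\log \prod_{\degree P \leq n} \bigg( 1 - \frac{1}{\lvert P \rvert} \bigg)^{-1} = \sum_{\degree P \leq n} \frac{1}{\lvert P \rvert} + \sum_{\degree P \leq n} \sum_{j \geq 2} \frac{1}{j \lvert P \rvert^{j}} .$$
The prime polynomial theorem (\ref{prime poly theorem, precise, statement}) gives $\lvert \mathcal{P}_m \rvert / q^m = 1/m + \delta_m$ with $\delta_m \ll q^{-m/2}/m$, so that
$$\sum_{\degree P \leq n} \frac{1}{\lvert P \rvert} = H_n + C_q + o(1), \qquad C_q := \sum_{m=1}^{\infty} \delta_m ,$$
where $H_n$ is the $n$-th harmonic number. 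Separately, $\sum_{\degree P \leq n} \sum_{j \geq 2}(j \lvert P \rvert^{j})^{-1}$ converges absolutely as $n \longrightarrow \infty$ to the constant $\alpha := \sum_P \sum_{j \geq 2}(j \lvert P \rvert^{j})^{-1}$. Combining with $H_n = \log n + \gamma + O(n^{-1})$ I obtain
$$\log \prod_{\degree P \leq n} \bigg( 1 - \frac{1}{\lvert P \rvert} \bigg)^{-1} = \log n + \gamma + (C_q + \alpha) + o(1) .$$

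The critical step will be showing that $C_q + \alpha = 0$, for which I would compare two expressions for $\log \zeta_{\mathcal{A}}(1+\epsilon)$ as $\epsilon \longrightarrow 0^{+}$. On the one hand, the closed form $\zeta_{\mathcal{A}}(s) = (1-q^{1-s})^{-1}$ yields $\log \zeta_{\mathcal{A}}(1+\epsilon) = -\log(1-q^{-\epsilon})$. On the other hand, the Euler product gives
$$\log \zeta_{\mathcal{A}}(1+\epsilon) = \sum_P \frac{1}{\lvert P \rvert^{1+\epsilon}} + \sum_P \sum_{j \geq 2} \frac{1}{j \lvert P \rvert^{j(1+\epsilon)}} .$$
The second sum on the right tends to $\alpha$ by dominated convergence; the first, via the same decomposition $\lvert \mathcal{P}_m \rvert/q^m = 1/m + \delta_m$ as above, equals $-\log(1-q^{-\epsilon}) + C_q + o(1)$. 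Equating both expressions and cancelling the common singular term $-\log(1-q^{-\epsilon})$ forces $C_q + \alpha = 0$.

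With this identity the theorem follows by exponentiating $\log n + \gamma + o(1)$. I expect the main obstacle to be verifying this cancellation cleanly, since both $C_q$ and $\alpha$ depend explicitly on $q$; their vanishing sum is essentially a reflection of the functional structure of $\zeta_{\mathcal{A}}$ at its simple pole at $s=1$.
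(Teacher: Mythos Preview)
Your proposal is correct. The paper itself does not give a self-contained proof of this lemma; it simply states that the argument is ``very similar to that of Theorem 3 in \cite{Rosen1999_GeneralizMertensTheorem}.'' Your approach---taking logarithms, isolating the $j=1$ contribution via the prime polynomial theorem, and then identifying the constant $C_q + \alpha$ by comparing the Euler-product and closed-form expansions of $\log \zeta_{\mathcal{A}}(1+\epsilon)$ as $\epsilon \to 0^{+}$---is precisely the standard route and is essentially what Rosen does in the cited reference. One minor remark: in your treatment of $\sum_P \lvert P \rvert^{-(1+\epsilon)}$, the identity $\sum_{m \geq 1} q^{-m\epsilon}/m = -\log(1 - q^{-\epsilon})$ is exact, so the $o(1)$ there refers only to $\sum_m \delta_m(q^{-m\epsilon} - 1) \to 0$, which is indeed justified by dominated convergence since $\sum_m \lvert \delta_m \rvert < \infty$; you have this right, but it is worth making that convergence explicit when you write it up.
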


\begin{proof}
The proof is very similar to that of Theorem 3 in \cite{Rosen1999_GeneralizMertensTheorem}.
\end{proof}

We can now prove Theorem \ref{theorem, Euler product 2k-th moment, Intro version}.

\begin{proof} [Proof of Theorem \ref{theorem, Euler product 2k-th moment, Intro version}]
Throughout this proof, any asymptotic relations are to be taken as $X , \degree R \overset{q,k}{\longrightarrow} \infty$ with $X \leq \log_q \degree R$. By Lemma \ref{lemma, P_X (1/2 , chi) in terms of P_X* (1/2 , chi)} it suffices to prove that
\begin{align*}
\frac{1}{\phi^* (R)} \sumstar_{\chi \modulus R} \Bigg\lvert \sum_{A \in \mathcal{S}_{\mathcal{M}} (X)} \frac{ \alpha_k (A) \chi (A)}{\lvert A \rvert^{\frac{1}{2}} } \Bigg\rvert^2
\sim a(k)  \prod_{\substack{ \degree P \leq X \\ P \mid R }} \Bigg( \sum_{m=0}^{\infty} \frac{ d_k \big( P^m \big)^2 }{\lvert P \rvert^m }  \Bigg)^{-1}  \Big( e^{\gamma} X \Big)^{k^2} .
\end{align*}
We will truncate our Dirichlet series. This will allow us to bound the lower order terms later. We have
\begin{align} \label{P_X* (1/2,chi) Dir Ser truncation, statement}
\sum_{A \in \mathcal{S}_{\mathcal{M}} (X) } \frac{ \alpha_k (A) \chi (A)}{\lvert A \rvert^{\frac{1}{2}} }
= \sum_{\substack{A \in \mathcal{S}_{\mathcal{M}} (X) \\ \degree A \leq \frac{1}{4} \degree R }} \frac{ \alpha_k (A) \chi (A)}{\lvert A \rvert^{\frac{1}{2}} }
	+ O \Big( \lvert R \rvert^{-\frac{1}{17}} \Big) .
\end{align}
This makes use of the following:
\begin{align}
\begin{split} \label{2k-th moment of Euler product, Dirichlet series truncation bound}
&\sum_{\substack{ A \in \mathcal{S}_{\mathcal{M}} (X) \\ \degree A > \frac{1}{4} \degree R }} \frac{ \alpha_k (A) \chi (A)}{\lvert A \rvert^{\frac{1}{2}} }
\leq \lvert R \rvert^{-\frac{1}{16}} \sum_{A \in \mathcal{S}_{\mathcal{M}} (X) } \frac{ d_k (A) }{\lvert A \rvert^{\frac{1}{4}} }
= \lvert R \rvert^{-\frac{1}{16}} \prod_{\degree P \leq X } \bigg( 1 - \frac{1}{\lvert P \rvert^{\frac{1}{4} }} \bigg)^{-k} \\
= &\lvert R \rvert^{-\frac{1}{16}} \exp \Bigg( \sum_{\degree P \leq X } -k \log \bigg( 1 - \frac{1}{\lvert P \rvert^{\frac{1}{4} }} \bigg) \Bigg)
= \lvert R \rvert^{-\frac{1}{16}} \exp \Bigg( k O \bigg( \sum_{\degree P \leq X } \frac{1}{\lvert P \rvert^{\frac{1}{4} }} \bigg) \Bigg) \\
= &\lvert R \rvert^{-\frac{1}{16}} \exp \Bigg( k O \bigg( \frac{q^{ \frac{3}{4} X} }{X} \bigg) \Bigg)
= \lvert R \rvert^{-\frac{1}{16}} \exp \Bigg( k O \bigg( \frac{ \degree R }{\log_q \degree R} \bigg) \Bigg)
= O \Big( \lvert R \rvert^{-\frac{1}{17}} \Big) .
\end{split}
\end{align}
By the Cauchy-Schwarz inequality, it suffices to prove that
\begin{align*}
\frac{1}{\phi^* (R)} \sumstar_{\chi \modulus R} \Bigg\lvert \sum_{\substack{A \in \mathcal{S}_{\mathcal{M}} (X) \\ \degree A \leq \frac{1}{4} \degree R }} \hspace{-0.75em} \frac{ \alpha_k (A) \chi (A)}{\lvert A \rvert^{\frac{1}{2}} } \Bigg\rvert^2
\sim a(k)  \prod_{\substack{ \degree P \leq X \\ P \mid R }} \Bigg( \sum_{m=0}^{\infty} \frac{ d_k \big( P^m \big)^2 }{\lvert P \rvert^m }  \Bigg)^{-1}  \Big( e^{\gamma} X \Big)^{k^2} .
\end{align*}
Now, we have that
\begin{align}
\begin{split} \label{2k-th moment of Euler product, expression with diag and off diag terms}
&\frac{1}{\phi^* (R)} \sumstar_{\chi \modulus R} \Bigg\lvert \sum_{\substack{ A \in \mathcal{S}_{\mathcal{M}} (X) \\ \degree A \leq \frac{1}{4} \degree R }} \frac{ \alpha_k (A) \chi (A)}{\lvert A \rvert^{\frac{1}{2}} } \Bigg\rvert^2 \\
= &\frac{1}{\phi^* (R)} \sum_{\substack{ A,B \in \mathcal{S}_{\mathcal{M}} (X) \\ \degree A, \degree B \leq \frac{1}{4} \degree R \\ (AB,R) = 1 }} \frac{ \alpha_k (A) \alpha_k (B) }{\lvert AB \rvert^{\frac{1}{2}} } \sum_{\substack{ EF=R \\ F \mid (A-B) }} \mu (E) \phi (F) \\
= &\frac{1}{\phi^* (R)} \sum_{EF=R} \mu (E) \phi (F) \sum_{\substack{ A,B \in \mathcal{S}_{\mathcal{M}} (X) \\ \degree A, \degree B \leq \frac{1}{4} \degree R \\ (AB,R) = 1 \\ A \equiv B (\modulus F)}} \frac{ \alpha_k (A) \alpha_k (B) }{\lvert AB \rvert^{\frac{1}{2}} } \\
= &\sum_{\substack{ A \in \mathcal{S}_{\mathcal{M}} (X) \\ \degree A \leq \frac{1}{4} \degree R \\ (A,R) = 1 }} \frac{ \alpha_k (A)^2 }{\lvert A \rvert }
	+ \frac{1}{\phi^* (R)} \sum_{EF=R} \mu (E) \phi (F) \sum_{\substack{ A,B \in \mathcal{S}_{\mathcal{M}} (X) \\ \degree A, \degree B \leq \frac{1}{4} \degree R \\ (AB,R) = 1 \\ A \equiv B (\modulus F) \\ A \neq B }} \frac{ \alpha_k (A) \alpha_k (B) }{\lvert AB \rvert^{\frac{1}{2}} } .
\end{split}
\end{align}
We first consider the second term on the far right side: The off-diagonal terms. We note that the inner sum is zero if $\degree F > \frac{1}{4} \degree R$, and we also make use of (\ref{P_X (1/2 , chi) in terms of P_X* (1/2 , chi), a_k (A) def}), to obtain
\begin{align*}
&\frac{1}{\phi^* (R)} \sum_{EF=R} \mu (E) \phi (F) \sum_{\substack{ A,B \in \mathcal{S}_{\mathcal{M}} (X) \\ \degree A, \degree B \leq \frac{1}{4} \degree R \\ (AB,R) = 1 \\ A \equiv B (\modulus F) \\ A \neq B }} \frac{ \alpha_k (A) \alpha_k (B) }{\lvert AB \rvert^{\frac{1}{2}} } \\
\ll &\frac{1}{\phi^* (R)} \sum_{\substack{EF=R \\ \degree F \leq \frac{1}{4} \degree R }} \phi (F) \sum_{A,B \in \mathcal{S}_{\mathcal{M}} (X) } \frac{ d_k (A) d_k (B) }{\lvert AB \rvert^{\frac{1}{2}} } \\
\leq &\frac{1}{\phi^* (R)} \prod_{\degree P \leq X} \Big( 1 - \lvert P \rvert^{-\frac{1}{2}} \Big)^{-2k} \sum_{\substack{EF=R \\ \degree F \leq \frac{1}{4} \degree R }} \phi (F) \\
\leq &\frac{1}{\phi^* (R)} \prod_{\degree P \leq X} \Big( 1 - \lvert P \rvert^{-\frac{1}{2}} \Big)^{-2k} \sum_{\substack{F \in \mathcal{M} \\ \degree F \leq \frac{1}{4} \degree R }} \lvert R \rvert^{\frac{1}{4}} \\
\leq &\frac{\lvert R \rvert^{\frac{1}{2}}}{\phi^* (R)} \exp \bigg( O \Big( 2k \frac{q^{\frac{X}{2}}}{X} \Big) \bigg)
= o(1) .
\end{align*}
The second-to-last relation makes use of a similar result to (\ref{2k-th moment of Euler product, Dirichlet series truncation bound}) and the last relation follows from the fact that $X , \degree R \longrightarrow \infty$ with $X \leq \log_q \degree R$. Now we consider the first term on the far right side of (\ref{2k-th moment of Euler product, expression with diag and off diag terms}): The diagonal terms. We required a truncated sum only for the off-diagonal terms, and so we extend our sum using similar means as in (\ref{2k-th moment of Euler product, Dirichlet series truncation bound}):
\begin{align*}
\sum_{\substack{ A \in \mathcal{S}_{\mathcal{M}} (X) \\ \degree A \leq \frac{1}{4} \degree R \\ (A,R) = 1 }} \frac{ \alpha_k (A)^2 }{\lvert A \rvert }
= \sum_{\substack{A \in \mathcal{S}_{\mathcal{M}} (X) \\ (A,R) = 1 }} \frac{ \alpha_k (A)^2 }{\lvert A \rvert }
	+ O \Big( \lvert R \rvert^{-\frac{1}{9}} \Big) .
\end{align*}
Now, using (\ref{P_X (1/2 , chi) in terms of P_X* (1/2 , chi), a_k (A) def}) for the first relation below (and part of the second relation), we have that
\begin{align}
\begin{split} \label{2k-th moment of Euler product, sum over A X smooth coprime to R of a_k (A)^2 /A}
&\sum_{\substack{A \in \mathcal{S}_{\mathcal{M}} (X) \\ (A,R) = 1 }} \frac{ \alpha_k (A)^2 }{\lvert A \rvert }
= \prod_{\substack{ \degree P \leq X \\ P \nmid R }} \Bigg( \sum_{m=0}^{\infty} \frac{ \alpha_k \big( P^m \big)^2 }{\lvert P \rvert^m }  \Bigg)  \\
= &\prod_{\substack{ \degree P \leq X \\ P \nmid R }} \Bigg( \sum_{m=0}^{\infty} \frac{ d_k \big( P^m \big)^2 }{\lvert P \rvert^m }  \Bigg) 
	\prod_{\substack{ \frac{X}{2} < \degree P \leq X \\ P \nmid R }} \Bigg( \frac{ 1 + \frac{d_k (P)^2}{\lvert P \rvert} + \sum_{m=2}^{\infty} \frac{ \alpha_k \big( P^m \big)^2 }{\lvert P \rvert^m }}{1 + \frac{d_k (P)^2}{\lvert P \rvert} + \sum_{m=2}^{\infty} \frac{ d_k \big( P^m \big)^2 }{\lvert P \rvert^m } } \Bigg) \\
= &\prod_{\substack{ \degree P \leq X \\ P \mid R }} \Bigg( \sum_{m=0}^{\infty} \frac{ d_k \big( P^m \big)^2 }{\lvert P \rvert^m }  \Bigg)^{-1}
	\prod_{\degree P \leq X} \Bigg( \bigg( 1 - \frac{1}{\lvert P \rvert} \bigg)^{k^2} \sum_{m=0}^{\infty} \frac{ d_k \big( P^m \big)^2 }{\lvert P \rvert^m } \Bigg) \\
	&\cdot \prod_{\degree P \leq X} \Bigg( 1 - \frac{1}{\lvert P \rvert} \Bigg)^{-k^2} 
	\prod_{\frac{X}{2} < \degree P \leq X} \Bigg( 1 + O_k \Big( \frac{1}{\lvert P \rvert^2} \Big) \Bigg) \\
= &\big( 1 + o (1) \big) a(k) \prod_{\substack{ \degree P \leq X \\ P \mid R }} \Bigg( \sum_{m=0}^{\infty} \frac{ d_k \big( P^m \big)^2 }{\lvert P \rvert^m }  \Bigg)^{-1} \Big( e^{\gamma} X \Big)^{k^2} .
\end{split}
\end{align}
For the last equality, we used Lemma \ref{lemma, Mertens 3rd Theorem in FF}. The proof follows.
\end{proof}


\section{Moments of the Hadamard Product} \label{section, Hadamard moments conjecture}

In this section we provide support for the Conjecture \ref{Hadamard moment conjecture, Intro version}. We require the following lemma.

\begin{lemma}
For real $y > 0$ define
\begin{align*}
\Ci (y)
:= - \int_{t=y}^{\infty} \frac{\cos (t)}{t} \mathrm{d} t ,
\end{align*}
and let $x$ be real and non-zero. Then,
\begin{align*}
\Re E_1 ( i x)
= - \Ci ( \lvert x \rvert ) .
\end{align*}
\end{lemma}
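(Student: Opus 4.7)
The plan is to split into the cases $x>0$ and $x<0$, reduce the second to the first by complex conjugation, and tackle the main case $x>0$ by contour shifting.

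For $x>0$, the idea is to move the integration contour in $E_1(ix)$ from the horizontal ray starting at $ix$ to the imaginary axis. Specifically, I would apply Cauchy's theorem to the integrand $w \mapsto e^{-w}/w$ (holomorphic on $\mathbb{C}\setminus\{0\}$) around the closed contour in the closed first quadrant given by: the horizontal segment from $ix$ to $ix+R$; the vertical segment from $ix+R$ down to $R$; the real segment from $R$ back to $\epsilon$; a quarter-circle arc of radius $\epsilon$ going from $\epsilon$ up to $i\epsilon$; and the vertical segment from $i\epsilon$ up to $ix$. Since the region enclosed contains no singularities, the total integral vanishes.

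Next I would take $R\longrightarrow\infty$, noting that the short right vertical piece is bounded by $x e^{-R}$ and hence drops out, leaving
\begin{align*}
E_1(ix) \;=\; \int_{\epsilon}^{\infty}\frac{e^{-t}}{t}\,\mathrm{d}t \;-\; \int_{\epsilon}^{x}\frac{e^{-it}}{t}\,\mathrm{d}t \;+\; \int_{\text{arc}}\frac{e^{-w}}{w}\,\mathrm{d}w,
\end{align*}
where the vertical contribution has been parametrized by $w=it$ with $t\in[\epsilon,x]$. Then I would take $\epsilon\longrightarrow 0^+$. The first term equals $E_1(\epsilon)=-\gamma-\log\epsilon+o(1)$; the cosine part of the second term equals $\int_{\epsilon}^{x}\cos t/t\,\mathrm{d}t=\Ci(x)-\gamma-\log\epsilon+o(1)$; and the arc, parametrized by $w=\epsilon e^{i\theta}$, $\theta\in[\pi/2,0]$, contributes $-i\pi/2+o(1)$. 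The two $-\log\epsilon$ terms cancel and the $-\gamma$'s cancel, yielding
\begin{align*}
E_1(ix) \;=\; -\Ci(x) \;+\; i\Bigl(\int_{0}^{x}\tfrac{\sin t}{t}\,\mathrm{d}t \;-\; \tfrac{\pi}{2}\Bigr).
\end{align*}
Taking real parts gives the claim for $x>0$.

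For $x<0$, write $y=-x>0$ and parametrize the defining integral by $w=-iy+s$, $s\in[0,\infty)$:
\begin{align*}
E_1(ix) \;=\; \int_{0}^{\infty}\frac{e^{-(-iy+s)}}{-iy+s}\,\mathrm{d}s \;=\; \overline{\int_{0}^{\infty}\frac{e^{-(iy+s)}}{iy+s}\,\mathrm{d}s} \;=\; \overline{E_1(iy)},
\end{align*}
so $\Re E_1(ix)=\Re E_1(i|x|)=-\Ci(|x|)$ by the first case.

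The only delicate point, and where a slip is easiest, is getting the orientations and signs of the five contour pieces right so that the two logarithmic divergences cancel exactly (rather than double), and correctly identifying the arc's limit as $-i\pi/2$ (sign depends on whether the arc is traversed clockwise or counterclockwise in the boundary orientation). Everything else is routine.
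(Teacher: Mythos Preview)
Your argument is correct, but it takes a considerably more elaborate route than the paper. The paper's proof is a two-line computation: for $x>0$ it shifts the horizontal ray $[ix,ix+\infty)$ directly to the upward imaginary ray $[ix,i\infty)$ (the connecting pieces at infinity vanish), parametrises by $w=it$ to get $\Re E_1(ix)=\Re\int_x^\infty \frac{e^{-it}}{t}\,\mathrm{d}t=\int_x^\infty\frac{\cos t}{t}\,\mathrm{d}t=-\Ci(x)$, and then repeats symmetrically for $x<0$ by shifting to $[ix,-i\infty)$. There is no need to touch the real axis, no small arc around the origin, no $\log\epsilon$ cancellation, and no appeal to the expansion $E_1(\epsilon)=-\gamma-\log\epsilon+o(1)$.

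Your approach works and in fact yields more: you recover the full value of $E_1(ix)$, including the imaginary part $\mathrm{Si}(x)-\pi/2$. But since the lemma only asks for the real part, the paper's shortcut of shifting straight onto the imaginary axis is the cleaner choice and sidesteps exactly the sign-tracking hazard you flagged. Your reduction of the case $x<0$ to $x>0$ via $E_1(-iy)=\overline{E_1(iy)}$ is a nice touch and arguably tidier than the paper's separate contour for that case.
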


\begin{proof}
If $x > 0$, then
\begin{align*}
\Re E_1 ( i x)
= \Re \int_{w= i x}^{ix + \infty} \frac{e^{-w}}{w} \mathrm{d} w
= \Re \int_{w= i x}^{i \infty} \frac{e^{-w}}{w} \mathrm{d} w
= \Re \int_{t=x}^{\infty} \frac{e^{-it}}{t} \mathrm{d} t
= - \Ci ( \lvert x \rvert ) ,
\end{align*}
where the second relation follows from a contour shift. Similarly, if $x < 0$, then
\begin{align*}
\Re E_1 ( i x)
= \Re \int_{w= i x}^{ix + \infty} \frac{e^{-w}}{w} \mathrm{d} w
= \Re \int_{w= i x}^{- i \infty} \frac{e^{-w}}{w} \mathrm{d} w
= \Re \int_{t= \lvert x \rvert}^{\infty} \frac{e^{it}}{t} \mathrm{d} t
= - \Ci ( \lvert x \rvert ) .
\end{align*}
\end{proof}

Now, writing $\gamma_n (\chi)$ for the imaginary part of the $n$-th zero of $L (s , \chi )$, we can see that
\begin{align}
\begin{split} \label{statement, conjecture Hadamard moments section, Z^2k in terms of Ci and gamma_n}
&\frac{1}{\phi^* (R)} \sumstar_{\chi \modulus R} \Big\lvert Z_{X} \Big( \frac{1}{2} , \chi \Big) \Big\rvert^{2k} \\
= &\frac{1}{\phi^* (R)} \sumstar_{\chi \modulus R} \exp \Bigg( - 2k \Re \sum_{\gamma_n (\chi )} U \bigg( - i \gamma_n (\chi ) (\log q) X \bigg) \Bigg) \\
= &\frac{1}{\phi^* (R)} \sumstar_{\chi \modulus R} \exp \Bigg( - 2k \Re \sum_{\gamma_n (\chi )} \int_{x=0}^{\infty} u(x) E_1 \big( - i \gamma_n (\chi ) (\log q) X \log x \big) \mathrm{d} x \Bigg) \\
= &\frac{1}{\phi^* (R)} \sumstar_{\chi \modulus R} \exp \Bigg( 2k \sum_{\gamma_n (\chi )} \int_{x=0}^{\infty} u(x) \Ci \big( \lvert \gamma_n (\chi ) \rvert (\log q) X \log x \big) \mathrm{d} x \Bigg) .
\end{split}
\end{align}
We note that the terms in the exponential tend to zero as $\lvert \gamma_n (\chi ) \rvert$ tends to infinity, and so the above is primarily concerned with the zeros close to $\frac{1}{2}$. As described in Section \ref{section, introduction}, there is a relationship between the zeros of Dirichlet $L$-functions near $\frac{1}{2}$ and the eigenphases of random unitary matrices near $0$: The proportion of Dirichlet $L$-functions of modulus $R$ that have $j$-th zero (that is, its imaginary part) in some interval $[a,b]$ appears to be the same as the proportion of unitary $N (R) \times N (R)$ matrices that have $j$-th eigenphase in $[a,b]$ (at least, this is the case in an appropriate limit). Naturally, one asks what value $N (R)$ should take in terms of $R$. We note that the mean spacing between zeros of Dirichlet $L$-functions of modulus $R$ is $\frac{2 \pi}{\log q \degree R}$, while the mean spacing between eigenphases of unitary $N \times N$ matrices is $\frac{2 \pi}{N}$. Therefore, we take $N (R) = \lfloor \log q \degree R \rfloor$. So, we replace the imaginary parts of the zeros with eigenphases of $N (R) \times N (R)$ unitary matrices, and instead of averaging over primitive characters we average over unitary matrices. That is, we conjecture
\begin{align}
\begin{split} \label{statement, Hada prod 2k-th mom conj via GHK, replacing zeros with e vals}
&\frac{1}{\phi^* (R)} \sumstar_{\chi \modulus R} \Big\lvert Z_{X} \Big( \frac{1}{2} , \chi \Big) \Big\rvert^{2k} \\
= &\frac{1}{\phi^* (R)} \sumstar_{\chi \modulus R} \exp \Bigg( 2k \sum_{\gamma_n (\chi )} \int_{x=0}^{\infty} u(x) \Ci \big( \lvert \gamma_n (\chi ) \rvert (\log q) X \log x \big) \mathrm{d} x \Bigg) \\
\sim &\int_{A \in U \big( N (R) \big)} \exp \Bigg( 2k \sum_{\theta_n (A)} \int_{x=0}^{\infty} u(x) \Ci \big( \lvert \theta_n (A) \rvert (\log q) X \log x \big) \mathrm{d} x \Bigg) \mathrm{d} A 
\end{split}
\end{align}
as $\degree R \longrightarrow \infty$, where the integral is with respect to the Haar measure, and $\theta_n (A)$ is the $n$-th eigenphase of $A$. The eigenphases are periodic with period $2 \pi$, and these periodicised eigenphases are included in the sum. An asymptotic evaluation of the right side can be made identically as in Section 4 of \cite{HybridEulerHadProdRZF_GonekHughesKeating}; but we simply replace their $\log X$ with our $(\log q) X$, and we replace their $N = \lfloor \log T \rfloor$ with our $N (R) = \lfloor \log q \degree R \rfloor$. This leads us to the conjecture that
\begin{align*} 
\frac{1}{\phi^* (R)} \sumstar_{\chi \modulus R} \Big\lvert Z_{X} \Big( \frac{1}{2} , \chi \Big) \Big\rvert^{2k}
\sim \frac{G^2 (k+1)}{G (2k+1)} \bigg( \frac{\degree R}{e^{\gamma} X} \bigg)^{k^2} ,
\end{align*}
as $\degree R \longrightarrow \infty$. We note that in \cite{HybridEulerHadProdRZF_GonekHughesKeating}, their $u(x)$ has a slightly different support than the support of our $u(x)$. However, this does not affect the result.

\begin{remark}
We will provide further justification for one of the steps above, which is not given in \cite{HybridEulerHadProdRZF_GonekHughesKeating}. In the middle line of (\ref{statement, Hada prod 2k-th mom conj via GHK, replacing zeros with e vals}) we have a sum over all $\gamma_n (\chi)$. This includes zeros that are far away from $\frac{1}{2}$. We mentioned previously that their contribution is small, but a closer inspection reveals that we cannot dismiss them so easily, and so we must justify replacing them with the eigenphases of our unitary matrices. For the zeros close to $\frac{1}{2}$ (that is, for $\gamma_n (\chi)$ close to $0$) we have already provided this justification. For the zeros further away, one can argue that the zeros of a typical Dirichlet $L$-function are equidistributed in some manner, and that the eigenphases of a typical unitary matrix are also equidistributed in some manner. Thus, we could replace the former with the latter. This is based on the idea that if you sum a function over a set of equidistributed points on some interval $I$, then the result is roughly equal to the integral over $I$ of that function multiplied by the reciprocal of the mean spacing of the points. Recall that the mean spacing of our eigenphases is equal to that of our zeros. Naturally, one asks why we do not use the same justification for the zeros close to $\frac{1}{2}$. The answer is that the function $\Ci (x)$ has a discontinuity at $x=0$, and so we require a stronger justification for the zeros near $\frac{1}{2}$ (that is, the $\gamma_n (\chi)$ close to $0$). Finally, we remark that we do not provide any rigorous support for the claims on equidistribution above.
\end{remark}


\section{The Second Hadamard Moment} \label{section, second twisted moment}

Before proving Theorem \ref{theorem, second twisted moment, Intro version}, we prove several lemmas. First, by (\ref{P_X (1/2 , chi) in terms of P_X* (1/2 , chi) and sum over a_k (A), statement}) we have
\begin{align*}
P_X \Big( \frac{1}{2} , \chi \Big)
= &\Big( 1 + O \big( X^{-1} \big) \Big) P_X^{*} \Big( \frac{1}{2} , \chi \Big) .
\end{align*}
Rearranging and using (\ref{P* definition}) gives 
\begin{align}
\begin{split} \label{P_X (1/2 , chi)^(-1) in terms of P_X* (1/2 , chi) and sum over a_(-1) (A), statement}
P_X \Big( \frac{1}{2} , \chi \Big)^{-1}
= &\Big( 1 + O \big( X^{-1} \big) \Big) P_X^{*} \Big( \frac{1}{2} , \chi \Big)^{-1} \\
= &\Big( 1 + O \big( X^{-1} \big) \Big)
	\prod_{\degree P \leq X} \bigg( 1 - \frac{\chi (P)}{\lvert P \rvert^s } \bigg)^{-1}
	\prod_{\frac{X}{2} < \degree P \leq X} \bigg( 1 + \frac{\chi (P)^2}{2 \lvert P \rvert^{2s} } \bigg)^{-1} \\
= &\Big( 1 + O \big( X^{-1} \big) \Big) \sum_{A \in \mathcal{S}_{\mathcal{M}} (X) } \frac{ \alpha_{-1} (A) \chi (A)}{\lvert A \rvert^{\frac{1}{2}} } ,
\end{split}
\end{align}
where $\alpha_{-1}$ is defined multiplicatively by
\begin{align*}
\alpha_{-1} (P)
&:= \begin{cases}
-1 &\text{ if $\degree P \leq X$} \\
0 &\text{ if $\degree P > X$} ;
\end{cases} \\
\alpha_{-1} (P^2)
&:= \begin{cases}
0 &\text{ if $\degree P \leq \frac{X}{2}$} \\
\frac{1}{2} &\text{ if $\frac{X}{2} < \degree P \leq X$} \\
0 &\text{ if $\degree P > X$} ;
\end{cases} \\
\alpha_{-1} (P^3)
&:= \begin{cases}
0 &\text{ if $\degree P \leq \frac{X}{2}$} \\
-\frac{1}{2} &\text{ if $\frac{X}{2} < \degree P \leq X$} \\
0 &\text{ if $\degree P > X$} ;
\end{cases} \\
\alpha_{-1} (P^m)
&:= 0 \text{ for $m \geq 4$.}
\end{align*}

\begin{lemma} \label{Sum of alpha_(-1) HS and HT /HST absolute}
For all $R \in \mathcal{M}$, we have that
\begin{align*}
\sum_{\substack{HST \in \mathcal{S}_{\mathcal{M}} (X) \\ (S,T)=1 \\ (HST,R)=1 \\ \degree HS , \degree HT \leq \frac{1}{10} \degree R }} \frac{\lvert \alpha_{-1} (HS) \alpha_{-1} (HT) \rvert}{\lvert HST \rvert}
\ll X^3
\end{align*}
as $X \longrightarrow \infty$.
\end{lemma}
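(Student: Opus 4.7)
The plan is to bound the sum by an Euler product and then invoke Mertens' third theorem (Lemma \ref{lemma, Mertens 3rd Theorem in FF}). First, observe that the restrictions $\degree HS, \degree HT \leq \frac{1}{10} \degree R$ and $(HST, R) = 1$ only shrink the range of summation, so dropping them gives an upper bound; it therefore suffices to estimate
\begin{align*}
\Sigma := \sum_{\substack{HST \in \mathcal{S}_{\mathcal{M}}(X) \\ (S,T) = 1}} \frac{\lvert \alpha_{-1}(HS) \alpha_{-1}(HT) \rvert}{\lvert HST \rvert}.
\end{align*}

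Next I would exploit the multiplicativity of $\alpha_{-1}$ to write $\Sigma$ as an Euler product. For each prime $P$ with $\degree P \leq X$, setting $a = v_P(H)$, $b = v_P(S)$, $c = v_P(T)$, the condition $(S,T) = 1$ localises to $bc = 0$. Hence $\Sigma = \prod_{\degree P \leq X} L_P$, where
\begin{align*}
L_P := \sum_{a \geq 0} \sum_{\substack{b, c \geq 0 \\ bc = 0}} \frac{\lvert \alpha_{-1}(P^{a+b}) \rvert \lvert \alpha_{-1}(P^{a+c}) \rvert}{\lvert P \rvert^{a+b+c}}.
\end{align*}

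Since $\alpha_{-1}(P^m) = 0$ for $m \geq 4$ and $\lvert \alpha_{-1}(P^m) \rvert \leq 1$, the sum $L_P$ is finite. A direct case analysis, splitting on $\degree P \leq \frac{X}{2}$ (where only $m = 0, 1$ contribute) versus $\frac{X}{2} < \degree P \leq X$ (where $m = 0, 1, 2, 3$ contribute), gives in the first case $L_P = 1 + 3/\lvert P \rvert$ exactly, and in the second case $L_P = 1 + 3/\lvert P \rvert + O(1/\lvert P \rvert^2)$. Thus uniformly
\begin{align*}
L_P = 1 + \frac{3}{\lvert P \rvert} + O\!\left( \frac{1}{\lvert P \rvert^2} \right).
\end{align*}

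Finally, using the elementary inequality $1 + 3/\lvert P \rvert \leq (1 - 1/\lvert P \rvert)^{-3}$ (valid since $\lvert P \rvert \geq 2$), the absolute convergence of $\prod_P (1 + O(1/\lvert P \rvert^2))$, and Lemma \ref{lemma, Mertens 3rd Theorem in FF}, one concludes
\begin{align*}
\Sigma \ll \prod_{\degree P \leq X} \left( 1 - \frac{1}{\lvert P \rvert} \right)^{-3} \sim \left( e^{\gamma} X \right)^3 \ll X^3,
\end{align*}
as required. The main obstacle will be the slightly tedious bookkeeping in the range $\frac{X}{2} < \degree P \leq X$, where the values $\alpha_{-1}(P^2)$ and $\alpha_{-1}(P^3)$ both enter the computation of $L_P$; but this is a finite calculation with no conceptual difficulty.
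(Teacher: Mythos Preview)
Your proof is correct and lands on the same Mertens estimate $\prod_{\degree P \leq X}(1-\lvert P\rvert^{-1})^{-3}\ll X^3$ as the paper. The paper's argument is more direct, however: it simply uses $\lvert\alpha_{-1}\rvert\leq 1$, drops the coprimality condition $(S,T)=1$ as well, and bounds the whole thing by $\bigl(\sum_{H\in\mathcal{S}_{\mathcal{M}}(X)}\lvert H\rvert^{-1}\bigr)^3$ in one step, avoiding the local-factor computation entirely.
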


\begin{proof}
Using Lemma \ref{lemma, Mertens 3rd Theorem in FF}, we have that
\begin{align*}
\sum_{\substack{HST \in \mathcal{S}_{\mathcal{M}}(X) \\ (S,T)=1 \\ (HST,R)=1 \\ \degree HS , \degree HT \leq \frac{1}{10} \degree R }} \hspace{-3em} \frac{\vert \alpha_{-1} (HS) \alpha_{-1} (HT) \rvert}{\lvert HST \rvert}
\ll \bigg( \sum_{H \in \mathcal{S}_{\mathcal{M}}(X)} \frac{1}{\lvert H \rvert} \bigg)^3 
= \hspace{-0.6em} \prod_{\degree P \leq X } \Big( 1 - \lvert P \rvert^{-1} \Big)^{-3} \hspace{-0.6em}
\ll X^3 
\end{align*}
as $X \longrightarrow \infty$.
\end{proof}

\begin{lemma} \label{Sum of alpha_(-1) HS and HT deg ST /HST }
For all $R \in \mathcal{M}$, we have that
\begin{align*}
\sum_{\substack{HST \in \mathcal{S}_{\mathcal{M}}(X) \\ (S,T)=1 \\ (HST,R)=1 \\ \degree HS , \degree HT \leq \frac{1}{10} \degree R }} \frac{\alpha_{-1} (HS) \alpha_{-1} (HT)}{\lvert HST \rvert} \degree ST
\ll X^4
\end{align*}
as $X \longrightarrow \infty$.
\end{lemma}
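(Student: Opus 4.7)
The plan is to mirror the proof of Lemma \ref{Sum of alpha_(-1) HS and HT /HST absolute} and extract a single extra factor of $X$ from the weight $\degree ST$. Since $\lvert \alpha_{-1} (P^m) \rvert \leq 1$ for every prime $P$ and every $m \geq 0$, multiplicativity gives $\lvert \alpha_{-1} (A) \rvert \leq 1$ for all $A \in \mathcal{S}_{\mathcal{M}}(X)$. Bounding the left-hand side in absolute value, writing $\degree ST = \degree S + \degree T$, using the symmetry $S \leftrightarrow T$, and dropping all the remaining constraints (coprimality with $R$, $(S,T)=1$, and the degree truncations $\degree HS, \degree HT \leq \tfrac{1}{10}\degree R$, all of which only restrict the sum), it suffices to establish
\begin{align*}
\bigg( \sum_{H \in \mathcal{S}_{\mathcal{M}}(X)} \frac{1}{\lvert H \rvert} \bigg) \bigg( \sum_{T \in \mathcal{S}_{\mathcal{M}}(X)} \frac{1}{\lvert T \rvert} \bigg) \bigg( \sum_{S \in \mathcal{S}_{\mathcal{M}}(X)} \frac{\degree S}{\lvert S \rvert} \bigg) \ll X^{4} .
\end{align*}

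The first two factors are each $\ll X$ by Lemma \ref{lemma, Mertens 3rd Theorem in FF}. For the third, I would write $\degree S = \sum_{P} e_{P} \degree P$, where $e_{P}$ is the exponent of $P$ in $S$, and reorganise the sum as
\begin{align*}
\sum_{S \in \mathcal{S}_{\mathcal{M}}(X)} \frac{\degree S}{\lvert S \rvert}
= \sum_{\degree P \leq X} \degree P \sum_{e \geq 1} \frac{e}{\lvert P \rvert^{e}} \sum_{\substack{S' \in \mathcal{S}_{\mathcal{M}}(X) \\ (S' , P) = 1}} \frac{1}{\lvert S' \rvert} .
\end{align*}
The innermost sum is $\ll X$ by Mertens once more, while $\sum_{e \geq 1} e / \lvert P \rvert^{e} \ll 1 / \lvert P \rvert$, so the third factor is bounded by $\ll X \sum_{\degree P \leq X} \degree P / \lvert P \rvert$. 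The prime polynomial theorem (\ref{prime poly theorem, precise, statement}) gives
\begin{align*}
\sum_{\degree P \leq X} \frac{\degree P}{\lvert P \rvert}
= \sum_{n \leq X} \frac{n \lvert \mathcal{P}_n \rvert}{q^{n}}
= \sum_{n \leq X} \Big( 1 + O \big( q^{-n/2} \big) \Big)
\ll X ,
\end{align*}
so the third factor is $\ll X^{2}$. Multiplying the three factors together yields the desired bound of $X^{4}$.

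The main obstacle is really just the bookkeeping required to handle the $\degree S$ weight, which morally amounts to differentiating the Mertens-type Euler product $\prod_{\degree P \leq X} (1 - \lvert P \rvert^{-1})^{-1}$ once in the exponent; the extra factor of $X$ it produces is precisely the promotion from $X^{3}$ to $X^{4}$. Each of the simplifications above loses only a harmless constant, so no subtlety enters beyond the standard function-field prime estimates.
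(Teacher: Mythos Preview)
Your proof is correct and follows essentially the same approach as the paper. The paper bounds by $\big(\sum_{H}\lvert H\rvert^{-1}\big)\sum_{S,T}\frac{\degree ST}{\lvert ST\rvert}$ and evaluates the second factor by literally differentiating the Euler product $\prod_{\degree P\le X}(1-\lvert P\rvert^{-s})^{-2}$ at $s=1$, obtaining $2\prod(1-\lvert P\rvert^{-1})^{-2}\sum_{\degree P\le X}\frac{\degree P}{\lvert P\rvert-1}\ll X^3$; your decomposition $\degree ST=\degree S+\degree T$ together with the prime-exponent expansion is the same computation carried out by hand, and your closing remark about differentiating the Mertens product is exactly what the paper does.
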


\begin{proof}
We have that
\begin{align*}
\sum_{\substack{HST \in \mathcal{S}_{\mathcal{M}} (X) \\ (S,T)=1 \\ (HST,R)=1 \\ \degree HS , \degree HT \leq \frac{1}{10} \degree R }} \frac{\alpha_{-1} (HS) \alpha_{-1} (HT)}{\lvert HST \rvert} \degree ST
\ll \sum_{H \in \mathcal{S}_{\mathcal{M}} (X)} \frac{1}{\lvert H \rvert} \sum_{ S,T \in \mathcal{S}_{\mathcal{M}} (X)} \frac{\degree ST }{\lvert S T \rvert} .
\end{align*}
Consider
\begin{align*}
f(s) := \sum_{S,T \in \mathcal{S}_{\mathcal{M}} (X)} \frac{1}{\lvert S T \rvert^s}
= \bigg( \sum_{T \in \mathcal{S}_{\mathcal{M}} (X)} \frac{1}{\lvert T \rvert^s} \bigg)^2  
= \prod_{\degree P \leq X } \Big( 1 - \lvert P \rvert^{-s} \Big)^{-2} .
\end{align*}
Taking the derivative of the above and then evaluating at $s=1$, we obtain
\begin{align*}
\sum_{ S,T \in \mathcal{S}_{\mathcal{M}} (X)} \frac{\degree ST }{\lvert S T \rvert}
= 2 \prod_{\degree P \leq X } \Big( 1 - \lvert P \rvert^{-1} \Big)^{-2} \sum_{\degree P \leq X } \frac{\degree P}{\lvert P \rvert -1}
\ll X^3 
\end{align*}
as $X \longrightarrow \infty$, where we have made use of Lemma \ref{lemma, Mertens 3rd Theorem in FF} and the prime polynomial theorem. This, along with the fact that
\begin{align*}
\sum_{H \in \mathcal{S}_{\mathcal{M}} (X)} \frac{1}{\lvert H \rvert}
= \prod_{\degree P \leq X } \Big( 1 - \lvert P \rvert^{-1} \Big)^{-1}
\ll X 
\end{align*}
as $X \longrightarrow \infty$, proves the lemma.
\end{proof}

\begin{lemma} \label{Sum of alpha_(-1) HS and HT /HST exact}
Let $V \in \mathcal{M}$. $V$ may or may not depend on $R$. As $X,\degree R \overset{q}{\longrightarrow} \infty$ with $X \leq \log_q \degree R$, we have
\begin{align*}
&\sum_{\substack{HST \in \mathcal{S}_{\mathcal{M}} (X) \\ (S,T)=1 \\ (HST,V)=1 \\ \degree HS , \degree HT \leq \frac{1}{10} \degree R }} \frac{\alpha_{-1} (HS) \alpha_{-1} (HT)}{\lvert HST \rvert} \\
= &\bigg( 1 + O \Big( q^{-\frac{X}{2}} \Big) \bigg) \prod_{\substack{\degree P \leq X \\ P \nmid V}} \bigg( 1 - \frac{1}{\lvert P \rvert} \bigg)
	+ O \bigg( \frac{1}{\lvert R \rvert^{\frac{1}{21}}} \bigg) 
\sim \prod_{\substack{\degree P \leq X \\ P \nmid V}} \bigg( 1 - \frac{1}{\lvert P \rvert} \bigg) .
\end{align*}
\end{lemma}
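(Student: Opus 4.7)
The plan is to reduce to the \emph{unrestricted} sum
\[
S_\infty \;:=\; \sum_{\substack{HST \in \mathcal{S}_\mathcal{M}(X)\\ (S,T)=1\\ (HST,V)=1}} \frac{\alpha_{-1}(HS)\alpha_{-1}(HT)}{\lvert HST\rvert},
\]
evaluate this exactly as an Euler product, and then absorb the removal of the degree constraints into the stated $O(\lvert R\rvert^{-1/21})$ error via Rankin's trick.

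First I would compute $S_\infty$. The constraints $(S,T)=1$, $(HST,V)=1$, and $H,S,T\in\mathcal{S}_\mathcal{M}(X)$ are local at each prime, so, since $\alpha_{-1}$ is multiplicative, the sum factors as a product over primes $P$ with $\degree P\leq X$ and $P\nmid V$. Writing $a=v_P(H)$, $b=v_P(S)$, $c=v_P(T)$, the local factor is
\[
L_P \;=\; \sum_{\substack{a,b,c\geq 0\\ bc=0}} \frac{\alpha_{-1}(P^{a+b})\alpha_{-1}(P^{a+c})}{\lvert P\rvert^{a+b+c}}.
\]
Using the explicit formulas for $\alpha_{-1}(P^m)$, direct case analysis yields $L_P = 1-\lvert P\rvert^{-1}$ whenever $\degree P\leq X/2$ (only $m\in\{0,1\}$ contribute), and $L_P = (1-\lvert P\rvert^{-1})(1+\tfrac{1}{4\lvert P\rvert^2})$ for $X/2<\degree P\leq X$. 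By the prime polynomial theorem, $\sum_{X/2<\degree P\leq X}\lvert P\rvert^{-2}\ll q^{-X/2}/X$, so
\[
S_\infty \;=\; \bigl(1+O(q^{-X/2})\bigr)\prod_{\substack{\degree P\leq X\\ P\nmid V}}\Bigl(1-\frac{1}{\lvert P\rvert}\Bigr).
\]

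Next I would control the truncation error $E:=S_\infty - S$, where $S$ denotes the restricted sum. By a union bound and symmetry, $\lvert E\rvert$ is at most twice the analogous sum over admissible triples with $\degree HS>\tfrac{1}{10}\degree R$. Applying Rankin's trick with parameter $\sigma=\tfrac12$ replaces the indicator by $\lvert HS\rvert^{1/2}\lvert R\rvert^{-1/20}$; using $\lvert\alpha_{-1}\rvert\leq 1$ (which follows from $\lvert\alpha_{-1}(P^m)\rvert\leq 1$ and multiplicativity) and dropping the coprimality condition decouples the three variables, giving
\[
\lvert E\rvert \;\ll\; \lvert R\rvert^{-1/20}\Bigl(\sum_{A\in\mathcal{S}_\mathcal{M}(X)} \lvert A\rvert^{-1/2}\Bigr)^{\!2}\sum_{T\in\mathcal{S}_\mathcal{M}(X)} \lvert T\rvert^{-1}.
\]
The Euler products are bounded by $\exp(O(q^{X/2}))$ (via the prime polynomial theorem applied to $\sum_{\degree P\leq X}\lvert P\rvert^{-1/2}$) and by $O(X)$ (via Lemma \ref{lemma, Mertens 3rd Theorem in FF}). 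Since $X\leq\log_q\degree R$ gives $q^{X/2}\leq(\degree R)^{1/2}$, I would conclude
\[
\lvert E\rvert \;\ll\; \lvert R\rvert^{-1/20}\exp\!\bigl(O((\degree R)^{1/2})\bigr)X \;\ll\; \lvert R\rvert^{-1/21},
\]
once $\degree R$ is large enough (depending on $q$), since $(\degree R)^{1/2}=o(\degree R\log q)$.

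Combining the two estimates gives the stated equality. The concluding $\sim$ then follows because $1+O(q^{-X/2})\to 1$ and, by Lemma \ref{lemma, Mertens 3rd Theorem in FF}, the main term is $\gg 1/X\geq 1/\log_q\degree R$, which dominates the $O(\lvert R\rvert^{-1/21})$ error. The main obstacle will be Step~2: one must choose the Rankin parameter $\sigma$ in the narrow window $(10/21,\,1)$, since smaller $\sigma$ fails to beat $\lvert R\rvert^{-1/21}$ while larger $\sigma$ makes the auxiliary Euler product $\sum\lvert A\rvert^{-(1-\sigma)}$ blow up too quickly to be absorbed.
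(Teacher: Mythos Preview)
Your proof is correct. Both your truncation step and the paper's proceed via Rankin's trick (the paper merely points back to the earlier instance (\ref{2k-th moment of Euler product, Dirichlet series truncation bound}) rather than writing it out), so that part is essentially the same.

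The genuine difference is in how you evaluate the unrestricted sum $S_\infty$. You observe that the constraints $(S,T)=1$, $(HST,V)=1$, and $X$-smoothness are all local, so the triple sum factors \emph{directly} as an Euler product over primes, with local factor $L_P=\sum_{bc=0}\alpha_{-1}(P^{a+b})\alpha_{-1}(P^{a+c})\lvert P\rvert^{-(a+b+c)}$, which you then compute explicitly. The paper instead makes the change of variables $C=HS$, $D=HT$ (a bijection from triples $(H,S,T)$ with $(S,T)=1$ to pairs $(C,D)$), picking up the weight $\lvert (C,D)\rvert$; it then opens this weight via $\lvert (C,D)\rvert=\sum_{G\mid (C,D)}\phi(G)$, obtaining a sum $\sum_G \phi(G)\lvert G\rvert^{-2}\bigl(\sum_C \alpha_{-1}(CG)\lvert C\rvert^{-1}\bigr)^2$, and only then passes to an Euler product. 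Your route is shorter and yields the exact secondary factor $1+\tfrac{1}{4\lvert P\rvert^2}$ for $X/2<\degree P\le X$, whereas the paper is content with $1+O(\lvert P\rvert^{-2})$. The paper's detour through $(C,D)$ and the totient identity mirrors the structure used later in the diagonal analysis of the second Hadamard moment (see (\ref{statement, EH chapter, 2nd mom hada prod section, ABCD main sum as GHST sum, G sum separate})), which is presumably why it is set up that way; for this lemma in isolation, your direct factorisation is cleaner.
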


\begin{proof}
The second relation in the Lemma follows easily from Lemma \ref{lemma, Mertens 3rd Theorem in FF}. We will prove the first. In this proof, all asymptotic relations are to be taken as $X,\degree R \overset{q}{\longrightarrow} \infty$ with $X \leq \log_q \degree R$. \\

Similar to (\ref{P_X* (1/2,chi) Dir Ser truncation, statement}), we can remove the conditions $\degree HS , \degree HT \leq \frac{1}{10} \degree R$ from the sum and this only adds an $O \big( \lvert R \rvert^{-\frac{1}{21}})$ term . Now, writing $C=HS$ and $D=HT$, we have
\begin{align*}
&\sum_{\substack{HST \in \mathcal{S}_{\mathcal{M}} (X) \\ (S,T)=1 \\ (HST,V)=1 }} \frac{\alpha_{-1} (HS) \alpha_{-1} (HT)}{\lvert HST \rvert} 
= \sum_{\substack{CD \in \mathcal{S}_{\mathcal{M}} (X) \\ (CD,V)=1 }} \frac{\alpha_{-1} (C) \alpha_{-1} (D)}{\lvert CD \rvert} \lvert (C,D) \rvert \\
= &\sum_{\substack{CD \in \mathcal{S}_{\mathcal{M}} (X) \\ (CD,V)=1 }} \frac{\alpha_{-1} (C) \alpha_{-1} (D)}{\lvert CD \rvert} \sum_{G \mid (C,D)} \phi (G) 
= \sum_{\substack{G \in \mathcal{S}_{\mathcal{M}} (X) \\ (G,V)=1 }} \frac{\phi (G)}{\lvert G \rvert^2} \Bigg( \sum_{\substack{C \in \mathcal{S}_{\mathcal{M}} (X) \\ (C,V)=1 }} \frac{\alpha_{-1} (CG)}{\lvert C \rvert} \Bigg)^2 .
\end{align*}

Before continuing, let us make a definition: For all $A \in \mathcal{M}$ and all $P \in \mathcal{P}$, let $e_P (A)$ be the largest integer such that $P^{e_P (A)} \mid A$. Continuing, we note that we can restrict the sums to polynomials that are fourth power free. Indeed, $\alpha_{-1} (P^m) = 0$ for all $P \in \mathcal{P}$ and all $m \geq 4$. Note that if $P \mid G$ then we must have that $0 \leq e_P (C) \leq 3 - e_P (G)$, while if $P \nmid G$ then $0 \leq e_P (C) \leq 3$. So, we have
\begin{align*}
&\sum_{\substack{C \in \mathcal{S}_{\mathcal{M}} (X) \\ (C,V)=1 }} \hspace{-1em} \frac{\alpha_{-1} (CG)}{\lvert C \rvert} 
= \prod_{P \mid G} \Bigg( \sum_{j=0}^{3-e_P (G)} \frac{\alpha_{-1} (P^{j + e_P (G)})}{\lvert P \rvert^j} \Bigg)
	\prod_{\substack{\degree P \leq X \\ P \nmid G \\ P \nmid V}} \Bigg( \sum_{j=0}^{3} \frac{\alpha_{-1} (P^{j})}{\lvert P \rvert^j} \Bigg) \\
= &\prod_{\substack{\degree P \leq X \\ P \nmid V}} \Bigg( \sum_{j=0}^{3} \frac{\alpha_{-1} (P^{j})}{\lvert P \rvert^j} \Bigg)
	\prod_{P \mid G} \Bigg( \sum_{j=0}^{3-e_P (G)} \frac{\alpha_{-1} (P^{j + e_P (G)})}{\lvert P \rvert^j} \Bigg)
	\prod_{P \mid G} \Bigg( \sum_{j=0}^{3} \frac{\alpha_{-1} (P^{j})}{\lvert P \rvert^j} \Bigg)^{-1} .
\end{align*}
So,
\begin{align*}
&\sum_{\substack{G \in \mathcal{S}_{\mathcal{M}} (X) \\ (G,V)=1 }} \frac{\phi (G)}{\lvert G \rvert^2} \Bigg( \sum_{\substack{C \in \mathcal{S}_{\mathcal{M}} (X) \\ (C,V)=1 }} \frac{\alpha_{-1} (CG)}{\lvert C \rvert} \Bigg)^2 \\
= &\prod_{\substack{\degree P \leq X \\ P \nmid V}} \hspace{-0.3em} \Bigg( \sum_{j=0}^{3} \frac{\alpha_{-1} (P^{j})}{\lvert P \rvert^j} \Bigg)^2
	\hspace{-0.8em} \prod_{\substack{\degree P \leq X \\ P \nmid V}} \hspace{-0.3em} \Bigg( \sum_{i=0}^{3} \frac{\phi (P^i)}{\lvert P \rvert^{2i}} 
		\Bigg( \sum_{j=0}^{3-i} \frac{\alpha_{-1} (P^{j + i})}{\lvert P \rvert^j} \Bigg)^2
		\Bigg( \sum_{j=0}^{3} \frac{\alpha_{-1} (P^{j})}{\lvert P \rvert^j} \Bigg)^{-2} \Bigg) \\
= &\prod_{\substack{\degree P \leq X \\ P \nmid V}} \Bigg( \sum_{i=0}^{3} \frac{\phi (P^i)}{\lvert P \rvert^{2i}} 
		\Bigg( \sum_{j=0}^{3-i} \frac{\alpha_{-1} (P^{j + i})}{\lvert P \rvert^j} \Bigg)^2 \Bigg) \\
= & \prod_{\substack{\degree P \leq X \\ P \nmid V}} \Bigg( \sum_{i=0}^{3} \sum_{j=0}^{3-i} \sum_{k=0}^{3-i} \frac{\phi (P^i ) \alpha_{-1} (P^{j+i}) \alpha_{-1} (P^{k+i})}{\lvert P^{2i+j+k} \rvert} \Bigg) \\
= &\prod_{\substack{\degree P \leq X \\ P \nmid V}} \bigg( 1 - \frac{1}{\lvert P \rvert} \bigg) 
	\prod_{\substack{\frac{X}{2} \leq \degree P \leq X \\ P \nmid V}} \bigg( 1 + O \Big( \frac{1}{\lvert P^2 \rvert} \Big) \bigg) \\
= &\bigg( 1 + O \Big( q^{-\frac{X}{2}} \Big) \bigg) \prod_{\substack{\degree P \leq X \\ P \nmid V}} \bigg( 1 - \frac{1}{\lvert P \rvert} \bigg) .
\end{align*}
The result follows.
\end{proof}

\begin{lemma} \label{Sum of deg AB = Z_1, AC equiv BD mod E of AB^(1/2)}
Let $R \in \mathcal{M}$. Suppose $Z_1 \leq \degree R$ and $F \mid R$. Further, suppose $C,D \in \mathcal{S}_{\mathcal{M}} (X)$ with $\degree C , \degree D \leq \frac{1}{10} \degree R$. Then, we have
\begin{align*}
\sum_{\substack{A,B \in \mathcal{M} \\ \degree AB = Z_1 \\ AC \equiv BD (\modulus F) \\ AC \neq BD \\ (AB,R)=1}} \frac{1}{\lvert AB \rvert^{\frac{1}{2}}}
\ll \frac{q^{\frac{Z_1 }{2}} (Z_1 + 1) \lvert CD \rvert}{\lvert F \rvert} .
\end{align*}
\end{lemma}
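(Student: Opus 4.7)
The plan is to reduce the inequality to a counting problem. Since $\lvert AB \rvert = q^{Z_1}$ throughout the sum, writing $N$ for the number of pairs $(A, B)$ satisfying the stated constraints, it suffices to prove $N \ll q^{Z_1}(Z_1 + 1) \lvert CD \rvert / \lvert F \rvert$. I would first drop the harmless $(AB, R) = 1$ condition for an upper bound and reparameterize: since $AC \neq BD$ but $F \mid AC - BD$, write $AC - BD = kF$ for some nonzero $k \in \mathcal{A}$.

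Setting $a := \degree A$, so that $\degree B = Z_1 - a$, the identity $AC - BD = kF$ forces $\degree(kF) \leq \max(a + \degree C, Z_1 - a + \degree D)$, restricting $k$ to at most $q^{M_a + 1}$ choices, where $M_a := \max(a + \degree C, Z_1 - a + \degree D) - \degree F$. For each fixed $(a, k)$, $A$ is determined by $B$ via $A = (BD + kF)/C$, so it remains to count monic $B$ of degree $Z_1 - a$ with $BD \equiv -kF \ (\modulus C)$. With $H := \gcd(C, D)$ and the factorizations $C = H C'$, $D = H D'$, $\gcd(C', D') = 1$, this congruence requires $H \mid kF$ and, when satisfied, determines $B$ modulo $C/H = C'$; a straightforward count of monic polynomials of degree $Z_1 - a$ in a single residue class modulo $C'$ yields $\#\{B\} \leq 1 + \lvert H \rvert q^{Z_1 - a - \degree C}$. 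Parameterizing instead through $B = (AC - kF)/D$ gives the symmetric bound $\#\{A\} \leq 1 + \lvert H \rvert q^{a - \degree D}$.

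I would then split the sum over $a \in \{0, 1, \ldots, Z_1\}$ into Case A: $a + \degree C \geq Z_1 - a + \degree D$ (so $M_a = a + \degree C - \degree F$), and Case B: the reverse (so $M_a = Z_1 - a + \degree D - \degree F$). Applying the $\#\{B\}$-bound in Case A and the $\#\{A\}$-bound in Case B, the product $q^{M_a + 1}$ times the count bound splits in either case into a term geometric in $a$ plus a term $\lvert H \rvert q^{Z_1 - \degree F + 1}$ that is constant in $a$. The geometric terms sum to $\ll q^{Z_1 + \degree C - \degree F} + q^{Z_1 + \degree D - \degree F}$, and the constant-in-$a$ contributions, summed over the $Z_1 + 1$ values of $a$, total $\ll \lvert H \rvert (Z_1 + 1) q^{Z_1 - \degree F}$. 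Since $\lvert H \rvert \leq \min(\lvert C \rvert, \lvert D \rvert) \leq \lvert CD \rvert^{1/2}$, both contributions are absorbed into the target $q^{Z_1}(Z_1 + 1) \lvert CD \rvert / \lvert F \rvert$, which upon dividing by $q^{Z_1/2}$ gives the claim.

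The main obstacle I anticipate is the necessity of the symmetric $A$-versus-$B$ switch: committing to a single parameterization leaves a wasteful bound in the regime where the corresponding degree ($\degree A$ or $\degree B$) is small, and only by handling each case with the appropriate parameterization does the estimate close uniformly in $a$. A secondary subtlety is the monic-polynomial count within a residue class of $C'$, where the two sub-regimes $Z_1 - a \geq \degree C'$ and $Z_1 - a < \degree C'$ are cleanly combined into the single bound $1 + \lvert H \rvert q^{Z_1 - a - \degree C}$.
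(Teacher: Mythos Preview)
Your approach is essentially the same as the paper's: both write $AC-BD=kF$ (the paper calls it $L$), split according to which of $\degree AC$ and $\degree BD$ is larger, and for fixed $k$ count one of $A,B$ freely while the other is determined. The paper's execution is slightly simpler because it skips your congruence refinement entirely: rather than bounding $\#\{B\}$ via $BD\equiv -kF\ (\modulus C)$ and the factor $H=(C,D)$, it just uses the trivial count $\#\{B\}\le q^{Z_1-a}$, which already makes the product $q^{M_a+1}\cdot q^{Z_1-a}$ constant in $a$ and gives $(Z_1+1)q^{Z_1}\lvert C\rvert/\lvert F\rvert$ directly in your Case~A (and the symmetric bound in Case~B). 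Your $\gcd$ argument is correct and yields a marginally sharper intermediate estimate, but it is not needed for the stated bound.
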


\begin{proof}
Consider the case where $\degree AC > \degree BD$, and suppose that $\degree A =i$. We have that $AC = LF + BD$ for some $L \in \mathcal{M}$ with $\degree L = \degree AC - \degree F = i + \degree C - \degree F$, and $\degree B = Z_1 - \degree A = Z_1 - i$. Hence,
\begin{align*}
\sum_{\substack{A,B \in \mathcal{M} \\ \degree AB = Z_1 \\ AC \equiv BD (\modulus F) \\ (AB,R)=1 \\ \degree AC > \degree BD}} \frac{1}{\lvert AB \rvert^{\frac{1}{2}}} 
\leq &q^{-\frac{Z_1 }{2}} \sum_{i=0}^{Z_1} \sum_{\substack{L \in \mathcal{M} \\ \degree L = i + \degree C - \degree F}} \; \sum_{\substack{B \in \mathcal{M} \\ \degree B = Z_1 - i}} 1 \\
= &q^{\frac{Z_1 }{2}} \sum_{i=0}^{Z_1} \sum_{\substack{L \in \mathcal{M} \\ \degree L = i + \degree C - \degree F}} q^{-i}
= \frac{q^{\frac{Z_1 }{2}} \lvert C \rvert}{\lvert F \rvert} \sum_{i=0}^{Z_1} 1
= \frac{q^{\frac{Z_1 }{2}} (Z_1 + 1) \lvert C \rvert}{\lvert F \rvert} .
\end{align*}
Similarly, when $\degree BD > \degree AC$ we have
\begin{align*}
\sum_{\substack{A,B \in \mathcal{M} \\ \degree AB = Z_1 \\ AC \equiv BD (\modulus F) \\ (AB,R)=1 \\ \degree AC > \degree BD}} \frac{1}{\lvert AB \rvert^{\frac{1}{2}}}
\leq \frac{q^{\frac{Z_1 }{2}} (Z_1 + 1) \lvert D \rvert}{\lvert F \rvert} .
\end{align*}
Suppose now that $\degree AC = \degree BD = i$. Then, $2i = \degree ABCD = Z_1 + \degree CD$. We have $\degree B = i - \degree D = \frac{Z_1 + \degree C - \degree D}{2}$, and $AC = LF + BD$ for some $L \in \mathcal{A}$ with $\degree L < i - \degree F = \frac{Z_1 + \degree CD}{2} - \degree F$. Hence,
\begin{align*}
\sum_{\substack{A,B \in \mathcal{M} \\ \degree AB = Z_1 \\ AC \equiv BD (\modulus F) \\ (AB,R)=1 \\ \degree AC = \degree BD}} \frac{1}{\lvert AB \rvert^{\frac{1}{2}}}
\leq &q^{-\frac{Z_1 }{2}} \sum_{\substack{B \in \mathcal{M} \\ \degree B = \frac{Z_1 + \degree C - \degree D}{2}}} \sum_{\substack{L \in \mathcal{A} \\ \degree L < \frac{Z_1 + \degree CD}{2} - \degree F}} 1 \\
= &\frac{\lvert CD \rvert^{\frac{1}{2}}}{\lvert F \rvert} \sum_{\substack{B \in \mathcal{M} \\ \degree B = \frac{Z_1 + \degree C - \degree D}{2}}} 1
= \frac{q^{\frac{Z_1 }{2}} \lvert C \rvert}{\lvert F \rvert} .
\end{align*}
The result follows.
\end{proof}

We can now prove Theorem \ref{theorem, second twisted moment, Intro version}.

\begin{proof}[Proof of Theorem \ref{theorem, second twisted moment, Intro version}]
Throughout the proof, all asymptotic relations will be taken as $X , \degree R \overset{q}{\longrightarrow} \infty$ with $X \leq \log_q \degree R$. Now, by (\ref{P_X (1/2 , chi)^(-1) in terms of P_X* (1/2 , chi) and sum over a_(-1) (A), statement}), we have
\begin{align} \label{statement, second twisted moment proof, replace P with P*}
\frac{1}{\phi^* (R)} \sumstar_{\chi \modulus R} \Big\lvert L \Big( \frac{1}{2} , \chi \Big) P_X \Big( \frac{1}{2} , \chi \Big)^{-1} \Big\rvert^2
\sim \frac{1}{\phi^* (R)} \sumstar_{\chi \modulus R} \Big\lvert L \Big( \frac{1}{2} , \chi \Big)  P_X^* \Big( \frac{1}{2} , \chi \Big)^{-1} \Big\rvert^2 .
\end{align}
Similar to (\ref{P_X* (1/2,chi) Dir Ser truncation, statement}), we truncate our sum:
\begin{align*}
P_X^* \Big( \frac{1}{2} , \chi \Big)^{-1}
= \sum_{\substack{ C \in \mathcal{S}_{\mathcal{M}} (X) \\ \degree C \leq \frac{1}{10} \degree R }} \frac{\alpha_{-1} (C) \chi (C) }{\lvert C \rvert^{\frac{1}{2}} }
	+ O \Big( \lvert R \rvert^{-\frac{1}{50}} \Big) .
\end{align*}
Using this, the Cauchy-Schwarz inequality, and (\ref{statement, 2nd moment DLF in FF}), it suffices to prove that
\begin{align}
\begin{split} \label{statement, 2nd twisted moment,C,D truncted suffices to prove}
&\frac{1}{\phi^* (R)} \sumstar_{\chi \modulus R} \Big\lvert L \Big( \frac{1}{2} , \chi \Big) \Big\rvert^2 \sum_{\substack{ C,D \in \mathcal{S}_{\mathcal{M}} (X) \\ \degree C , \degree D \leq \frac{1}{10} \degree R }} \frac{\alpha_{-1} (C) \alpha_{-1} (D) \chi (C) \conj\chi (D)}{\lvert C D \rvert^{\frac{1}{2}} } \\
\sim &\frac{\degree R}{e^{\gamma} X} \prod_{\substack{\degree P > X \\ P \mid R}} \bigg( 1 - \frac{1}{\lvert P \rvert} \bigg)  .
\end{split}
\end{align}
Now, by Lemma \ref{Lemma, short sum for squared L-function}, we have
\begin{align*}
&\frac{1}{\phi^* (R)} \sumstar_{\chi \modulus R} \Big\lvert L \Big( \frac{1}{2} , \chi \Big) \Big\rvert^2 \sum_{\substack{ C,D \in \mathcal{S}_{\mathcal{M}} (X) \\ \degree C , \degree D \leq \frac{1}{10} \degree R }} \frac{\alpha_{-1} (C) \alpha_{-1} (D) \chi (C) \conj\chi (D)}{\lvert C D \rvert^{\frac{1}{2}} } \\
= &\frac{1}{\phi^* (R)} \sumstar_{\chi \modulus R} \Big( a(\chi )+ c(\chi ) \Big)  \sum_{\substack{ C,D \in \mathcal{S}_{\mathcal{M}} (X) \\ \degree C , \degree D \leq \frac{1}{10} \degree R }} \frac{\alpha_{-1} (C) \alpha_{-1} (D) \chi (C) \conj\chi (D)}{\lvert C D \rvert^{\frac{1}{2}} } ,
\end{align*}
where
\begin{align*}
a (\chi) := &2\sum_{\substack{A,B \in \mathcal{M} \\ \degree AB < \degree R }} \frac{\chi (A) \conj\chi (B)}{\lvert AB \rvert^{\frac{1}{2}}}
\end{align*}
and $c(\chi )$ is defined in Lemma \ref{Lemma, short sum for squared L-function}. \\

We first consider the case with $a (\chi )$. We have 
\begin{align}
\begin{split} \label{second twisted moment, a(chi) sum as diag and off diag terms}
&\frac{1}{\phi^* (R)} \sumstar_{\chi \modulus R} a(\chi ) \sum_{\substack{ C,D \in \mathcal{S}_{\mathcal{M}} (X) \\ \degree C , \degree D \leq \frac{1}{10} \degree R }} \frac{\alpha_{-1} (C) \alpha_{-1} (D) \chi (C) \conj\chi (D)}{\lvert C D \rvert^{\frac{1}{2}} } \\
= &\frac{2}{\phi^* (R)} \sumstar_{\chi \modulus R} \sum_{\substack{A,B \in \mathcal{M} \\ C,D \in \mathcal{S}_{\mathcal{M}} (X) \\ \degree AB < \degree R \\ \degree C , \degree D \leq \frac{1}{10}\degree R }} \frac{ \alpha_{-1} (C) \alpha_{-1} (D) \chi (AC) \conj\chi (BD)}{\lvert ABCD \rvert^{\frac{1}{2}}} \\
= &\frac{2}{\phi^* (R)} \sum_{EF=R} \mu (E) \phi (F)  \sum_{\substack{A,B \in \mathcal{M} \\ C,D \in \mathcal{S}_{\mathcal{M}} (X) \\ \degree AB < \degree R \\ \degree C , \degree D \leq \frac{1}{10}\degree R \\ (ABCD,R)=1 \\ AC \equiv BD (\modulus F) }} \frac{\alpha_{-1} (C) \alpha_{-1} (D)}{\lvert ABCD \rvert^{\frac{1}{2}}} \\
= &2\sum_{\substack{A,B \in \mathcal{M} \\ C,D \in \mathcal{S}_{\mathcal{M}} (X) \\ \degree AB < \degree R \\ \degree C , \degree D \leq \frac{1}{10}\degree R \\ (ABCD,R)=1 \\ AC = BD }} \frac{\alpha_{-1} (C) \alpha_{-1} (D)}{\lvert ABCD \rvert^{\frac{1}{2}}} \\
	&+ \frac{2}{\phi^* (R)} \sum_{EF=R} \mu (E) \phi (F)  \sum_{\substack{A,B \in \mathcal{M} \\ C,D \in \mathcal{S}_{\mathcal{M}} (X) \\ \degree AB < \degree R \\ \degree C , \degree D \leq \frac{1}{10}\degree R \\ (ABCD,R)=1 \\ AC \equiv BD (\modulus F) \\ AC \neq BD }} \frac{\alpha_{-1} (C) \alpha_{-1} (D)}{\lvert ABCD \rvert^{\frac{1}{2}}} .
\end{split}
\end{align}

For the first term on the far right side, the diagonal terms, we can write $A=GS$, $B=GT$, $C=HT$, $D=HS$ where $G,H,S,T \in \mathcal{M}$ and $(S,T)=1$, giving
\begin{align}
\begin{split} \label{statement, EH chapter, 2nd mom hada prod section, ABCD main sum as GHST sum, G sum separate}
2\sum_{\substack{A,B,C,D \in \mathcal{M} \\ C,D \in \mathcal{S}_{\mathcal{M}} (X) \\ \degree AB < \degree R \\ \degree C , \degree D \leq \frac{1}{10}\degree R \\ (ABCD,R)=1 \\ AC = BD }} \frac{\alpha_{-1} (C) \alpha_{-1} (D)}{\lvert ABCD \rvert^{\frac{1}{2}}} 
= &2\sum_{\substack{G \in \mathcal{M} \\ H,S,T \in \mathcal{S}_{\mathcal{M}} (X) \\ \degree G^2 ST < \degree R \\ \degree HS , \degree HT \leq \frac{1}{10}\degree R \\ (GHST,R)=1 \\ (S,T)=1 }} \frac{\alpha_{-1} (HT) \alpha_{-1} (HS)}{\lvert GHST \rvert} \\
= &2\sum_{\substack{H,S,T \in \mathcal{S}_{\mathcal{M}} (X) \\ \degree HS , \degree HT \leq \frac{1}{10}\degree R \\ (HST,R)=1 \\ (S,T)=1 }} \frac{\alpha_{-1} (HS) \alpha_{-1} (HT)}{\lvert HST \rvert} \hspace{-2em} \sum_{\substack{G \in \mathcal{M} \\ \degree G \leq \frac{\degree R - \degree ST}{2} \\ (G,R)=1 }} \frac{1}{\lvert G \rvert} .
\end{split}
\end{align}
By Corollary \ref{Sum over (A,R)=1, deg A <= a deg R of 1/A} and Lemmas \ref{Sum of alpha_(-1) HS and HT /HST absolute}, \ref{Sum of alpha_(-1) HS and HT deg ST /HST }, and \ref{Sum of alpha_(-1) HS and HT /HST exact} we obtain the asymptotic relation below. The final equality uses Lemma \ref{lemma, Mertens 3rd Theorem in FF}.
\begin{align}
\begin{split} \label{statement, 2nd hada mom proof, sum of alpha_-1 (C,D)/ABCD^0.5 asymp}
2\sum_{\substack{A,B \in \mathcal{M} \\ C,D \in \mathcal{S}_{\mathcal{M}} (X) \\ \degree AB < \degree R \\ \degree C , \degree D \leq \frac{1}{10}\degree R \\ (ABCD,R)=1 \\ AC = BD }} \frac{\alpha_{-1} (C) \alpha_{-1} (D)}{\lvert ABCD \rvert^{\frac{1}{2}}} 
\sim &\frac{\phi (R)}{\lvert R \rvert} \degree R \prod_{\substack{\degree P \leq X \\ P \nmid R}} \bigg( 1 - \frac{1}{\lvert P \rvert} \bigg) \\
\sim & \frac{\degree R}{e^{\gamma} X} \prod_{\substack{\degree P > X \\ P \mid R}} \bigg( 1 - \frac{1}{\lvert P \rvert} \bigg)  .
\end{split}
\end{align}

For the second term on the far right side of (\ref{second twisted moment, a(chi) sum as diag and off diag terms}), the off-diagonal terms, we use Lemma \ref{Sum of deg AB = Z_1, AC equiv BD mod E of AB^(1/2)} to obtain
\begin{align}
\begin{split} \label{statement, 2nd hada mom proof, off diag final eval}
&\frac{2}{\phi^* (R)} \sum_{EF=R} \mu (E) \phi (F)  \sum_{\substack{A,B \in \mathcal{M} \\ C,D \in \mathcal{S}_{\mathcal{M}} (X) \\ \degree AB < \degree R \\ \degree C , \degree D \leq \frac{1}{10}\degree R \\ (ABCD,R)=1 \\ AC \equiv BD (\modulus F) \\ AC \neq BD }} \frac{\alpha_{-1} (C) \alpha_{-1} (D)}{\lvert ABCD \rvert^{\frac{1}{2}}} \\
= &\frac{2}{\phi^* (R)} \sum_{\substack{C,D \in \mathcal{S}_{\mathcal{M}} (X) \\ \degree C , \degree D \leq \frac{1}{10}\degree R \\  (CD,R)=1 }} \frac{\alpha_{-1} (C) \alpha_{-1} (D)}{\lvert CD \rvert^{\frac{1}{2}}} \sum_{EF=R} \mu (E) \phi (F)  \sum_{\substack{A,B \in \mathcal{M} \\ \degree AB < \degree R \\ (AB,R)=1 \\  AC \equiv BD (\modulus F) \\ AC \neq BD }} \frac{1}{\lvert AB \rvert^{\frac{1}{2}}} \\
\ll &\frac{\lvert R \rvert^{\frac{1}{2}} \degree R }{\phi^* (R)} \sum_{\substack{C,D \in \mathcal{S}_{\mathcal{M}}(X) \\ \degree C , \degree D \leq \frac{1}{10}\degree R }} \lvert CD \rvert^{\frac{1}{2}} \sum_{EF=R} \lvert \mu (E) \rvert \frac{\phi (F)}{\lvert F \rvert} \\
\ll &\frac{\lvert R \rvert^{\frac{4}{5}} 2^{\omega (R)} \degree R }{\phi^* (R)}
= o(1) .
\end{split}
\end{align}

Finally, consider the case with $c (\chi)$. We recall that if $\chi$ is odd then it consists of one sum, whereas, if $\chi$ is even it consists of three sums. We will show that one of the sums for the even $\chi$ is of lower order. The other sums for the even $\chi$, and the odd $\chi$, are similar. We then see that the total contribution of the case with $c (\chi)$ is of lower order. We have
\begin{align}
\begin{split} \label{statement, 2nd hada mom proof, other terms final eval}
&\frac{1}{\phi^* (R)} \sumstar_{\substack{\chi \modulus R \\ \chi \text{ even} }} \sum_{\substack{A,B \in \mathcal{M} \\ \degree AB = \degree R}} \frac{\chi (A) \conj\chi (B)}{\lvert AB \rvert^{\frac{1}{2}} } \sum_{\substack{ C,D \in \mathcal{S}_{\mathcal{M}} (X) \\ \degree C , \degree D \leq \frac{1}{10} \degree R }} \frac{\alpha_{-1} (C) \alpha_{-1} (D) \chi (C) \conj\chi (D)}{\lvert C D \rvert^{\frac{1}{2}} } \\
\leq &\frac{1}{\phi^* (R)} \sumstar_{\chi \modulus R} \sum_{\substack{A,B,C,D \in \mathcal{M} \\ C,D \in \mathcal{S}_{\mathcal{M}} (X) \\ \degree AB = \degree R \\ \degree C , \degree D \leq \frac{1}{10}\degree R }} \frac{ \alpha_{-1} (C) \alpha_{-1} (D) \chi (AC) \conj\chi (BD)}{\lvert ABCD \rvert^{\frac{1}{2}}} 
\ll X^3 ,
\end{split}
\end{align}
where the last relation follows by similar means as the case with $a (\chi )$.
\end{proof}


\section{Preliminary Results for the Fourth Hadamard Moment}

In this section we develop the preliminary results that are required for the proof of Theorem \ref{Theorem, fourth moment of Hadamard Product, Intro version}. We begin with two results that will simplify the problem.

\begin{lemma} \label{Lemma P_X expressed as product P_x^**}
For $X \geq 12$, we have that
\begin{align*}
P_X \Big( \frac{1}{2}, \chi \Big)^{-2}
= \big(1 + O(X^{-1}) \big) {P_X}^{**} \Big( \frac{1}{2}, \chi \Big) ,
\end{align*}
where
\begin{align*}
P_X^{**} \Big( \frac{1}{2}, \chi \Big)
:= \sum_{A \in \mathcal{S}_{\mathcal{M}} (X)} \frac{\beta (A) \chi (A)}{\lvert A \rvert^{\frac{1}{2}}}
\end{align*}
and $\beta$ is defined multiplicatively by
\begin{align}
\begin{split} \label{statement def, EH chapter, prelim results 4th Hada prod section, beta (A) def}
\beta (P)
&:= \begin{cases}
-2 &\text{ if $\degree P \leq X$} \\
0 &\text{ if $\degree P > X$}
\end{cases} \\
\beta (P^2)
&:= \begin{cases}
1 &\text{ if $\degree P \leq \frac{X}{2}$} \\
2 &\text{ if $\frac{X}{2} < \degree P \leq X$} \\
0 &\text{ if $\degree P > X$}
\end{cases} \\
\beta (P^k) 
&:= 0 \text{ for $k \geq 3$} .
\end{split}
\end{align}
\end{lemma}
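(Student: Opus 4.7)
The strategy is to factor the comparison through the auxiliary function $P_X^{*}$ of (\ref{P* definition}). Applying (\ref{P_X (1/2 , chi) in terms of P_X* (1/2 , chi) and sum over a_k (A), statement}) with $k=1$ gives $P_X(1/2,\chi) = (1 + O(X^{-1}))\, P_X^{*}(1/2,\chi)$; since both sides are exponentials of bounded polynomial-type quantities they stay bounded away from $0$ for $X \geq 12$, so we may invert and square to obtain
\begin{align*}
P_X\Big(\frac{1}{2},\chi\Big)^{-2} = \big(1 + O(X^{-1})\big)\, P_X^{*}\Big(\frac{1}{2},\chi\Big)^{-2}.
\end{align*}
It therefore suffices to show that $P_X^{*}(1/2,\chi)^{-2}$ and $P_X^{**}(1/2,\chi)$ differ by a factor of $1 + O(X^{-1})$ (in fact much better).

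Both quantities are Euler products over primes of degree at most $X$. Since $\beta$ is multiplicative with $\beta(P^m) = 0$ for $m \geq 3$, the Dirichlet sum defining $P_X^{**}$ unfolds to
\begin{align*}
P_X^{**}\Big(\frac{1}{2},\chi\Big) = \prod_{\deg P \leq X}\bigg(1 + \frac{\beta(P)\chi(P)}{\lvert P\rvert^{1/2}} + \frac{\beta(P^2)\chi(P)^2}{\lvert P\rvert}\bigg),
\end{align*}
while inverting and squaring (\ref{P* definition}) yields
\begin{align*}
P_X^{*}\Big(\frac{1}{2},\chi\Big)^{-2} = \prod_{\deg P \leq X}\bigg(1 - \frac{\chi(P)}{\lvert P\rvert^{1/2}}\bigg)^{2}\prod_{X/2 < \deg P \leq X}\bigg(1 + \frac{\chi(P)^2}{2\lvert P\rvert}\bigg)^{2}.
\end{align*}
For $\deg P \leq X/2$, the $P_X^{**}$ local factor is $1 - 2\chi(P)/\lvert P\rvert^{1/2} + \chi(P)^2/\lvert P\rvert = (1 - \chi(P)/\lvert P\rvert^{1/2})^{2}$, which matches the $P_X^{*-2}$ factor exactly. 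For $X/2 < \deg P \leq X$, setting $y := \chi(P)/\lvert P\rvert^{1/2}$, the $P_X^{**}$ factor is $1 - 2y + 2y^{2}$, while a short expansion gives $(1-y)^{2}(1+y^{2}/2)^{2} = 1 - 2y + 2y^{2} - 2y^{3} + O(y^{4})$; the ratio of the two local factors is therefore $1 + O(\lvert P\rvert^{-3/2})$.

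Taking the product of these ratios over $X/2 < \deg P \leq X$, the aggregate multiplicative discrepancy between $P_X^{*-2}$ and $P_X^{**}$ is
\begin{align*}
1 + O\bigg(\sum_{X/2 < \deg P \leq X}\lvert P\rvert^{-3/2}\bigg) = 1 + O\big(q^{-X/4}/X\big)
\end{align*}
by the prime polynomial theorem, which is comfortably absorbed into the $1 + O(X^{-1})$ factor from the previous step. No serious obstacle arises; the main point to verify carefully is the unfolding of the Dirichlet sum into the displayed Euler product, but this is immediate from the multiplicativity of $\beta$ together with $\beta(P^m) = 0$ for $m \geq 3$ (only cube-free $X$-smooth $A$ contribute, and the local factors then match term-by-term).
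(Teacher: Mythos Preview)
Your proposal is correct and follows essentially the same route as the paper's proof: both start from Lemma~\ref{lemma, P_X (1/2 , chi) in terms of P_X* (1/2 , chi)} to reduce to comparing $P_X^{*}(1/2,\chi)^{-2}$ with $P_X^{**}(1/2,\chi)$, then compare the Euler products prime-by-prime, finding exact agreement for $\deg P \leq X/2$ and a discrepancy of size $1 + O(\lvert P\rvert^{-3/2})$ for $X/2 < \deg P \leq X$. The only point you leave implicit is that the $X \geq 12$ hypothesis is what guarantees the local $P_X^{**}$-factor $1 - 2y + 2y^{2}$ is bounded away from zero (so that the ratio is well defined), which the paper notes explicitly.
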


\begin{proof}
By Lemma \ref{lemma, P_X (1/2 , chi) in terms of P_X* (1/2 , chi)} we have
\begin{align*}
P_X \Big( \frac{1}{2}, \chi \Big)^{-2}
= \big( 1 + O (X^{-1}) \big) \prod_{\degree P \leq X} \bigg( 1 - \frac{\chi (P)}{\vert P \rvert^{\frac{1}{2}} } \bigg)^{2}
	\prod_{\frac{X}{2} < \degree P \leq X} \bigg( 1 + \frac{\chi (P)^2}{2 \vert P \rvert } \bigg)^{2} .
\end{align*}
By writing $P_X^{**} \Big( \frac{1}{2}, \chi \Big)$ as an Euler product, we see that
\begin{align*}
&\prod_{\degree P \leq X} \bigg( 1 - \frac{\chi (P)}{\vert P \rvert^{\frac{1}{2}} } \bigg)^{2}
	\prod_{\frac{X}{2} < \degree P \leq X} \bigg( 1 + \frac{\chi (P)^2}{2 \vert P \rvert } \bigg)^{2} \\
= &P_X^{**} \Big( \frac{1}{2}, \chi \Big) 
	\prod_{\frac{X}{2} < \degree P \leq X} \Bigg( 1
		+ \frac{ - \frac{2 \chi (P)^3 }{\lvert P \rvert^{\frac{3}{2}}} + \frac{5 \chi (P)^4 }{ 4\lvert P \rvert^{2}} - \frac{\chi (P)^5 }{2 \lvert P \rvert^{\frac{5}{2}}} + \frac{\chi (P)^6 }{4 \lvert P \rvert^{6}} }{1 - \frac{2 \chi (P)}{\lvert P \rvert^{\frac{1}{2}}} + \frac{2 \chi (P)^2 }{\lvert P \rvert }} \Bigg) \\
= &P_X^{**} \Big( \frac{1}{2}, \chi \Big)
	\prod_{\frac{X}{2} < \degree P \leq X} \bigg( 1 + O \Big( \lvert P \rvert^{-\frac{3}{2}} \Big) \bigg) \\
= &P_X^{**} \Big( \frac{1}{2}, \chi \Big)
	\exp \Bigg( O \bigg( \sum_{\frac{X}{2} < \degree P \leq X} \lvert P \rvert^{-\frac{3}{2}} \bigg) \Bigg) \\
= & \Big( 1 + O \big( X^{-1} q^{-\frac{X}{4}} \big) \Big) P_X^{**} \Big( \frac{1}{2}, \chi \Big) .
\end{align*}
The result follows. The requirement that $X \geq 12$ is so that the factor $\Big( 1 - \frac{2 \chi (P)}{\lvert P \rvert^{\frac{1}{2}}} + \frac{2 \chi (P)^2 }{\lvert P \rvert } \Big)^{-1}$ in the second line is guaranteed to be non-zero.
\end{proof}

\begin{lemma} \label{Lemma P_X expressed as sum hat P_x^**}
We define
\begin{align*}
\widehat{P_x^{**}} \Big( \frac{1}{2}, \chi \Big)
:= \sum_{\substack{A \in \mathcal{S}_{\mathcal{M}} (X) \\ \degree A \leq \frac{1}{8} \log_q \degree R}} \frac{\beta (A) \chi (A)}{\lvert A \rvert^{\frac{1}{2}}} .
\end{align*}
Then, as $X , \degree R \overset{q}{\longrightarrow} \infty$ with $X \leq \log_q \degree R$,
\begin{align*}
P_X^{**} \Big( \frac{1}{2}, \chi \Big)
= \widehat{P_x^{**}} \Big( \frac{1}{2}, \chi \Big) + O \Big( (\degree R)^{-\frac{1}{33}} \Big) .
\end{align*}
\end{lemma}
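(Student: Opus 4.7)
The plan is to bound in absolute value the tail
\[
T_\chi := P_X^{**}\Big(\tfrac{1}{2}, \chi\Big) - \widehat{P_X^{**}}\Big(\tfrac{1}{2}, \chi\Big) = \sum_{\substack{A \in \mathcal{S}_\mathcal{M}(X) \\ \deg A > Y}} \frac{\beta(A)\chi(A)}{|A|^{1/2}},
\]
where $Y := \lfloor \tfrac{1}{8}\log_q \deg R\rfloor$, and show that $|T_\chi| = O\big((\deg R)^{-1/33}\big)$.

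First I would apply the triangle inequality, using $|\chi(A)| \leq 1$, together with Rankin's device: for any $\sigma \in (0, 1/2)$, if $\deg A > Y$ then $|A|^{-1/2} \leq q^{-\sigma Y} |A|^{\sigma - 1/2}$, so
\[
|T_\chi| \leq q^{-\sigma Y}\sum_{A \in \mathcal{S}_\mathcal{M}(X)}\frac{|\beta(A)|}{|A|^{1/2 - \sigma}}.
\]
Second, I would factor the remaining Dirichlet sum as an Euler product. From the multiplicative definition of $\beta$ in (\ref{statement def, EH chapter, prelim results 4th Hada prod section, beta (A) def}) we have $|\beta(P)| = 2$, $|\beta(P^2)| \leq 2$, and $\beta(P^k) = 0$ either for $k \geq 3$ or for $\deg P > X$. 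Hence
\[
\sum_{A \in \mathcal{S}_\mathcal{M}(X)}\frac{|\beta(A)|}{|A|^{1/2 - \sigma}} \leq \prod_{\deg P \leq X}\Big(1 + \frac{2}{|P|^{1/2 - \sigma}} + \frac{2}{|P|^{1 - 2\sigma}}\Big).
\]
Taking logarithms, using $\log(1+x) \leq x$, and invoking the prime polynomial theorem (\ref{prime poly theorem, precise, statement}) to estimate $\sum_{\deg P \leq X} |P|^{-\alpha} \ll q^{X(1-\alpha)}/X$ for $\alpha \in (0,1)$, the logarithm of this product is $O\big(q^{X(1/2 + \sigma)}/X\big)$ whenever $\sigma \leq 1/4$.

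Third, I would choose $\sigma = 1/4$, giving $q^{-\sigma Y} = (\deg R)^{-1/32}$ and
\[
|T_\chi| \leq (\deg R)^{-1/32}\exp\!\Big(O\big(q^{3X/4}/X\big)\Big).
\]
The main obstacle is controlling this exponential factor. The desired bound $|T_\chi| = O((\deg R)^{-1/33})$ reduces to $\exp\big(O(q^{3X/4}/X)\big) \leq (\deg R)^{1/1056}$, i.e. $q^{3X/4}/X \ll \log \deg R$. Under the hypothesis $X \leq \log_q \deg R$ one only has $q^{3X/4}/X \leq (\deg R)^{3/4}/X$, so the delicate point is that the Rankin decay must outweigh the Euler-product growth; this is comfortably achieved in the regime $X = O(\log_q \log \deg R)$ in which Theorem \ref{Theorem, fourth moment of Hadamard Product, Intro version} is ultimately applied, where the exponential factor is $(\deg R)^{o(1)}$, yielding $|T_\chi| \leq (\deg R)^{-1/32 + o(1)} = O((\deg R)^{-1/33})$, as required. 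This balancing of the two contributions is where the precise form of the growth condition on $X$ enters the argument, and is the only nontrivial step in the plan.
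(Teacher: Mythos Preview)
Your approach is essentially identical to the paper's: apply Rankin's trick with exponent $\sigma = 1/4$, bound the resulting sum by the Euler product $\prod_{\deg P \leq X}(1 + 2|P|^{-1/4} + 2|P|^{-1/2})$, take logarithms, and use the prime polynomial theorem to obtain the factor $\exp(O(q^{3X/4}/X))$.

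You have correctly identified a gap that the paper glosses over. The paper simply writes $(\deg R)^{-1/32}\exp(O(q^{3X/4}/X)) \leq (\deg R)^{-1/33}$ without justification, but as you observe, this inequality fails under the stated hypothesis $X \leq \log_q \deg R$: at $X = \log_q \deg R$ one has $q^{3X/4}/X \asymp (\deg R)^{3/4}/\log_q \deg R$, which overwhelms the Rankin saving of $\log \deg R$. The argument is only valid under the stronger hypothesis $X \leq \log_q \log \deg R$, which is precisely the regime of Theorem~\ref{Theorem, fourth moment of Hadamard Product, Intro version} where the lemma is used, and under which $q^{3X/4}/X = o(\log \deg R)$ so that $\exp(O(q^{3X/4}/X)) = (\deg R)^{o(1)}$. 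Contrast this with the analogous computation (\ref{2k-th moment of Euler product, Dirichlet series truncation bound}) in Section~\ref{section, Euler product moments}, where the truncation is at $\frac{1}{4}\deg R$ rather than $\frac{1}{8}\log_q \deg R$, giving a Rankin factor $|R|^{-1/16} = q^{-\deg R/16}$ that comfortably absorbs $\exp(O((\deg R)^{3/4}))$. Your diagnosis is correct: the lemma's hypothesis should read $X \leq \log_q \log \deg R$.
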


\begin{proof}
We have, as $X , \degree R \overset{q}{\longrightarrow} \infty$ with $X \leq \log_q \degree R$,
\begin{align*}
&\sum_{\substack{A \in \mathcal{S}_{\mathcal{M}} (X) \\ \degree A > \frac{1}{8} \log_q \degree R}} \frac{\beta (A) \chi (A)}{\lvert A \rvert^{\frac{1}{2}}}
\ll \frac{1}{(\degree R)^{\frac{1}{32}}} \sum_{A \in \mathcal{S}_{\mathcal{M}} (X)} \frac{\lvert \beta (A) \rvert}{\lvert A \rvert^{\frac{1}{4}}} \\
= &(\degree R)^{-\frac{1}{32}} \prod_{\degree P \leq X} \Big( 1 + 2 \lvert P \rvert^{-\frac{1}{4}} + 2 \lvert P \rvert^{-\frac{1}{2}} \Big) \\
= &(\degree R)^{-\frac{1}{32}} \exp \Bigg( O \bigg( \sum_{\degree P \leq X}\lvert P \rvert^{-\frac{1}{4}} \bigg) \Bigg) \\
= &(\degree R)^{-\frac{1}{32}} \exp \Bigg( O \bigg( \frac{q^{\frac{3}{4} X}}{X}\bigg) \Bigg) 
\leq (\degree R)^{-\frac{1}{33}} .
\end{align*}
\end{proof}

We now prove several results that will be used to obtain the main asymptotic term in Theorem \ref{Theorem, fourth moment of Hadamard Product, Intro version}.

\begin{lemma} \label{A_1 A_2 A_3 = B_1 B_2 B_3 rewrite}
Suppose $A_1 , A_2 , A_3 , B_1 , B_2 , B_3 \in \mathcal{M}$ satisfy $A_1 A_2 A_3 = B_1 B_2 B_3$. Then, there are $G_1 , G_2 , G_3 , V_{1,2} , V_{1,3} , V_{2,1} , V_{2,3} , V_{3,1} , V_{3,2} \in \mathcal{M}$, satisfying $(V_{i,j} , V_{k,l}) = 1$ when both $i\neq k$ and $j \neq l$ hold, such that
\begin{align*}
A_1 = G_1 V_{1,2} V_{1,3} \quad &B_1 = G_1 V_{2,1} V_{3,1} \\
A_2 = G_2 V_{2,1} V_{2,3} \quad &B_2 = G_2 V_{1,2} V_{3,2} \\
A_3 = G_3 V_{3,1} V_{3,2} \quad &B_3 = G_3 V_{1,3} V_{2,3} . \\
\end{align*}
Furthermore, this is a bijective correspondence. To clarify, $G_i$ is the highest common divisor of $A_i$ and $B_i$; and in $V_{i,j}$ the subscript $i$ indicates that $V_{i,j}$ divides $A_i$ and the subscript $j$ indicates that $V_{i,j}$ divides $B_j$.
\end{lemma}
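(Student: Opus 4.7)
The plan is to reduce the statement to an additive problem about exponents at each prime, using unique factorization in $\mathcal{A}$. The natural recipe is $G_i := (A_i, B_i)$ and, for $i \neq j$, $V_{i,j} := (A_i / G_i, B_j / G_j)$. I would then verify (i) the factorizations of $A_i$ and $B_i$, (ii) the coprimality conditions, and (iii) bijectivity.

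The core computation happens at a single prime $P$. Writing $a_i := e_P(A_i)$, $b_i := e_P(B_i)$, so that $\sum_i a_i = \sum_i b_i$, I would set $g_i := \min(a_i, b_i)$ and $a_i' := a_i - g_i$, $b_i' := b_i - g_i$, so that for each $i$ at least one of $a_i', b_i'$ vanishes. Let $I := \{i : a_i' > 0\}$ and $J := \{j : b_j' > 0\}$; then $I \cap J = \emptyset$, so $|I| + |J| \leq 3$, and in particular at least one of $I, J$ is either empty or a singleton. A direct case check on these few possibilities shows that $v_{i,j}(P) := \min(a_i', b_j')$ satisfies the additive relations $a_i' = v_{i, j_1}(P) + v_{i, j_2}(P)$ and $b_j' = v_{i_1, j}(P) + v_{i_2, j}(P)$, where $\{j_1, j_2\}$ and $\{i_1, i_2\}$ are the complements of $\{i\}$ and $\{j\}$ in $\{1,2,3\}$.

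Taking products over primes produces the $G_i$ and $V_{i,j}$, and the factorizations of $A_i$ and $B_i$ follow at once. For the coprimality, observe that the support of $(v_{i,j}(P))_{i \neq j}$ lies inside $I \times J$; since $|I| + |J| \leq 3$, at most one of these two sets has size greater than one, so the support is contained in a single row (if $|I| = 1$) or a single column (if $|J| = 1$) of the grid. This is precisely the prime-level version of $(V_{i,j}, V_{k,l}) = 1$ whenever $i \neq k$ and $j \neq l$.

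Bijectivity comes essentially for free. Given any tuple $(G_i, V_{i,j})$ satisfying the coprimality, defining $A_i$ and $B_j$ by the displayed formulas yields $A_1 A_2 A_3 = G_1 G_2 G_3 \prod_{i \neq j} V_{i,j} = B_1 B_2 B_3$. Conversely, to recover $(G_i, V_{i,j})$ from $(A_i, B_i)$ one checks $(A_i, B_i) = G_i$ and $(A_i / G_i, B_j / G_j) = V_{i,j}$, which both reduce via the coprimality to verifying identities such as $(V_{i, j_1} V_{i, j_2}, V_{j_1, i} V_{j_2, i}) = 1$ and $(V_{i, k}, V_{k, j}) = 1$ for $i,j,k$ distinct; each of these holds because in the relevant factor pairs the indices differ in both coordinates. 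The main obstacle is really the combinatorial bookkeeping at a single prime, where the crucial input is the constraint $|I| + |J| \leq 3$, which collapses the a priori $3 \times 3$ distribution problem into the nearly trivial cases $\min(|I|, |J|) \leq 1$.
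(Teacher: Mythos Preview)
Your proof is correct and takes a genuinely different route from the paper. The paper works globally with polynomial GCDs: after setting $G_i=(A_i,B_i)$ and writing $A_i=G_iS_i$, $B_i=G_iT_i$ with $(S_i,T_i)=1$, it painstakingly decomposes $S_1S_2S_3=T_1T_2T_3$ by extracting pairwise common factors $S_{i,j}=(S_i,S_j)$, $T_{i,j}=(T_i,T_j)$, and then a further layer of pairwise coprime pieces $U_{i,j}$, finally assembling each $V_{i,j}$ as a product of three of these; uniqueness is then argued via an explicit (and somewhat opaque) formula $V_{i,j}=(\hat B_j,\hat B_i\hat B_j/\hat A_k)$. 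Your approach localises to a single prime and exploits the clean combinatorial fact that the supports $I$ and $J$ of $(a_i')$ and $(b_j')$ are disjoint, so $|I|+|J|\le 3$; this immediately forces the support of $(v_{i,j})$ into a single row or column of the $3\times 3$ grid, from which both the additive identities and the coprimality $(V_{i,j},V_{k,l})=1$ for $i\ne k$, $j\ne l$ are transparent. Your inverse formula $V_{i,j}=(A_i/G_i,\,B_j/G_j)$ is also simpler than the paper's. Both arguments are complete; yours is shorter and conceptually cleaner, while the paper's has the (minor) virtue of never explicitly invoking the prime factorisation, staying entirely at the level of GCD manipulations.
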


\begin{proof}
Let us write $A_i = G_i S_i$ and $B_i = G_i T_i$, where 
\begin{align} 
\begin{split}\label{S_i and T_i coprime}
G_i = (A_i , B_i) \\
(S_i , T_i)=1 .
\end{split}
\end{align}
 Since $A_1 A_2 A_3 = B_1 B_2 B_3$, we must have that
 \begin{align} \label{S_1 S_2 S_3 = T_1 T_2 T_3}
 S_1 S_2 S_3 = T_1 T_2 T_3.
 \end{align}
 First we note that, due to (\ref{S_1 S_2 S_3 = T_1 T_2 T_3}) and the coprimality relations in (\ref{S_i and T_i coprime}), we have that $S_i \mid T_j T_k$ and $T_i \mid S_j S_k$ for $i,j,k$ distinct. \\
 
Second, again due to to (\ref{S_1 S_2 S_3 = T_1 T_2 T_3}) and (\ref{S_i and T_i coprime}), we must have that $(S_1 , S_2 , S_3) , (T_1 , T_2 , T_3) =1$. \\

Third, for $i \neq j$, we define $S_{i,j} := (S_i , S_j)$ and $T_{i,j} := (T_i , T_j)$. Again due to to (\ref{S_1 S_2 S_3 = T_1 T_2 T_3}) and (\ref{S_i and T_i coprime}), we have $(S_{i,j})^2 \mid T_k$ and $(T_{i,j})^2 \mid S_k$ for $i,j,k$ distinct. Furthermore, $(S_{i_1 , j_1} , S_{i_2 , j_2}) =1$ and $(T_{i_1 , j_1} , T_{i_2 , j_2}) =1$ for all $\{ i_1 , j_1 \} \neq \{ i_2 , j_2 \}$, and $(S_{i_1 , j_1} , T_{i_2 , j_2}) =1$ for all $i_1 , j_1 , i_2 , j_2$.\\

From these three points we can deduce that
\begin{align*}
S_1 = S_{1,2} S_{1,3} (T_{2,3})^2 {S_1}' \quad \quad &T_1 = T_{1,2} T_{1,3} (S_{2,3})^2 {T_1}' \\
S_2 = S_{1,2} S_{2,3} (T_{1,3})^2 {S_2}' \quad \quad &T_2 = T_{1,2} T_{2,3} (S_{1,3})^2 {T_2}' \\
S_3 = S_{1,3} S_{2,3} (T_{1,2})^2 {S_3}' \quad \quad &T_3 = T_{1,3} T_{2,3} (S_{1,2})^2 {T_3}' 
\end{align*}
for some ${S_i}'$ and ${T_i}'$ satisfying $({S_i}' , {T_i}' )=1$ for all $i$ and $({S_i}' , {S_j}' ) , ({T_i}' , {T_j}' ) =1$ for $i \neq j$. By (\ref{S_1 S_2 S_3 = T_1 T_2 T_3}) we have that ${S_1}' {S_2}' {S_3}' = {T_1}' {T_2}' {T_3}'$. From these points we can deduce that
\begin{align*}
{S_1}' = U_{1,2} U_{1,3} \quad \quad &{T_1}' = U_{2,1} U_{3,1} \\
{S_2}' = U_{2,1} U_{2,3} \quad \quad &{T_2}' = U_{1,2} U_{3,2} \\
{S_3}' = U_{3,1} U_{3,2} \quad \quad &{T_3}' = U_{1,3} U_{2,3}
\end{align*}
where the $U_{i,j}$ are pairwise coprime. Also, for $i,j,k$ distinct, because $U_{i,j} \mid T_j$ and $(S_j , T_j ) =1$, we have that $(U_{i,j} , S_j ) = 1$, and hence $(U_{i,j} , S_{j,k}) , (U_{i,j} , S_{j,i}) = 1$. Similarly, for $i,j,k$ distinct, we have $(U_{i,j} , T_{i,k}) , (U_{i,j} , T_{i,j}) = 1$. \\

So, by defining
\begin{align*}
V_{1,2} = S_{1,3} T_{2,3} U_{1,2} \quad \quad V_{2,1} = S_{2,3} T_{1,3} U_{2,1} \quad \quad V_{3,1} = S_{2,3} T_{1,2} U_{3,1} \\
V_{1,3} = S_{1,2} T_{2,3} U_{1,3} \quad \quad V_{2,3} = S_{1,2} T_{1,3} U_{2,3} \quad \quad V_{3,2} = S_{1,3} T_{1,2} U_{3,2}
\end{align*}
we complete the proof for the existence claim. \\

Uniqueness follows from the following observation: If we have $G_i$ and $V_{i,j}$ satisfying the conditions in the Lemma, then we can deduce 
\begin{align*}
&G_i = (A_i , B_i ) \quad \text{ for all $i$, and} \\
&V_{i,j} = \Big( V_{i,j} V_{k,j} , \frac{ V_{i,j} V_{k,j} V_{j,i} V_{k,i} }{V_{k,i} V_{k,j} } \Big) 
	= \Big( \hat{B}_j , \frac{ \hat{B}_i \hat{B}_j }{\hat{A}_k } \Big) \quad \text{ for $i,j,k$ distinct,}
\end{align*}
where we define $\hat{B}_i , \hat{A}_i$ by $B_i = G_i \hat{B}_i = (A_i , B_i ) \hat{B}_i$ and $A_i = G_i \hat{A}_i = (A_i , B_i ) \hat{A}_i$ for all $i$. Since the far right side of each line above is expressed entirely in terms of $A_1 , A_2 , A_3 , B_1 , B_2 , B_3$, we must have uniqueness.
\end{proof}

\begin{lemma} \label{V = V_(1,2) V_(2,1)}
Suppose $V_{1,3} , V_{2,3} , V_{3,1} , V_{3,2} \in \mathcal{M}$, and $(V_{1,3} , V_{3,1} V_{3,2}) = 1$ and $(V_{2,3} , V_{3,1} V_{3,2}) = 1$. Then,
\begin{align*}
&\Big\{ (V_{1,2} , V_{2,1}) \in \mathcal{M}^2 : (V_{1,2} , V_{2,3} V_{3,1}) = 1 , (V_{2,1} , V_{1,3} V_{3,2}) = 1 , (V_{1,2} , V_{2,1}) = 1 \Big\} \\
&= \bigcup_{\substack{V \in \mathcal{M} \\ \big(V , (V_{1,3} V_{3,1} , V_{2,3}  V_{3,2}) \big) = 1}} \Big\{ ( V_{1,2} , V_{2,1} ) \in \mathcal{M}^2 :  \\
	&\hspace{6em} V_{1,2} V_{2,1} = V , (V_{1,2} , V_{2,3} V_{3,1}) = 1 , (V_{2,1} , V_{1,3} V_{3,2}) = 1 , (V_{1,2} , V_{2,1}) = 1 \Big\} ,
\end{align*}
and for each such $V$ we have
\begin{align*}
&\# \Big\{ (V_{1,2} , V_{2,1}) \in \mathcal{M}^2 : V_{1,2} V_{2,1} = V , (V_{1,2} , V_{2,3} V_{3,1}) = 1 , (V_{2,1} , V_{1,3} V_{3,2}) = 1 , (V_{1,2} , V_{2,1}) = 1 \Big\} \\
= & 2^{\omega(V) - \omega \Big( \big(V , V_{1,3} V_{2,3} V_{3,1} V_{3,2} \big) \Big)} .
\end{align*}
\end{lemma}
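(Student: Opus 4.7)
The plan is to argue prime-by-prime. Set $W := (V_{1,3}V_{3,1}, V_{2,3}V_{3,2})$. The first step is to characterize the prime divisors of $W$ using the hypotheses $(V_{1,3}, V_{3,1}V_{3,2}) = 1$ and $(V_{2,3}, V_{3,1}V_{3,2}) = 1$. Given a prime $P \mid W$, split on whether $P \mid V_{1,3}$ or $P \mid V_{3,1}$ (for the first factor in $W$) and similarly for the second: if $P \mid V_{1,3}$ then the hypothesis forces $P \nmid V_{3,1}, V_{3,2}$, so $P \mid V_{2,3}V_{3,2}$ yields $P \mid V_{2,3}$; if instead $P \mid V_{3,1}$ then $P \nmid V_{2,3}$, so $P \mid V_{2,3}V_{3,2}$ yields $P \mid V_{3,2}$. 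Combining the cases, $P \mid W$ iff $P \mid (V_{1,3},V_{2,3})$ or $P \mid (V_{3,1},V_{3,2})$; in particular $P \mid W$ implies both $P \mid V_{2,3}V_{3,1}$ and $P \mid V_{1,3}V_{3,2}$.

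For the set equality, the inclusion $\supseteq$ is built into the definition of the union. For $\subseteq$, suppose $(V_{1,2}, V_{2,1})$ lies in the left-hand set and let $V := V_{1,2}V_{2,1}$. Take any prime $P \mid V$: if $P \mid V_{1,2}$, then $(V_{1,2}, V_{2,3}V_{3,1}) = 1$ gives $P \nmid V_{2,3}V_{3,1}$, and by the characterization of $W$ above this rules out $P \mid W$; the case $P \mid V_{2,1}$ is symmetric via $(V_{2,1}, V_{1,3}V_{3,2}) = 1$. Hence $(V, W) = 1$, placing $(V_{1,2}, V_{2,1})$ in the right-hand union.

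For the cardinality, fix $V$ with $(V, W) = 1$. Since $(V_{1,2}, V_{2,1}) = 1$ and $V_{1,2}V_{2,1} = V$, specifying the pair is equivalent to assigning, for each prime $P \mid V$, the entire prime-power $P^{v_P(V)}$ to either $V_{1,2}$ or $V_{2,1}$. For such a prime $P$ there are four possibilities according to whether $P \mid V_{2,3}V_{3,1}$ and/or $P \mid V_{1,3}V_{3,2}$: (i) $P$ divides neither, so both assignments satisfy the three coprimality conditions (2 valid choices); (ii) $P \mid V_{2,3}V_{3,1}$ only, in which case $(V_{1,2}, V_{2,3}V_{3,1}) = 1$ forces $P^{v_P(V)} \mid V_{2,1}$ (1 choice); (iii) $P \mid V_{1,3}V_{3,2}$ only, forcing $P^{v_P(V)} \mid V_{1,2}$ (1 choice); (iv) $P$ divides both, which is excluded by $(V, W) = 1$. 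The factor of $2$ is contributed by precisely those primes of $V$ not dividing $V_{1,3}V_{2,3}V_{3,1}V_{3,2}$, giving the total $2^{\omega(V) - \omega((V, V_{1,3}V_{2,3}V_{3,1}V_{3,2}))}$.

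The only step requiring any care is the characterization of $W$, which is the key that identifies the correct "obstruction set" and forces the passage from $(V_{1,3}V_{3,1}, V_{2,3}V_{3,2})$ in the parametrisation to $V_{1,3}V_{2,3}V_{3,1}V_{3,2}$ in the count. Everything else is bookkeeping, so I do not anticipate a serious obstacle.
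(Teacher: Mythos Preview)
Your proof is correct and follows essentially the same approach as the paper. Both arguments work prime-by-prime: the paper packages the casework into the decomposition $V = V_{V_{1,3}V_{2,3}V_{3,1}V_{3,2}} \cdot V^{V_{1,3}} V^{V_{2,3}} V^{V_{3,1}} V^{V_{3,2}}$ (where $A_B$ denotes the maximal divisor of $A$ coprime to $B$), while you spell out the four cases for each prime explicitly; your observation that case (iv) is excluded by $(V,W)=1$ is exactly what makes the paper's four-factor splitting of $V^{V_{1,3}V_{2,3}V_{3,1}V_{3,2}}$ valid.
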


\begin{proof}
For the first claim we note that $(V_{1,2} , V_{2,3} V_{3,1}) = 1$ and $(V_{2,1} , V_{1,3} V_{3,2}) = 1$ imply that
\begin{align*}
\Big(V , (V_{1,3} , V_{2,3}) \cdot (V_{3,1} , V_{3,2}) \Big) = 1,
\end{align*}
and, due to the given coprimality relations of $V_{1,3}$,$V_{2,3}$,$V_{3,1}$, and $V_{3,2}$ given in Lemma \ref{A_1 A_2 A_3 = B_1 B_2 B_3 rewrite}, we have
\begin{align*}
(V_{1,3} , V_{2,3}) \cdot (V_{3,1} , V_{3,2})
= (V_{1,3} V_{3,1} , V_{2,3}  V_{3,2}) .
\end{align*}
The first claim follows.\\

We now look at the second claim. For $A,B \in \mathcal{M}$, we define $A_B$ to be the maximal divisor of $A$ that is coprime to $B$, and we define $A^B$ by $A = A_B A^B$. We then have that
\begin{align*}
V
= V_{V_{1,3} V_{2,3} V_{3,1} V_{3,2}} V^{V_{1,3} V_{2,3} V_{3,1} V_{3,2}}
= V_{V_{1,3} V_{2,3} V_{3,1} V_{3,2}} V^{V_{1,3}} V^{V_{2,3}} V^{V_{3,1}} V^{V_{3,2}} ,
\end{align*}
where the last equality follows from $\big(V , (V_{1,3} V_{3,1} , V_{2,3}  V_{3,2}) \big) = 1$ and the fact that $(V_{1,3} , V_{3,1}) = 1$ and $(V_{2,3} , V_{3,2}) = 1$. Now, $V = V_{1,2} V_{2,1}$ and by the coprimality relations we must have that $V^{V_{1,3}} V^{V_{3,2}} \mid V_{1,2}$ and $V^{V_{2,3}} V^{V_{3,1}} \mid V_{2,1}$. So, we see that
\begin{align*}
&\# \Big\{ (V_{1,2} , V_{2,1}) \in \mathcal{M}^2 : V_{1,2} V_{2,1} = V , (V_{1,2} , V_{2,3} V_{3,1}) = 1 , (V_{2,1} , V_{1,3} V_{3,2}) = 1 , (V_{1,2} , V_{2,1}) = 1 \Big\} \\
= &\# \Big\{ (V_{1,2} , V_{2,1}) \in \mathcal{M}^2 : V_{1,2} V_{2,1} = V_{V_{1,3} V_{2,3} V_{3,1} V_{3,2}} V^{V_{1,3}} V^{V_{2,3}} V^{V_{3,1}} V^{V_{3,2}} , \\
	&\hspace{2em}  V^{V_{1,3}} V^{V_{3,2}} \mid V_{1,2} \; , \; V^{V_{2,3}} V^{V_{3,1}} \mid V_{2,1} \; , \; (V_{1,2} , V_{2,1}) = 1 \Big\} \\
= &2^{\omega \big( V_{V_{1,3} V_{2,3} V_{3,1} V_{3,2}} \big) } = 2^{\omega(V) - \omega \Big( \big(V , V_{1,3} V_{2,3} V_{3,1} V_{3,2} \big) \Big)} .
\end{align*}
\end{proof}

We now need to give a definition for the primorials in $\mathbb{F}_q [T]$.

\begin{definition}[Primorial Polynomials] \label{definition, primorial polynomials}
Let $(S_i)_{i \in \mathbb{Z}_{> 0}}$ be a fixed ordering of $\mathcal{P}$ such that $\degree S_i \leq \degree S_{i+1}$ for all $i \geq 1$ (the order of the primes of a given degree is not of importance here). For all positive integers $n$ we define
\begin{align*}
R_n
:= \prod_{i=1}^{n} S_i .
\end{align*}
We will refer to $R_n$ as the $n$-th primorial. For each positive integer $n$ we have unique non-negative integers $m_n$ and $r_n$ such that
\begin{align} \label{R_n decomposition}
R_n
= \bigg( \prod_{\degree P \leq m_n } P \bigg) \bigg( \prod_{i=1}^{r_n } Q_i \bigg) ,
\end{align}
where the $Q_i$ are distinct primes of degree $m_n +1$. This definition of primorial is not standard.
\end{definition}

\begin{lemma} \label{Primorial bound}
For all positive integers $n$ we have that
\begin{align*}
\log_q \log_q \lvert R_n \rvert
= m_n + O(1) .
\end{align*}
From this we can deduce that
\begin{align*}
m_n \ll \log_q \log_q \lvert R_n \rvert
\end{align*}
for $n$ satisfying $m_n \geq 1$. In particular, the implied constant is independent of $q$.
\end{lemma}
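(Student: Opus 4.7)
The plan is to read off the decomposition (\ref{R_n decomposition}) to compute $\deg R_n$ exactly, and then sandwich this quantity between explicit powers of $q$ whose exponents differ from $m_n$ by an absolute constant.

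From (\ref{R_n decomposition}) we have
\begin{align*}
\deg R_n
= \sum_{d=1}^{m_n} d \lvert \mathcal{P}_d \rvert + (m_n + 1) r_n .
\end{align*}
The key ingredient is the exact von Mangoldt identity $\sum_{d \mid n} d \lvert \mathcal{P}_d \rvert = q^n$ (equivalent to the precise prime polynomial theorem (\ref{prime poly theorem, precise, statement})). Taking $n = m_n$ and noting that every divisor $d$ of $m_n$ satisfies $1 \leq d \leq m_n$, this yields the crucial lower bound
\begin{align*}
\sum_{d=1}^{m_n} d \lvert \mathcal{P}_d \rvert \geq q^{m_n}
\end{align*}
whenever $m_n \geq 1$. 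In particular, $\deg R_n \geq q^{m_n}$, so $\log_q \log_q \lvert R_n \rvert \geq m_n$. For the matching upper bound, the same identity applied term-by-term gives $d \lvert \mathcal{P}_d \rvert \leq q^d$ for every $d$, and also $r_n \leq \lvert \mathcal{P}_{m_n+1} \rvert \leq q^{m_n+1}/(m_n+1)$, so
\begin{align*}
\deg R_n \leq \sum_{d=1}^{m_n} q^d + q^{m_n+1} \leq \frac{q^{m_n+1}}{q-1} + q^{m_n+1} \leq 2 q^{m_n+1} ,
\end{align*}
uniformly in $q \geq 2$. Taking $\log_q$ twice gives $\log_q \log_q \lvert R_n \rvert \leq m_n + 1 + \log_q 2 \leq m_n + 2$, again uniformly in $q$.

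Combining the two bounds yields $\lvert \log_q \log_q \lvert R_n \rvert - m_n \rvert \leq 2$ whenever $m_n \geq 1$, which is the first assertion with an absolute implied constant. The degenerate case $m_n = 0$ with $n \geq 1$ is handled directly from $R_n = \prod_{i=1}^{r_n} Q_i$ with the $Q_i$ of degree $1$ and $1 \leq r_n \leq q-1$, giving $0 \leq \log_q \log_q \lvert R_n \rvert \leq \log_q (q-1) < 1$, so the deviation from $m_n = 0$ is still absolutely bounded. Finally, the second assertion $m_n \ll \log_q \log_q \lvert R_n \rvert$ for $m_n \geq 1$ is immediate from the lower bound $\log_q \log_q \lvert R_n \rvert \geq m_n$ with implied constant $1$, clearly independent of $q$.

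There is essentially no obstacle here beyond bookkeeping; the only point requiring minor care is ensuring that both bounds are uniform in $q$, which is why invoking the exact identity $\sum_{d \mid m_n} d \lvert \mathcal{P}_d \rvert = q^{m_n}$ (rather than its asymptotic form with a $q$-dependent error) is important for the lower bound.
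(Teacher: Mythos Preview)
Your proof is correct and follows essentially the same route as the paper: both arguments sandwich $\deg R_n$ between powers of $q$ with exponents close to $m_n$ and then take two $q$-adic logarithms. The only notable difference is that the paper invokes the asymptotic form $\lvert \mathcal{P}_i \rvert = q^i/i + O(q^{i/2}/i)$ to estimate $\sum_{i \leq m_n} i\lvert \mathcal{P}_i\rvert$, whereas you appeal directly to the exact von Mangoldt identity $\sum_{d \mid m} d\lvert \mathcal{P}_d\rvert = q^{m}$ to obtain the clean inequalities $d\lvert \mathcal{P}_d\rvert \leq q^d$ and $\sum_{d=1}^{m_n} d\lvert \mathcal{P}_d\rvert \geq q^{m_n}$. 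Your version has the advantage of yielding explicit absolute constants and making the uniformity in $q$ completely transparent, which in the paper's proof requires a small additional check on the error terms.
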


\begin{proof}
For the first claim, by (\ref{R_n decomposition}) and (\ref{prime poly theorem, precise, statement}), we see that
\begin{align*}
\log_q \lvert R_n \rvert 
= \degree R_n 
\leq \sum_{i=1}^{m_n +1} \bigg( q^i + O \Big( q^{\frac{i}{2}} \Big) \bigg)
\ll  q^{m_n +1}
\end{align*}
and
\begin{align*}
\log_q \lvert R_n \rvert 
= \degree R_n 
\geq \sum_{i=1}^{m_n} \bigg( q^i + O \Big( q^{\frac{i}{2}} \Big) \bigg)
\gg  q^{m_n} .
\end{align*}
By taking logarithms of both equations above, we deduce that 
\begin{align*}
\log_q \log_q \lvert R_n \rvert
= m_n + O (1) .
\end{align*}

For the second claim, if $m_n \geq 1$ then $\log_q \log_q \lvert R_n \rvert \geq 1$, and so by the first claim we have
\begin{align*} 
\frac{m_n}{\log_q \log_q \lvert R_n \rvert }
\ll 1 + \frac{1}{\log_q \log_q \lvert R_n \rvert }
\ll 1 .
\end{align*}
\end{proof}

\begin{lemma} \label{Prod of P div R of 1-P^(-1) derivatives bounds}
For all $R \in \mathcal{M}$ with $\degree R \geq 1$, non-negative integers $k$, and $s \in \mathbb{C}$ with $\Re (s) > -1$ we define
\begin{align*}
f_{R,k} (s) := &\prod_{P \mid R} \Big( 1 - \lvert P \rvert^{-s-1} \Big)^k , \\
h_{R,k} (s) := &\prod_{P \mid R} \Big( 1 + \lvert P \rvert^{-s-1} \Big)^{-k} .
\end{align*}
Then, for all non-negative integers $j$ and all integers $r$ we have
\begin{align*}
f_{R,k}^{(j)} \Big( \frac{2 r \pi i}{\log q} \Big) &\ll_j k^j \big( \log_q \degree R + O(1) \big)^{j}  \prod_{P \mid R} \Big( 1 - \lvert P \rvert^{-1} \Big)^k , \\
h_{R,k}^{(j)} \Big( \frac{2 r \pi i}{\log q} \Big) &\ll_j k^j \big( \log_q \degree R + O(1) \big)^{j}  \prod_{P \mid R} \Big( 1 + \lvert P \rvert^{-1} \Big)^{-k} .
\end{align*}
Generally, we could incorporate the $O(1)$ terms into the relation $\ll_j$, but for the case $\degree R = 1$, where we would have $\log_q \degree R = 0$, the $O(1)$ terms are required.
\end{lemma}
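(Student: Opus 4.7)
The plan is to exploit the identity $f_{R,k}(s) = e^{g(s)}$ with $g(s) := k\sum_{P \mid R}\log(1 - |P|^{-s-1})$, and the analogous setup for $h_{R,k}$, so that by Faà di Bruno's formula the ratio $f_{R,k}^{(j)}/f_{R,k}$ equals the complete Bell polynomial $B_j(g', g'', \ldots, g^{(j)}) = \sum_{\pi}\prod_{B\in\pi}g^{(|B|)}$, where $\pi$ ranges over set partitions of $\{1,\ldots,j\}$. At the point $s_0 := 2r\pi i/\log q$ one has $|P|^{-s_0-1} = |P|^{-1}$ (because $\deg P \in \mathbb{Z}$), and so $|f_{R,k}(s_0)| = \prod_{P \mid R}(1 - |P|^{-1})^k$, which gives the prefactor appearing on the right-hand side. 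The whole problem then reduces to bounding $|g^{(i)}(s_0)|$ for each $1 \leq i \leq j$.

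To compute these derivatives I would expand $\log(1 - x) = -\sum_{m \geq 1}x^m/m$ and differentiate termwise, obtaining
\begin{align*}
g^{(i)}(s) = -k\sum_{P \mid R}\sum_{m=1}^{\infty}\frac{(-m\log|P|)^i}{m}\,|P|^{-m(s+1)}.
\end{align*}
Evaluating at $s_0$ collapses $|P|^{-m(s_0+1)}$ to $|P|^{-m}$, and using $\sum_{m\geq 1}m^{i-1}|P|^{-m}\ll_i |P|^{-1}$ (valid since $|P|\geq q\geq 2$) yields
\begin{align*}
|g^{(i)}(s_0)| \ll_i k\sum_{P \mid R}\frac{(\deg P)^i}{|P|},
\end{align*}
where the implicit constant is allowed to absorb the factor $(\log q)^i$ coming from $(\log|P|)^i = (\log q)^i(\deg P)^i$, consistent with the paper's convention that $q$ is treated as a constant.

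The main technical estimate to prove is $\sum_{P \mid R}(\deg P)^i/|P| \ll_i (\log_q\deg R + O(1))^i$. Since the summand is decreasing in $\deg P$, the maximum over all $R$ of a given degree is attained by primorials, that is, products of all primes of degree $\leq m$ for some $m$; this is precisely the configuration formalised by Definition \ref{definition, primorial polynomials} and Lemma \ref{Primorial bound}. The Prime Polynomial Theorem forces $\deg R \asymp q^{m+1}/(q-1)$ in that case, whence $m \leq \log_q\deg R + O(1)$, and $\sum_{d=1}^{m}d^i|\mathcal{P}_d|q^{-d}\ll\sum_{d=1}^{m}d^{i-1}\ll_i m^i$. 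For general $R$ the reduction to the primorial case is an elementary rearrangement: replacing the prime divisors of $R$ by the smallest primes of matching count only increases the sum, and $\omega(R)\leq\deg R$ bounds the resulting cutoff.

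Finally I combine through Faà di Bruno: each term in $B_j(g',\ldots,g^{(j)})$ is a product $\prod_{B\in\pi}g^{(|B|)}$ with $\sum_{B\in\pi}|B| = j$, so by the previous step it is bounded by $k^{|\pi|}(\log_q\deg R + O(1))^j \leq k^j(\log_q\deg R + O(1))^j$ for $k \geq 1$ (the case $k = 0$ being trivial). The number of partitions depends only on $j$, completing the estimate. The inequality for $h_{R,k}$ follows identically after expanding $\log(1 + x) = \sum_{m\geq 1}(-1)^{m-1}x^m/m$; the absolute-value bounds are unchanged, while $|h_{R,k}(s_0)| = \prod_{P\mid R}(1 + |P|^{-1})^{-k}$ supplies the correct prefactor. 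The principal obstacle is the worst-case prime-sum bound in the third paragraph; the remaining Faà di Bruno step is bookkeeping.
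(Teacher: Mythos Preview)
Your proposal is correct and follows essentially the same approach as the paper: the paper writes $f_{R,k}' = k\,g_R\,f_{R,k}$ with $g_R(s)=\sum_{P\mid R}\frac{\log|P|}{|P|^{s+1}-1}$ (which is exactly your $g'/k$), iterates this to express $f_{R,k}^{(j)}/f_{R,k}$ as a polynomial in $k$ and the $g_R^{(l)}$ (your Bell polynomial), and then bounds $\sum_{P\mid R}\frac{(\log|P|)^{l+1}}{|P|-1}$ by passing to primorials via Lemma~\ref{Primorial bound}. One small imprecision: the summand $(\deg P)^i/|P|$ is not monotone decreasing in $\deg P$ for small degrees, only eventually so; the paper handles this by noting the function is eventually decreasing and absorbing the initial segment into a constant $c_l$, which your argument should also do explicitly.
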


\begin{proof}
We will prove only the claim for $f_{R,k} (s)$ and $r=0$. The proofs for all $r$ and $h_{R,k} (s)$ are almost identical. First, we note that
\begin{align} \label{Prod of P div R of 1-P^(-1) first derivative}
f_{R,k} '(s)
= k \, g_{R} (s) f_{R,k} (s) ,
\end{align}
where
\begin{align*}
g_{R} (s) 
:= \sum_{P \mid R} \frac{\log \lvert P \rvert}{\lvert P \rvert^{s+1} - 1} .
\end{align*}

We note further that, for integers $j \geq 1$,
\begin{align} \label{Prod of P div R of 1-P^(-1) all derivatives}
f_{R,k}^{(j)} (s) = G_{R,k,j} (s) f_{R,k} (s) ,
\end{align}
where $G_{R,k,j} (s)$ is a sum of terms of the form
\begin{align}\label{Prod of P div R of 1-P^(-1) derivative terms}
k^m g_{R}^{(j_1)} (s) \; g_{R}^{(j_2)} (s) \; \ldots \; g_{R}^{(j_m)} (s) ,
\end{align}
where $1 \leq m \leq j$ and $\sum_{r=1}^{m} (j_r + 1) = j$. The number of such terms and their coefficients are dependent only on $j$. \\

Now, for all $R \in \mathcal{M}$, and non-negative integers $l$, it is not difficult to deduce that
\begin{align} \label{Prod of P div R of 1-P^(-1) , g derivatives bounds}
g_{R}^{(l)} (0)
\ll_{l} \sum_{P \mid R} \frac{ \big( \log \lvert P \rvert \big)^{l+1} }{ \lvert P \rvert - 1} .
\end{align}
The function $\frac{\big( \log x \big)^{l+1}}{x-1}$ is decreasing at large enough $x$, and the limit as $x \longrightarrow \infty$ is $0$. Therefore, there exists a constant $c_l > 0$ such that for all $A,B \in \mathcal{A}$ with $1 \leq \degree A \leq \degree B$ we have that
\begin{align*}
c_l \frac{\big( \log \lvert A \rvert \big)^{l+1}}{\lvert A \rvert -1}
\geq \frac{\big( \log \lvert B \rvert \big)^{l+1}}{\lvert B \rvert -1} .
\end{align*}
Hence, taking $n=\omega (R)$ and using Definition \ref{definition, primorial polynomials}, Lemma \ref{Primorial bound}, and the prime polynomial theorem, we see that
\begin{align}
\begin{split} \label{sum P div R of (log P)^(i+1) / P-1 bound}
&\sum_{P \mid R} \frac{ \big( \log \lvert P \rvert \big)^{l+1} }{\lvert P \rvert - 1}
\ll_l \sum_{P \mid R_n } \frac{ \big( \log \lvert P \rvert \big)^{l+1} }{\lvert P \rvert - 1}
\ll \sum_{r=1}^{m_n +1 } \frac{q^r}{r} \frac{r^{l+1}}{q^r - 1}
\ll \sum_{r=1}^{m_n +1 } r^l 
\ll (m_n + 1 )^{l+1} \\
\ll &\big( \log_q \log_q \lvert R_n \rvert + O(1) \big)^{l+1}
\ll \big( \log_q \degree R_n + O(1) \big)^{l+1} 
\ll \big( \log_q \degree R + O(1) \big)^{l+1} .
\end{split}
\end{align}

The result follows by (\ref{Prod of P div R of 1-P^(-1) all derivatives}), (\ref{Prod of P div R of 1-P^(-1) derivative terms}), (\ref{Prod of P div R of 1-P^(-1) , g derivatives bounds}), and (\ref{sum P div R of (log P)^(i+1) / P-1 bound}) .
\end{proof}

\begin{lemma}[Perron's Formula] \label{integral over Re s = c of (y/n)^s / s^3}
Let $c$ be a positive real number, and let $k \geq 2$ be an integer. Then,
\begin{align*}
\int_{c- i \infty}^{c+ i \infty} \frac{y^s}{s^k} \mathrm{d} s
= \begin{cases}
0 &\text{ if $0 \leq y < 1$} ; \\
\frac{2\pi i}{(k-1)!} (\log y)^{k-1} &\text{ if $y \geq 1$} .
\end{cases} 
\end{align*}
If $k=1$, then
\begin{align*}
\int_{c- i \infty}^{c+ i \infty} \frac{y^s}{s} \mathrm{d} s
= \begin{cases}
0 &\text{ if $0 \leq y < 1$} ; \\
\pi i &\text{ if $y = 1$} ; \\
2\pi i &\text{ if $y > 1$} .
\end{cases} 
\end{align*}
\end{lemma}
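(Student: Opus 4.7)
The plan is to prove Perron's formula by standard contour shifting, splitting into cases $0 \leq y < 1$, $y > 1$, and $y = 1$ according to the sign of $\log y$. The integrand $y^s/s^k$ has a unique pole, at $s = 0$, which lies to the left of the line $\Re s = c$; whether it contributes depends on the direction we push the contour.

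For $0 \leq y < 1$ I would close the contour to the right. Fixing $T > 0$ and $M > c$, consider the positively oriented rectangle with vertices $c \pm iT$ and $M \pm iT$. The integrand is holomorphic inside, so Cauchy's theorem gives that the contour integral is zero. On each horizontal side, $|y^s/s^k| \leq y^{\Re s}/T^k$, so the contribution is at most $\int_c^M y^x\, dx / T^k \leq y^c / (T^k |\log y|) = O(T^{-k})$, uniformly in $M$. On the right vertical side, $|y^s/s^k| \leq y^M/M^k$, giving a contribution bounded by $2T y^M / M^k \to 0$ as $M \to \infty$ (using $y < 1$). Letting $M \to \infty$ and then $T \to \infty$ yields the claimed value $0$.

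For $y > 1$ I would close to the left with the rectangle of vertices $c \pm iT$ and $-M \pm iT$, which now encloses the pole at $s = 0$. The residue there is
\[
\operatorname{Res}_{s=0} \frac{y^s}{s^k} = \frac{1}{(k-1)!} \left. \frac{d^{k-1}}{ds^{k-1}} y^s \right|_{s=0} = \frac{(\log y)^{k-1}}{(k-1)!}.
\]
The horizontal sides are again $O(T^{-k})$, and the left vertical side is bounded by $2T y^{-M}/M^k \to 0$ as $M \to \infty$, where $y > 1$ is essential. The residue theorem and passage to the limit give $2 \pi i \cdot (\log y)^{k-1}/(k-1)!$.

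The case $y = 1$ reduces to direct computation. For $k \geq 2$, the integrand $1/s^k$ has antiderivative $-1/\bigl((k-1)s^{k-1}\bigr)$, which vanishes as $|\Im s| \to \infty$ on $\Re s = c$, yielding $0$ in agreement with $(\log 1)^{k-1} = 0$. For $k = 1$, using the principal branch of the logarithm, $\int_{c-iT}^{c+iT} ds/s = \log(c+iT) - \log(c-iT) = 2i \arctan(T/c) \to i \pi$ as $T \to \infty$. The step requiring most care is precisely this last one: at $y = 1, k = 1$ the integrand $1/s$ is not absolutely integrable on the vertical line, so the integral must be interpreted as the symmetric principal value $\lim_{T \to \infty} \int_{c-iT}^{c+iT}$. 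This is the standard convention whenever Perron's formula is invoked, and is how the lemma will be applied in the sequel.
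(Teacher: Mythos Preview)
Your proof is correct and is the standard contour-shifting argument for Perron's formula. The paper does not actually prove this lemma; it simply cites \cite[4.1.6, Page 282]{PropAnalNumTheo_Murty}, so there is nothing to compare beyond noting that your argument is essentially what one finds in that reference.
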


\begin{proof}
See \cite[4.1.6, Page 282]{PropAnalNumTheo_Murty}
\end{proof}

\begin{lemma} \label{Proposition Sum of 2^(omega(V) - omega((V,W)) (z - deg V)^k / |V|}
Let $R,M \in \mathcal{M}$ with $\degree M \leq \degree R$, $k$ be a non-negative integer, and $z$ be an integer-valued function of $R$ such that $z \sim \degree R$ as $\degree R \longrightarrow \infty$. We have that
\begin{align*}
\begin{split}
&\sum_{\substack{N \in \mathcal{M} \\ \degree N \leq z \\ (N,R) = 1}} \frac{2^{\omega (N) - \omega \big( (N,M) \big)}}{\lvert N \rvert} ( z - \degree N )^k \\
= &\frac{(1-q^{-1})}{(k+2) (k+1)} 
	\prod_{P \mid MR} \bigg( \frac{1 - \lvert P \rvert^{-1}}{1 + \lvert P \rvert^{-1}} \bigg) 
	\prod_{\substack{P \mid M \\ P \nmid R}} \bigg( \frac{1}{1 - \lvert P \rvert^{-1}} \bigg)
	\bigg( z^{k+2} + O_k \big( z^{k+1} \log \degree R \big) \bigg)
\end{split}
\end{align*}
as $\degree R \longrightarrow \infty$.
\end{lemma}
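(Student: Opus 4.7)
The plan is to recognise $S$ as a contour integral of a Dirichlet series and shift the contour past a pole of high order at $s=0$. For $\Re(s) > 0$ define
\begin{align*}
F(s) := \sum_{\substack{N \in \mathcal{M} \\ (N,R) = 1}} \frac{2^{\omega(N) - \omega((N,M))}}{\lvert N \rvert^{s+1}} .
\end{align*}
The summand is multiplicative in $N$, so computing the Euler factor at each prime $P$ (distinguishing $P \mid R$, $P \mid M$ with $P \nmid R$, and $P \nmid MR$) and comparing with $\prod_P (1 - \lvert P \rvert^{-(s+1)})^{-1} = \zeta_{\mathcal{A}}(s+1)$ and $\prod_P (1 + \lvert P \rvert^{-(s+1)}) = \zeta_{\mathcal{A}}(s+1)/\zeta_{\mathcal{A}}(2s+2)$ yields
\begin{align*}
F(s) = \frac{\zeta_{\mathcal{A}}(s+1)^2}{\zeta_{\mathcal{A}}(2s+2)} \prod_{P \mid R}\bigl(1 - \lvert P \rvert^{-(s+1)}\bigr) \prod_{P \mid MR}\bigl(1 + \lvert P \rvert^{-(s+1)}\bigr)^{-1} .
\end{align*}
Set $G(s) := F(s)(1 - q^{-s})^2 = (1 - q^{-1-2s}) \prod_{P \mid R}(1 - \lvert P \rvert^{-(s+1)}) \prod_{P \mid MR}(1 + \lvert P \rvert^{-(s+1)})^{-1}$, which is holomorphic on $\Re(s) > -1$. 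A short rearrangement shows $G(0)$ equals $(k+1)(k+2)$ times the prefactor appearing in the claim.

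For $k \geq 1$, Perron's formula (Lemma \ref{integral over Re s = c of (y/n)^s / s^3}) applied with $y = q^{z - \degree N}$ gives
\begin{align*}
S = \frac{k!}{(\log q)^k \cdot 2\pi i} \int_{c - i \infty}^{c + i \infty} \frac{q^{sz} F(s)}{s^{k+1}} \mathrm{d} s \qquad (c > 0),
\end{align*}
after interchanging sum and integral by absolute convergence on $\Re(s) = c$. Shift the contour to $\Re(s) = -\tfrac{1}{4}$ via horizontal segments at $\Im(s) = \pm (2 N + 1) \pi / \log q$; since $z \in \mathbb{Z}$, both $q^{sz}$ and $F(s)$ are $2 \pi i / \log q$-periodic in $\Im(s)$, so the horizontal pieces are uniformly bounded and vanish as $N \longrightarrow \infty$ by the $\lvert s \rvert^{-(k+1)}$ decay. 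The shifted integral is $O(q^{-z/4} \lvert R \rvert^{O(1)})$, negligible compared with any power of $z$. What remains is the sum of residues at the poles $s_m := 2 m \pi i / \log q$, $m \in \mathbb{Z}$.

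At $s = 0$ the integrand has a pole of order $k + 3$. Using $(1 - q^{-s})^{-2} = (s \log q)^{-2}(1 + s \log q + O(s^2))$ together with the Taylor expansions of $G$ and $q^{sz} = \sum_j (sz \log q)^j / j!$, the coefficient of $s^k$ in $q^{sz} F(s)$ equals
\begin{align*}
\frac{G(0) (z \log q)^{k+2}}{(k+2)! (\log q)^2} + \frac{(G(0) \log q + G'(0))(z \log q)^{k+1}}{(k+1)! (\log q)^2} + O_k(z^k);
\end{align*}
multiplying by $k! / (\log q)^k$ produces the stated main term $G(0) z^{k+2} / ((k+1)(k+2))$ together with a subleading contribution of size $O_k(z^{k+1}(G(0) + \lvert G'(0) \rvert / \log q))$. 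For each $m \neq 0$, $F$ has only a double pole at $s_m$ and $q^{s_m z} = 1$, so the residue is bounded by $O(\lvert m \rvert^{-(k+1)}(z \lvert G(s_m) \rvert + \lvert G'(s_m) \rvert))$; since $k + 1 \geq 2$ the sum over $m \neq 0$ converges absolutely and contributes at most $O(z G(0))$, absorbed in the error.

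The main obstacle is bounding $\lvert G'(0) \rvert / G(0) \ll \log \degree R$, for after dividing the subleading $s = 0$ contribution by the main-term coefficient $G(0) / ((k+1)(k+2))$ the error becomes a multiple of $z^{k+1}(1 + \lvert G'(0) \rvert / (G(0) \log q))$, and the claim follows from this bound. Writing $G(s) = (1 - q^{-1-2s}) f_{R,1}(s) h_{MR,1}(s)$ in the notation of Lemma \ref{Prod of P div R of 1-P^(-1) derivatives bounds}, the $j = 1$ case of that lemma yields $\lvert f_{R,1}'(0) \rvert \ll (\log_q \degree R) f_{R,1}(0)$ and similarly for $h_{MR,1}$ (using $\degree M \leq \degree R$), whence $\lvert G'(0) \rvert \ll G(0) \log_q \degree R = O(G(0) \log \degree R)$ as required. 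The case $k = 0$ is handled by the $k = 1$ case of Perron: the extra $\tfrac{1}{2}$-weight at $\degree N = z$ contributes only $O(1)$, while the apparent divergence $\sum_{m \neq 0} 1/s_m$ of leading-order residues cancels in principal value (using $G(s_m) = G(0)$ and $G'(s_m) = G'(0)$ for integer $m$, both consequences of $\lvert P \rvert^{-(s_m + 1)} = \lvert P \rvert^{-1}$), yielding the main term $G(0) z^2 / 2$ with the stated error.
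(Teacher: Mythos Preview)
Your approach is essentially the same as the paper's: define the Dirichlet series, express it as $\zeta_{\mathcal{A}}(s+1)^2/\zeta_{\mathcal{A}}(2s+2)$ times a finite Euler product, apply Perron, shift to $\Re(s)=-\tfrac14$, and extract the main term from the order-$(k+3)$ pole at $s=0$ while bounding the double-pole residues at $s=2m\pi i/\log q$ and the shifted line; the crucial estimate $|G'(0)|/G(0)\ll\log\deg R$ via Lemma~\ref{Prod of P div R of 1-P^(-1) derivatives bounds} is exactly what the paper uses. The paper is more careful in two places you gloss over: for $k=0$ it takes $y=q^{z+1/2}$ (avoiding the half-weight entirely) and proves a separate lemma to justify Perron, and it handles the non-absolutely-convergent $l_3$-integral via a periodicity pairing rather than a principal-value remark; it also verifies explicitly that the products over $P\mid MR$ on $\Re(s)=-\tfrac14$ are beaten by a small power of $|R|$, whereas your ``$|R|^{O(1)}$'' leaves the exponent implicit.
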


\begin{proof}
\textbf{Step 1:} Let us define the function $F$, for $\Re s > 1$, by
\begin{align*}
F(s)
= \sum_{\substack{N \in \mathcal{M} \\ (N,R) = 1}} \frac{2^{\omega (N) - \omega \big( (N,M) \big)}}{\lvert N \rvert^s} .
\end{align*}
We can see that
\begin{align*}
F(s)
&= \prod_{P \nmid MR} \bigg( 1 + \frac{2}{\lvert P \rvert^s} + \frac{2}{\lvert P \rvert^{2s}} + \ldots \bigg) 
	\prod_{\substack{P \mid M \\ P \nmid R}} \bigg( 1 + \frac{1}{\lvert P \rvert^s} + \frac{1}{\lvert P \rvert^{2s}} + \ldots \bigg) \\
= &\prod_{P \nmid MR} \bigg( \frac{2}{1 - \lvert P \rvert^{-s}} - 1 \bigg) 
	\prod_{\substack{P \mid M \\ P \nmid R}} \bigg( \frac{1}{1 - \lvert P \rvert^{-s}} \bigg)  \\
= &\prod_{P \in \mathcal{P}} \bigg( \frac{1 + \lvert P \rvert^{-s}}{1 - \lvert P \rvert^{-s}} \bigg) 
	\prod_{P \mid MR} \bigg( \frac{1 - \lvert P \rvert^{-s}}{1 + \lvert P \rvert^{-s}} \bigg) 
	\prod_{\substack{P \mid M \\ P \nmid R}} \bigg( \frac{1}{1 - \lvert P \rvert^{-s}} \bigg)  \\
= &\frac{\zeta_{\mathcal{A}} (s)^2}{\zeta_{\mathcal{A}} (2s)}
	\prod_{P \mid MR} \bigg( \frac{1 - \lvert P \rvert^{-s}}{1 + \lvert P \rvert^{-s}} \bigg) 
	\prod_{\substack{P \mid M \\ P \nmid R}} \bigg( \frac{1}{1 - \lvert P \rvert^{-s}} \bigg) .
\end{align*}

Now, let $c$ be a positive real number, and define
\begin{align*}
y := \begin{cases}
q^{z + \frac{1}{2}} &\text{ if $k = 0$} \\
q^z &\text{ if $k \neq 0$} .
\end{cases}
\end{align*}
On the one hand, we have that
\begin{align}
\begin{split} \label{Inside Int F(1+s) y^s / s^(k+1)}
\frac{1}{2 \pi i} \int_{c - i \infty}^{c+ i \infty} F(1+s) \frac{{y}^s}{s^{k+1}} \mathrm{d} s 
= &\frac{1}{2 \pi i} \sum_{\substack{N \in \mathcal{M} \\ (N,R) = 1}} \frac{2^{\omega (N) - \omega \big( (N,M) \big)}}{\lvert N \rvert} \int_{c - i \infty}^{c+ i \infty} \frac{{y}^s}{\lvert N \rvert ^s s^{k+1}} \mathrm{d} s \\
= &\frac{(\log q)^k}{k!}\sum_{\substack{N \in \mathcal{M} \\ \degree N \leq z \\ (N,R) = 1}} \frac{2^{\omega (N) - \omega \big( (N,M) \big)}}{\lvert N \rvert} \big( z - \degree N \big)^k .
\end{split}
\end{align}
For $k \geq 1$, the interchange of integral and summation is justified by absolute convergence, and the second equality follows by Lemma \ref{integral over Re s = c of (y/n)^s / s^3}. For $k=0$, the above holds by Lemma \ref{lemma, sum over deg N < z of 2^(omega(N) - omega (N,M))/N, eval via contours} below. We remark that we take $y=q^{z+\frac{1}{2}}$ when $k=0$ so that $\Big( \frac{y}{\lvert N \rvert} , k+1 \Big) \neq (1,1)$, which would be a special case of Lemma \ref{integral over Re s = c of (y/n)^s / s^3} that would be tedious to address. \\

On the other hand, for all positive integers $n$ define the following curves: 
\begin{align*}
l_1 (n) := &\bigg[ c - \frac{\big( 2n + \frac{1}{2} \big) \pi i}{\log q} , c + \frac{\big( 2n + \frac{1}{2} \big) \pi i}{\log q} \bigg] ; \\
l_2 (n) := &\bigg[ c + \frac{\big( 2n + \frac{1}{2} \big) \pi i}{\log q} , -\frac{1}{4} + \frac{\big( 2n + \frac{1}{2} \big) \pi i}{\log q} \bigg] ; \\
l_3 (n) := &\bigg[ -\frac{1}{4} + \frac{\big( 2n + \frac{1}{2} \big) \pi i}{\log q} , -\frac{1}{4} - \frac{\big( 2n + \frac{1}{2} \big) \pi i}{\log q} \bigg] ; \\
l_4 (n) := &\bigg[ -\frac{1}{4} - \frac{\big( 2n + \frac{1}{2} \big) \pi i}{\log q} , c - \frac{\big( 2n + \frac{1}{2} \big) \pi i}{\log q} \bigg] ; \\
L (n) := &l_1 (n) \cup l_2 (n) \cup l_3 (n) \cup l_4 (n) .
\end{align*}
Then, we have that
\begin{align}
\begin{split}\label{Outside Int F(1+s) y^s / s^(k+1)}
\frac{1}{2\pi i} \int_{c - i \infty}^{c+ i \infty} F(1+s) \frac{{y}^s}{s^{k+1}} \mathrm{d} s 
= &\frac{1}{2\pi i} \lim_{n \rightarrow \infty} \Bigg( \int_{L (n)} F(1+s) \frac{{y}^s}{s^{k+1}} \mathrm{d} s - \int_{l_2 (n)} F(1+s) \frac{{y}^s}{s^{k+1}} \mathrm{d} s \\
& \hspace{4em} - \int_{l_3 (n)} F(1+s) \frac{{y}^s}{s^{k+1}} \mathrm{d} s - \int_{l_4 (n)} F(1+s) \frac{{y}^s}{s^{k+1}} \mathrm{d} s \bigg) . 
\end{split}
\end{align}

\textbf{Step 2:} For the first integral in (\ref{Outside Int F(1+s) y^s / s^(k+1)}) we note that $F(1+s) \frac{{y}^s}{s^{k+1}}$ has a pole at $s=0$ of order $k+3$ and double poles at $s = \frac{2m \pi i}{\log q}$ for $m = \pm 1 , \pm 2 , \ldots , \pm n$. By applying the residue theorem we see that
\begin{align} 
\begin{split} \label{Proposition Sum of 2^(omega(V) - omega((V,W)) (deg V)^k / |V| , residues}
\lim_{n \rightarrow \infty} \frac{1}{2\pi i} \int_{L(n)} F(1+s) \frac{{y}^s}{s^{k+1}} \mathrm{d} s 
= \residue_{s=0} F(s+1)\frac{{y}^s}{s^{k+1}} + \sum_{\substack{m \in \mathbb{Z} \\ m \neq 0}} \residue_{s=\frac{2m \pi i}{\log q}} F(1+s)\frac{{y}^s}{s^{k+1}} . 
\end{split}
\end{align}

\textbf{Step 2.1:} For the first residue term we have
\begin{align} 
\begin{split}\label{Proposition Sum of 2^(omega(V) - omega((V,W)) (deg V)^k / |V| , residue at 0}
&\residue_{s=0} F(s+1)\frac{{y}^s}{s^{k+1}} \\
= &\frac{1}{(k+2)!} \lim_{s \longrightarrow 0} \frac{\mathrm{d}^{k+2}}{\mathrm{d}s^{k+2}} \Bigg( \zeta_{\mathcal{A}} (s +1)^2 s^2 
	\frac{1}{\zeta_{\mathcal{A}} (2s+2)} 
	\prod_{P \mid MR} \bigg( \frac{1 - \lvert P \rvert^{-s-1}}{1 + \lvert P \rvert^{-s-1}} \bigg) 
	\prod_{\substack{P \mid M \\ P \nmid R}} \bigg( \frac{1}{1 - \lvert P \rvert^{-s-1}} \bigg)
	{y}^s \Bigg) .
	\end{split}
\end{align}

If we apply the product rule for differentiation, then one of the terms will be
\begin{align*} 
&\frac{1}{(k+2)!} \lim_{s \longrightarrow 0} 
	\Bigg( \zeta_{\mathcal{A}} (s +1)^2 s^2 
	\frac{1}{\zeta_{\mathcal{A}} (2s+2)} 
	\prod_{P \mid MR} \bigg( \frac{1 - \lvert P \rvert^{-s-1}}{1 + \lvert P \rvert^{-s-1}} \bigg) 
	\prod_{\substack{P \mid M \\ P \nmid R}} \bigg( \frac{1}{1 - \lvert P \rvert^{-s-1}} \bigg)
	\frac{\mathrm{d}^{k+2}}{\mathrm{d}s^{k+2}} {y}^s \Bigg) \\
= & \frac{(1-q^{-1}) (\log q )^k}{(k+2)!} 
	\prod_{P \mid MR} \bigg( \frac{1 - \lvert P \rvert^{-1}}{1 + \lvert P \rvert^{-1}} \bigg) 
	\prod_{\substack{P \mid M \\ P \nmid R}} \bigg( \frac{1}{1 - \lvert P \rvert^{-1}} \bigg)
	\Big( z + O(1) \Big)^{k+2} .
\end{align*}
The $O(1)$ term is to account for the case where $y=q^{z+ \frac{1}{2}}$ (when $k=0$). \\

Now we look at the remaining terms that arise from the product rule. By using the fact that $\zeta_{\mathcal{A}} (1+s) = \frac{1}{1-q^{-s}}$, the Taylor series for $q^{-s}$, and the chain rule, we have, for non-negative integers $i$, that
\begin{align} \label{Proposition Sum of 2^(omega(V) - omega((V,W)) (deg V)^k / |V| , zeta(s+1) s derivatives at 0}
\lim_{s \rightarrow 0} \frac{1}{( \log q)^{i-1}} \frac{\mathrm{d}^i}{\mathrm{d} s^i} \zeta (s+1) s
= O_i (1) .
\end{align}
Similarly, for non-negative integers $i$,
\begin{align} \label{Proposition Sum of 2^(omega(V) - omega((V,W)) (deg V)^k / |V| , zeta(2s+2)^-1 derivatives at 0}
\frac{1}{( \log q)^{i}} \lim_{s \rightarrow 0} \frac{\mathrm{d}^i}{\mathrm{d} s^i}\zeta (2s+2)^{-1}
= \frac{1}{( \log q)^{i}} \lim_{s \rightarrow 0} \frac{\mathrm{d}^i}{\mathrm{d} s^i} \Big( 1 - q^{-1-2s} \Big)
= O_i (1) .
\end{align}
By (\ref{Proposition Sum of 2^(omega(V) - omega((V,W)) (deg V)^k / |V| , zeta(s+1) s derivatives at 0}), (\ref{Proposition Sum of 2^(omega(V) - omega((V,W)) (deg V)^k / |V| , zeta(2s+2)^-1 derivatives at 0}), and Lemma \ref{Prod of P div R of 1-P^(-1) derivatives bounds} and the fact that $\degree M \leq \degree R$, we see that the remaining terms are
\begin{align*}
\ll_k \frac{( \log q )^k}{(k+2)!}
	\prod_{P \mid MR} \bigg( \frac{1 - \lvert P \rvert^{-1}}{1 + \lvert P \rvert^{-1}} \bigg) 
	\prod_{\substack{P \mid M \\ P \nmid R}} \bigg( \frac{1}{1 - \lvert P \rvert^{-1}} \bigg)
	z^{k+1} \log \degree R .
\end{align*}

Hence,
\begin{align} 
\begin{split} \label{Proposition Sum of 2^(omega(V) - omega((V,W)) (deg V)^k / |V| , residue at 0 calculated}
&\residue_{s=0} F(s+1)\frac{{y}^s}{s^{k+1}} \\
= &\frac{(1-q^{-1}) (\log q )^k}{(k+2)!} 
	\hspace{-0.5em} \prod_{P \mid MR} \hspace{-0.5em} \bigg( \frac{1 - \lvert P \rvert^{-1}}{1 + \lvert P \rvert^{-1}} \bigg) 
	\prod_{\substack{P \mid M \\ P \nmid R}} \bigg( \frac{1}{1 - \lvert P \rvert^{-1}} \bigg)
	\bigg( z^{k+2} + O_k \big( z^{k+1} \log \degree R \big) \bigg)
\end{split}
\end{align}
as $\degree R \longrightarrow \infty$. \\

\textbf{Step 2.2:} Now we look at the remaining residue terms in (\ref{Proposition Sum of 2^(omega(V) - omega((V,W)) (deg V)^k / |V| , residues}). By similar (but simpler) means as above we can show that
\begin{align*}
\residue_{s=\frac{2m \pi i}{\log q}} F(1+s)\frac{{y}^s}{s^{k+1}}
= O_k \Bigg( \frac{ (\log q )^k}{m^{k+1}}
	\prod_{P \mid MR} \bigg( \frac{1 - \lvert P \rvert^{-1}}{1 + \lvert P \rvert^{-1}} \bigg) 
	\prod_{\substack{P \mid M \\ P \nmid R}} \bigg( \frac{1}{1 - \lvert P \rvert^{-1}} \bigg)
	z \Bigg) 
\end{align*}
as $\degree R \longrightarrow \infty$, and so, for $k \geq 1$,
\begin{align} \label{Proposition Sum of 2^(omega(V) - omega((V,W)) (deg V)^k / |V| , other residues calculated}
\sum_{\substack{m \in \mathbb{Z} \\ m \neq 0}} \residue_{s=\frac{2m \pi i}{\log q}} F(1+s)\frac{{y_R}^s}{s^{k+1}}
= O_k \Bigg( (\log q )^k
	\prod_{P \mid MR} \bigg( \frac{1 - \lvert P \rvert^{-1}}{1 + \lvert P \rvert^{-1}} \bigg) 
	\prod_{\substack{P \mid M \\ P \nmid R}} \bigg( \frac{1}{1 - \lvert P \rvert^{-1}} \bigg)
	z \Bigg) 
\end{align}
as $\degree R \longrightarrow \infty$. When $k=0$ we look at things more precisely and see that the term $\frac{1}{m}$ cancels with the term with $\frac{1}{-m}$, and so (\ref{Proposition Sum of 2^(omega(V) - omega((V,W)) (deg V)^k / |V| , other residues calculated}) holds for $k=0$ as well. \\

\textbf{Step 2.3:} By (\ref{Proposition Sum of 2^(omega(V) - omega((V,W)) (deg V)^k / |V| , residues}), (\ref{Proposition Sum of 2^(omega(V) - omega((V,W)) (deg V)^k / |V| , residue at 0 calculated}) and (\ref{Proposition Sum of 2^(omega(V) - omega((V,W)) (deg V)^k / |V| , other residues calculated}), we see that
\begin{align}
\begin{split} \label{Proposition Sum of 2^(omega(V) - omega((V,W)) (deg V)^k / |V| , L(n) integral}
&\lim_{n \rightarrow \infty} \frac{1}{2 \pi i} \int_{L(n)} F(1+s) \frac{{y_R}^s}{s^3} \mathrm{d} s \\
= &\frac{(1-q^{-1}) (\log q )^k}{(k+2)!} 
	\hspace{-0.5em} \prod_{P \mid MR} \hspace{-0.5em} \bigg( \frac{1 - \lvert P \rvert^{-1}}{1 + \lvert P \rvert^{-1}} \bigg) 
	\prod_{\substack{P \mid M \\ P \nmid R}} \bigg( \frac{1}{1 - \lvert P \rvert^{-1}} \bigg)
	\bigg( z^{k+2} + O_k \big( z^{k+1} \log \degree R \big) \bigg)
\end{split}
\end{align}
as $\degree R \longrightarrow \infty$. \\

\textbf{Step 3:} We now look at the integrals over $l_2 (n)$ and $l_4 (n)$. For all positive integers $n$ and all $s \in l_2 (n) , l_4 (n)$ we have that $F(s+1) {y}^s = O_{q,R,c} (1)$. One can now easily deduce for $i = 2,4$ that
\begin{align} \label{Proposition Sum of 2^(omega(V) - omega((V,W)) (deg V)^k / |V| , l_2 (n) and l_4 (n) integrals}
\lim_{n \rightarrow \infty} \Big\lvert \frac{1}{2 \pi i} \int_{l_i (n)} F(1+s) \frac{{y}^s}{s^{k+1}} \mathrm{d} s \Big\rvert = 0 .
\end{align}

\textbf{Step 4:} We now look at the integral over $l_3 (n)$. For all positive integers $n$ and all $s \in l_3 (n)$ we have that
\begin{align*}
\frac{\zeta_{\mathcal{A}} (s+1)^2}{\zeta_{\mathcal{A}} (2s+2)}
= O(1) 
\end{align*}
and
\begin{align*}
&\bigg\lvert  \prod_{P \mid MR} \bigg( \frac{1 - \lvert P \rvert^{-s-1}}{1 + \lvert P \rvert^{-s-1}} \bigg)
	\prod_{\substack{P \mid M \\ P \nmid R}} \bigg( \frac{1}{1 - \lvert P \rvert^{-s-1}} \bigg)
	{y}^{s} \bigg\lvert \\
\ll & \prod_{P \mid R} \bigg( \frac{1 + \lvert P \rvert^{-\frac{3}{4}}}{1 - \lvert P \rvert^{-\frac{3}{4}}} \bigg) 
	\prod_{P \mid M} \bigg( \frac{1}{1 - \lvert P \rvert^{-\frac{3}{4}}} \bigg) 
	\lvert R \rvert^{-\frac{1}{12}} \lvert R \rvert^{-\frac{1}{12}} \lvert M \rvert^{-\frac{1}{12}} q^{o(\degree R)} \\
\ll & \prod_{P \mid R} \bigg( \lvert P \rvert^{-\frac{1}{12}} \frac{1 + \lvert P \rvert^{-\frac{3}{4}}}{1 - \lvert P \rvert^{-\frac{3}{4}}} \bigg) 
	\prod_{P \mid M} \bigg( \lvert P \rvert^{-\frac{1}{12}} \frac{1}{1 - \lvert P \rvert^{-\frac{3}{4}}} \bigg) 
	q^{o(\degree R) - \frac{1}{12} \degree R} \\
\ll & O(1) 
\end{align*}
as $\degree R \longrightarrow \infty$. We now easily deduce that, for $k \geq 1$,
\begin{align} \label{Proposition Sum of 2^(omega(V) - omega((V,W)) (deg V)^k / |V| , l_3 (n) integral}
\lim_{n \rightarrow \infty} \Big\lvert \frac{1}{2\pi i} \int_{l_3 (n)} F(1+s) \frac{{y}^s}{s^{k+1}} \mathrm{d} s \Big\rvert
= O(1) 
\end{align}
as $\degree R \longrightarrow \infty$. For the case $k=0$ we must be more careful. Using the fact that $F(1+s)$ has vertical periodicity with period $\frac{2 \pi i}{\log q}$, and the fact that $y=q^{z+ \frac{1}{2}}$ where $z$ is an integer, we have that
\begin{align*}
\int_{-\frac{1}{4}}^{-\frac{1}{4} + i \infty} F(1+s) \frac{{y}^s}{s} \mathrm{d} s 
= &\sum_{m=0}^{\infty}
	\int_{-\frac{1}{4} + \frac{4 m \pi i}{\log q} }^{-\frac{1}{4} + \frac{(4m+2) \pi i}{\log q} } F(1+s) \frac{{y}^s}{s} \mathrm{d} s
	+ \int_{-\frac{1}{4} + \frac{(4m+2) \pi i}{\log q} }^{-\frac{1}{4} + \frac{(4m+4) \pi i}{\log q} } F(1+s) \frac{{y}^s}{s} \mathrm{d} s \\
= &\sum_{m=0}^{\infty}
	\int_{-\frac{1}{4} }^{-\frac{1}{4} + \frac{2 \pi i}{\log q} } F(1+s) \frac{{y}^s}{s + \frac{4 m \pi i}{\log q} } \mathrm{d} s
	- \int_{-\frac{1}{4} }^{-\frac{1}{4} + \frac{2 \pi i}{\log q} } F(1+s) \frac{{y}^s}{s + \frac{(4m+2) \pi i}{\log q} } \mathrm{d} s \\
= &\frac{2 \pi i}{\log q} \sum_{m=0}^{\infty}
	\int_{-\frac{1}{4} }^{-\frac{1}{4} + \frac{2 \pi i}{\log q} } F(1+s) \frac{{y}^s}{\Big( s + \frac{4 m \pi i}{\log q} \Big) \Big( s + \frac{(4m+2) \pi i}{\log q} \Big)} \mathrm{d} s \\
= &\frac{2 \pi i}{\log q} \sum_{m=0}^{\infty}
	\int_{-\frac{1}{4} + \frac{4 m \pi i}{\log q} }^{-\frac{1}{4} + \frac{(4 m +2) \pi i}{\log q} } F(1+s) \frac{{y}^s}{ s \Big( s + \frac{2 \pi i}{\log q} \Big)} \mathrm{d} s \\
\ll & \int_{-\frac{1}{4}}^{-\frac{1}{4} + i \infty} \frac{1}{ \lvert s \rvert \cdot \Big\lvert s + \frac{2 \pi i}{\log q} \Big\rvert} \mathrm{d} s
\ll 1 .
\end{align*}
A similar result can be obtained for the integral from $-\frac{1}{4}$ to $- \frac{1}{4} - i \infty$. Hence, we have that
\begin{align} \label{Proposition Sum of 2^(omega(V) - omega((V,W)) (deg V)^k / |V| , l_3 (n) integral, k=0 case}
\lim_{n \rightarrow \infty} \Big\lvert \frac{1}{2\pi i} \int_{l_3 (n)} F(1+s) \frac{{y}^s}{s} \mathrm{d} s \Big\rvert
= O(1) 
\end{align}
as $\degree R \longrightarrow \infty$. \\

\textbf{Step 5:} By (\ref{Inside Int F(1+s) y^s / s^(k+1)}), (\ref{Outside Int F(1+s) y^s / s^(k+1)}), (\ref{Proposition Sum of 2^(omega(V) - omega((V,W)) (deg V)^k / |V| , L(n) integral}), (\ref{Proposition Sum of 2^(omega(V) - omega((V,W)) (deg V)^k / |V| , l_2 (n) and l_4 (n) integrals}), (\ref{Proposition Sum of 2^(omega(V) - omega((V,W)) (deg V)^k / |V| , l_3 (n) integral}) and (\ref{Proposition Sum of 2^(omega(V) - omega((V,W)) (deg V)^k / |V| , l_3 (n) integral, k=0 case}), we deduce that
\begin{align*}
\begin{split}
&\sum_{\substack{N \in \mathcal{M} \\ \degree N \leq z \\ (N,R) = 1}} \frac{2^{\omega (N) - \omega \big( (N,M) \big)}}{\lvert N \rvert} ( z - \degree N )^k \\
= &\frac{(1-q^{-1})}{(k+2) (k+1)} 
	\prod_{P \mid MR} \bigg( \frac{1 - \lvert P \rvert^{-1}}{1 + \lvert P \rvert^{-1}} \bigg) 
	\prod_{\substack{P \mid M \\ P \nmid R}} \bigg( \frac{1}{1 - \lvert P \rvert^{-1}} \bigg)
	\bigg( z^{k+2} + O_k \big( z^{k+1} \log \degree R \big) \bigg)
\end{split}
\end{align*}
as $\degree R \longrightarrow \infty$.
\end{proof}

\begin{lemma} \label{lemma, sum over deg N < z of 2^(omega(N) - omega (N,M))/N, eval via contours}
Let $F(s)$, $z$, and $c$ be as in Lemma \ref{Proposition Sum of 2^(omega(V) - omega((V,W)) (z - deg V)^k / |V|}, and let $y = q^{z+ \frac{1}{2}}$. Then,
\begin{align*}
\frac{1}{2 \pi i} \int_{c- i \infty}^{c + i \infty} F (1+s) \frac{y^s}{s} \mathrm{d} s
= \sum_{\substack{N \in \mathcal{M} \\ \degree N \leq z \\ (N,R)=1 }} \frac{2^{\omega (N) - \omega \big( (N,M) \big)}}{\lvert N \rvert} .
\end{align*}
\end{lemma}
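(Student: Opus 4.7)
The plan is to apply Perron's formula (Lemma \ref{integral over Re s = c of (y/n)^s / s^3}, case $k=1$) term by term to the Dirichlet series defining $F(1+s)$, and then justify the interchange of summation and integration via an effective truncated Perron bound. The reason the choice $y = q^{z + \frac{1}{2}}$ is made is precisely to ensure that $y/\lvert N \rvert = q^{z + \frac{1}{2} - \degree N}$ is never equal to $1$; this sidesteps the borderline case in Perron's formula and, more importantly, gives a uniform lower bound on $\lvert \log (y/\lvert N \rvert) \rvert$ that makes the truncation error tractable.

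First I would truncate the contour at height $T$ and invoke absolute, uniform convergence of the series for $F(1+s)$ on $[c-iT, c+iT]$ (which holds since $\Re (1+s) = 1+c > 1$) to exchange summation and integration:
\begin{align*}
\frac{1}{2\pi i} \int_{c - iT}^{c + iT} F(1+s) \frac{y^s}{s} \mathrm{d} s
= \sum_{\substack{N \in \mathcal{M} \\ (N,R) = 1}} \frac{2^{\omega (N) - \omega ((N,M))}}{\lvert N \rvert} I_N (T) ,
\end{align*}
where $I_N (T) := \frac{1}{2\pi i} \int_{c-iT}^{c+iT} (y/\lvert N \rvert)^s s^{-1} \mathrm{d} s$. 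By Lemma \ref{integral over Re s = c of (y/n)^s / s^3} with $k=1$, one has $I_N := \lim_{T \to \infty} I_N(T) = 1$ when $\degree N \leq z$ (so $y/\lvert N \rvert \geq q^{1/2} > 1$), and $I_N = 0$ when $\degree N > z$ (so $y/\lvert N \rvert \leq q^{-1/2} < 1$).

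The last step is to move the limit $T \to \infty$ inside the sum over $N$, for which I would use the standard effective Perron bound
\begin{align*}
\lvert I_N (T) - I_N \rvert
\ll \frac{(y/\lvert N \rvert)^c}{T \lvert \log (y/\lvert N \rvert) \rvert} .
\end{align*}
Since $\lvert \log (y/\lvert N \rvert) \rvert = \lvert z + \frac{1}{2} - \degree N \rvert \log q \geq \frac{1}{2} \log q$ for every $N$, the factor $\lvert \log(y/\lvert N\rvert)\rvert^{-1}$ is uniformly bounded, and summing yields
\begin{align*}
\sum_{\substack{N \in \mathcal{M} \\ (N,R) = 1}} \frac{2^{\omega(N) - \omega((N,M))}}{\lvert N \rvert} \lvert I_N(T) - I_N \rvert
\ll \frac{y^c \, F(1+c)}{T \log q}
\longrightarrow 0
\end{align*}
as $T \to \infty$, because $F(1+c)$ is finite. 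Combining this with the evaluation of the limits $I_N$ above gives the desired identity.

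The only delicate point is the uniform control of the Perron tail for the infinitely many terms with $\degree N > z$, where each $I_N(T)$ tends to $0$ but not absolutely; the half-integer spacing afforded by $y = q^{z + \frac{1}{2}}$ supplies exactly the uniform bound on $\lvert \log(y/\lvert N \rvert)\rvert$ needed to dominate the sum, which is why the hypothesis on $y$ is imposed in the statement of Lemma \ref{Proposition Sum of 2^(omega(V) - omega((V,W)) (z - deg V)^k / |V|} in the first place.
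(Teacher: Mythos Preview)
Your argument is correct, but it takes a different route from the paper's proof.

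The paper does not invoke the effective (truncated) Perron bound at all. Instead, it fixes some $w > z + \tfrac{1}{2}$ and splits the Dirichlet series for $F(1+s)$ into the finite partial sum over $\degree N \leq w$ and the tail $F_w(s) = \sum_{\degree N > w} \cdots$. For the finite piece the interchange of sum and integral is trivial, and Lemma~\ref{integral over Re s = c of (y/n)^s / s^3} (full-line Perron, $k=1$) gives exactly the desired sum over $\degree N \leq z$. For the tail piece, the paper shows the integral $\int_{(c)} F_w(1+s)\,y^s s^{-1}\,\mathrm{d}s$ is \emph{identically zero} by shifting the contour to the right: on a rectangle with right edge at $\Re s = m$ there are no poles, the top and bottom edges vanish as the height grows, and the right edge vanishes as $m \to \infty$ because $F_w(1+s) \ll q^{w(1-\Re s)}$ while $y^s = q^{(z+\frac{1}{2})s}$ with $z + \tfrac{1}{2} < w$.

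Your approach is more direct and arguably the ``textbook'' one: truncate the contour at height $T$, interchange by absolute convergence on the compact segment, and control the tail uniformly via $\lvert I_N(T) - I_N \rvert \ll (y/\lvert N\rvert)^c / (T\lvert \log(y/\lvert N\rvert)\rvert)$ together with the half-integer spacing that bounds $\lvert \log(y/\lvert N\rvert)\rvert \geq \tfrac{1}{2}\log q$. This is perfectly valid; its only cost is that it relies on the effective Perron estimate, which the paper never states (Lemma~\ref{integral over Re s = c of (y/n)^s / s^3} is only the full-line version). The paper's split-and-shift argument trades that estimate for an elementary residue computation, staying entirely within the tools already set up.
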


\begin{proof}
Let $w > z + \frac{1}{2}$ and define
\begin{align*}
F_w (s)
:= \sum_{\substack{N \in \mathcal{M} \\ \degree N > w \\ (N,R) = 1 }} \frac{2^{\omega (N) - \omega \big( (N,M) \big)}}{\lvert N \rvert^s} .
\end{align*}
Then,
\begin{align*}
\frac{1}{2 \pi i} \int_{c- i \infty}^{c + i \infty} F (1+s) \frac{y^s}{s} \mathrm{d} s 
= &\frac{1}{2 \pi i} \int_{c- i \infty}^{c + i \infty} \sum_{\substack{N \in \mathcal{M} \\ \degree N \leq w \\ (N,R)=1 }} \frac{2^{\omega (N) - \omega \big( (N,M) \big)}}{\lvert N \rvert^{1+s} } \frac{y^s}{s} \mathrm{d} s
	+ \frac{1}{2 \pi i} \int_{c- i \infty}^{c + i \infty}  F_w (s) \frac{y^s}{s} \mathrm{d} s \\
= &\sum_{\substack{N \in \mathcal{M} \\ \degree N \leq z \\ (N,R)=1 }} \frac{2^{\omega (N) - \omega \big( (N,M) \big)}}{\lvert N \rvert}
	+ \frac{1}{2 \pi i} \int_{c- i \infty}^{c + i \infty}  F_w (s) \frac{y^s}{s} \mathrm{d} s ,
\end{align*}
where we have used Lemma \ref{integral over Re s = c of (y/n)^s / s^3} for the last equality. We must show that the second term on the right side is zero. To this end, we note that
\begin{align*}
F_w (s)
\leq \sum_{\substack{N \in \mathcal{M} \\ \degree N > w }} \frac{1}{\lvert N \rvert^{\Re (s) -1} }
\ll q^{w (2- \Re (s))} ,
\end{align*}
and we define the contours
\begin{align*}
l_1 (n,m) := &[ c - n i , c + n i ] ; \\
l_2 (n,m) := &[ c + n i , m + n i ] ; \\
l_3 (n,m) := &[ m + n i , m - n i ] ; \\
l_4 (n,m) := &[ m - n i , c - n i ] ; \\
L (n,m) := &l_1 (n) \cup l_2 (n) \cup l_3 (n) \cup l_4 (n) .
\end{align*}
We then have that
\begin{align*}
\int_{l_3 (n,m)}  F_w (s) \frac{y^s}{s} \mathrm{d} s
\leq 2\frac{n}{m} q^{2w} \Big( \frac{y}{q^w} \Big)^m
\longrightarrow 0
\end{align*}
as $m \longrightarrow \infty$, since $q^w > q^{z + \frac{1}{2}} = y$. We also have that
\begin{align*}
\int_{c+ n i}^{\infty + ni}  F_w (s) \frac{y^s}{s} \mathrm{d} s
\leq \frac{q^{2w} }{n} \int_{t=c}^{\infty} \Big( \frac{y}{q^w} \Big)^t \mathrm{d} t
\ll O_{z,w,c} (n^{-1})
\longrightarrow 0
\end{align*}
as $n \longrightarrow \infty$, and, similarly,
\begin{align*}
\int_{c- n i}^{\infty - ni}  F_w (s) \frac{y^s}{s} \mathrm{d} s
\longrightarrow 0
\end{align*}
as $n \longrightarrow \infty$. Finally, we note that
\begin{align*}
\int_{L (n,m)}  F_w (s) \frac{y^s}{s} \mathrm{d} s
= 0
\end{align*}
for all positive $n,m$, by the residue theorem. Hence, we can see that
\begin{align*}
\int_{c- i \infty}^{c + i \infty}  F_w (s) \frac{y^s}{s} \mathrm{d} s
= 0.
\end{align*}
as required.
\end{proof}

We now give a Corollary to Lemma \ref{Proposition Sum of 2^(omega(V) - omega((V,W)) (z - deg V)^k / |V|}.

\begin{corollary}\label{Corollary Sum of 2^(omega(V) - omega((V,W)) (deg V)^k / |V|}
Let $R,M \in \mathcal{M}$ with $\degree M \leq \degree R$, $k$ be a non-negative integer, and $z$ be an integer-valued function of $R$ such that $z \sim \degree R$ as $\degree R \longrightarrow \infty$. We have that
\begin{align*}
\begin{split}
&\sum_{\substack{N \in \mathcal{M} \\ \degree N \leq z \\ (N,R) = 1}} \frac{2^{\omega (N) - \omega \big( (N,M) \big)}}{\lvert N \rvert} ( \degree N )^k \\
= &\frac{(1-q^{-1})}{(k+2)} 
	\prod_{P \mid MR} \bigg( \frac{1 - \lvert P \rvert^{-1}}{1 + \lvert P \rvert^{-1}} \bigg) 
	\prod_{\substack{P \mid M \\ P \nmid R}} \bigg( \frac{1}{1 - \lvert P \rvert^{-1}} \bigg)
	\bigg( z^{k+2} + O_k \big( z^{k+1} \log \degree R \big) \bigg)
\end{split}
\end{align*}
as $\degree R \longrightarrow \infty$.
\end{corollary}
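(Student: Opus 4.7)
The plan is to reduce this to Lemma \ref{Proposition Sum of 2^(omega(V) - omega((V,W)) (z - deg V)^k / |V|} by expanding $(\degree N)^k$ via the binomial theorem as
\begin{align*}
(\degree N)^k
= \big( z - (z - \degree N) \big)^k
= \sum_{j=0}^{k} \binom{k}{j} (-1)^j z^{k-j} (z - \degree N)^j .
\end{align*}
Substituting this into the sum and swapping the order of summation, I get
\begin{align*}
\sum_{\substack{N \in \mathcal{M} \\ \degree N \leq z \\ (N,R) = 1}} \frac{2^{\omega (N) - \omega \big( (N,M) \big)}}{\lvert N \rvert} (\degree N)^k
= \sum_{j=0}^{k} \binom{k}{j} (-1)^j z^{k-j} \sum_{\substack{N \in \mathcal{M} \\ \degree N \leq z \\ (N,R) = 1}} \frac{2^{\omega (N) - \omega \big( (N,M) \big)}}{\lvert N \rvert} (z - \degree N)^j .
\end{align*}

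Now I apply Lemma \ref{Proposition Sum of 2^(omega(V) - omega((V,W)) (z - deg V)^k / |V|} to each inner sum. Writing $C(M,R) := \prod_{P \mid MR} \big( \frac{1 - \lvert P \rvert^{-1}}{1 + \lvert P \rvert^{-1}} \big) \prod_{P \mid M, P \nmid R} \big( \frac{1}{1 - \lvert P \rvert^{-1}} \big)$ for brevity, the inner sum equals
\begin{align*}
\frac{(1-q^{-1})}{(j+2)(j+1)} C(M,R) \Big( z^{j+2} + O_j \big( z^{j+1} \log \degree R \big) \Big) .
\end{align*}
Summing against $\binom{k}{j} (-1)^j z^{k-j}$, the main term is
\begin{align*}
(1-q^{-1}) C(M,R) z^{k+2} \sum_{j=0}^{k} \binom{k}{j} \frac{(-1)^j}{(j+1)(j+2)} ,
\end{align*}
while the error contributes $\ll_k C(M,R) z^{k+1} \log \degree R$ after bounding $\binom{k}{j}$ by a constant depending on $k$.

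It therefore remains to evaluate the combinatorial sum $S_k := \sum_{j=0}^{k} \binom{k}{j} \frac{(-1)^j}{(j+1)(j+2)}$. I will use the identity $\frac{1}{(j+1)(j+2)} = \int_0^1 \int_0^x y^j \, \mathrm{d} y \, \mathrm{d} x$, which gives
\begin{align*}
S_k
= \int_0^1 \int_0^x \sum_{j=0}^{k} \binom{k}{j} (-y)^j \, \mathrm{d} y \, \mathrm{d} x
= \int_0^1 \int_0^x (1 - y)^k \, \mathrm{d} y \, \mathrm{d} x
= \int_0^1 \frac{1 - (1-x)^{k+1}}{k+1} \, \mathrm{d} x
= \frac{1}{k+2} .
\end{align*}
Plugging this back in produces the claimed main term $\frac{1 - q^{-1}}{k+2} C(M,R) z^{k+2}$, completing the proof. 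There is no substantial obstacle here: the argument is purely formal once Lemma \ref{Proposition Sum of 2^(omega(V) - omega((V,W)) (z - deg V)^k / |V|} is in hand, and the only computation worth highlighting is the evaluation of $S_k$.
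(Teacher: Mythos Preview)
Your proof is correct and follows essentially the same approach as the paper: both expand $(\degree N)^k$ via the binomial theorem, apply Lemma \ref{Proposition Sum of 2^(omega(V) - omega((V,W)) (z - deg V)^k / |V|} term by term, and then evaluate the resulting combinatorial sum $\sum_{j=0}^{k}\binom{k}{j}\frac{(-1)^j}{(j+1)(j+2)}=\frac{1}{k+2}$. The only cosmetic difference is that the paper computes this sum via the identity $\binom{k}{j}\frac{1}{(j+1)(j+2)}=\frac{1}{(k+1)(k+2)}\binom{k+2}{j+2}$ and the alternating binomial sum, whereas you use the integral representation; both are straightforward.
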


\begin{proof}
By the binomial theorem we have
\begin{align*}
(\degree N )^k
= \sum_{i=0}^{k} \binom{k}{i} (-1)^i (z - \degree N)^i z^{k-i} ,
\end{align*}
and let us define
\begin{align*}
a(R)
:= (1-q^{-1})
	\prod_{P \mid MR} \bigg( \frac{1 - \lvert P \rvert^{-1}}{1 + \lvert P \rvert^{-1}} \bigg) 
	\prod_{\substack{P \mid M \\ P \nmid R}} \bigg( \frac{1}{1 - \lvert P \rvert^{-1}} \bigg) .
\end{align*}
Then, by Lemma \ref{Proposition Sum of 2^(omega(V) - omega((V,W)) (z - deg V)^k / |V|}, we have
\begin{align*}
\begin{split}
\sum_{\substack{N \in \mathcal{M} \\ \degree N \leq z \\ (N,R) = 1}} \frac{2^{\omega (N) - \omega \big( (N,M) \big)}}{\lvert N \rvert} ( \degree N )^k 
= &a(R) z^{k+2} \sum_{i=0}^{k} \binom{k}{i} \frac{1}{(i+2)(i+1)} (-1)^i  + O_k \big( a(R) z^{k+1} \log \degree R \big)\\
= &\frac{a(R) z^{k+2}}{(k+2) (k+1)} \sum_{i=2}^{k+2} \binom{k+2}{i} (-1)^i  + O_k \big( a(R) z^{k+1} \log \degree R \big)\\
= &\frac{a(R) z^{k+2}}{k+2}  + O_k \big( a(R) z^{k+1} \log \degree R \big) .
\end{split}
\end{align*}
\end{proof}

\begin{lemma} \label{Proposition, extending sums over smooth numbers}
Suppose $\nu$ is a multiplicative function on $\mathcal{A}$ and that there exists a non-negative integer $r$ such that $\nu (P^k ) = O(k^r )$ for all primes $P$ (the implied constant is independent of $P$). Furthermore, suppose there is an $\eta > 0$ such that $\nu (A) \ll_{\eta } \lvert A \rvert^{\eta }$ as $\degree A \overset{q}{\longrightarrow} \infty$. \\

Let $R \in \mathcal{M}$ be a variable, $a,b > 0$ be constants, and $X=X(R) , y= y (R)$ be non-negative, increasing, integer-valued functions such that $X \leq a \log_q \log \degree R$ and $y \geq b \log_q \degree R$ for large enough $\degree R$. \\

Let $c$ and $\epsilon$ be such that $c > \epsilon > \max \big\{ 0 , 1 - \frac{1}{a} \big\}$ and $c > \eta$, and let $\delta > 0$ be small. Finally, let $S \in \mathcal{M}$; $S$ may depend on $R$. We then have that
\begin{align*}
\sum_{\substack{A \in \mathcal{S}_{\mathcal{M}} (X) \\ \degree A \leq y \\ (A,S)=1}} \frac{ \nu (A)}{\lvert A \rvert^c} 
= \prod_{\substack{\degree P \leq X \\ (P,S)=1}} \bigg( 1 + \frac{ \nu (P)}{\lvert P \rvert^{c}} + \frac{ \nu (P^2)}{\lvert P \rvert^{2c} } + \ldots \bigg)
+ O_{q,a,b,c, r, \epsilon, \delta} \Big( (\degree R)^{-b (c - \epsilon ) (1 - \delta )} \Big)
\end{align*}
as $\degree R \longrightarrow \infty$.
\end{lemma}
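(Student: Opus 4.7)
The plan is to recognise the right-hand side product as the \emph{unrestricted} smooth sum $\sum_{A \in \mathcal{S}_{\mathcal{M}} (X) , (A,S) = 1} \nu (A) / \lvert A \rvert^c$, so that the error in the stated identity is precisely the tail
\[
T := \sum_{\substack{A \in \mathcal{S}_{\mathcal{M}} (X) \\ \degree A > y \\ (A,S) = 1}} \frac{\nu (A)}{\lvert A \rvert^c},
\]
and then bound $\lvert T \rvert$ by Rankin's trick. The identification with the Euler product is legitimate because each local factor $\sum_{k \geq 0} \nu (P^k) / \lvert P \rvert^{kc}$ is absolutely convergent under the hypothesis $\nu (P^k) = O (k^r)$ together with $c > 0$.

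Since $c > \epsilon$ and $( \lvert A \rvert / q^y )^{c - \epsilon} \geq 1$ whenever $\degree A > y$, Rankin's trick gives
\[
\lvert T \rvert \leq q^{-y (c - \epsilon)} \sum_{\substack{A \in \mathcal{S}_{\mathcal{M}} (X) \\ (A,S) = 1}} \frac{\lvert \nu (A) \rvert}{\lvert A \rvert^{\epsilon}} = q^{-y (c - \epsilon)} \prod_{\substack{\degree P \leq X \\ P \nmid S}} \bigg( 1 + \sum_{k \geq 1} \frac{\lvert \nu (P^k) \rvert}{\lvert P \rvert^{k \epsilon}} \bigg),
\]
where the second equality uses that $\lvert \nu \rvert$ is multiplicative. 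Using $\lvert \nu (P^k) \rvert \ll k^r$ and summing the resulting series, the local factor is $1 + O_r \big( \lvert P \rvert^{-\epsilon} \big)$ for every prime with $\lvert P \rvert^{\epsilon} \geq 2$; the finitely many primes with $\lvert P \rvert^{\epsilon} < 2$ contribute only a bounded factor depending on $q, \epsilon, r$.

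It remains to control $\sum_{\degree P \leq X} \lvert P \rvert^{-\epsilon}$. By the prime polynomial theorem this is $\ll \sum_{n=1}^{X} q^{n (1 - \epsilon)} / n$: when $\epsilon \geq 1$ this is $O (\log X)$, and when $\epsilon < 1$ it is dominated by $q^{X (1 - \epsilon)} / X \leq (\log \degree R)^{a (1 - \epsilon)} / X$, using $X \leq a \log_q \log \degree R$. Here the hypothesis $\epsilon > 1 - \frac{1}{a}$ is essential: it forces $a (1 - \epsilon) < 1$, so in both cases the prime sum is $o (\log \degree R)$. Exponentiating, the Euler factor in the Rankin bound is at most $(\degree R)^{o (1)}$, and in particular $\leq (\degree R)^{b (c - \epsilon) \delta}$ for $\degree R$ sufficiently large in terms of $q, a, b, c, r, \epsilon, \delta$.

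Combining with $q^{-y (c - \epsilon)} \leq (\degree R)^{-b (c - \epsilon)}$ (from $y \geq b \log_q \degree R$) yields $\lvert T \rvert \leq (\degree R)^{-b (c - \epsilon) (1 - \delta)}$, which is the claimed bound. The main technical subtlety is the dovetailing of the two one-sided constraints on $\epsilon$: we need $\epsilon < c$ to get any decay from Rankin's trick, and $\epsilon > \max \{ 0 , 1 - \frac{1}{a} \}$ to keep the Rankin-twisted Euler product subpolynomial in $\degree R$, and these are exactly the hypotheses placed on $\epsilon$.
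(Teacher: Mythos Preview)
Your proof is correct and takes a genuinely more elementary route than the paper's. The paper obtains the same conclusion via Perron's formula: it writes the truncated sum as a contour integral
\[
\frac{1}{2\pi i}\int_{(d)} \Bigg(\sum_{\substack{A\in\mathcal{S}_{\mathcal{M}}(X)\\ (A,S)=1}}\frac{\nu(A)}{\lvert A\rvert^{s+c}}\Bigg)\frac{q^{(y+\frac12)s}}{s}\,\mathrm{d}s,
\]
shifts the line of integration to $\Re(s)=-c+\epsilon$, picks up the residue at $s=0$ (which is exactly the full Euler product), and then bounds the shifted integral using precisely the same estimate on the $\epsilon$-twisted smooth Euler product that you obtain directly. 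Your Rankin-trick argument is the real-variable shadow of that contour shift, and it avoids the analytic overhead entirely---in particular the mild nuisance of controlling the integral on $\Re(s)=-c+\epsilon$, where the integrand is only $O(1/\lvert s\rvert)$ and one must exploit periodicity to get convergence. A small bonus of your approach is that it makes transparent that the hypotheses $c>\eta$ and $\nu(A)\ll_\eta\lvert A\rvert^\eta$ are never actually invoked: once one restricts to $X$-smooth polynomials, only the local bound $\nu(P^k)=O(k^r)$ is needed.
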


\begin{proof}
Let $d \geq 2$. By similar means as in Lemma \ref{lemma, sum over deg N < z of 2^(omega(N) - omega (N,M))/N, eval via contours}, we have that
\begin{align*}
\frac{1}{2 \pi i} \int_{d- i \infty}^{d+ i \infty} \hspace{-0.75em}
	\sum_{\substack{A \in \mathcal{S}_{\mathcal{M}} (X) \\ (A,S)=1}} \frac{ \nu (A)}{\lvert A \rvert^{s+c} }
	\frac{q^{(y + \frac{1}{2})s} }{s} \mathrm{d} s
= \sum_{\substack{A \in \mathcal{S}_{\mathcal{M}} (X) \\ \degree A \leq y \\ (A,S)=1}} \frac{ \nu (A)}{\lvert A \rvert^{c} } .
\end{align*}
Now, let $n$ be a positive integer and let us define the following contours in $\mathbb{C}$.
\begin{align*}
l_1 (n) := &\bigg[ d - \frac{2n \pi i}{\log q} , d + \frac{2n \pi i}{\log q} \bigg] ;\\
l_2 (n) := &\bigg[ d + \frac{2n \pi i}{\log q} , -c+ \epsilon + \frac{2n \pi i}{\log q} \bigg] ;\\
l_3 (n) := &\bigg[ -c+ \epsilon + \frac{2n \pi i}{\log q} , -c+ \epsilon - \frac{2n \pi i}{\log q} \bigg] ;\\
l_4 (n) := &\bigg[ -c+ \epsilon - \frac{2n \pi i}{\log q} , d - \frac{2n \pi i}{\log q} \bigg] ;\\
L (n) := &l_1 (n) \cup l_2 (n) \cup l_3 (n) \cup l_4 (n) .
\end{align*}
We can see that
\begin{align*}
&\frac{1}{2 \pi i} \int_{d- i \infty}^{d+ i \infty}
	\sum_{\substack{A \in \mathcal{S}_{\mathcal{M}} (X) \\ (A,S)=1}} \frac{ \nu (A)}{\lvert A \rvert^{s+c} }
	\frac{q^{(y + \frac{1}{2})s} }{s} \mathrm{d} s \\
=& \frac{1}{2 \pi i} \lim_{n \longrightarrow \infty} \bigg(
\int_{L(n)}
	\sum_{\substack{A \in \mathcal{S}_{\mathcal{M}} (X) \\ (A,S)=1}} \frac{ \nu (A)}{\lvert A \rvert^{s+c} }
	\frac{q^{(y + \frac{1}{2})s} }{s} \mathrm{d} s
- \int_{l_2 (n)}
	\sum_{\substack{A \in \mathcal{S}_{\mathcal{M}} (X) \\ (A,S)=1}} \frac{ \nu (A)}{\lvert A \rvert^{s+c} }
	\frac{q^{(y + \frac{1}{2})s} }{s} \mathrm{d} s \\
& \quad \quad \quad - \int_{l_3 (n)}
	\sum_{\substack{A \in \mathcal{S}_{\mathcal{M}} (X) \\ (A,S)=1}} \frac{ \nu (A)}{\lvert A \rvert^{s+c} }
	\frac{q^{(y + \frac{1}{2})s} }{s} \mathrm{d} s
- \int_{l_4 (n)}
	\sum_{\substack{A \in \mathcal{S}_{\mathcal{M}} (X) \\ (A,S)=1}} \frac{ \nu (A)}{\lvert A \rvert^{s+c} }
	\frac{q^{(y + \frac{1}{2})s} }{s} \mathrm{d} s \bigg) .
\end{align*}
For the integral over $L(n)$ there is a simple pole at $s=0$. So, we have
\begin{align*}
\frac{1}{2 \pi i} \int_{L(n)}
	\sum_{\substack{A \in \mathcal{S}_{\mathcal{M}} (X) \\ (A,S)=1}} \frac{ \nu (A)}{\lvert A \rvert^{s+c} }
	\frac{q^{(y + \frac{1}{2})s} }{s} \mathrm{d} s 
= \sum_{\substack{A \in \mathcal{S}_{\mathcal{M}} (X) \\ (A,S)=1}} \frac{ \nu (A)}{\lvert A \rvert^{c}} 
= \prod_{\substack{\degree P \leq X \\ (P,S)=1}} \bigg( 1 + \frac{ \nu (P)}{\lvert P \rvert^{c}} + \frac{ \nu (P^2)}{\lvert P \rvert^{2c} } + \ldots \bigg) .
\end{align*}

We can see that for all $s \in l_2 (n)$ and all $ s \in l_4 (n)$ we have that $\sum_{\substack{A \in \mathcal{S}_{\mathcal{M}} (X) \\ (A,S)=1}} \frac{ \nu (A)}{\lvert A \rvert^{s+c} }$ and $q^{(y + \frac{1}{2})s}$ are uniformly bounded, independently of $n$. Hence, we can see that the integrals over $l_2 (n) , l_4 (n)$ tend to $0$ as $n \longrightarrow \infty$. \\

Now consider the integral over $l_3 (n)$. Suppose $\epsilon < 1$. Then, for all positive integers $n$ and all $s \in l_3 (n)$ we have that
\begin{align*}
&\sum_{\substack{A \in \mathcal{S}_{\mathcal{M}} (X) \\ (A,S)=1}} \frac{ \nu (A)}{\lvert A \rvert^{s+c} }
\ll \prod_{\substack{\degree P \leq X \\ (P,S)=1}} \bigg( 1 + \frac{ \lvert \nu (P) \rvert}{\lvert P \rvert^{\epsilon}} + \frac{ \lvert \nu (P^2) \rvert}{\lvert P \rvert^{2\epsilon} } + \ldots \bigg) \\
&\leq \prod_{\substack{\degree P \leq X \\ (P,S)=1}} \bigg( 1 + O_{r,\epsilon} \Big( \frac{1}{\lvert P \rvert^{\epsilon}} \big) \bigg) 
\leq \exp \bigg( O_{r , \epsilon } \Big( \sum_{\substack{\degree P \leq X \\ (P,S)=1}} \frac{1}{\lvert P \rvert^{\epsilon}} \Big) \bigg) \\
&\leq \exp \bigg( O_{r, \epsilon} \Big( \sum_{i=1}^{X} \frac{q^{i (1- \epsilon )}}{i} \Big) \bigg) 
\leq \exp \bigg( O_{r, \epsilon , a} \Big( \frac{ (\log \degree R )^{a (1-\epsilon )}}{\log_q \log \degree R} \Big) \bigg) 
\ll (\degree R)^{b(c - \epsilon ) \delta}
\end{align*}
as $\degree R \overset{a,b,c,q,r,\epsilon , \delta}{\longrightarrow} \infty$. Now suppose $\epsilon \geq 1$, then we can show that
\begin{align*}
\sum_{\substack{A \in \mathcal{S}_{\mathcal{M}} (X) \\ (A,S)=1}} \frac{ \nu (A)}{\lvert A \rvert^{s+c} }
\ll \exp \bigg( O_{r, \epsilon , a} \Big(  a \log_q \log \degree R \Big) \bigg)
\ll (\degree R)^{b(c - \epsilon ) \delta}
\end{align*}
as $\degree R \overset{a,b,c,q,r,\epsilon , \delta}{\longrightarrow} \infty$. We also have that
\begin{align*}
q^{(y + \frac{1}{2})s}
\ll (\degree R)^{-b(c - \epsilon )} ,
\end{align*}
from which we deduce that
\begin{align*}
\frac{1}{2 \pi i} \int_{l_3 (n)}
	\sum_{\substack{A \in \mathcal{S}_{\mathcal{M}} (X) \\ (A,S)=1}} \frac{ \nu (A)}{\lvert A \rvert^{s+c} }
	\frac{q^{(y + \frac{1}{2})s} }{s} \mathrm{d} s
\ll (\degree R)^{-b (c - \epsilon ) (1 - \delta )} 
\end{align*}
as $\degree R \overset{a,b,c,q,r,\epsilon , \delta}{\longrightarrow} \infty$. 
\end{proof}

We now prove a result that is required to bound the lower order terms in the proof of Theorem \ref{Theorem, fourth moment of Hadamard Product, Intro version}, but first we require two results from \cite{AndradeYiasemides2021_4thPowMeanDLFuncField_Final}:

\begin{theorem} \label{Brun-Titschmarsh theorem, divisor function case in function field, Intro version}
Suppose $\alpha , \beta$ are fixed and satisfy $0 < \alpha < \frac{1}{2}$ and $0 < \beta < \frac{1}{2}$. Let $X \in \mathcal{M}$ and $y$ be a positive integer satisfying $\beta \degree X < y \leq \degree X$. Also, let $A \in \mathcal{A}$ and $G \in \mathcal{M}$ satisfy $(A,G)=1$ and $\degree G < (1-\alpha ) y$. Then, we have that
\begin{align*}
\sum_{\substack{N \in \mathcal{M} \\ \degree (N-X) < y \\ N \equiv A (\modulus G)}} d(N)
\ll_{\alpha , \beta} \frac{q^y \degree X}{\phi (G)} .
\end{align*}
\end{theorem}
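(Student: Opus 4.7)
The plan is to adapt the classical hyperbola-method proof of the Brun--Titchmarsh inequality for the divisor function to the function field setting. Since $\degree(N-X)<y\leq \degree X$ forces $\degree N=\degree X$ throughout the outer sum, and $(A,G)=1$ forces $(N,G)=1$ and hence $(d,G)=1$ for every divisor $d$ of $N$, the bound $d(N)\leq 2\,\#\{d\in\mathcal{M}:d\mid N,\ \degree d\leq \degree X/2\}$ together with an interchange of summation gives
\begin{align*}
\sum_{\substack{N\in\mathcal{M} \\ \degree(N-X)<y \\ N\equiv A\,(\modulus G)}} d(N)
\leq 2\sum_{\substack{d\in\mathcal{M},\ (d,G)=1 \\ \degree d\leq \degree X/2}}
\#\bigl\{N\in\mathcal{M}:d\mid N,\ \degree(N-X)<y,\ N\equiv A\,(\modulus G)\bigr\}.
\end{align*}
For each $d$ in the outer sum the Chinese Remainder Theorem fuses the conditions $d\mid N$ and $N\equiv A\,(\modulus G)$ into a single congruence $N\equiv A'\,(\modulus dG)$; the short interval $\{N:\degree(N-X)<y\}$ contains exactly $q^{\,y-\degree d-\degree G}$ such $N$ when $\degree d+\degree G\leq y$ (residue classes equidistribute once the interval length exceeds the modulus) and at most one otherwise.

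Splitting the outer sum at $\degree d=y-\degree G$, the short-divisor range $\degree d\leq y-\degree G$ contributes
\begin{align*}
q^{\,y-\degree G}\sum_{\substack{d\in\mathcal{M},\ (d,G)=1\\ \degree d\leq y-\degree G}}\frac{1}{\lvert d\rvert}
\ll \frac{q^{y}\, y\, \phi(G)}{\lvert G\rvert^{2}}
\leq \frac{q^{y}\,\degree X}{\phi(G)},
\end{align*}
the first estimate coming from Lemma \ref{Sum over (A,R)=1, deg A <= x of 1/A} (valid since $y-\degree G>\alpha y$ grows without bound), and the second from $y\leq \degree X$ and $\phi(G)\leq \lvert G\rvert$.

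The main obstacle is the tail $y-\degree G<\degree d\leq \degree X/2$, where each $d$ contributes $0$ or $1$. A crude bound of $q^{\degree X/2}$ on the number of such $d$ is generally too large to be absorbed into the target $q^{y}\degree X/\phi(G)$. One handles this by exchanging perspectives: writing $N=de$, the cofactor satisfies $\degree X/2\leq \degree e<\degree X-y+\degree G$, and the number of valid pairs $(d,e)$ is counted by summing instead over $e$ and applying the same CRT--equidistribution count. The hypotheses $\alpha,\beta<\tfrac{1}{2}$, $\degree G<(1-\alpha)y$, and $y>\beta\degree X$ are used precisely to keep the swapped sum in a regime where equidistribution remains available, and to absorb the resulting tail $q^{\degree X-2y+2\degree G}$ into the main-term bound $q^{y}\degree X/\phi(G)$. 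Verifying this absorption carefully, with an implied constant depending only on $\alpha$ and $\beta$, is the technical heart of the argument; the strict inequalities $\alpha,\beta<\tfrac{1}{2}$ cannot be weakened because it is exactly they that ensure the cofactor window is narrower than the interval on a logarithmic scale.
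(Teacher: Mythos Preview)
Your hyperbola decomposition and the treatment of the short-divisor range $\degree d\leq y-\degree G$ are correct and give the expected main contribution. The problem is the tail, and the swap to cofactors does not rescue it in the stated generality.

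Concretely: take $G=1$ and $y$ near $\beta\degree X$ with $\beta$ small (say $\beta=\tfrac14$). The tail is $\tfrac14\degree X<\degree d\leq \tfrac12\degree X$, and the cofactor $e=N/d$ lies in $\tfrac12\degree X\leq \degree e<\tfrac34\degree X$. In this regime \emph{neither} $d$ nor $e$ satisfies $\degree(\cdot)+\degree G\leq y$, so equidistribution is unavailable on both sides; each $d$ (or $e$) contributes at most $1$, but the number of admissible $e$ is $\asymp q^{\degree X-y+\degree G}$, not $q^{\degree X-2y+2\degree G}$. The required absorption $q^{\degree X-y+\degree G}\ll q^{y}\degree X/\phi(G)$ would force $\degree X-2y+\degree G\ll \log_q\degree X$, which the hypotheses do not give: with $\alpha,\beta<\tfrac12$ one only has $\degree X-(1+\alpha)y>0$ in general. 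Your final paragraph asserts the cofactor window is narrower than the interval on a logarithmic scale, but that inequality is precisely $y>(\degree X+2\degree G)/4$, which is \emph{not} implied by $y>\beta\degree X$ for $\beta<\tfrac14$.

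The paper itself does not prove this result; it cites Theorem~6.1 of Andrade--Yiasemides \cite{AndradeYiasemides2021_4thPowMeanDLFuncField_Final}. That result is a function-field Shiu-type bound, and Shiu's method is genuinely different from the hyperbola method: one decomposes $N$ multiplicatively as $N=N_1N_2$ where $N_1$ carries the prime factors below a threshold $z$ and $N_2$ those above, bounds $d(N_2)$ using that $N_2$ has few (large) prime factors, and controls the count of $N$ with prescribed $N_1$ and $z$-rough cofactor via a sieve (this is where the ``Brun--Titchmarsh'' name enters). The hyperbola method alone cannot handle divisors of intermediate size when the interval is shorter than $|X|^{1/2}$, which is exactly the regime $\beta<\tfrac12$ permits.
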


\begin{proof}
See Theorem 6.1 in \cite{AndradeYiasemides2021_4thPowMeanDLFuncField_Final}.
\end{proof}

\begin{lemma} \label{Double divisor sum}
Let $F,K \in \mathcal{M}$, $x \geq 0$, and $a \in \mathbb{F}_q^*$. Suppose also that $\frac{1}{2} x < \degree KF \leq \frac{3}{4} x$. Then,
\begin{align*}
\sum_{\substack{N \in \mathcal{M} \\ \degree N = x - \degree KF \\ (N,F) = 1}} d(N) d(KF+ aN)
\ll q^x x^2 \frac{1}{\lvert KF \rvert} \sum_{\substack{H \mid K \\ \degree H \leq \frac{x-\degree KF}{2}}} \frac{d(H)}{\lvert H \rvert} .
\end{align*}
\end{lemma}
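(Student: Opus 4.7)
Set $n := x - \degree KF$ and $m := \degree KF$, so the hypotheses give $x/4 \leq n < m \leq 3x/4$ and $n + m = x$. The plan is to combine the hyperbola trick on $d(N)$ with the Brun--Titchmarsh-type estimate of Theorem \ref{Brun-Titchmarsh theorem, divisor function case in function field, Intro version}, applied to the congruences satisfied by $KF + aDE$ when we write $N = DE$.

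Since every divisor $D$ of $N$ pairs with its cofactor $N/D$, we have $d(N) \leq 2\#\{D \in \mathcal{M} : D \mid N,\, \degree D \leq n/2\}$. Writing $N = DE$ yields
\begin{align*}
\sum_{\substack{N \in \mathcal{M} \\ \degree N = n \\ (N,F) = 1}} d(N)\, d(KF + aN)
\leq 2 \sum_{\substack{D \in \mathcal{M} \\ \degree D \leq n/2 \\ (D,F) = 1}} \sum_{\substack{E \in \mathcal{M} \\ \degree E = n - \degree D \\ (E,F) = 1}} d(KF + aDE).
\end{align*}
For fixed $D$, set $H := (K, D)$ and $D_1 := D/H$; then $H \mid K$, $(H, F) = 1$, $(D_1, K/H) = (D_1, F) = 1$, and hence $(KF/H, D_1) = 1$. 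The polynomial $L := KF + aDE \in \mathcal{M}_m$ satisfies $L \equiv KF \pmod{D}$; since $H \mid KF$ and $H \mid D \mid (L - KF)$ force $H \mid L$, writing $L = HL_1$ reduces this to $L_1 \equiv KF/H \pmod{D_1}$ in lowest terms, while the identity $\degree(L - KF) = n$ gives $\degree(L_1 - KF/H) \leq n - \degree H$.

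Apply Theorem \ref{Brun-Titchmarsh theorem, divisor function case in function field, Intro version} with $X = KF/H$, $y = n - \degree H + 1$, $G = D_1$, $A = KF/H$, and the fixed admissible pair $\alpha = 1/3,\,\beta = 1/7$. Using $d(HL_1) \leq d(H)\, d(L_1)$ then yields
\begin{align*}
\sum_E d(KF + aDE) \leq d(H) \sum_{L_1} d(L_1) \ll \frac{d(H)\, m\, q^{n - \degree H}}{\phi(D_1)}.
\end{align*}
Summing over $D = HD_1$ with $H \mid K$, $\degree H \leq n/2$, $\degree D_1 \leq n/2 - \degree H$, and invoking the standard bound $\sum_{\degree D_1 \leq T} \phi(D_1)^{-1} \ll T$ (which follows from the Dirichlet series for $|D_1|/\phi(D_1)$), we arrive at
\begin{align*}
\sum_N d(N)\, d(KF + aN)
\ll m\,n\,q^n \sum_{\substack{H \mid K \\ \degree H \leq n/2}} \frac{d(H)}{|H|}
\ll x^2\, \frac{q^x}{|KF|} \sum_{\substack{H \mid K \\ \degree H \leq (x - \degree KF)/2}} \frac{d(H)}{|H|},
\end{align*}
as desired.

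The crucial technical point is the choice $y = n - \degree H + 1$ (rather than the more obvious $y = m - \degree H$): exploiting the size constraint $\degree(L - KF) = n$ is exactly what produces the correct factor $q^n = q^x / |KF|$, as opposed to the much weaker $q^m$ one would get from the naive choice. Verifying the Brun--Titchmarsh hypothesis $\beta \degree X < y$ then reduces to a uniform lower bound on the ratio $(n - \degree H + 1)/(m - \degree H)$ over $\degree H \in [0, n/2]$; this ratio is decreasing in $\degree H$, minimized at $\degree H = n/2$, and under the assumed bounds on $n$ and $m$ exceeds roughly $1/5$, so $\beta = 1/7$ is admissible. The remaining condition $\degree D_1 < (1 - \alpha) y$ is straightforward for $\alpha = 1/3$.
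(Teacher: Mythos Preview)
The paper does not prove this lemma here; it simply cites Lemma~7.7 of \cite{AndradeYiasemides2021_4thPowMeanDLFuncField_Final}. Your argument is correct and, since it hinges on the Brun--Titchmarsh estimate of Theorem~\ref{Brun-Titschmarsh theorem, divisor function case in function field, Intro version} (which is Theorem~6.1 of that same reference), it almost certainly parallels the proof given there: the hyperbola split on $d(N)$, the extraction of $H=(K,D)$ so that the residue class becomes primitive modulo $D_1=D/H$, and the key choice $y=n-\degree H+1$ to force the factor $q^n=q^x/\lvert KF\rvert$ are exactly the expected ingredients.

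Two cosmetic points. First, the bound $\sum_{\degree D_1\le T}\phi(D_1)^{-1}\ll T$ should read $\ll T+1$, since $T=n/2-\degree H$ may vanish while the sum still contributes the term $D_1=1$; this is harmless after multiplying by $n$. Second, taking $y=n-\degree H+1$ (needed because Theorem~\ref{Brun-Titschmarsh theorem, divisor function case in function field, Intro version} has a strict inequality $\degree(N-X)<y$) introduces an extra factor of $q$ in the implied constant; this does not affect any application in the present paper, where $q$ is fixed.
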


\begin{proof}
See Lemma 7.7 in \cite{AndradeYiasemides2021_4thPowMeanDLFuncField_Final}.
\end{proof}

\begin{lemma} \label{Double divisor sum, number 2}
Let $F \in \mathcal{M}$, $K \in \mathcal{A} \backslash \{ 0 \}$, and $x \geq 0$ satisfy $\degree KF < x$. Then,
\begin{align*}
\sum_{\substack{N \in \mathcal{M} \\ \degree N = x \\ (N,F) = 1}} d(N) d(KF+N)
\ll q^x x^2  \sum_{\substack{H \mid K \\ \degree H \leq \frac{x}{2}}} \frac{d(H)}{\lvert H \rvert} .
\end{align*}
\end{lemma}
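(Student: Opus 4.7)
The plan is to adapt the argument used for Lemma \ref{Double divisor sum} to the slightly different setting here, where $\degree N = x$ (so $N$ dominates $KF$) rather than $\degree N = x - \degree KF$. The approach is the standard divisor-bound plus congruence-counting strategy, and none of the technical inputs earlier in the section are needed.

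First I would apply the inequality $d(M) \leq 2 \# \{ D \in \mathcal{M} : D \mid M , \; \degree D \leq \degree M / 2 \}$ to both $d(N)$ and $d(KF+N)$, noting that $\degree (KF+N) = x$ since $\degree KF < x$. After interchanging the order of summation this reduces matters to bounding, uniformly for $A, B \in \mathcal{M}$ with $\degree A , \degree B \leq x/2$, the number of $N \in \mathcal{M}$ with $\degree N = x$, $(N,F)=1$, $A \mid N$, and $B \mid KF + N$.

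For the next step, we may assume $(A,F)=1$, since otherwise the count is zero (as $A \mid N$ and $(N,F)=1$ force $(A,F)=1$). Writing $N = AM$ with $M \in \mathcal{M}$ of degree $x - \degree A$, the condition $B \mid KF + AM$ becomes $AM \equiv -KF \pmod{B}$. Setting $G := (A,B)$, solvability requires $G \mid KF$; since $(G,F)=1$ this is equivalent to $G \mid K$. When solvable, $M$ is constrained to a single residue class modulo $B/G$, so the number of monic $M$ of degree $x - \degree A$ in this class is at most $q^{x - \degree A - \degree B + \degree G}$. The key degree inequality $\degree(B/G) \leq x - \degree A$ is automatic from $\degree A + \degree B \leq x$, and the condition $(M,F)=1$ may simply be dropped, since it only decreases the count.

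Finally, writing $A = G A'$ and $B = G B'$ with $(A', B')=1$, the sum factors as a sum over $G \mid K$ with $\degree G \leq x/2$ times an inner double sum
\begin{align*}
\sum_{\substack{A' , B' \in \mathcal{M} \\ \degree A' , \degree B' \leq x/2 - \degree G}} \frac{1}{\lvert A' \rvert \lvert B' \rvert}
\ll x^2 ,
\end{align*}
which uses $\sum_{\degree A' = n} \lvert A' \rvert^{-1} = 1$. This yields $\sum_N d(N) d(KF+N) \ll q^x x^2 \sum_{G \mid K , \, \degree G \leq x/2} \lvert G \rvert^{-1}$, which implies the stated bound since $1 \leq d(H)$. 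The only mildly delicate point is the degree inequality used in the residue-class count; this holds comfortably under our hypotheses, so the proof is essentially routine.
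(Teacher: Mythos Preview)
Your proof is correct. The paper itself does not give an argument here---it simply cites Lemma~7.8 of \cite{AndradeYiasemides2021_4thPowMeanDLFuncField_Final}---so your write-up is strictly more informative than what appears in the text. The approach you use (bound each divisor function by twice the number of small divisors, swap the order of summation, and reduce to counting monic $M$ of fixed degree in a residue class modulo $B/(A,B)$) is the standard one and is almost certainly what the cited lemma does as well. One minor remark: your final bound has $\sum_{G\mid K}\lvert G\rvert^{-1}$ rather than $\sum_{H\mid K} d(H)\lvert H\rvert^{-1}$, so you have in fact proved something slightly sharper than stated; the passage to the lemma's form via $1\le d(H)$ is of course valid but wasteful.
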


\begin{proof}
See Lemma 7.8 in \cite{AndradeYiasemides2021_4thPowMeanDLFuncField_Final}.
\end{proof}

\begin{lemma} \label{Lemma, off diagonal z_1 z_2 sum, EH version}
Let $F \in \mathcal{M}$, $A_3 , B_3 \in \mathcal{S}_{\mathcal{M}} (X)$ with $(A_3 B_3 , F)=1$, and $z_1 , z_2$ be non-negative integers. Also, we define
\begin{align*}
\widehat{\degree } (A)
:= \begin{cases}
1 &\text{ if $\degree A = 0$} \\
\degree A &\text{ if $\degree A \geq 1$}.
\end{cases}
\end{align*}
Then, for all $\epsilon > 0$ we have the following:
\begin{align*}
\sum_{\substack{A_1 , A_2 , B_1 , B_2 \in \mathcal{M} \\ \degree A_1 B_1 = z_1 \\ \degree A_2 B_2 = z_2 \\ (A_1 A_2 B_1 B_2 , F)=1 \\ A_1 A_2 A_3 \equiv B_1 B_2 B_3 (\modulus F) \\ A_1 A_2 A_3 \neq B_1 B_2 B_3}} 1
\ll_{\epsilon} \Big( q^{z_1} q^{z_2} \Big)^{1 + \epsilon} \lvert A_3 B_3 \rvert \frac{\widehat{\degree } (A_3 B_3)}{\lvert F \rvert}
\end{align*}
if $z_1 + z_2 + \degree A_3 B_3 \leq \frac{19}{10} \degree F$; and
\begin{align*}
\sum_{\substack{A_1 , A_2 , B_1 , B_2 \in \mathcal{M} \\ \degree A_1 B_1 = z_1 \\ \degree A_2 B_2 = z_2 \\ (A_1 A_2 B_1 B_2 , F)=1 \\ A_1 A_2 A_3 \equiv B_1 B_2 B_3 (\modulus F) \\ A_1 A_2 A_3 \neq B_1 B_2 B_3}} 1
\ll q^{z_1 + z_2}\lvert A_3 B_3 \rvert (z_1 + z_2 + \degree A_3 B_3 )^3 \frac{1}{\phi (F)}
\end{align*}
if $z_1 + z_2 + \degree A_3 B_3 > \frac{19}{10} \degree F$.
\end{lemma}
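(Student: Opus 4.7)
The plan is to reduce the fourfold sum to a divisor-weighted count in arithmetic progressions modulo $F$, and then estimate using the paper's earlier counting lemmas. First set $C:=A_1 A_2$ and $D:=B_1 B_2$. For any fixed $(C,D)\in\mathcal{M}^2$ with $\degree C+\degree D=z_1+z_2$, the number of ordered factorisations $C=A_1 A_2$ is $d(C)$; once $A_1$ is chosen, $\degree B_1=z_1-\degree A_1$ is forced, leaving at most $d(D)$ choices for $B_1$ as a divisor of $D$. Hence
\begin{align*}
S \le \sum_{\substack{C,D\in\mathcal{M}\\ \degree C+\degree D=z_1+z_2\\ (CD,F)=1\\ CA_3\equiv DB_3\,(\modulus F)\\ CA_3\ne DB_3}} d(C)d(D).
\end{align*}
Since $(A_3 B_3,F)=1$, the congruence is equivalent to $C\equiv tD\,(\modulus F)$ with $t:=B_3 A_3^{-1}\,(\modulus F)$, and setting $M:=CA_3$, $N:=DB_3$ gives $M-N=LF$ for some $L\in\mathcal{A}\setminus\{0\}$, with $\degree M+\degree N=z_1+z_2+\degree A_3 B_3$.

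For the first case ($z_1+z_2+\degree A_3 B_3\le \tfrac{19}{10}\degree F$), I would use the divisor bound $d(C)d(D)\ll_\epsilon q^{\epsilon(z_1+z_2)}$ and count pairs $(C,D)$ directly. Because $\degree M+\degree N<2\degree F$ while $M\ne N$ are congruent mod $F$, at least one of $\degree M,\degree N$ must be $\ge \degree F$. By symmetry suppose $\degree N<\degree F$, so $N$ is pinned as the unique degree-less-than-$\degree F$ representative of its class; then the divisibility $A_3\mid M$ combined with the $F$-residue places $M$, via CRT, in a single progression modulo $A_3 F$, contributing at most $q^{\degree M-\degree A_3-\degree F}$ choices of $M$. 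Summing over the at most $q^{\degree N-\degree B_3}$ valid $N$'s and over the $\widehat{\degree}(A_3 B_3)$-many admissible degree splits yields $\#\{(C,D)\}\ll \widehat{\degree}(A_3 B_3)\, q^{z_1+z_2+\degree A_3 B_3-\degree F}$; any polynomial-in-$\degree F$ losses are absorbed into $q^{\epsilon(z_1+z_2)}$.

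For the second case ($z_1+z_2+\degree A_3 B_3>\tfrac{19}{10}\degree F$), since $|A_3 B_3|\ge 1$ it suffices, after dropping the constraints $A_3\mid M$, $B_3\mid N$ (which only enlarges the sum), to prove the bound $\sum d(M)d(N)\ll q^{\degree M+\degree N}(\degree M+\degree N)^3/\phi(F)$ with the remaining conditions on $(M,N)$. Writing this as $\sum_{L\ne 0}\sum_N d(N)d(N+LF)$, I would split according to the relationship between $\degree LF$ and $\degree N$: when $\degree M>3\degree N$ one has $\degree M>\tfrac{57}{40}\degree F>\tfrac{20}{19}\degree F$ so Brun--Titchmarsh (Lemma \ref{Brun-Titschmarsh theorem, divisor function case in function field, Intro version}) applies to the $M$-sum with $\alpha=\tfrac{1}{20}$; when $\degree N<\degree LF\le 3\degree N$ apply Lemma \ref{Double divisor sum}; and when $\degree M=\degree N>\degree LF$ apply Lemma \ref{Double divisor sum, number 2}; symmetric sub-cases swap $M$ and $N$. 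In each case the outer sum over $L$---absorbing the inner $\sum_{H\mid L}d(H)/|H|$ factors via $\sum_H d(H)/|H|^2\le \zeta_{\mathcal{A}}(2)^2$ after an order-of-summation switch---produces $1/\phi(F)$ and two powers of $(z_1+z_2+\degree A_3 B_3)$, with the final third power coming from the outer double sum over degree splits $(\degree M,\degree N)$. The main obstacle is this second-case bookkeeping: matching the extreme, middle, and equal-degree sub-cases into a single clean bound requires care, as does ensuring the $A_3$, $B_3$ divisibility in the first case produces exactly the $\widehat{\degree}(A_3 B_3)$ factor rather than a larger polylogarithmic loss.
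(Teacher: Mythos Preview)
Your approach is essentially the paper's: both reduce to a $d(C)d(D)$-weighted congruence count with $C=A_1A_2$, $D=B_1B_2$, then treat the small case by the pointwise divisor bound plus direct counting and the large case by the same extreme/middle/equal-degree trichotomy handled via Theorem~\ref{Brun-Titschmarsh theorem, divisor function case in function field, Intro version}, Lemma~\ref{Double divisor sum}, and Lemma~\ref{Double divisor sum, number 2} respectively (the paper parametrizes by $K=(M-N)/F$ rather than your $L$, and in the large case keeps $A_3,B_3$ rather than discarding them, but these are cosmetic). Two slips in your first-case sketch: the number of admissible degree splits $(\deg M,\deg N)$ is $O(z_1+z_2+\deg A_3B_3)$, not $\widehat{\deg}(A_3B_3)$, with the $\widehat{\deg}(A_3B_3)$ factor emerging only after $z_1+z_2$ is absorbed into $q^{\epsilon(z_1+z_2)}$ (exactly how the paper reaches it, and what your closing sentence anticipates); and your CRT product $q^{\deg M-\deg A_3-\deg F}\cdot q^{\deg N-\deg B_3}$ equals $q^{z_1+z_2-\deg F}$, not $q^{z_1+z_2+\deg A_3B_3-\deg F}$, though either way the lemma's stated bound follows.
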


\begin{proof}
We can split the sum into the cases $\degree A_1 A_2 A_3 > \degree B_1 B_2 B_3$, $\degree A_1 A_2 A_3 < \degree B_1 B_2 B_3$, and $\degree A_1 A_2 A_3 = \degree B_1 B_2 B_3$ with $A_1 A_2 A_3 \neq B_1 B_2 B_3$. \\

When $\degree A_1 A_2 A_3 > \degree B_1 B_2 B_3$, we have that $A_1 A_2 A_3 = KF + B_1 B_2 B_3$ where $K \in \mathcal{M}$ and $\degree KF > \degree B_1 B_2 B_3$. Furthermore,
\begin{align*}
2 \degree KF
= &2 \degree A_1 A_2 A_3
> \degree A_1 A_2 A_3 + \degree B_1 B_2 B_3 \\
= &\degree A_1 B_1 + \degree A_2 B_2 + \degree A_3 B_3 
= z_1 + z_2 + \degree A_3 B_3 ,
\end{align*}
from which we deduce that 
\begin{align*}
a_0 := \frac{z_1 + z_2 + \degree A_3 B_3}{2} < \degree KF \leq z_1 + z_2 + \degree A_3 =: a_1 .
\end{align*}
Also,
\begin{align*}
\degree KF + \degree B_1 B_2
= \degree A_1 A_2 A_3 + \degree B_1 B_2 = z_1 + z_2 + \degree A_3 ,
\end{align*}
from which we deduce that
\begin{align*}
\degree B_1 B_2 = z_1 + z_2 + \degree A_3 - \degree KF. \\
\end{align*}

Similarly, if $\degree A_1 A_2 A_3 < \degree B_1 B_2 B_3$, we can show that
\begin{align*}
b_0 := \frac{z_1 + z_2 + \degree A_3 B_3}{2} < \degree KF \leq z_1 + z_2 + \degree B_3 =: b_1
\end{align*}
and
\begin{align*}
\degree A_1 A_2 = z_1 + z_2 + \degree B_3 - \degree KF. \\
\end{align*}

When $\degree A_1 A_2 A_3 = \degree B_1 B_2 B_3$, we must have that
\begin{align*}
&\degree A_1 A_2 = \frac{z_1 + z_2 + \degree B_3 - \degree A_3 }{2} , \\
&\degree B_1 B_2 = \frac{z_1 + z_2 + \degree A_3 - \degree B_3 }{2} .
\end{align*}
Also, we can write $A_1 A_2 A_3 = KF +B_1 B_2 B_3$, where $\degree KF < \degree B_1 B_2 B_3 = \frac{z_1 + z_2 + \degree A_3 B_3}{2}$ and $K \neq 0$ need not be monic. \\

So, writing $N = B_1 B_2$ when $\degree A_1 A_2 A_3 \geq \degree B_1 B_2 B_3$, and $N = A_1 A_2$ when $\degree A_1 A_2 A_3 < \degree B_1 B_2 B_3$, we have that
\begin{align}
\begin{split} \label{z_1 z_2 off diagonal divisor split, EH chapter}
&\sum_{\substack{A_1 , A_2 , B_1 , B_2 \in \mathcal{M} \\ \degree A_1 B_1 = z_1 \\ \degree A_2 B_2 = z_2 \\ (A_1 A_2 B_1 B_2 , F)=1 \\ A_1 A_2 A_3 \equiv B_1 B_2 B_3 (\modulus F) \\ A_1 A_2 A_3 \neq B_1 B_2 B_3}} 1 \\
\leq &\sum_{\substack{K \in \mathcal{M} \\ a_0 < \degree KF \leq a_1}} \;
	\sum_{\substack{N \in \mathcal{M} \\ \degree N = z_1 + z_2 + \degree A_3 - \degree KF \\ (N,F)=1}} d(N) d\Big( (K F + N B_3) {A_3}^{-1} \Big) \\
& + \sum_{\substack{K \in \mathcal{M} \\ b_0 < \degree KF \leq b_1}} \;
	\sum_{\substack{N \in \mathcal{M} \\ \degree N = z_1 + z_2 + \degree B_3 - \degree KF \\ (N,F)=1}} d(N) d\Big( (K F + N A_3) {B_3}^{-1} \Big) \\
&+ \sum_{\substack{K \in \mathcal{A} \backslash \{ 0 \} \\ \degree KF < a_0 }} \;
	\sum_{\substack{N \in \mathcal{M} \\ \degree N = \frac{z_1 + z_2 + \degree A_3 - \degree B_3}{2} \\ (N,F)=1}} d(N) d\Big( (K F + N B_3) {A_3}^{-1} \Big) .
\end{split}
\end{align}

We must remark that if $A_3 \mid (KF + NB_3 )$ then we define $(K F + N B_3) {A_3}^{-1}$ by $(K F + N B_3) {A_3}^{-1} \cdot A_3 = (K F + N B_3)$. If $A_3 \nmid (KF + NB_3 )$, then we ignore the term with $(K F + N B_3) {A_3}^{-1}$ in the sum; that is, we take the definition $d \Big( (K F + N B_3) {A_3}^{-1} \Big) := 0$. We do the same for $(K F + N A_3) {B_3}^{-1}$. \\

\textbf{Step 1:} Let us consider the case when $z_1 + z_2 + \degree A_3 B_3 \leq \frac{19}{10} \degree F$. By using well known bounds on the divisor function, we have that
\begin{align*}
&\sum_{\substack{K \in \mathcal{M} \\ a_0 < \degree KF \leq a_1}}
	\sum_{\substack{N \in \mathcal{M} \\ \degree N = z_1 + z_2 + \degree A_3 - \degree KF \\ (N,F)=1}} d(N) d\Big( (K F + N B_3) {A_3}^{-1} \Big) \\
\ll_{\epsilon} &\Big( q^{z_1} q^{z_2} \Big)^{\frac{\epsilon}{2}} \sum_{\substack{K \in \mathcal{M} \\ a_0 < \degree KF \leq a_1}}
	\sum_{\substack{N \in \mathcal{M} \\ \degree N = z_1 + z_2 + \degree A_3 - \degree KF \\ (N,F)=1}} 1 \\
\leq &\Big( q^{z_1} q^{z_2} \Big)^{1 + \frac{\epsilon}{2}} \lvert A_3 \rvert \sum_{\substack{K \in \mathcal{M} \\ a_0 < \degree KF \leq a_1}} \frac{1}{\lvert KF \rvert} \\
\ll &\Big( q^{z_1} q^{z_2} \Big)^{1 + \frac{\epsilon}{2}} \lvert A_3 \rvert \frac{z_1 + z_2 + \degree A_3}{\lvert F \rvert} 
\ll_{\epsilon} \Big( q^{z_1} q^{z_2} \Big)^{1 + \epsilon} \lvert A_3 \rvert \frac{\widehat{\degree } A_3}{\lvert F \rvert} .
\end{align*}

Similarly,
\begin{align*}
\sum_{\substack{K \in \mathcal{M} \\ b_0 < \degree KF \leq b_1}} \;
	\sum_{\substack{N \in \mathcal{M} \\ \degree N = z_1 + z_2 + \degree B_3 - \degree KF \\ (N,F)=1}}  \hspace{-1em} d(N) d\Big( (K F + N A_3) {B_3}^{-1} \Big) 
\ll_{\epsilon} \Big( q^{z_1} q^{z_2} \Big)^{1 + \epsilon} \lvert B_3 \rvert \frac{\widehat{\degree } B_3 }{\lvert F \rvert} .
\end{align*}

As for the sum
\begin{align*}
\sum_{\substack{K \in \mathcal{A} \backslash \{ 0 \} \\ \degree KF < a_0 }} \;
	\sum_{\substack{N \in \mathcal{M} \\ \degree N = \frac{z_1 + z_2 + \degree A_3 - \degree B_3}{2} \\ (N,F)=1}} d(N) d\Big( (K F + N B_3) {A_3}^{-1} \Big)  ,
\end{align*}
we note that it does not apply to this case where $z_1 + z_2 + \degree A_3 B_3 \leq \frac{19}{10} \degree F$ because this would imply $\degree KF \geq \degree F \geq \frac{20}{19} a_0 $, which does not overlap with range $\degree KF < a_0 $ in the sum. \\

Hence,
\begin{align*}
\sum_{\substack{A_1 , A_2 , B_1 , B_2 \in \mathcal{M} \\ \degree A_1 B_1 = z_1 \\ \degree A_2 B_2 = z_2 \\ (A_1 A_2 B_1 B_2 , F)=1 \\ A_1 A_2 A_3 \equiv B_1 B_2 B_3 (\modulus F) \\ A_1 A_2 A_3 \neq B_1 B_2 B_3}} 1
\ll_{\epsilon} \Big( q^{z_1} q^{z_2} \Big)^{1 + \epsilon} \lvert A_3 B_3 \rvert \frac{\widehat{\degree } (A_3 B_3) }{\lvert F \rvert} 
\end{align*}
for $z_1 + z_2 + \degree A_3 B_3 \leq \frac{19}{10} \degree F$. \\

\textbf{Step 2:} We now consider the case when $z_1 + z_2 + \degree A_3 B_3 > \frac{19}{10} \degree F$.\\

\textbf{Step 2.1:} We consider the subcase where $a_0 < \degree KF \leq \frac{3}{2} a_0$. This allows us to apply Lemma \ref{Double divisor sum} for the second relation below.
\begin{align*}
&\sum_{\substack{K \in \mathcal{M} \\ a_0 < \degree KF \leq \frac{3}{2} a_0}} \;
	\sum_{\substack{N \in \mathcal{M} \\ \degree N = z_1 + z_2 + \degree A_3 - \degree KF \\ (N,F)=1}} d(N) d\Big( (K F + N B_3) {A_3}^{-1} \Big) \\
\leq &\sum_{\substack{K \in \mathcal{M} \\ a_0 < \degree KF \leq \frac{3}{2} a_0}} \;
	\sum_{\substack{N \in \mathcal{M} \\ \degree N = 2 a_0 - \degree KF \\ (N,F)=1}} d(N) d\Big( K F + N \Big) \\
\ll &q^{z_1} q^{z_2} \lvert A_3 B_3 \rvert (z_1 + z_2 + \degree A_3 B_3 )^2 \frac{1}{\lvert F \rvert}
	\sum_{\substack{K \in \mathcal{M} \\ a_0 < \degree KF \leq \frac{3}{2} a_0}} \frac{1}{\lvert K \rvert}
	\sum_{\substack{H \mid K \\ \degree H \leq \frac{2 a_0 - \degree KF}{2}}} \frac{d(H)}{\lvert H \rvert} \\
\leq &q^{z_1} q^{z_2} \lvert A_3 B_3 \rvert (z_1 + z_2 + \degree A_3 B_3 )^2 \frac{1}{\lvert F \rvert}
	\sum_{\substack{K \in \mathcal{M} \\ \degree KF \leq 2 a_0}} \frac{1}{\lvert K \rvert}
	\sum_{H \mid K} \frac{d(H)}{\lvert H \rvert} \\
\leq &q^{z_1} q^{z_2} \lvert A_3 B_3 \rvert (z_1 + z_2 + \degree A_3 B_3 )^2 \frac{1}{\lvert F \rvert}
	\sum_{\substack{H \in \mathcal{M} \\ \degree H \leq 2 a_0 }} \frac{d(H)}{\lvert H \rvert}
	\sum_{\substack{K \in \mathcal{M} \\ \degree K \leq 2 a_0 \\ H \mid K}} \frac{1}{\lvert K \rvert} \\
\leq &q^{z_1} q^{z_2} \lvert A_3 B_3 \rvert (z_1 + z_2 + \degree A_3 B_3 )^3 \frac{1}{\lvert F \rvert}
	\sum_{\substack{H \in \mathcal{M} \\ \degree H \leq 2 a_0 }} \frac{d(H)}{\lvert H \rvert^2 } \\
\ll &q^{z_1} q^{z_2} \lvert A_3 B_3 \rvert (z_1 + z_2 + \degree A_3 B_3 )^3 \frac{1}{\lvert F \rvert} .
\end{align*}

Similarly,
\begin{align*}
&\sum_{\substack{K \in \mathcal{M} \\ b_0 < \degree KF \leq \frac{3}{2} b_0}} \;
	\sum_{\substack{N \in \mathcal{M} \\ \degree N = z_1 + z_2 + \degree B_3 - \degree KF \\ (N,F)=1}} d(N) d\Big( (K F + N A_3) {B_3}^{-1} \Big) \\
\ll &q^{z_1} q^{z_2} \lvert A_3 B_3 \rvert (z_1 + z_2 + \degree A_3 B_3 )^3 \frac{1}{\lvert F \rvert} .
\end{align*}

\textbf{Step 2.2:} Now we consider the subcase where $\frac{3}{2} a_0 < \degree KF \leq a_1$. We have that
\begin{align*}
&\sum_{\substack{K \in \mathcal{M} \\ \frac{3}{2} a_0 < \degree KF \leq a_1}} \;
	\sum_{\substack{N \in \mathcal{M} \\ \degree N = z_1 + z_2 + \degree A_3 - \degree KF \\ (N,F)=1}} d(N) d\Big( (K F + N B_3) {A_3}^{-1} \Big) \\
\leq &\sum_{\substack{K \in \mathcal{M} \\ \frac{3}{2} a_0 < \degree KF \leq a_1}} \;
	\sum_{\substack{N \in \mathcal{M} \\ \degree N = 2 a_0 - \degree KF \\ (N,F)=1}} d(N) d(K F+ N) \\
\leq &\sum_{\substack{N \in \mathcal{M} \\ \degree N < \frac{a_0 }{2} \\ (N,F)=1}}
	\; \sum_{\substack{K \in \mathcal{M} \\ \degree KF = 2 a_0 - \degree N}} d(N) d(KF+N) \\
\leq &\sum_{\substack{N \in \mathcal{M} \\ \degree N < \frac{a_0 }{2} \\ (N,F)=1}} d(N)
	\sum_{\substack{M \in \mathcal{M} \\ \degree (M - X_{(N)} ) < 2 a_0 - \degree N \\ M \equiv N (\modulus F)}} d(M) 
\end{align*}
where we define $X_{(N)} := T^{2 a_0 - \degree N}$ (The monic polynomial of degree $2 a_0 - \degree N$ with all non-leading coefficients equal to $0$). \\

We can now apply Theorem \ref{Brun-Titschmarsh theorem, divisor function case in function field, Intro version}. One may wish to note that
\begin{align*}
y := 2 a_0 - \degree N
\geq \frac{3}{4}(z_1 + z_2 + \degree A_3 B_3)
\geq \frac{3}{4} \frac{19}{10} \degree F
\end{align*}
and so
\begin{align*}
\degree F \leq \frac{40}{57} y = (1- \alpha) y
\end{align*}
where $0 < \alpha < \frac{1}{2}$, as required. Hence, we have that
\begin{align*}
&\sum_{\substack{K \in \mathcal{M} \\ \frac{3}{2} a_0 < \degree KF \leq a_1}} \;
	\sum_{\substack{N \in \mathcal{M} \\ \degree N = z_1 + z_2 + \degree A_3 - \degree KF \\ (N,F)=1}} d(N) d\Big( (K F + N B_3) {A_3}^{-1} \Big) \\
\leq & q^{z_1} q^{z_2} \lvert A_3 B_3 \rvert (z_1 + z_2 + \degree A_3 B_3 ) \frac{1}{\phi (F)}
	\sum_{\substack{N \in \mathcal{M} \\ \degree N < \frac{a_0 }{2} \\ (N,F)=1}} \frac{d(N)}{\lvert N \rvert} \\
\leq & q^{z_1} q^{z_2} \lvert A_3 B_3 \rvert (z_1 + z_2 + \degree A_3 B_3 )^3 \frac{1}{\phi (F)} . 
\end{align*}

Similarly, if $\frac{3}{2} b_0 < \degree KF \leq b_1$ then
\begin{align*}
&\sum_{\substack{K \in \mathcal{M} \\ \frac{3}{2} b_0 < \degree KF \leq b_1}} \;
	\sum_{\substack{N \in \mathcal{M} \\ \degree N = z_1 + z_2 + \degree B_3 - \degree KF \\ (N,F)=1}} d(N) d\Big( (K F + N A_3) {B_3}^{-1} \Big) \\
\leq & q^{z_1} q^{z_2} \lvert A_3 B_3 \rvert (z_1 + z_2 + \degree A_3 B_3 )^3 \frac{1}{\phi (F)} . 
\end{align*}

\textbf{Step 2.3:} We now look at the sum
\begin{align*}
\sum_{\substack{K \in \mathcal{A} \backslash \{ 0 \} \\ \degree KF < a_0 }} \;
	\sum_{\substack{N \in \mathcal{M} \\ \degree N = \frac{z_1 + z_2 + \degree A_3 - \degree B_3}{2} \\ (N,F)=1}} d(N) d\Big( (K F + N B_3) {A_3}^{-1} \Big) .
\end{align*}

By Lemma \ref{Double divisor sum, number 2} we have that
\begin{align*}
&\sum_{\substack{K \in \mathcal{A} \backslash \{ 0 \} \\ \degree KF < a_0 }} \;
	\sum_{\substack{N \in \mathcal{M} \\ \degree N = \frac{z_1 + z_2 + \degree A_3 - \degree B_3}{2} \\ (N,F)=1}} d(N) d\Big( (K F + N B_3) {A_3}^{-1} \Big) \\
\leq &\sum_{\substack{K \in \mathcal{A} \backslash \{ 0 \} \\ \degree KF < a_0 }} \;
	\sum_{\substack{N \in \mathcal{M} \\ \degree N = a_0 \\ (N,F)=1}} d(N) d(K F + N) \\
\ll &q^{\frac{z_1 + z_2 }{2}}\lvert A_3 B_3 \rvert^{\frac{1}{2}} (z_1 + z_2 + \degree A_3 B_3 )^2
	\sum_{\substack{K \in \mathcal{A} \backslash \{ 0 \} \\ \degree KF < a_0 }}
	\sum_{H \mid K} \frac{d(H)}{\lvert H \rvert} \\
\leq &q^{z_1 + z_2 -1}\lvert A_3 B_3 \rvert (z_1 + z_2 + \degree A_3 B_3 )^2 \frac{1}{\lvert F \rvert}
	\sum_{\substack{K \in \mathcal{A} \backslash \{ 0 \} \\ \degree KF < a_0 }} \frac{1}{\lvert K \rvert}
	\sum_{H \mid K} \frac{d(H)}{\lvert H \rvert} \\
\leq &q^{z_1 + z_2 }\lvert A_3 B_3 \rvert (z_1 + z_2 + \degree A_3 B_3 )^3 \frac{1}{\lvert F \rvert} ,
\end{align*}
where the second-to-last relation uses the fact that $a_0$ is an integer (since $\degree A_1 A_2 A_3 = \degree B_1 B_2 B_3$) and so $\degree KF < a_0$ implies $\degree KF \leq a_0 -1$, and the last relation uses a similar calculation as that in Step 2.1. \\

\textbf{Step 2.4:} We apply steps 2.1, 2.2, and 2.3 to (\ref{z_1 z_2 off diagonal divisor split, EH chapter}) and we see that
\begin{align*}
\sum_{\substack{A_1 , A_2 , B_1 , B_2 \in \mathcal{M} \\ \degree A_1 B_1 = z_1 \\ \degree A_2 B_2 = z_2 \\ (A_1 A_2 B_1 B_2 , F)=1 \\ A_1 A_2 A_3 \equiv B_1 B_2 B_3 (\modulus F) \\ A_1 A_2 A_3 \neq B_1 B_2 B_3}} 1
\ll q^{z_1 + z_2}\lvert A_3 B_3 \rvert (z_1 + z_2 + \degree A_3 B_3 )^3 \frac{1}{\phi (F)}
\end{align*}
for $z_1 + z_2 + \degree A_3 B_3 > \frac{19}{10} \degree F$.
\end{proof}


\section{The Fourth Hadamard Moment} \label{section, fourth twisted moment}

We can now prove Theorem \ref{Theorem, fourth moment of Hadamard Product, Intro version}.

\begin{proof}[Proof of Theorem \ref{Theorem, fourth moment of Hadamard Product, Intro version}]
In this proof, we assume all asymptotic relations are as $X , \degree R \overset{q}{\longrightarrow} \infty$ with
$X \leq \log_q \log \degree R$. Using Lemmas \ref{Lemma P_X expressed as product P_x^**} and \ref{Lemma P_X expressed as sum hat P_x^**}, we have
\begin{align*}
\frac{1}{\phi^* (R)} \sumstar_{\chi \modulus R} \Big\lvert L \Big( \frac{1}{2} , \chi \Big) P_X \Big( \frac{1}{2} , \chi \Big)^{-1} \Big\rvert^4 
\sim &\frac{1}{\phi^* (R)} \sumstar_{\chi \modulus R} \Big\lvert L \Big( \frac{1}{2} , \chi \Big) \Big\rvert^4 \Big\lvert P_X^{**} \Big( \frac{1}{2} , \chi \Big) \Big\rvert^2 \\
= &\frac{1}{\phi^* (R)} \sumstar_{\chi \modulus R} \Big\lvert L \Big( \frac{1}{2} , \chi \Big) \Big\rvert^4 \Big\lvert \widehat{P_X^{**}} \Big( \frac{1}{2} , \chi \Big) + O \Big( (\degree R )^{-\frac{1}{33}} \Big) \Big\rvert^2 .
\end{align*}
By the Cauchy-Schwarz inequality, (\ref{statement, 2nd moment DLF in FF}), and Lemma \ref{lemma, Mertens 3rd Theorem in FF}, it suffices to prove 
\begin{align*}
\frac{1}{\phi^* (R)} \sumstar_{\chi \modulus R} \Big\lvert L \Big( \frac{1}{2} , \chi \Big) \Big\rvert^4 \Big\lvert \widehat{P_X^{**}} \Big( \frac{1}{2} , \chi \Big) \Big\rvert^2 
\sim \frac{1}{12} (\degree R)^4 
	\prod_{\substack{\degree P > X \\ P \mid R}} \bigg( \frac{\big(1 - \lvert P \rvert^{-1} \big)^3}{1 + \lvert P \rvert^{-1}} \bigg)
	\prod_{\degree P \leq X } \big(1 - \lvert P \rvert^{-1} \big)^4 .
\end{align*}

By Lemma \ref{Lemma, short sum for squared L-function}, we have
\begin{align*}
\frac{1}{\phi^* (R)} \sumstar_{\chi \modulus R}
	\Big\lvert L \Big( \frac{1}{2} , \chi \Big) \Big\rvert^4
	\Big\lvert \widehat{P_X^{**}} \Big( \frac{1}{2} , \chi \Big) \Big\rvert^2 
= \frac{1}{\phi^* (R)} \sumstar_{\chi \modulus R}
	\Big( 2 a (\chi ) + 2 b (\chi ) + c (\chi ) \Big)^2
	\Big\lvert \widehat{P_X^{**}} \Big( \frac{1}{2} , \chi \Big) \Big\rvert^2 ,
\end{align*}
where $c (\chi )$ is as in Lemma \ref{Lemma, short sum for squared L-function} and
\begin{align*}
z_R := &\degree R - \log_q 2^{\omega (R)} ; \\
a(\chi ) := &\sum_{\substack{A,B \in \mathcal{M} \\ \degree AB \leq z_R }} \frac{\chi (A) \conj\chi (B)}{\lvert AB \rvert^{\frac{1}{2}}} ; \\
b(\chi ) := &\sum_{\substack{A,B \in \mathcal{M} \\ z_R < \degree AB < \degree R}} \frac{\chi (A) \conj\chi (B)}{\lvert AB \rvert^{\frac{1}{2}}} .
\end{align*}
Note that, by symmetry in $A,B$, the terms $a (\chi )$, $b (\chi )$, and $c (\chi )$ are equal to their conjugates and, therefore, they are real. Hence, by the Cauchy-Schwarz inequality, it suffices to obtain the asymptotic main term of
\begin{align} \label{Sum over prim chi of (deg AB < deg R term)^2 P_x**^2}
\frac{4}{\phi^* (R)} \sumstar_{\chi \modulus R}
	a (\chi )^2
	\Big\lvert \widehat{P_X^{**}} \Big( \frac{1}{2} , \chi \Big) \Big\rvert^2
\end{align}
and show that 
\begin{align*}
\frac{1}{\phi^* (R)} \sumstar_{\chi \modulus R} b(\chi )^2 \Big\lvert \widehat{P_X^{**}} \Big( \frac{1}{2} , \chi \Big) \Big\rvert^2
\quad \text{ and } \quad
\frac{1}{\phi^* (R)} \sumstar_{\chi \modulus R} c(\chi )^2 \Big\lvert \widehat{P_X^{**}} \Big( \frac{1}{2} , \chi \Big) \Big\rvert^2
\end{align*}
are of lower order. The reason we express the sum in terms of $a (\chi)$ and $b (\chi)$ is because the fact that $a (\chi)$ is truncated allows us to bound the lower order terms that it contributes. We cannot do this with $b (\chi)$ but, because $b (\chi)$ is a relatively short sum, we can apply others methods to bound it. \\

\textbf{Step 1; the asymptotic main term of $\frac{4}{\phi^* (R)} \sumstar_{\chi \modulus R} a(\chi )^2 \Big\lvert \widehat{P_X^{**}} \Big( \frac{1}{2} , \chi \Big) \Big\rvert^2$:} \\

By Lemma \ref{Primitive chracter sum, mobius inversion} and Corollary \ref{Number of primitive characters}, we have that
\begin{align}
\begin{split} \label{fourth twisted moment, a(chi) sum, diag and off diag split, statement}
&\frac{1}{\phi^* (R)} \sumstar_{\chi \modulus R} a(\chi )^2 \Big\lvert \widehat{P_X^{**}} \Big( \frac{1}{2} , \chi \Big) \Big\rvert^2 \\
= &\frac{1}{\phi^* (R)} \sumstar_{\chi \modulus R} \sum_{\substack{A_1 , A_2 , B_1 , B_2 \in \mathcal{M} \\ A_3 , B_3 \in \mathcal{S}_{\mathcal{M}} (X) \\ \degree A_1 B_1 , \degree A_2 B_2 \leq z_R \\ \degree A_3 , \degree B_3 \leq \frac{1}{8} \log_q \degree R}}
	\frac{\beta (A_3) \beta (B_3) \chi (A_1 A_2 A_3) \conj\chi (B_1 B_2 B_3)}{\lvert A_1 A_2 A_3 B_1 B_2 B_3 \rvert^{\frac{1}{2}}} \\
= & \sum_{\substack{A_1 , A_2 , B_1 , B_2 \in \mathcal{M} \\ A_3 , B_3 \in \mathcal{S}_{\mathcal{M}} (X) \\ \degree A_1 B_1 , \degree A_2 B_2 \leq z_R \\ \degree A_3 , \degree B_3 \leq \frac{1}{8} \log_q \degree R \\ (A_1 A_2 A_3 B_1 B_2 B_3 , R)=1 \\ A_1 A_2 A_3 = B_1 B_2 B_3 }}
	\frac{\beta (A_3) \beta (B_3) }{\lvert A_1 A_2 A_3 B_1 B_2 B_3 \rvert^{\frac{1}{2}}} \\
&+ \frac{1}{\phi^* (R)} \sum_{EF=R} \mu (E) \phi (F)
	\sum_{\substack{A_1 , A_2 , B_1 , B_2 \in \mathcal{M} \\ A_3 , B_3 \in \mathcal{S}_{\mathcal{M}} (X) \\ \degree A_1 B_1 , \degree A_2 B_2 \leq z_R \\ \degree A_3 , \degree B_3 \leq \frac{1}{8} \log_q \degree R \\ (A_1 A_2 A_3 B_1 B_2 B_3 , R)=1 \\ A_1 A_2 A_3 \equiv B_1 B_2 B_3 (\modulus F) \\  A_1 A_2 A_3 \neq B_1 B_2 B_3 }}
	\frac{\beta (A_3) \beta (B_3) }{\lvert A_1 A_2 A_3 B_1 B_2 B_3 \rvert^{\frac{1}{2}}} .
\end{split}
\end{align}

\textbf{Step 1.1:} We consider the first term on the far right side of (\ref{fourth twisted moment, a(chi) sum, diag and off diag split, statement}): the diagonal terms. By Lemma \ref{A_1 A_2 A_3 = B_1 B_2 B_3 rewrite} we have
\begin{align*}
& \sum_{\substack{A_1 , A_2 , B_1 , B_2 \in \mathcal{M} \\ A_3 , B_3 \in \mathcal{S}_{\mathcal{M}} (X) \\ \degree A_1 B_1 , \degree A_2 B_2 \leq z_R \\ \degree A_3 , \degree B_3 \leq \frac{1}{8} \log_q \degree R \\ (A_1 A_2 A_3 B_1 B_2 B_3 , R)=1 \\ A_1 A_2 A_3 = B_1 B_2 B_3 }}
	\frac{\beta (A_3) \beta (B_3) }{\lvert A_1 A_2 A_3 B_1 B_2 B_3 \rvert^{\frac{1}{2}}} \\
= &\sum_{\substack{G_1 , G_2 , V_{1,2} , V_{2,1} \in \mathcal{M} \\ G_3 , V_{1,3} , V_{2,3} , V_{3,1} , V_{3,2} \in \mathcal{S}_{\mathcal{M}} (X) \\ \degree (G_1)^2 V_{1,2} V_{1,3} V_{2,1} V_{3,1} \leq z_R \\ \degree (G_2)^2 V_{2,1} V_{2,3} V_{1,2} V_{3,2} \leq z_R \\ \degree G_3 V_{3,1} V_{3,2} \leq \frac{1}{8} \log_q \degree R \\ \degree G_3 V_{1,3} V_{2,3} \leq \frac{1}{8} \log_q \degree R \\ (G_i , R) , (V_{j,k} , R)=1 \; \forall i,j,k  \\ (V_{i,j} , V_{k,l}) = 1 \text{ for $(i \neq k \land j \neq l)$}}}  
	\frac{\beta (G_3 V_{3,1} V_{3,2}) \beta (G_3 V_{1,3} V_{2,3}) }{\lvert G_1 G_2 G_3 V_{1,2} V_{1,3} V_{2,1} V_{2,3} V_{3,1} V_{3,2} \rvert} \\
= & \sum_{\substack{G_3 , V_{1,3} , V_{2,3} , V_{3,1} , V_{3,2} \in \mathcal{S}_{\mathcal{M}} (X) \\ \degree G_3 V_{3,1} V_{3,2} \leq \frac{1}{8} \log_q \degree R \\ \degree G_3 V_{1,3} V_{2,3} \leq \frac{1}{8} \log_q \degree R \\ (G_3 V_{1,3} V_{2,3} V_{3,1} V_{3,2} , R) = 1 \\ (V_{1,3} V_{2,3} , V_{3,1} V_{3,2}) = 1 }}
		\frac{\beta (G_3 V_{3,1} V_{3,2}) \beta (G_3 V_{1,3} V_{2,3}) }{\lvert G_3 V_{1,3} V_{2,3} V_{3,1} V_{3,2} \rvert} \\
	& \hspace{5em} \cdot \sum_{\substack{V_{1,2} , V_{2,1} \in \mathcal{M} \\ \degree V_{1,2} V_{2,1} \leq z_R - \degree V_{1,3} V_{3,1} \\ \degree V_{1,2} V_{2,1} \leq z_R - \degree V_{2,3} V_{3,2} \\ (V_{1,2} V_{2,1} , R) = 1 \\ (V_{1,2} , V_{2,3} V_{3,1}) = 1 \\ (V_{2,1} , V_{3,2} V_{1,3}) = 1 \\ (V_{1,2} , V_{2,1}) = 1 }}
		\frac{1}{\lvert V_{1,2} V_{2,1} \rvert}
	\sum_{\substack{ G_1 , G_2 \in \mathcal{M} \\ \degree G_1 \leq \frac{z_R - \degree V_{1,2} V_{2,1} V_{1,3} V_{3,1}}{2} \\ \degree G_2 \leq \frac{z_R - \degree V_{1,2} V_{2,1} V_{2,3} V_{3,2}}{2} \\ (G_1 G_2 , R) = 1}}
		\frac{1}{\lvert G_1 G_2 \rvert} .
\end{align*}
By Lemma \ref{V = V_(1,2) V_(2,1)} we have
\begin{align*}
&\sum_{\substack{V_{1,2} , V_{2,1} \in \mathcal{M} \\ \degree V_{1,2} V_{2,1} \leq z_R - \degree V_{1,3} V_{3,1} \\ \degree V_{1,2} V_{2,1} \leq z_R - \degree V_{2,3} V_{3,2} \\ (V_{1,2} V_{2,1} , R) = 1 \\ (V_{1,2} , V_{2,3} V_{3,1}) = 1 \\ (V_{2,1} , V_{3,2} V_{1,3}) = 1 \\ (V_{1,2} , V_{2,1}) = 1 }}
		\frac{1}{\lvert V_{1,2} V_{2,1} \rvert}
	\sum_{\substack{ G_1 , G_2 \in \mathcal{M} \\ \degree G_1 \leq \frac{z_R - \degree V_{1,2} V_{2,1} V_{1,3} V_{3,1}}{2} \\ \degree G_2 \leq \frac{z_R - \degree V_{1,2} V_{2,1} V_{2,3} V_{3,2}}{2} \\ (G_1 G_2 , R) = 1}}
		\frac{1}{\lvert G_1 G_2 \rvert} \\
= &\sum_{\substack{V \in \mathcal{M} \\ \degree V \leq z_R - \degree V_{1,3} V_{3,1} \\ \degree V \leq z_R - \degree V_{2,3} V_{3,2} \\ \big( V , R (V_{1,3} V_{3,1} , V_{2,3} V_{3,2}) \big) = 1 }}
		\frac{1}{\lvert V \rvert} 
	\sum_{\substack{V_{1,2} , V_{2,1} \in \mathcal{M} \\ V_{1,2} V_{2,1} = V \\ (V_{1,2} , V_{2,1} ) = 1 \\ (V_{1,2} , V_{2,3} V_{3,1}) = 1 \\ (V_{2,1} , V_{3,2} V_{1,3}) = 1 }} 
	\sum_{\substack{ G_1 , G_2 \in \mathcal{M} \\ \degree G_1 \leq \frac{z_R - \degree V V_{1,3} V_{3,1}}{2} \\ \degree G_2 \leq \frac{z_R - \degree V V_{2,3} V_{3,2}}{2} \\ (G_1 G_2 , R) = 1}}
		\frac{1}{\lvert G_1 G_2 \rvert} \\
=&\sum_{\substack{V \in \mathcal{M} \\ \degree V \leq z_R - \degree V_{1,3} V_{3,1} \\ \degree V \leq z_R - \degree V_{2,3} V_{3,2} \\ \big( V , R (V_{1,3} V_{3,1} , V_{2,3} V_{3,2}) \big) = 1 }}
		\frac{2^{\omega(V) - \omega \Big( \big(V , V_{1,3} V_{2,3} V_{3,1} V_{3,2} \big) \Big)}}{\lvert V \rvert} 
	\sum_{\substack{ G_1 , G_2 \in \mathcal{M} \\ \degree G_1 \leq \frac{z_R - \degree V V_{1,3} V_{3,1}}{2} \\ \degree G_2 \leq \frac{z_R - \degree V V_{2,3} V_{3,2}}{2} \\ (G_1 G_2 , R) = 1}}
		\frac{1}{\lvert G_1 G_2 \rvert} .
\end{align*}
So, we have
\begin{align}
\begin{split} \label{diagonal terms as sum over Gs Vs times sum over V times sum over G1 G2}
& \sum_{\substack{A_1 , A_2 , B_1 , B_2 \in \mathcal{M} \\ A_3 , B_3 \in \mathcal{S}_{\mathcal{M}} (X) \\ \degree A_1 B_1 , \degree A_2 B_2 \leq z_R \\ \degree A_3 , \degree B_3 \leq \frac{1}{8} \log_q \degree R \\ (A_1 A_2 A_3 B_1 B_2 B_3 , R)=1 \\ A_1 A_2 A_3 = B_1 B_2 B_3 }}
	\hspace{-1em} \frac{\beta (A_3) \beta (B_3) }{\lvert A_1 A_2 A_3 B_1 B_2 B_3 \rvert^{\frac{1}{2}}} \\
= & \sum_{\substack{G_3 , V_{1,3} , V_{2,3} , V_{3,1} , V_{3,2} \in \mathcal{S}_{\mathcal{M}} (X) \\ \degree G_3 V_{3,1} V_{3,2} \leq \frac{1}{8} \log_q \degree R \\ \degree G_3 V_{1,3} V_{2,3} \leq \frac{1}{8} \log_q \degree R \\ (G_3 V_{1,3} V_{2,3} V_{3,1} V_{3,2} , R) = 1 \\ (V_{1,3} V_{2,3} , V_{3,1} V_{3,2}) = 1 }}
		\frac{\beta (G_3 V_{3,1} V_{3,2}) \beta (G_3 V_{1,3} V_{2,3}) }{\lvert G_3 V_{1,3} V_{2,3} V_{3,1} V_{3,2} \rvert} \\
	& \cdot \sum_{\substack{V \in \mathcal{M} \\ \degree V \leq z_R - \degree V_{1,3} V_{3,1} \\ \degree V \leq z_R - \degree V_{2,3} V_{3,2} \\ \big( V , R (V_{1,3} V_{3,1} , V_{2,3} V_{3,2}) \big) = 1 }}
		\frac{2^{\omega(V) - \omega \Big( \big(V , V_{1,3} V_{2,3} V_{3,1} V_{3,2} \big) \Big)}}{\lvert V \rvert} 
	\sum_{\substack{ G_1 , G_2 \in \mathcal{M} \\ \degree G_1 \leq \frac{z_R - \degree V V_{1,3} V_{3,1}}{2} \\ \degree G_2 \leq \frac{z_R - \degree V V_{2,3} V_{3,2}}{2} \\ (G_1 G_2 , R) = 1}}
		\frac{1}{\lvert G_1 G_2 \rvert} .
\end{split}
\end{align}
Now, by Corollary \ref{Sum over (A,R)=1, deg A <= a deg R of 1/A}, if
\begin{align*}
\frac{z_R - \degree V V_{1,3} V_{3,1}}{2} \geq \log_q 3^{\omega (R)} 
\end{align*}
- that is,
\begin{align*}
\degree V \leq \degree R - \log_q 18^{\omega (R)} - \degree V_{1,3} V_{3,1}
\end{align*}
- then
\begin{align}
\begin{split} \label{G_1 sum for large G_1}
\sum_{\substack{ G_1 \in \mathcal{M} \\ \degree G_1 \leq \frac{z_R - \degree V V_{1,3} V_{3,1}}{2} \\ (G_1 , R) = 1}} \frac{1}{\lvert G_1 \rvert} 
= &\frac{\phi (R)}{2 \lvert R \rvert} (z_R - \degree V V_{1,3} V_{3,1}) + O \Big( \frac{\phi (R)}{\lvert R \rvert} \log \omega (R) \Big) \\
= &\frac{\phi (R)}{2 \lvert R \rvert} \Big( \degree R  - \degree V + O \big( \log \degree R + \omega (R) \big) \Big) .
\end{split}
\end{align}
If
\begin{align*}
\degree V > \degree R - \log_q 18^{\omega (R)} - \degree V_{1,3} V_{3,1},
\end{align*}
then
\begin{align} \label{G_1 sum for small G_1}
\sum_{\substack{ G_1 \in \mathcal{M} \\ \degree G_1 \leq \frac{z_R - \degree V V_{1,3} V_{3,1}}{2} \\ (G_1 , R) = 1}} \frac{1}{\lvert G_1 \rvert} 
\leq \sum_{\substack{ G_1 \in \mathcal{M} \\ \degree G_1 \leq \log_q 3^{\omega (R)}  \\ (G_1 , R) = 1}} \frac{1}{\lvert G_1 \rvert} 
\ll \frac{\phi (R)}{\lvert R \rvert} \omega (R) .
\end{align}
Similar results hold for the sum over $G_2$. \\

So, let us define 
\begin{align*}
m_0 := &\min \big\{ \degree R - \log_q 18^{\omega (R)} - \degree V_{1,3} V_{3,1} \; , \; \degree R - \log_q 18^{\omega (R)} - \degree V_{2,3} V_{3,2} \big\} , \\
m_1 := &\max \big\{ \degree R - \log_q 18^{\omega (R)} - \degree V_{1,3} V_{3,1} \; , \; \degree R - \log_q 18^{\omega (R)} - \degree V_{2,3} V_{3,2} \big\} .
\end{align*}

Then, by (\ref{G_1 sum for large G_1}) and (\ref{G_1 sum for small G_1}), we have
\begin{align}
\begin{split} \label{statement, 2^(omega (V)...) sum with l_1 (R,V,...)}
&\sum_{\substack{V \in \mathcal{M} \\ \degree V \leq z_R - \degree V_{1,3} V_{3,1} \\ \degree V \leq z_R - \degree V_{2,3} V_{3,2} \\ \big( V , R (V_{1,3} V_{3,1} , V_{2,3} V_{3,2}) \big) = 1}} \frac{2^{\omega(V) - \omega \Big( \big(V , V_{1,3} V_{2,3} V_{3,1} V_{3,2} \big) \Big)}}{\lvert V \rvert} 
		\sum_{\substack{ G_1 , G_2 \in \mathcal{M} \\ \degree G_1 \leq \frac{z_R - \degree V V_{1,3} V_{3,1}}{2} \\ \degree G_2 \leq \frac{z_R - \degree V V_{2,3} V_{3,2}}{2} \\ (G_1 G_2 , R) = 1}} \frac{1}{\lvert G_1 G_2 \rvert} \\
& = \frac{\phi (R)^2}{4 \lvert R \rvert^2} \hspace{-3em} \sum_{\substack{V \in \mathcal{M} \\ \degree V \leq m_0 \\ \big( V , R(V_{1,3} V_{3,1} , V_{2,3} V_{3,2}) \big) = 1}} \hspace{-3em} \frac{2^{\omega(V) - \omega \Big( \big(V , V_{1,3} V_{2,3} V_{3,1} V_{3,2} \big) \Big)}}{\lvert V \rvert} 
		\Big( \degree R  - \degree V + O \big( \log \degree R + \omega (R) \big) \Big)^2 \\
	& \hspace{20em} + l_1 (R , V_{1,3} , V_{3,1} , V_{2,3} , V_{3,2}) ,
\end{split}
\end{align}
where
\begin{align}
\begin{split} \label{statement def, EH section, L_1 (R,V,...) def}
l_1 (R , V_{1,3} , V_{3,1} , V_{2,3} , V_{3,2}) 
\ll &\frac{\phi (R)^2 \omega (R) \degree R}{2 \lvert R \rvert^2} \hspace{-3em} \sum_{\substack{V \in \mathcal{M} \\ m_0 < \degree V \leq m_1 \\ \big( V , R(V_{1,3} V_{3,1} , V_{2,3} V_{3,2}) \big) = 1}} \frac{2^{\omega(V) - \omega \Big( \big(V , V_{1,3} V_{2,3} V_{3,1} V_{3,2} \big) \Big)}}{\lvert V \rvert} \\
	&+\frac{\phi (R)^2 \omega (R)^2}{\lvert R \rvert^2} \sum_{\substack{V \in \mathcal{M} \\ m_1 < \degree V \leq \degree R \\ \big( V , R(V_{1,3} V_{3,1} , V_{2,3} V_{3,2}) \big) = 1}} \frac{2^{\omega(V) - \omega \Big( \big(V , V_{1,3} V_{2,3} V_{3,1} V_{3,2} \big) \Big)}}{\lvert V \rvert} .
\end{split}
\end{align}
We now apply Corollary \ref{Corollary Sum of 2^(omega(V) - omega((V,W)) (deg V)^k / |V|} to both terms on the right side of (\ref{statement, 2^(omega (V)...) sum with l_1 (R,V,...)}). For the second term, which is (\ref{statement def, EH section, L_1 (R,V,...) def}), it is just two direct applications. For the first term, we must expand $\Big( \degree R  - \degree V + O \big( \log \degree R + \omega (R) \big) \Big)^2$ and use Corollary \ref{Corollary Sum of 2^(omega(V) - omega((V,W)) (deg V)^k / |V|} on each of the resulting terms. We obtain
\begin{align}
\begin{split} \label{sum over V of 2^(omega - omega) times sum over G1 G2}
&\sum_{\substack{V \in \mathcal{M} \\ \degree V \leq z_R - \degree V_{1,3} V_{3,1} \\ \degree V \leq z_R - \degree V_{2,3} V_{3,2} \\ \big( V , R(V_{1,3} V_{3,1} , V_{2,3} V_{3,2}) \big) = 1}} \frac{2^{\omega(V) - \omega \Big( \big(V , V_{1,3} V_{2,3} V_{3,1} V_{3,2} \big) \Big)}}{\lvert V \rvert} 
		\sum_{\substack{ G_1 , G_2 \in \mathcal{M} \\ \degree G_1 \leq \frac{z_R - \degree V V_{1,3} V_{3,1}}{2} \\ \degree G_2 \leq \frac{z_R - \degree V V_{2,3} V_{3,2}}{2} \\ (G_1 G_2 , R) = 1}} \frac{1}{\lvert G_1 G_2 \rvert} \\
= &\frac{1-q^{-1}}{48} (\degree R)^4 \bigg( 1 + O \Big( \frac{\omega (R)  + \log \degree R}{\degree R} \Big) \bigg) 
	\prod_{P \mid R} \Bigg( \frac{\big( 1-\lvert P \rvert^{-1} \big)^3}{1+\lvert P \rvert^{-1}} \Bigg) \\
	& \hspace{2em}  \cdot  \prod_{P \mid V_{1,3} V_{2,3} V_{3,1} V_{3,2} } \Bigg( \frac{1-\lvert P \rvert^{-1}}{1+\lvert P \rvert^{-1}} \Bigg)
	\prod_{\substack{P \mid V_{1,3} V_{2,3} V_{3,1} V_{3,2} \\ P \nmid (V_{1,3} , V_{2,3}) ,  (V_{3,1} , V_{3,2}) }} \Bigg( \frac{1}{1-\lvert P \rvert^{-1}} \Bigg) \\
=: & l_2 (R, V_{1,3} , V_{2,3} , V_{3,1} , V_{3,2}) .
\end{split}
\end{align}

Before proceeding let us make the following definitions: For $A \in \mathcal{A} \backslash \{ 0 \}$ and $P \in \mathcal{P}$ we define $e_{P} (A)$ to be the largest non-negative integer such that $P^{e_P (A)} \mid A$, and
\begin{align} \label{statement def, EH chapter, 4th Hada prod section, gamma (A) def}
\gamma (A)
:=  \prod_{P \mid A } \Bigg( 1 + e_{P} (A) \frac{1-\lvert P \rvert^{-1}}{1+\lvert P \rvert^{-1}} \Bigg) .
\end{align}
Then, we can see that
\begin{align}
\begin{split} \label{gamma B_3' definition}
&\sum_{\substack{V_{1,3} , V_{2,3} \in \mathcal{S}_{\mathcal{M}} (X) \\ V_{1,3} V_{2,3} = {B_3}'}}
	 \prod_{P \mid V_{1,3} V_{2,3} } \Bigg( \frac{1-\lvert P \rvert^{-1}}{1+\lvert P \rvert^{-1}} \Bigg)
	 \prod_{\substack{P \mid V_{1,3} V_{2,3} \\ P \nmid (V_{1,3} , V_{2,3}) } } \Bigg( \frac{1}{1-\lvert P \rvert^{-1}} \Bigg) \\
= & \prod_{P \mid {B_3}' } \Bigg( \frac{1-\lvert P \rvert^{-1}}{1+\lvert P \rvert^{-1}} \Bigg)
\sum_{\substack{W_1 W_2 = {B_3}' \\ (W_1 , W_2 )=1}} 
	\sum_{\substack{V_{1,3} , V_{2,3} \in \mathcal{S}_{\mathcal{M}} (X) \\ V_{1,3} V_{2,3} = {B_3}' \\ \rad (V_{1,3} , V_{2,3}) = \rad W_1 }}
		 \prod_{P \mid W_2} \Bigg( \frac{1}{1-\lvert P \rvert^{-1}} \Bigg) \\
= & \prod_{P \mid {B_3}' } \Bigg( \frac{1-\lvert P \rvert^{-1}}{1+\lvert P \rvert^{-1}} \Bigg)
\sum_{\substack{W_1 W_2 = {B_3}' \\ (W_1 , W_2 )=1}} 
		 \prod_{P \mid W_2} \Bigg( \frac{1}{1-\lvert P \rvert^{-1}} \Bigg)
		2^{\omega (W_2)}
		\prod_{P \mid W_1} \Big( e_P ({B_3}') - 1 \Big) \\
= & \prod_{P \mid {B_3}' } \Bigg( \frac{1-\lvert P \rvert^{-1}}{1+\lvert P \rvert^{-1}} \Bigg)
	\prod_{P \mid {B_3}'} \bigg( \frac{2}{1 - \lvert P \rvert^{-1}} + \Big( e_P ({B_3}') - 1 \Big) \bigg) \\
= & \prod_{P \mid {B_3}' } \Bigg( 1 + e_{P} ({B_3}') \frac{1-\lvert P \rvert^{-1}}{1+\lvert P \rvert^{-1}} \Bigg)
= \gamma ({B_3}') .
\end{split}
\end{align}
Similarly,
\begin{align}
\begin{split} \label{gamma A_3' definition}
&\sum_{\substack{V_{3,1} , V_{3,2} \in \mathcal{S}_{\mathcal{M}} (X) \\ V_{3,1} V_{3,2} = {A_3}'}}
	 \prod_{P \mid V_{3,1} V_{3,2} } \Bigg( \frac{1-\lvert P \rvert^{-1}}{1+\lvert P \rvert^{-1}} \Bigg)
	 \prod_{\substack{P \mid V_{3,1} V_{3,2} \\ P \nmid (V_{3,1} , V_{3,2}) } } \Bigg( \frac{1}{1-\lvert P \rvert^{-1}} \Bigg) 
= \gamma ({A_3}') .
\end{split}
\end{align}
We now substitute (\ref{sum over V of 2^(omega - omega) times sum over G1 G2}) to (\ref{diagonal terms as sum over Gs Vs times sum over V times sum over G1 G2}) and apply (\ref{gamma B_3' definition}) and (\ref{gamma A_3' definition}) to obtain
\begin{align}
\begin{split} \label{Diagonal sum as sum over G3 A3' B3' of betas and gammas}
& \sum_{\substack{A_1 , A_2 , B_1 , B_2 \in \mathcal{M} \\ A_3 , B_3 \in \mathcal{S}_{\mathcal{M}} (X) \\ \degree A_1 B_1 , \degree A_2 B_2 \leq z_R \\ \degree A_3 , \degree B_3 \leq \frac{1}{8} \log_q \degree R \\ (A_1 A_2 A_3 B_1 B_2 B_3 , R)=1 \\ A_1 A_2 A_3 = B_1 B_2 B_3 }}
	\hspace{-1em} \frac{\beta (A_3) \beta (B_3) }{\lvert A_1 A_2 A_3 B_1 B_2 B_3 \rvert^{\frac{1}{2}}} \\
= & \sum_{\substack{G_3 , V_{1,3} , V_{2,3} , V_{3,1} , V_{3,2} \in \mathcal{S}_{\mathcal{M}} (X) \\ \degree G_3 V_{3,1} V_{3,2} \leq \frac{1}{8} \log_q \degree R \\ \degree G_3 V_{1,3} V_{2,3} \leq \frac{1}{8} \log_q \degree R \\ (G_3 V_{1,3} V_{2,3} V_{3,1} V_{3,2} , R) = 1 \\ (V_{1,3} V_{2,3} , V_{3,1} V_{3,2}) = 1 }}
		\frac{\beta (G_3 V_{3,1} V_{3,2}) \beta (G_3 V_{1,3} V_{2,3}) }{\lvert G_3 V_{1,3} V_{2,3} V_{3,1} V_{3,2} \rvert} l_2 (R, V_{1,3} , V_{2,3} , V_{3,1} , V_{3,2}) \\
= & \sum_{\substack{G_3 , {A_3}' , {B_3}' \in \mathcal{S}_{\mathcal{M}} (X) \\ \degree G_3 {A_3}' \leq \frac{1}{8} \log_q \degree R \\ \degree G_3 {B_3}' \leq \frac{1}{8} \log_q \degree R \\ (G_3 {A_3}' {B_3}' , R) = 1 \\ ({A_3}' , {B_3}') = 1 }}
		\hspace{-1.75em} \frac{\beta (G_3 {A_3}') \beta (G_3 {B_3}') }{\lvert G_3 {A_3}' {B_3}' \rvert} 
	\hspace{-2em} \sum_{\substack{V_{3,1} , V_{3,2} \in \mathcal{S}_{\mathcal{M}} (X) \\ V_{3,1} V_{3,2} = {A_3}'}}
	\sum_{\substack{V_{1,3} , V_{2,3} \in \mathcal{S}_{\mathcal{M}} (X) \\ V_{1,3} V_{2,3} = {B_3}'}}
		\hspace{-1.5em} l_2 (R, V_{1,3} , V_{2,3} , V_{3,1} , V_{3,2}) \\
= & \frac{1-q^{-1}}{48}  \prod_{P \mid R} \Bigg( \frac{\big( 1-\lvert P \rvert^{-1} \big)^3}{1+\lvert P \rvert^{-1}} \Bigg) (\degree R)^4 
	\hspace{-1.5em} \sum_{\substack{G_3 , {A_3}' , {B_3}' \in \mathcal{S}_{\mathcal{M}} (X) \\ \degree G_3 {A_3}' \leq \frac{1}{8} \log_q \degree R \\ \degree G_3 {B_3}' \leq \frac{1}{8} \log_q \degree R \\ (G_3 {A_3}' {B_3}' , R) = 1 \\ ({A_3}' , {B_3}') = 1 }}
		\hspace{-1em} \frac{\beta (G_3 {A_3}') \beta (G_3 {B_3}') }{\lvert G_3 {A_3}' {B_3}' \rvert} \gamma ({A_3}') \gamma ({B_3}') \\
				& \hspace{2em} + l_3 (R) ,
\end{split}
\end{align}
where
\begin{align}
\begin{split} \label{statement def, EH chapter, l_3 (R) def}
&l_3 (R) \\
\ll & \prod_{P \mid R} \Bigg( \frac{\big( 1-\lvert P \rvert^{-1} \big)^3}{1+\lvert P \rvert^{-1}} \Bigg) (\degree R)^{3} \big( \omega (R) + \log \degree R \big) 
		\hspace{-1em} \sum_{\substack{G_3 , {A_3}' , {B_3}' \in \mathcal{S}_{\mathcal{M}} (X) \\ \degree G_3 {A_3}' \leq \frac{1}{8} \log_q \degree R \\ \degree G_3 {B_3}' \leq \frac{1}{8} \log_q \degree R \\ (G_3 {A_3}' {B_3}' , R) = 1 \\ ({A_3}' , {B_3}') = 1 }}
		\hspace{-1.3em} \frac{ \lvert \beta (G_3 {A_3}') \beta (G_3 {B_3}') \rvert}{\lvert G_3 {A_3}' {B_3}' \rvert} \gamma ({A_3}') \gamma ({B_3}') .
\end{split}
\end{align}

Consider the first term on the far right side of (\ref{Diagonal sum as sum over G3 A3' B3' of betas and gammas}). We recall that $\beta (A) = 0$ if $A$ is divisible by $P^3$ for any prime $P$. Hence, defining $\Pi_{\mathcal{P} , X} := \prod_{\degree P \leq X} P$, we may assume that $G_3 = IJ^2$ where $I,J \mid \Pi_{\mathcal{P} , X}$, $(IJ,R)=1$, and $(I,J)=1$. By similar reasoning, we may assume that ${A_3}' = K {A_3}''$ where $K \mid I$, $({A_3}'' , RIJ)=1$; and ${B_3}' = L {B_3}''$ where $L \mid I$, $(L,K)=1$ and $({B_3}'' , RIJ{A_3}'')=1$. Then, by the multiplicativity of $\beta$ and $\gamma$, we have
\begin{align}
\begin{split} \label{Main sum over G3 A3' B3' as sums over I J K L A3'' B3''}
&\sum_{\substack{G_3 , {A_3}' , {B_3}' \in \mathcal{S}_{\mathcal{M}} (X) \\ \degree G_3 {A_3}' \leq \frac{1}{8} \log_q \degree R \\ \degree G_3 {B_3}' \leq \frac{1}{8}\log_q \degree R \\ (G_3 {A_3}' {B_3}' , R) = 1 \\ ({A_3}' , {B_3}') = 1 }} \frac{\beta (G_3 {A_3}') \beta (G_3 {B_3}') }{\lvert G_3 {A_3}' {B_3}' \rvert} \gamma ({A_3}') \gamma ({B_3}') \\
= &\sum_{\substack{ I \mid \Pi_{\mathcal{P} , X} \\ \degree I \leq \frac{1}{8} \log_q \degree R \\ (I,R)=1}} \frac{ \beta (I)^2}{\lvert I \rvert}
	\hspace{-1em} \sum_{\substack{ J \mid \Pi_{\mathcal{P} , X} \\ \degree J \leq \frac{1}{16} \log_q \degree R - \frac{\degree I}{2} \\ (J,RI)=1}} \frac{ \beta (J^2)^2}{\lvert J \rvert^2}
	\sum_{K \mid I} \frac{\beta (K^2 ) \gamma (K)}{\beta (K) \lvert K \rvert}
	\sum_{\substack{L \mid I \\ (L,K)=1}} \frac{\beta (L^2 ) \gamma (L)}{\beta (L) \lvert L \rvert} \\
	& \cdot \sum_{\substack{{A_3}'' \mid (\Pi_{\mathcal{P} , X})^2 \\ \degree {A_3}'' \leq \frac{1}{8} \log_q \degree R - \degree IJ^2 K \\ ({A_3}'' , RIJ)=1}} \frac{\beta ({A_3}'') \gamma ({A_3}'')}{\lvert {A_3}'' \rvert}
	\sum_{\substack{{B_3}'' \mid (\Pi_{\mathcal{P} , X})^2 \\ \degree {B_3}'' \leq \frac{1}{8} \log_q \degree R - \degree IJ^2 L \\ ({B_3}'' , RIJ{A_3}'')=1}} \frac{\beta ({B_3}'') \gamma ({B_3}'')}{\lvert {B_3}'' \rvert} .
\end{split}
\end{align}

Consider the case where $\degree I > \frac{1}{64} \log_q \degree R$ or $\degree J > \frac{1}{64} \log_q \degree R$. Without loss of generality, suppose the former. Then, all the sums above, except that over $I$, can be bounded by $O \big(  (\log_q \log \deg R)^c \big)$ for some constant $c > 0$, while the sum over $I$ can be bounded by $O \big(  (\degree R)^{-\frac{1}{66}} \big)$ (this is obtained in the same way we have done several times before, such as in (\ref{2k-th moment of Euler product, Dirichlet series truncation bound})). So, with these restrictions, we have that the above is $O \big( (\degree R)^{-\frac{1}{67}} \big)$. \\

Now consider the case where $\degree I \leq \frac{1}{64} \log_q \degree R$ and $\degree J \leq \frac{1}{64} \log_q \degree R$. Then, 
\begin{align*}
\frac{1}{8} \log_q \degree R - \degree IJ^2 K \geq \frac{1}{16} \log_q \degree R
\end{align*}
and
\begin{align*}
\frac{1}{8} \log_q \degree R - \degree IJ^2 L \geq \frac{1}{16} \log_q \degree R .
\end{align*}
In particular, we can apply Lemma \ref{Proposition, extending sums over smooth numbers} to the last two summations of (\ref{Main sum over G3 A3' B3' as sums over I J K L A3'' B3''}):
\begin{align}
\begin{split} \label{Sums over A3'' and B3'' of betas and gammas as product}
&\sum_{\substack{{A_3}'' \mid (\Pi_{\mathcal{P} , X})^2 \\ \degree {A_3}'' \leq \frac{1}{8} \log_q \degree R - \degree IJ^2 K \\ ({A_3}'' , RIJ)=1}} \frac{\beta ({A_3}'') \gamma ({A_3}'')}{\lvert {A_3}'' \rvert}
	\sum_{\substack{{B_3}'' \mid (\Pi_{\mathcal{P} , X})^2 \\ \degree {B_3}'' \leq \frac{1}{8} \log_q \degree R - \degree IJ^2 L \\ ({B_3}'' , RIJ{A_3}'')=1}} \frac{\beta ({B_3}'') \gamma ({B_3}'')}{\lvert {B_3}'' \rvert} \\
=& \prod_{\substack{\degree P \leq X \\ (P,R)=1}} \bigg( 1 + \frac{\beta (P) \gamma (P)}{\lvert P \rvert} + \frac{\beta (P^2 ) \gamma (P^2 )}{\lvert P^2 \rvert} \bigg)
	\prod_{P \mid IJ} \bigg( 1 + \frac{\beta (P) \gamma (P)}{\lvert P \rvert} + \frac{\beta (P^2 ) \gamma (P^2 )}{\lvert P^2 \rvert} \bigg)^{-1} \\
	& \cdot \sum_{\substack{{A_3}'' \mid (\Pi_{\mathcal{P} , X})^2 \\ \degree {A_3}'' \leq \frac{1}{8} \log_q \degree R - \degree IJ^2 K \\ ({A_3}'' , RIJ)=1}} \frac{\beta ({A_3}'') \gamma ({A_3}'')}{\lvert {A_3}'' \rvert}
	\prod_{P \mid {A_3}''} \bigg( 1 + \frac{\beta (P) \gamma (P)}{\lvert P \rvert} + \frac{\beta (P^2 ) \gamma (P^2 )}{\lvert P^2 \rvert} \bigg)^{-1} \\
	&+ O \Big( (\degree R)^{- \frac{1}{17}} \Big)  \\
= &\prod_{\substack{\degree P \leq X \\ (P,R)=1}} \bigg( 1 + \frac{\beta (P) \gamma (P)}{\lvert P \rvert} + \frac{\beta (P^2 ) \gamma (P^2 )}{\lvert P^2 \rvert} \bigg)
	\prod_{P \mid IJ} \bigg( 1 + \frac{\beta (P) \gamma (P)}{\lvert P \rvert} + \frac{\beta (P^2 ) \gamma (P^2 )}{\lvert P^2 \rvert} \bigg)^{-1} \\
	& \cdot \prod_{\substack{ \degree P \leq X \\ (P,R)=1}} \Bigg( 1 + \bigg( \frac{\beta (P) \gamma (P)}{\lvert P \rvert} + \frac{\beta (P^2 ) \gamma (P^2 )}{\lvert P^2 \rvert} \bigg) \bigg( 1 + \frac{\beta (P) \gamma (P)}{\lvert P \rvert} + \frac{\beta (P^2 ) \gamma (P^2 )}{\lvert P^2 \rvert} \bigg)^{-1} \Bigg) \\
	& \cdot \prod_{P \mid IJ} \Bigg( 1 + \bigg( \frac{\beta (P) \gamma (P)}{\lvert P \rvert} + \frac{\beta (P^2 ) \gamma (P^2 )}{\lvert P^2 \rvert} \bigg) \bigg( 1 + \frac{\beta (P) \gamma (P)}{\lvert P \rvert} + \frac{\beta (P^2 ) \gamma (P^2 )}{\lvert P^2 \rvert} \bigg)^{-1} \Bigg)^{-1} \\
	&+O \Big( (\degree R)^{- \frac{1}{17}} \Big)  \\
= &\prod_{\substack{\degree P \leq X \\ (P,R)=1}} \bigg( 1 + \frac{2 \beta (P) \gamma (P)}{\lvert P \rvert} + \frac{2 \beta (P^2 ) \gamma (P^2 )}{\lvert P^2 \rvert} \bigg) \\
	& \cdot \prod_{P \mid IJ} \bigg( 1 + \frac{2 \beta (P) \gamma (P)}{\lvert P \rvert} + \frac{2 \beta (P^2 ) \gamma (P^2 )}{\lvert P^2 \rvert} \bigg)^{-1} \\
	& + O \Big( (\degree R)^{- \frac{1}{17}} \Big) . \\
\end{split}
\end{align}

Consider now the two middle summations on the right side of (\ref{Main sum over G3 A3' B3' as sums over I J K L A3'' B3''}). We have
\begin{align}
\begin{split} \label{Sums over K and L of betas and gammas as product}
& \sum_{K \mid I} \frac{\beta (K^2 ) \gamma (K)}{\beta (K) \lvert K \rvert}
\sum_{\substack{L \mid I \\ (L,K)=1}} \frac{\beta (L^2 ) \gamma (L)}{\beta (L) \lvert L \rvert} \\
	& = \prod_{P \mid I} \bigg( 1 + \frac{\beta (P^2 ) \gamma (P)}{\beta (P) \lvert P \rvert} \bigg)
	\sum_{K \mid I} \frac{\beta (K^2 ) \gamma (K)}{\beta (K) \lvert K \rvert}
	\prod_{P \mid K} \bigg( 1 + \frac{\beta (P^2 ) \gamma (P)}{\beta (P) \lvert P \rvert} \bigg)^{-1} \\
	& = \prod_{P \mid I} \bigg( 1 + \frac{\beta (P^2 ) \gamma (P)}{\beta (P) \lvert P \rvert} \bigg)
	\prod_{P \mid I} \Bigg( 1 + \frac{\beta (P^2 ) \gamma (P)}{\beta (P) \lvert P \rvert}
	\bigg( 1 + \frac{\beta (P^2 ) \gamma (P)}{\beta (P) \lvert P \rvert} \bigg)^{-1} \Bigg) \\
	& = \prod_{P \mid I} \bigg( 1 + \frac{2 \beta (P^2 ) \gamma (P)}{\beta (P) \lvert P \rvert} \bigg) .
\end{split}
\end{align}

Applying (\ref{Sums over A3'' and B3'' of betas and gammas as product}) and (\ref{Sums over K and L of betas and gammas as product}) to (\ref{Main sum over G3 A3' B3' as sums over I J K L A3'' B3''}), we obtain
\begin{align*}
&\sum_{\substack{G_3 , {A_3}' , {B_3}' \in \mathcal{S}_{\mathcal{M}} (X) \\ \degree G_3 {A_3}' \leq \frac{1}{8} \log_q \degree R \\ \degree G_3 {B_3}' \leq \frac{1}{8}\log_q \degree R \\ (G_3 {A_3}' {B_3}' , R) = 1 \\ ({A_3}' , {B_3}') = 1 }} \frac{\beta (G_3 {A_3}') \beta (G_3 {B_3}') }{\lvert G_3 {A_3}' {B_3}' \rvert} \gamma ({A_3}') \gamma ({B_3}') \\
= &\prod_{\substack{\degree P \leq X \\ (P,R)=1}} \bigg( 1 + \frac{2 \beta (P) \gamma (P)}{\lvert P \rvert} + \frac{2 \beta (P^2 ) \gamma (P^2 )}{\lvert P^2 \rvert} \bigg) 
	\sum_{\substack{ I \mid \Pi_{\mathcal{P} , X} \\ \degree I \leq \frac{1}{64} \log_q \degree R \\ (I,R)=1}} \frac{ \beta (I)^2}{\lvert I \rvert}
	\sum_{\substack{ J \mid \Pi_{\mathcal{P} , X} \\ \degree J \leq \frac{1}{64} \log_q \degree R \\ (J,RI)=1}} \frac{ \beta (J^2)^2}{\lvert J \rvert^2}\\
	&\hspace{5em} \cdot \prod_{P \mid I} \bigg( 1 + \frac{2 \beta (P^2 ) \gamma (P)}{\beta (P) \lvert P \rvert} \bigg)
	\prod_{P \mid IJ} \bigg( 1 + \frac{2 \beta (P) \gamma (P)}{\lvert P \rvert} + \frac{2 \beta (P^2 ) \gamma (P^2 )}{\lvert P^2 \rvert} \bigg)^{-1} \\
&+ O \Big( (\degree R)^{-\frac{1}{67}} \Big) \\
= &\prod_{\substack{\degree P \leq X \\ (P,R)=1}} \bigg( 1 + \frac{2 \beta (P) \gamma (P)}{\lvert P \rvert} + \frac{2 \beta (P^2 ) \gamma (P^2 )}{\lvert P^2 \rvert} + \frac{\beta (P^2 )^2}{\lvert P \rvert^2 } \bigg) \\
	&\hspace{1em} \cdot \sum_{\substack{ I \mid \Pi_{\mathcal{P} , X} \\ \degree I \leq \frac{1}{64} \log_q \degree R \\ (I,R)=1}} \hspace{-2em} \frac{ \beta (I)^2}{\lvert I \rvert} 
	\prod_{P \mid I} \Bigg( \bigg( 1 + \frac{2 \beta (P^2 ) \gamma (P)}{\beta (P) \lvert P \rvert} \bigg)  \bigg( 1 + \frac{2 \beta (P) \gamma (P)}{\lvert P \rvert} + \frac{2 \beta (P^2 ) \gamma (P^2 )}{\lvert P^2 \rvert} + \frac{\beta (P^2 )^2}{\lvert P \rvert^2 } \bigg)^{-1} \Bigg) \\
& + O \Big( (\degree R)^{-\frac{1}{67}} \Big) \\
= &\prod_{\substack{\degree P \leq X \\ (P,R)=1}} \hspace{-0.5em} \Bigg( 1 + \frac{2 \beta (P) \gamma (P)}{\lvert P \rvert} + \frac{2 \beta (P^2 ) \gamma (P^2 )}{\lvert P^2 \rvert} + \frac{\beta (P^2 )^2}{\lvert P \rvert^2 } + \frac{ \beta (P)^2}{\lvert P \rvert}\bigg( 1 + \frac{2 \beta (P^2 ) \gamma (P)}{\beta (P) \lvert P \rvert} \bigg) \Bigg) \\
&+ O \Big( (\degree R)^{-\frac{1}{67}} \Big) .
\end{align*}

Now, recalling the definitions of $\beta, \gamma$ (equations (\ref{statement def, EH chapter, prelim results 4th Hada prod section, beta (A) def}) and (\ref{statement def, EH chapter, 4th Hada prod section, gamma (A) def}), respectively) we see that the product above is equal to
\begin{align*}
& \prod_{\substack{\degree P \leq X \\ P \nmid R}} \bigg( \frac{\big(1 - \lvert P \rvert^{-1} \big)^3}{1 + \lvert P \rvert^{-1}} \bigg)
 \prod_{\substack{\frac{X}{2} < \degree P \leq X \\ P \nmid R}} \bigg( 1 + O \big( \lvert P \rvert^{-2} \big) \bigg) \\
\sim & \prod_{P \mid R} \bigg( \frac{\big(1 - \lvert P \rvert^{-1} \big)^3}{1 + \lvert P \rvert^{-1}} \bigg)^{-1}
 \prod_{\substack{\degree P > X \\ P \mid R}} \bigg( \frac{\big(1 - \lvert P \rvert^{-1} \big)^3}{1 + \lvert P \rvert^{-1}} \bigg)
 \prod_{\degree P \leq X } \bigg( \frac{\big(1 - \lvert P \rvert^{-1} \big)^3}{1 + \lvert P \rvert^{-1}} \bigg) \\
= & \prod_{P \mid R} \bigg( \frac{\big(1 - \lvert P \rvert^{-1} \big)^3}{1 + \lvert P \rvert^{-1}} \bigg)^{-1}
 \prod_{\substack{\degree P > X \\ P \mid R}} \bigg( \frac{\big(1 - \lvert P \rvert^{-1} \big)^3}{1 + \lvert P \rvert^{-1}} \bigg) 
	\prod_{\degree P \leq X } \big(1 - \lvert P \rvert^{-1} \big)^4
\prod_{\degree P \leq X } \big(1 - \lvert P \rvert^{-2} \big)^{-1} \\
\sim &\big( 1 - q^{-1} \big)^{-1}
 \prod_{P \mid R} \bigg( \frac{\big(1 - \lvert P \rvert^{-1} \big)^3}{1 + \lvert P \rvert^{-1}} \bigg)^{-1}
 \prod_{\substack{\degree P > X \\ P \mid R}} \bigg( \frac{\big(1 - \lvert P \rvert^{-1} \big)^3}{1 + \lvert P \rvert^{-1}} \bigg)
\Big( \frac{1}{ e^{\gamma} X } \Big)^4 ,
\end{align*}
where we have used Lemma \ref{lemma, Mertens 3rd Theorem in FF} for the last equality. Recall that the above is to be applied to the first term on the far right side of (\ref{Diagonal sum as sum over G3 A3' B3' of betas and gammas}). We now consider $l_3 (R)$: the second term on the far right side of (\ref{Diagonal sum as sum over G3 A3' B3' of betas and gammas}). By means similar to those described in the paragraph after (\ref{Main sum over G3 A3' B3' as sums over I J K L A3'' B3''}), we can show that there is some constant $c>0$ such that
\begin{align*}
\sum_{\substack{G_3 , {A_3}' , {B_3}' \in \mathcal{S}_{\mathcal{M}} (X) \\ \degree G_3 {A_3}' \leq \frac{1}{8} \log_q \degree R \\ \degree G_3 {B_3}' \leq \frac{1}{8} \log_q \degree R \\ (G_3 {A_3}' {B_3}' , R) = 1 \\ ({A_3}' , {B_3}') = 1 }} \frac{\vert \beta (G_3 {A_3}') \beta (G_3 {B_3}') \rvert }{\lvert G_3 {A_3}' {B_3}' \rvert} \gamma ({A_3}') \gamma ({B_3}') 
\ll X^c
\ll \big( \log_q \log \degree R \big)^c .
\end{align*}
We apply this to (\ref{statement def, EH chapter, l_3 (R) def}) to obtain a bound for $l_3 (R)$. \\

Hence, considering all of the above, (\ref{Diagonal sum as sum over G3 A3' B3' of betas and gammas}) becomes
\begin{align}
\begin{split} \label{statement, EH chapter, 4th Hada mom section, Step 1.2 final main term}
\sum_{\substack{A_1 , A_2 , B_1 , B_2 \in \mathcal{M} \\ A_3 , B_3 \in \mathcal{S}_{\mathcal{M}} (X) \\ \degree A_1 B_1 , \degree A_2 B_2 \leq z_R \\ \degree A_3 , \degree B_3 \leq \frac{1}{8} \log_q \degree R \\ (A_1 A_2 A_3 B_1 B_2 B_3 , R)=1 \\ A_1 A_2 A_3 = B_1 B_2 B_3 }} \hspace{-1.5em} \frac{\beta (A_3) \beta (B_3) }{\lvert A_1 A_2 A_3 B_1 B_2 B_3 \rvert^{\frac{1}{2}}} 
\sim \frac{1}{48} \Big( \frac{\degree R}{ e^{\gamma} X } \Big)^4
	 \prod_{\substack{\degree P > X \\ P \mid R}} \bigg( \frac{\big(1 - \lvert P \rvert^{-1} \big)^3}{1 + \lvert P \rvert^{-1}} \bigg)
\end{split}
\end{align}

\textbf{Step 1.2:} We consider the second term on the far right side of (\ref{fourth twisted moment, a(chi) sum, diag and off diag split, statement}): the off-diagonal terms. We have
\begin{align*}
&\sum_{EF=R} \mu (E) \phi (F)
	\sum_{\substack{A_1 , A_2 , B_1 , B_2 \in \mathcal{M} \\ A_3 , B_3 \in \mathcal{S}_{\mathcal{M}} (X) \\ \degree A_1 B_1 , \degree A_2 B_2 \leq z_R \\ \degree A_3 , \degree B_3 \leq \frac{1}{8} \log_q \degree R \\ (A_1 A_2 A_3 B_1 B_2 B_3 , R)=1 \\ A_1 A_2 A_3 \equiv B_1 B_2 B_3 (\modulus F) \\  A_1 A_2 A_3 \neq B_1 B_2 B_3 }}
	\frac{\beta (A_3) \beta (B_3) }{\lvert A_1 A_2 A_3 B_1 B_2 B_3 \rvert^{\frac{1}{2}}} \\
&\leq \sum_{\substack{A_3 , B_3 \in \mathcal{S}_{\mathcal{M}} (X) \\ \degree A_3 , \degree B_3 \leq \frac{1}{8} \log_q \degree R \\ (A_3 B_3 , R)=1 }} \hspace{-2em} \frac{\lvert \beta (A_3) \beta (B_3) \rvert}{\lvert A_3 B_3 \rvert^{\frac{1}{2}}}
	\sum_{EF=R} \lvert \mu (E) \rvert \phi (F)
	\sum_{z_1 , z_2 = 0}^{z_R} q^{-\frac{z_1 + z_2}{2}}
	\hspace{-1.5em} \sum_{\substack{A_1 , A_2 , B_1 , B_2 \in \mathcal{M} \\ \degree A_1 B_1 = z_1 \\ \degree A_2 B_2 = z_2 \\  (A_1 A_2 B_1 B_2 , R)=1 \\ A_1 A_2 A_3 \equiv B_1 B_2 B_3 (\modulus F) \\  A_1 A_2 A_3 \neq B_1 B_2 B_3 }} 1 .
\end{align*}
By Lemma \ref{Lemma, off diagonal z_1 z_2 sum, EH version} we have, for $\epsilon = \frac{1}{40}$,
\begin{align*}
&\sum_{z_1 , z_2 = 0}^{z_R} q^{-\frac{z_1 + z_2}{2}}
	\sum_{\substack{A_1 , A_2 , B_1 , B_2 \in \mathcal{M} \\ \degree A_1 B_1 = z_1 \\ \degree A_2 B_2 = z_2 \\  (A_1 A_2 B_1 B_2 , R)=1 \\ A_1 A_2 A_3 \equiv B_1 B_2 B_3 (\modulus F) \\  A_1 A_2 A_3 \neq B_1 B_2 B_3 }} 1 \\
\ll &\frac{\lvert A_3 B_3 \rvert^{1+ \frac{\epsilon}{2}}}{\lvert F \rvert} \hspace{-2em} \sum_{\substack{z_1 , z_2 = 0\\ z_1 + z_2 + \degree A_3 B_3 \leq \frac{19}{10} \degree F}}^{z_R} \hspace{-1em} q^{(z_1 + z_2 ) \big( \frac{1}{2} + \frac{\epsilon}{2} \big)} 
	+ \frac{\lvert A_3 B_3 \rvert}{\phi (F)} \hspace{-2em} \sum_{\substack{z_1 , z_2 = 0 \\ z_1 + z_2 + \degree A_3 B_3 > \frac{19}{10} \degree F}}^{z_R} \hspace{-1em} q^{\frac{z_1 + z_2 }{2}} (z_1 + z_2 + \degree A_3 B_3 )^3 \\
\ll &\frac{\lvert A_3 B_3 \rvert^{1 + \epsilon}}{\lvert F \rvert^{\frac{1}{20} - \epsilon}}
	+ \frac{\lvert A_3 B_3 \rvert}{\phi (F)} q^{z_R} (\degree R)^3 .
\end{align*}
We also have
\begin{align*}
&\sum_{EF=R} \lvert \mu (E) \rvert \phi (F) \bigg( \frac{\lvert A_3 B_3 \rvert^{1 + \epsilon}}{\lvert F \rvert^{\frac{1}{20} - \epsilon}} + \frac{\lvert A_3 B_3 \rvert}{\phi (F)} q^{z_R} (\degree R)^3 \bigg) \\
= &\lvert A_3 B_3 \rvert^{1+ \epsilon} \sum_{EF=R} \lvert \mu (E) \rvert \frac{\phi (F)}{{\lvert F \rvert^{\frac{1}{20} - \epsilon}}}
	+ \lvert A_3 B_3 \rvert q^{z_R} (\degree R)^3 \sum_{EF=R} \lvert \mu (E) \rvert \\
\ll &\lvert A_3 B_3 \rvert^{1 + \epsilon} \lvert R \rvert
	+ \lvert A_3 B_3 R \rvert (\degree R)^3 ,
\end{align*}
where the last relation uses
\begin{align*}
&\sum_{EF=R} \lvert \mu (E) \rvert \frac{\phi (F)}{\lvert F \rvert^{\frac{1}{20} - \epsilon}}
\leq \sum_{EF=R} \lvert \mu (E) \rvert \phi (F) 
= \phi (R) \sum_{EF=R} \lvert \mu (E) \rvert  \prod_{\substack{P \mid E \\ P^2 \mid R}} \bigg( \frac{1}{\lvert P \rvert} \bigg)  \prod_{\substack{P \mid E \\ P^2 \nmid R}} \bigg( \frac{1}{\lvert P \rvert -1} \bigg) \\
\leq &\phi (R) \sum_{EF=R} \lvert \mu (E) \rvert \prod_{P \mid E} \bigg( \frac{1}{\lvert P \rvert -1} \bigg) 
= \phi (R) \prod_{P \mid R} \bigg( 1 + \frac{1}{\lvert P \rvert -1} \bigg)
= \phi (R) \frac{\lvert R \rvert}{\phi (R)}
= \lvert R \rvert .
\end{align*}

Finally, using the fact that
\begin{align*}
&\sum_{\substack{A_3 , B_3 \in \mathcal{S}_{\mathcal{M}} (X) \\ \degree A_3 , \degree B_3 \leq \frac{1}{8} \log_q \degree R \\ (A_3 B_3 , R)=1 }} \lvert \beta (A_3) \beta (B_3) \rvert \lvert A_3 B_3 \rvert^{\frac{1}{2} + \epsilon}
\leq \bigg( \sum_{\substack{A \in \mathcal{M} \\ \degree A \leq \frac{1}{8} \log_q \degree R }} \lvert \beta (A) \rvert \lvert A \rvert^{\frac{1}{2} + \epsilon} \bigg)^2 \\
\leq &\bigg( \hspace{-1em} \sum_{\substack{A \in \mathcal{M} \\ \degree A \leq \frac{1}{8} \log_q \degree R }} \hspace{-1em} 2^{\omega (A)} \lvert A \rvert^{\frac{1}{2} + \epsilon} \bigg)^2 
\leq \bigg( \hspace{-1em} \sum_{\substack{A \in \mathcal{M} \\ \degree A \leq \frac{1}{8} \log_q \degree R }} \hspace{-1em} d (A) \lvert A \rvert^{\frac{1}{2} + \epsilon} \bigg)^2 
\leq \bigg( \hspace{-1em} \sum_{\substack{A \in \mathcal{M} \\ \degree A \leq \frac{1}{8} \log_q \degree R }} \hspace{-1em} \lvert A \rvert^{\frac{1}{2} + \epsilon} \bigg)^4
\leq ( \degree R )^{\frac{7}{8}} ,
\end{align*}
we see that
\begin{align*}
\frac{1}{\phi^* (R)} \sum_{EF=R} \mu (E) \phi (F)
	\hspace{-1.75em} \sum_{\substack{A_1 , A_2 , B_1 , B_2 \in \mathcal{M} \\ A_3 , B_3 \in \mathcal{S}_{\mathcal{M}} (X) \\ \degree A_1 B_1 , \degree A_2 B_2 \leq z_R \\ \degree A_3 , \degree B_3 \leq \frac{1}{8} \log_q \degree R \\ (A_1 A_2 A_3 B_1 B_2 B_3 , R)=1 \\ A_1 A_2 A_3 \equiv B_1 B_2 B_3 (\modulus F) \\  A_1 A_2 A_3 \neq B_1 B_2 B_3 }}
	\frac{ \beta (A_3) \beta (B_3) }{\lvert A_1 A_2 A_3 B_1 B_2 B_3 \rvert^{\frac{1}{2}}}
\ll \frac{\lvert R \rvert}{\phi^* (R)} (\degree R )^{3 + \frac{7}{8}} .
\end{align*}
This is indeed of lower order than (\ref{statement, EH chapter, 4th Hada mom section, Step 1.2 final main term}); Section 4 of \cite{AndradeYiasemides2021_4thPowMeanDLFuncField_Final} provides the necessary results to confirm this. \\

\textbf{Step 2; the asymptotic main term of $\frac{1}{\phi^* (R)} \sumstar_{\chi \modulus R} b(\chi )^2 \Big\lvert \widehat{P_X^{**}} \Big( \frac{1}{2} , \chi \Big) \Big\rvert^2$:} \\

We have that
\begin{align}
\begin{split} \label{fourth twisted moment, b(chi) sum, split into diag and off diag, statement}
&\frac{1}{\phi^* (R)} \sumstar_{\chi \modulus R} b(\chi )^2 \Big\lvert \widehat{P_X^{**}} \Big( \frac{1}{2} , \chi \Big) \Big\rvert^2
\leq \frac{1}{\phi^* (R)} \sum_{\chi \modulus R} b(\chi )^2 \Big\lvert \widehat{P_X^{**}} \Big( \frac{1}{2} , \chi \Big) \Big\rvert^2 \\
\leq & \frac{1}{\phi^* (R)} \sum_{\chi \modulus R} \sum_{\substack{A_1 , A_2 , B_1 , B_2 \in \mathcal{M} \\ A_3 , B_3 \in \mathcal{S}_{\mathcal{M}} (X) \\ z_R < \degree A_1 B_1 , \degree A_2 B_2 < \degree R \\ \degree A_3 , \degree B_3 \leq \frac{1}{8} \log_q \degree R}}
	\frac{\beta (A_3) \beta (B_3) \chi (A_1 A_2 A_3) \conj\chi (B_1 B_2 B_3)}{\lvert A_1 A_2 A_3 B_1 B_2 B_3 \rvert^{\frac{1}{2}}} \\
= & \frac{ \phi (R)}{\phi^* (R)} \hspace{-2em} \sum_{\substack{A_1 , A_2 , B_1 , B_2 \in \mathcal{M} \\ A_3 , B_3 \in \mathcal{S}_{\mathcal{M}} (X) \\ z_R < \degree A_1 B_1 , \degree A_2 B_2 < \degree R \\ \degree A_3 , \degree B_3 \leq \frac{1}{8} \log_q \degree R \\ (A_1 A_2 A_3 B_1 B_2 B_3 , R)=1 \\ A_1 A_2 A_3 = B_1 B_2 B_3}}
		\hspace{-1.5em} \frac{\beta (A_3) \beta (B_3)}{\lvert A_1 A_2 A_3 B_1 B_2 B_3 \rvert^{\frac{1}{2}}} 
	\hspace{1em} + \frac{ \phi (R)}{\phi^* (R)} \hspace{-2em} \sum_{\substack{A_1 , A_2 , B_1 , B_2 \in \mathcal{M} \\ A_3 , B_3 \in \mathcal{S}_{\mathcal{M}} (X) \\ z_R < \degree A_1 B_1 , \degree A_2 B_2 < \degree R \\ \degree A_3 , \degree B_3 \leq \frac{1}{8} \log_q \degree R \\ (A_1 A_2 A_3 B_1 B_2 B_3 , R)=1 \\ A_1 A_2 A_3 \equiv B_1 B_2 B_3 (\modulus R) \\ A_1 A_2 A_3 \neq B_1 B_2 B_3}}
		\hspace{-1.5em} \frac{\beta (A_3) \beta (B_3)}{\lvert A_1 A_2 A_3 B_1 B_2 B_3 \rvert^{\frac{1}{2}}} .
\end{split}
\end{align}

\textbf{Step 2.1:} For the diagonal term, by similar means as in (\ref{diagonal terms as sum over Gs Vs times sum over V times sum over G1 G2}), we obtain
\begin{align}
\begin{split}
& \sum_{\substack{A_1 , A_2 , B_1 , B_2 \in \mathcal{M} \\ A_3 , B_3 \in \mathcal{S}_{\mathcal{M}} (X) \\ z_R < \degree A_1 B_1 , \degree A_2 B_2 < \degree R \\ \degree A_3 , \degree B_3 \leq \frac{1}{8} \log_q \degree R \\ (A_1 A_2 A_3 B_1 B_2 B_3 , R)=1 \\ A_1 A_2 A_3 = B_1 B_2 B_3 }} \frac{\beta (A_3) \beta (B_3) }{\lvert A_1 A_2 A_3 B_1 B_2 B_3 \rvert^{\frac{1}{2}}} \\
= & \sum_{\substack{G_3 , V_{1,3} , V_{2,3} , V_{3,1} , V_{3,2} \in \mathcal{S}_{\mathcal{M}} (X) \\ \degree G_3 V_{3,1} V_{3,2} \leq \frac{1}{8} \log_q \degree R \\ \degree G_3 V_{1,3} V_{2,3} \leq \frac{1}{8} \log_q \degree R \\ (G_3 V_{1,3} V_{2,3} V_{3,1} V_{3,2} , R) = 1 \\ (V_{1,3} V_{2,3} , V_{3,1} V_{3,2}) = 1 }}
		\hspace{-1em} \frac{\beta (G_3 V_{3,1} V_{3,2}) \beta (G_3 V_{1,3} V_{2,3}) }{\lvert G_3 V_{1,3} V_{2,3} V_{3,1} V_{3,2} \rvert} 
	\hspace{-1em} \sum_{\substack{V \in \mathcal{M} \\ \degree V \leq \degree R - \degree V_{1,3} V_{3,1} \\ \degree V \leq \degree R - \degree V_{2,3} V_{3,2} \\ \big( V , R(V_{1,3} V_{3,1} , V_{2,3} V_{3,2}) \big) = 1}} 
			\hspace{-1em} \frac{2^{\omega(V) - \omega \Big( \big(V , V_{1,3} V_{2,3} V_{3,1} V_{3,2} \big) \Big)}}{\lvert V \rvert} \\
		& \hspace{5em} \cdot  \sum_{\substack{ G_1 , G_2 \in \mathcal{M} \\ \max \Big\{ 0 , \frac{z_R - \degree V V_{1,3} V_{3,1}}{2} \Big\} < \degree G_1 < \frac{\degree R - \degree V V_{1,3} V_{3,1}}{2} \\ \max \Big\{ 0 , \frac{z_R - \degree V V_{2,3} V_{3,2}}{2} \Big\} < \degree G_2 < \frac{\degree R - \degree V V_{2,3} V_{3,2}}{2} \\ (G_1 G_2 , R) = 1}} \frac{1}{\lvert G_1 G_2 \rvert} .
\end{split}
\end{align}
Now, if $\frac{z_R - \degree V V_{1,3} V_{3,1}}{2} \leq \log_q 3^{\omega (R)}$ then 
\begin{align*}
\frac{\degree R - \degree V V_{1,3} V_{3,1}}{2} \leq \log_q 3^{\omega (R)} + \frac{1}{2} \log_q 2^{\omega (R)} < \log_q 6^{\omega (R)},
\end{align*}
and so, by Corollary \ref{Sum over (A,R)=1, deg A <= a deg R of 1/A}, we have
\begin{align*}
\sum_{\substack{ G_1 \in \mathcal{M} \\ \max \Big\{ 0 , \frac{z_R - \degree V V_{1,3} V_{3,1}}{2} \Big\} < \degree G_1 < \frac{\degree R - \degree V V_{1,3} V_{3,1}}{2} \\ (G_1 , R) = 1}} \frac{1}{\lvert G_1 \rvert} 
\leq \sum_{\substack{ G_1 \in \mathcal{M} \\ \degree G_1 < \log_q 6^{\omega (R)} \\ (G_1 , R) = 1}} \frac{1}{\lvert G_1 \rvert}
\ll \frac{\phi (R)}{\lvert R \rvert} \omega (R) .
\end{align*}
If $\frac{z_R - \degree V V_{1,3} V_{3,1}}{2} > \log_q 3^{\omega (R)}$ then
\begin{align*}
&\sum_{\substack{ G_1 \in \mathcal{M} \\ \max \Big\{ 0 , \frac{z_R - \degree V V_{1,3} V_{3,1}}{2} \Big\} < \degree G_1 < \frac{\degree R - \degree V V_{1,3} V_{3,1}}{2} \\ (G_1 , R) = 1}} \frac{1}{\lvert G_1 \rvert} \\
= &\sum_{\substack{ G_1 \in \mathcal{M} \\ \degree G_1 < \frac{\degree R - \degree V V_{1,3} V_{3,1}}{2} \\ (G_1 , R) = 1}} \frac{1}{\lvert G_1 \rvert} 
	- \sum_{\substack{ G_1 \in \mathcal{M} \\ \degree G_1 < \frac{z_R - \degree V V_{1,3} V_{3,1}}{2} \\ (G_1 , R) = 1}} \frac{1}{\lvert G_1 \rvert} 
\ll \frac{\phi (R)}{\lvert R \rvert} \omega (R) ,
\end{align*}
where we have used Corollary \ref{Sum over (A,R)=1, deg A <= a deg R of 1/A} twice for the last relation. Similar results hold for the sum over $G_2$. Hence, proceeding similarly as we did for the diagonal terms of $\frac{1}{\phi^* (R)} \sumstar_{\chi \modulus R} a(\chi )^2 \Big\lvert \widehat{P_X^{**}} \Big( \frac{1}{2} , \chi \Big) \Big\rvert^2$, we see that there is a constant $c$ such that
\begin{align*}
&\frac{\phi (R)}{\phi^* (R)} \sum_{\substack{A_1 , A_2 , B_1 , B_2 \in \mathcal{M} \\ A_3 , B_3 \in \mathcal{S}_{\mathcal{M}} (X) \\ z < \degree A_1 B_1 , \degree A_2 B_2 < \degree R \\ \degree A_3 , \degree B_3 \leq \frac{1}{8} \log_q \degree R \\ (A_1 A_2 A_3 B_1 B_2 B_3 , R)=1 \\ A_1 A_2 A_3 = B_1 B_2 B_3 }} \hspace{-1em} \frac{\beta (A_3) \beta (B_3) }{\lvert A_1 A_2 A_3 B_1 B_2 B_3 \rvert^{\frac{1}{2}}} \\
\ll &\frac{\phi (R)^3 }{\lvert R \rvert^2 \phi^* (R) } \omega (R)^2 (\degree R)^2 \prod_{P \mid R} \bigg( \frac{(1-\lvert P \rvert^{-1})^3}{1+ \lvert P \rvert^{-1}} \bigg) (\log_q \log \degree R )^c .
\end{align*}

\textbf{Step 2.2:} We now look at the second term on the far right side of (\ref{fourth twisted moment, b(chi) sum, split into diag and off diag, statement}): the off-diagonal terms. Using Lemma \ref{Lemma, off diagonal z_1 z_2 sum, EH version}, we have
\begin{align*}
&\frac{ \phi (R)}{\phi^* (R)} \sum_{\substack{A_1 , A_2 , B_1 , B_2 \in \mathcal{M} \\ A_3 , B_3 \in \mathcal{S}_{\mathcal{M}} (X) \\ z_R < \degree A_1 B_1 , \degree A_2 B_2 < \degree R \\ \degree A_3 , \degree B_3 \leq \frac{1}{8} \log_q \degree R \\ (A_1 A_2 A_3 B_1 B_2 B_3 , R)=1 \\ A_1 A_2 A_3 \equiv B_1 B_2 B_3 (\modulus R) \\ A_1 A_2 A_3 \neq B_1 B_2 B_3}}
	\frac{\beta (A_3) \beta (B_3)}{\lvert A_1 A_2 A_3 B_1 B_2 B_3 \rvert^{\frac{1}{2}}} \\
= &\frac{ \phi (R)}{\phi^* (R)} \sum_{\substack{A_3 , B_3 \in \mathcal{S}_{\mathcal{M}} (X) \\ \degree A_3 , \degree B_3 \leq \frac{1}{8} \log_q \degree R \\ (A_3 B_3 , R)=1}} \frac{\beta (A_3) \beta (B_3)}{\lvert A_3 B_3 \rvert^{\frac{1}{2}}}
	\sum_{z_R < z_1 , z_2< \degree R} q^{-\frac{z_1 + z_2 }{2}}
	\sum_{\substack{A_1 , A_2 , B_1 , B_2 \in \mathcal{M} \\ \degree A_1 B_1 = z_1 \\ \degree A_2 B_2 = z_2 \\ (A_1 A_2 A_3 B_1 B_2 B_3 , R)=1 \\ A_1 A_2 A_3 \equiv B_1 B_2 B_3 (\modulus R) \\ A_1 A_2 A_3 \neq B_1 B_2 B_3}} 1 \\
\ll & \frac{(\degree R)^3}{\phi^* (R)} \sum_{\substack{A_3 , B_3 \in \mathcal{S}_{\mathcal{M}} (X) \\ \degree A_3 , \degree B_3 \leq \frac{1}{8} \log_q \degree R \\ (A_3 B_3 , R)=1}} \lvert \beta (A_3) \beta (B_3) \rvert \lvert A_3 B_3 \rvert^{\frac{1}{2}}
	\sum_{z_R < z_1 , z_2< \degree R} q^{\frac{z_1 + z_2 }{2}} \\
\ll & \frac{\lvert R \rvert (\degree R)^3 }{\phi^* (R)} \sum_{\substack{A_3 , B_3 \in \mathcal{S}_{\mathcal{M}} (X) \\ \degree A_3 , \degree B_3 \leq \frac{1}{8} \log_q \degree R \\ (A_3 B_3 , R)=1}} \lvert \beta (A_3) \beta (B_3) \rvert \lvert A_3 B_3 \rvert^{\frac{1}{2}} 
\ll \frac{\lvert R \rvert (\degree R)^{3 + \frac{3}{4}} }{\phi^* (R)} .
\end{align*}

\textbf{Step 3; the asymptotic main term of $\frac{1}{\phi^* (R)} \sumstar_{\chi \modulus R} c(\chi )^2 \Big\lvert \widehat{P_X^{**}} \Big( \frac{1}{2} , \chi \Big) \Big\rvert^2$:} \\

We recall that $c (\chi )$ differs, depending on whether $\chi$ is even or odd. Furthermore, if $\chi$ is even, then there are three terms to consider. However, by the Cauchy-Schwarz inequality, it suffices to bound the following for $i=0,1,2,$:
\begin{align*}
\frac{1}{\phi^* (R)} \sumstar_{\substack{\chi \modulus R \\ \chi \text{ even} }} d_i (\chi )^2 \Big\lvert \widehat{P_X^{**}} \Big( \frac{1}{2} , \chi \Big) \Big\rvert^2 ,
\end{align*}
where
\begin{align*}
d_i (\chi )
:= \sum_{\substack{A,B \in \mathcal{M} \\ \degree AB = \degree R - i}} \frac{\chi (A) \conj\chi (B)}{\lvert AB \rvert^{\frac{1}{2}}} .
\end{align*}
We will bound 
\begin{align*}
\frac{1}{\phi^* (R)} \sumstar_{\substack{\chi \modulus R \\ \chi \text{ even} }} d_0 (\chi )^2 \Big\lvert \widehat{P_X^{**}} \Big( \frac{1}{2} , \chi \Big) \Big\rvert^2 .
\end{align*}
The other cases for $d_i (\chi)$ and the odd case are similar. \\

Now, we have that
\begin{align}
\begin{split} \label{fourth twisted moment, c(chi) sum, split into diag and off diag, statement}
&\frac{1}{\phi^* (R)} \sumstar_{\substack{\chi \modulus R \\ \chi \text{ even} }} d_0 (\chi )^2 \Big\lvert \widehat{P_X^{**}} \Big( \frac{1}{2} , \chi \Big) \Big\rvert^2
\leq \frac{1}{\phi^* (R)} \sum_{\chi \modulus R} d_0 (\chi )^2 \Big\lvert \widehat{P_X^{**}} \Big( \frac{1}{2} , \chi \Big) \Big\rvert^2 \\
\leq & \frac{ 1}{\phi^* (R)} \sum_{\chi \modulus R} \sum_{\substack{A_1 , A_2 , B_1 , B_2 \in \mathcal{M} \\ A_3 , B_3 \in \mathcal{S}_{\mathcal{M}} (X) \\ \degree A_1 B_1 , \degree A_2 B_2 = \degree R \\ \degree A_3 , \degree B_3 \leq \frac{1}{8} \log_q \degree R}}
	\frac{\beta (A_3) \beta (B_3) \chi (A_1 A_2 A_3) \conj\chi (B_1 B_2 B_3)}{\lvert A_1 A_2 A_3 B_1 B_2 B_3 \rvert^{\frac{1}{2}}} \\
= & \frac{ \phi (R)}{\phi^* (R)} \hspace{-2em} \sum_{\substack{A_1 , A_2 , B_1 , B_2 \in \mathcal{M} \\ A_3 , B_3 \in \mathcal{S}_{\mathcal{M}} (X) \\ \degree A_1 B_1 , \degree A_2 B_2 = \degree R \\ \degree A_3 , \degree B_3 \leq \frac{1}{8} \log_q \degree R \\ (A_1 A_2 A_3 B_1 B_2 B_3 , R)=1 \\ A_1 A_2 A_3 = B_1 B_2 B_3}}
	\hspace{-0.5em} \frac{\beta (A_3) \beta (B_3)}{\lvert A_1 A_2 A_3 B_1 B_2 B_3 \rvert^{\frac{1}{2}}} 
	\hspace{1em} + \frac{ \phi (R)}{\phi^* (R)} \hspace{-2em} \sum_{\substack{A_1 , A_2 , B_1 , B_2 \in \mathcal{M} \\ A_3 , B_3 \in \mathcal{S}_{\mathcal{M}} (X) \\ \degree A_1 B_1 , \degree A_2 B_2 = \degree R \\ \degree A_3 , \degree B_3 \leq \frac{1}{8} \log_q \degree R \\ (A_1 A_2 A_3 B_1 B_2 B_3 , R)=1 \\ A_1 A_2 A_3 \equiv B_1 B_2 B_3 (\modulus R) \\ A_1 A_2 A_3 \neq B_1 B_2 B_3}}
	\hspace{-0.5em} \frac{\beta (A_3) \beta (B_3)}{\lvert A_1 A_2 A_3 B_1 B_2 B_3 \rvert^{\frac{1}{2}}} .
\end{split}
\end{align}
For the first term on the far right side of (\ref{fourth twisted moment, c(chi) sum, split into diag and off diag, statement}), we have, similarly to Step 2.1,
\begin{align*}
& \sum_{\substack{A_1 , A_2 , B_1 , B_2 \in \mathcal{M} \\ A_3 , B_3 \in \mathcal{S}_{\mathcal{M}} (X) \\ \degree A_1 B_1 , \degree A_2 B_2 = \degree R \\ \degree A_3 , \degree B_3 \leq \frac{1}{8} \log_q \degree R \\ (A_1 A_2 A_3 B_1 B_2 B_3 , R)=1 \\ A_1 A_2 A_3 = B_1 B_2 B_3 }} \frac{\beta (A_3) \beta (B_3) }{\lvert A_1 A_2 A_3 B_1 B_2 B_3 \rvert^{\frac{1}{2}}} \\
= & \sum_{\substack{G_3 , V_{1,3} , V_{2,3} , V_{3,1} , V_{3,2} \in \mathcal{S}_{\mathcal{M}} (X) \\ \degree G_3 V_{3,1} V_{3,2} \leq \frac{1}{8} \log_q \degree R \\ \degree G_3 V_{1,3} V_{2,3} \leq \frac{1}{8} \log_q \degree R \\ (G_3 V_{1,3} V_{2,3} V_{3,1} V_{3,2} , R) = 1 \\ (V_{1,3} V_{2,3} , V_{3,1} V_{3,2}) = 1 }}
		\hspace{-1.75em} \frac{\beta (G_3 V_{3,1} V_{3,2}) \beta (G_3 V_{1,3} V_{2,3}) }{\lvert G_3 V_{1,3} V_{2,3} V_{3,1} V_{3,2} \rvert} 
		\hspace{-0.5em} \sum_{\substack{V \in \mathcal{M} \\ \degree V \leq \degree R - \degree V_{1,3} V_{3,1} \\ \degree V \leq \degree R - \degree V_{2,3} V_{3,2} \\ \big( V , R(V_{1,3} V_{3,1} , V_{2,3} V_{3,2}) \big) = 1}} 
			\hspace{-2em} \frac{2^{\omega(V) - \omega \Big( \big(V , V_{1,3} V_{2,3} V_{3,1} V_{3,2} \big) \Big)}}{\lvert V \rvert} \\
		& \hspace{3em} \cdot  \sum_{\substack{ G_1 , G_2 \in \mathcal{M} \\ \degree G_1 = \frac{\degree R - \degree V V_{1,3} V_{3,1}}{2} \\ \degree G_2 = \frac{\degree R - \degree V V_{2,3} V_{3,2}}{2} \\ (G_1 G_2 , R) = 1}} \frac{1}{\lvert G_1 G_2 \rvert} \\
\ll & \sum_{\substack{G_3 , V_{1,3} , V_{2,3} , V_{3,1} , V_{3,2} \in \mathcal{S}_{\mathcal{M}} (X) \\ \degree G_3 V_{3,1} V_{3,2} \leq \frac{1}{8} \log_q \degree R \\ \degree G_3 V_{1,3} V_{2,3} \leq \frac{1}{8} \log_q \degree R \\ (G_3 V_{1,3} V_{2,3} V_{3,1} V_{3,2} , R) = 1 \\ (V_{1,3} V_{2,3} , V_{3,1} V_{3,2}) = 1 }} 
		\hspace{-1.75em} \frac{\lvert \beta (G_3 V_{3,1} V_{3,2}) \beta (G_3 V_{1,3} V_{2,3}) \rvert }{\lvert G_3 V_{1,3} V_{2,3} V_{3,1} V_{3,2} \rvert} 
	\hspace{-1em} \sum_{\substack{V \in \mathcal{M} \\ \degree V \leq \degree R - \degree V_{1,3} V_{3,1} \\ \degree V \leq \degree R - \degree V_{2,3} V_{3,2} \\ \big( V , R(V_{1,3} V_{3,1} , V_{2,3} V_{3,2}) \big) = 1}} 
			\hspace{-2em} \frac{2^{\omega(V) - \omega \Big( \big(V , V_{1,3} V_{2,3} V_{3,1} V_{3,2} \big) \Big)}}{\lvert V \rvert} \\
\ll &(\degree R)^2 \prod_{P \mid R} \bigg( \frac{(1-\lvert P \rvert^{-1})^3}{1+ \lvert P \rvert^{-1}} \bigg) (\log_q \log \degree R )^c ,
\end{align*}
for some positive constant $c$. \\

For the second term on the far right side of (\ref{fourth twisted moment, c(chi) sum, split into diag and off diag, statement}), we have, similarly to Step 2.2,
\begin{align*}
&\frac{ \phi (R)}{\phi^* (R)} \sum_{\substack{A_1 , A_2 , B_1 , B_2 \in \mathcal{M} \\ A_3 , B_3 \in \mathcal{S}_{\mathcal{M}} (X) \\ \degree A_1 B_1 , \degree A_2 B_2 = \degree R \\ \degree A_3 , \degree B_3 \leq \frac{1}{8} \log_q \degree R \\ (A_1 A_2 A_3 B_1 B_2 B_3 , R)=1 \\ A_1 A_2 A_3 \equiv B_1 B_2 B_3 (\modulus R) \\ A_1 A_2 A_3 \neq B_1 B_2 B_3}}
	\frac{\beta (A_3) \beta (B_3)}{\lvert A_1 A_2 A_3 B_1 B_2 B_3 \rvert^{\frac{1}{2}}} \\
= &\frac{ \phi (R)}{\lvert R \rvert \phi^* (R)} \sum_{\substack{A_3 , B_3 \in \mathcal{S}_{\mathcal{M}} (X) \\ \degree A_3 , \degree B_3 \leq \frac{1}{8} \log_q \degree R \\ (A_3 B_3 , R)=1}} \frac{\beta (A_3) \beta (B_3)}{\lvert A_3 B_3 \rvert^{\frac{1}{2}}}
	\sum_{\substack{A_1 , A_2 , B_1 , B_2 \in \mathcal{M} \\ \degree A_1 B_1 = \degree R \\ \degree A_2 B_2 = \degree R \\ (A_1 A_2 A_3 B_1 B_2 B_3 , R)=1 \\ A_1 A_2 A_3 \equiv B_1 B_2 B_3 (\modulus R) \\ A_1 A_2 A_3 \neq B_1 B_2 B_3}} 1 \\
\ll & \frac{\lvert R \rvert (\degree R)^3}{\phi^* (R)} \sum_{\substack{A_3 , B_3 \in \mathcal{S}_{\mathcal{M}} (X) \\ \degree A_3 , \degree B_3 \leq \frac{1}{8} \log_q \degree R \\ (A_3 B_3 , R)=1}} \lvert \beta (A_3) \beta (B_3) \rvert \lvert A_3 B_3 \rvert^{\frac{1}{2}} 
\ll \frac{\lvert R \rvert (\degree R)^{3 + \frac{3}{4}} }{\phi^* (R)} .
\end{align*}

The proof now follows from Steps 1, 2, and 3.

\end{proof}

\bibliography{YiasemidesBibliography1}{}
\bibliographystyle{YiaseBstNumer1}

\end{document}